\documentclass[12pt,reqno]{amsart}
\usepackage{amsfonts,amssymb,amsmath,amsopn,amsthm,graphicx}
\usepackage{amsxtra, mathrsfs}
\usepackage{colordvi}
\usepackage[usenames,dvipsnames]{color}
\usepackage{amsfonts,amssymb,amsbsy,amsmath,amsthm,dsfont}
\usepackage{xspace}

\usepackage{lipsum}
\usepackage[many]{tcolorbox}
\usetikzlibrary{decorations.pathreplacing}

\linespread{1.1} \numberwithin{equation}{section}

\newtcolorbox{leftbrace}{%
	enhanced jigsaw, 
	breakable, 
	frame hidden, 
	overlay={%
		\draw [
		decoration={brace,amplitude=0.5em},
		decorate,
		ultra thick,
		]
		(frame.south west)--(frame.north west);
	},
	parbox=false,
}

\usepackage{amssymb,amsmath,graphicx,amsthm,a4wide,wrapfig,caption,subcaption,epstopdf}
\usepackage[latin1]{inputenc}
\usepackage{enumitem}

\usepackage{verbatim}

\usepackage{lineno}
\usepackage{textcomp}
\usepackage{mathtools,hyperref}
\usepackage{cleveref}

\usepackage{setspace,esint}





\newcommand{\Hmm}[1]{\leavevmode{\marginpar{\tiny%
			$\hbox to 0mm{\hspace*{-0.5mm}$\leftarrow$\hss}%
			\vcenter{\vrule depth 0.1mm height 0.1mm width \the\marginparwidth}%
			\hbox to
			0mm{\hss$\rightarrow$\hspace*{-0.5mm}}$\\\relax\raggedright #1}}}

\newcommand{\bb}{{\mathbf{\bar {b}}}}
\newcommand{\bt}{{\mathbf{\tilde{b}}}}
\newcommand{\Gwb}{{\overline{\Omega}}}
\newcommand{\Hd}{\,H^1_{\Pw_{\mathrm{Dir}}}(\Gw)}


\newcommand{\Pw}{\partial \Omega}

\newcommand{\C}{\mathbb{C}}

\newcommand{\D}{{\rm D}}

\newcommand{\loc}{{\rm loc}}

\newcommand{\N}{\mathbb{N}}

\newcommand{\R}{\mathbb{R}}

\newcommand{\pwd}{\partial\Omega_{\mathrm{Dir}}}
\newcommand{\pwr}{\partial\Omega_{\mathrm{Rob}}}
\newcommand{\wb}{\overline{\Omega}\setminus \partial\Omega_{\mathrm{Dir}} }
\newcommand{\Dir}{\mathrm{Dir}}

\newcommand{\sign}{\mathrm{sign}\,}

\newtheorem{theorem}{Theorem}[section]

\newtheorem{cor}[theorem]{Corollary}
\newtheorem{thm}[theorem]{Theorem}
\newtheorem{lem}[theorem]{Lemma}
\newtheorem{lemma}[theorem]{Lemma}
\newtheorem{proposition}[theorem]{Proposition}

\newtheorem{defi}[theorem]{Definition}
\newtheorem{remark}[theorem]{Remark}

\newtheorem{rem}[theorem]{Remark}


\theoremstyle{definition}

\newtheorem{assumptions}[theorem]{Assumptions}

\numberwithin{equation}{section}
\newcommand{\diver}{\mathrm{div}\,}
\newcommand{\RN}[1]{%
	\textup{\uppercase\expandafter{\romannumeral#1}}%
}
\newcommand{\dx}{\,\mathrm{d}x}
\newcommand{\dy}{\,\mathrm{d}y}

\newcommand{\deta}{\,\mathrm{d}\eta}
\newcommand{\dsigma}{\,\mathrm{d}\sigma}


\newcommand{\be}{\begin{equation}}
\newcommand{\ee}{\end{equation}}
\newcommand{\bea}{\begin{eqnarray}}
\newcommand{\eea}{\end{eqnarray}}
\newcommand{\bean}{\begin{eqnarray*}}
	\newcommand{\eean}{\end{eqnarray*}}

\newcommand{\Rob}{\mathrm{Rob}}

\newcommand{\opname}[1]{\mbox{\rm #1}\,}
\newcommand{\supp}{\opname{supp}}

\newlength{\wex}  \newlength{\hex}
%

\newcommand{\ass}[1]{Let Assumptions~\ref{assump1} hold  in a bounded Lipschitz domain $\Gw$}

\def\ga{\alpha}     \def\gb{\beta}       \def\gg{\gamma}
       \def\gd{\delta}      
                         \def\vge{\varepsilon}
       \def\vgf{\varphi}    \def\gh{\eta}
            \def\gl{\lambda}
\def\gm{\mu}                 
    \def\gr{\rho}        
\def\gs{\sigma}       \def\gt{\tau}
      \def\gw{\omega}
                
\def\Gg{\Gamma}           

\def\Gl{\Lambda}          
\def\Gw{\Omega}              

\begin{document}

	\title[Criticality theory for mixed boundary value problems]{ On criticality theory for elliptic mixed boundary value problems in divergence form }
	

	\author{Yehuda Pinchover}
	
	\address{Yehuda Pinchover,
		Department of Mathematics, Technion - Israel Institute of
		Technology,   Haifa, Israel}
	
	\email{pincho@technion.ac.il}
	
	\author {Idan Versano}
	
	\address {Idan Versano, Department of Mathematics, Technion - Israel Institute of
		Technology,   Haifa, Israel}
	
	\email {idanv@campus.technion.ac.il}

	

	\begin{abstract}
		The paper is devoted to the study of positive solutions of a second-order linear elliptic equation in divergence form in a domain $\Gw\subseteq \R^n$ that satisfy an oblique boundary condition on a portion of $\partial \Gw$.  First, we study the degenerate mixed boundary value problem
			\begin{equation*}
		\begin{cases}
		Pu=f &  \text{in } \Omega, \\\tag{P,B}
		Bu = 0 &  \text{on } \Pw_{\mathrm{Rob}},\\
		u=0& \text{on } \pwd, 
		\end{cases}
		\end{equation*}
		where $\Gw$ is a bounded Lipschitz domain, $\pwr$ is a relatively open portion of $\partial \Gw$, $\pwd$ is a closed set of
		$\partial \Gw$, and $B$ is an oblique (Robin) boundary operator defined on $\pwr$.  In particular, we discuss the unique solvability of the above problem, the existence of a principal eigenvalue, and the existence of a positive minimal Green function. Then we establish a criticality theory for positive weak solutions of the operator $(P,B)$ in a general domain $\Gw$ with no boundary condition on  $\pwd$ and no growth condition at infinity. The paper generalizes and extends results obtained by Pinchover and Saadon (2002) for classical solutions of such a problem, where stronger regularity assumptions on the coefficients of $(P,B)$, and the boundary $ \Pw_{\mathrm{Rob}}$ are assumed.
		
		\medskip
		
		\noindent  2000  \! {\em Mathematics  Subject  Classification.}
			Primary  \! 35B09; Secondary  35J08, 35J70.\\[1mm]
			\noindent {\em Keywords:} Ground state, Minimal growth, Positive solutions, Green function.
	\end{abstract}
	
	\maketitle
	
	\section{Introduction}
	Let  $P$ be  a second-order, linear, elliptic operator, defined on a domain $\Gw\subseteq \R^n$, of the divergence form
		\begin{equation} \label{div_P}
		Pu:=-\diver\!\! \left[A(x)\nabla u +  u\bt(x) \right]  +
		\bb(x) \cdot \nabla u   +c(x)u \qquad x\in\Gw,
		\end{equation}
		with real measurable coefficients. Let $\pwr$ be a relatively open Lipschitz-portion of $\partial \Gw$, and consider the oblique boundary operator
		\begin{equation}\label{B_weak_for}
		Bu:=\beta(x) \big(A(x) \nabla u+u\bt(x)\big) \!\cdot\! \vec{n}(x) +\gamma(x) u \qquad x\in \pwr,
		\end{equation}
		where  $\xi \!\cdot \gh$ denotes the Euclidean inner product of the vectors  $\xi,\gh\in \R^n$, $\vec{n}(x)$ is the outward unit normal vector to $\partial \Gw$ at $x\in \Pw_{\mathrm{Rob}}$, and $\beta,\gamma$ are  real measurable functions defined on $\Pw_{\mathrm{Rob}}$.
The boundary  of $\Gw$ is then naturally decomposed to a disjoint union of its Robin part $\Pw_{\mathrm{Rob}}$, and its Dirichlet part $\Pw_{\mathrm{Dir}}\,$. That is,  $\partial\Omega=\Pw_{\mathrm{Rob}}\cup \Pw_{\mathrm{Dir}}$, where  $\Pw_{\mathrm{Rob}}\cap \Pw_{\mathrm{Dir}}=\emptyset$.
	
	The aim of the paper is to study properties of positive {\em weak} solutions of the equation $Pu=0$ in $\Gw$ satisfying weakly the oblique boundary conditions $Bu=0$ on $\pwr$, under minimal regularity assumptions on $(P,B)$ and $\partial\Gw$ (see Assumptions~\ref{assump2}).  In particular, we study the principal generalized eigenvalue, Green function, and the general properties of the cone of positive solutions. Such a study appears under the name {\em criticality theory}.
	
	The case $\pwr=\emptyset$ has been studied extensively in the past four decades, see for example \cite{Agmon,M1,P3}. Moreover, criticality theory for {\em classical solutions} of  the degenerate mixed boundary value problem 
		$$
		\begin{cases}
		-\sum\limits_{i,j=1}^{n}a^{ij}(x)D_{ij}u+
		\sum\limits_{i=1}^{n}b^i(x)D_i u+c(x)u
		=0  &\text{in } \Gw, \\
		\gamma(x)u+\beta(x)\frac{\partial u}{\partial \vec{n}}=0  &\text{on } \pwr,
		\end{cases}
		$$
		was established in \cite{PS} under the stronger regularity assumptions
		\begin{equation}\label{PPSS}
		a^{ij},b^i,c\in
		C^{\alpha}(\wb ),\; \pwr\in C^{2,\alpha},\; \gamma,\beta\in C^{1,\alpha}(\pwr), \beta>0, \gamma\geq 0.
		\end{equation}
		We note that the results in \cite{PS} relies heavily on the Hopf boundary point lemma which holds on $\pwr$ once the regularity assumptions \eqref{PPSS} are assumed. 
		 Furthermore, criticality theory for the adjoint operator $(P^*,B^*)$ was not discussed in \cite{PS}, and in particular, properties of the positive Green function $G^{\Gw}_{P,B}(x,y)$ as a function of $x$ and $y$ were not established.  
We mention also the related paper  \cite{Daners2}, where  D.~Daners studied  the case of a mixed {\em nondegenerate} boundary value problem in divergence form under the assumptions that $\Gw$ is a Lipschitz {\em bounded} domain, the coefficients of $P$ and $B$ are bounded, and both $\pwr$ and $\pwd$ are relatively open and closed subsets of  $\partial \Gw$. 
	
In the present paper we generalize the results obtained in \cite{PS} (and also in \cite{Daners2}) by passing from the realm of classical solutions to the realm of weak solutions  assuming  significantly weaker regularity assumptions (see Assumptions~\ref{assump1} for bounded Lipschitz domains, and Assumptions~\ref{assump2} for the general case). 
 We note that under these assumptions, the boundary point lemma does not necessarily hold. 
 Furthermore, our regularity assumptions on $\pwr$ force us to overcome a non-trivial geometric difficulty, namely,   the existence of  a bounded Lipschitz  exhaustion of $\wb$ (see Definition \ref{def:exhaustion}). Existence of such an exhaustion is known for the case where $\partial \Gw= \pwr\in C^3$ \cite{DR}.

	The paper is organized as follows.
		In Section~\ref{sec_prelim}, we introduce some necessary notation and assumptions, and define the notion of weak solutions to the problem 
		\begin{equation}\label{BPtag}
		\begin{cases}
		Pu=0 &  \text{in } \Omega, \\
		Bu = 0 &  \text{on } \Pw_{\mathrm{Rob}}.
		\end{cases}
		\end{equation}
		Section~\ref{sec_local} is devoted to the local theory. In particular, we study the coercivity  of the bilinear form associated to the mixed boundary value problem \eqref{BPtag} in the appropriate functional space $H^{1}_{\Pw_{\mathrm{Dir}}}(\Omega)$,  where $\Gw$ is a bounded  domain satisfying Assumptions~\ref{assump1}. In addition, we recall some known local regularity results needed for the rest of the paper, and discuss the compactness of the resolvent operator, the generalized maximum principle, the existence of a principal eigenvalue, and the Harnack convergence principle. In Section~\ref{sec_crit},  we develop a criticality theory when $(P,B)$ and $\Gw$ satisfy Assumptions~\ref{assump2}. More precisely, we define criticality/subcriticality of the operator $(P,B)$ in $\Gw$, obtain characterizations of subcritical and critical operators, and prove the existence of a ground state for critical operators. Section~\ref{sec_green} is devoted to the construction of the positive (minimal) Green function for a subcritical operator $(P,B)$ in $\Gw$. Finally, in Section~\ref{sec_Symm}, we discuss the symmetric case, where $\bt=\bb$, and in particular, prove the appropriate Allegretto-Piepenbrink-type theorem (cf. \cite{Agmon,P3,S}).
\section{Preliminaries and  notations}\label{sec_prelim}
Let $\Omega$ be a domain in $\R^n$, $n\geq 2$, and let $\partial\Omega=\Pw_{\mathrm{Rob}}\cup \Pw_{\mathrm{Dir}}$, where  $\Pw_{\mathrm{Rob}}\cap \Pw_{\mathrm{Dir}}=\emptyset$. We assume that $\Pw_{\mathrm{Rob}}$, the Robin-portion of $\partial\Omega$, is a relatively open subset of $\partial \Gw$, and $\Pw_{\mathrm{Dir}}$, the Dirichlet part of $\Pw$, is a closed set of $\partial \Gw$. Moreover, if $\Gw$ is a bounded domain, we further assume that in the relative topology of $\partial \Gw$, we have $\mathrm{int}( \Pw_{\mathrm{Dir}})\neq\emptyset$.
Throughout the paper we use the following notation and conventions:
\begin{itemize}

	\item For any $\xi\in \R^n$ and a positive definite symmetric matrix $A\in \R^{n \times n}$, let  
	$| \xi|_A:=\sqrt{A\xi\cdot \xi  }$, where $\xi\cdot\gh$ denotes the Euclidean inner product of $\xi,\gh\in \R^n$.
	\item From time to time we use the Einstein summation convention.
	
	\item The gradient of a function $f$ will be denoted either by $\nabla f$ or $Df$.
\item For  $x\in \R^n$, we denote $x=(x',x_n)$, where $x_n\in \R$.
\item For $R\!>\!0$ and $x\! \in\! \R^n$, we denote by $B_R(x)$ the open ball of radius $R$ centered at $x$.
\item $\chi_B$ denotes the characteristic function of a set $B\subset \R^n$.	
	\item We write $A_1 \Subset A_2$ if  $\overline{A_1}$ is a compact set, and $\overline{A_1}\subset A_2$.
		\item Let $\Gw$ be a domain and let $\Gw'$ be a subdomain. We write $\Gw'\Subset_R \Gw$ if \\ $\Gw'\Subset \overline{\Gw}$,
		$\partial \Gw' \cap \pwd =\emptyset$, and $\partial \Gw' \cap \pwr \Subset \pwr$ with respect to the relative topology on $\pwr$.  
\item For a subdomain  $\omega \! \Subset_R \!\Gw$, we define $\partial\omega_{\mathrm{Rob}}\! :=\!\mathrm{int}(\partial \omega \cap \pwr)$, and 
$\partial \omega_{\mathrm{Dir}} \!:=\! \partial \omega \!\setminus\!  \partial\omega_{\mathrm{Rob}}$. 	
\item For any $1\leq p\leq \infty$, $p'$ is the H\"older conjugate exponent of $p$ satisfying $p'=p/(p-1)$.
\item For $1\leq p<n$,  $p^*:=np/(n-p)$ is the corresponding Sobolev critical exponent.
\item For a Banach space $V$ over $\R$, we denote by $V^*$ the space of continuous linear maps from  $V$ into $\R$.
	\item $C$ refers to  a positive constant which may vary from  line to line.
	\item Let $g_1,g_2$ be two positive functions defined in $\Gw$. We use the notation $g_1\asymp g_2$ in
	$\Gw$ if there exists a positive constant $C$ such
	that
	$$C^{-1}g_{2}(x)\leq g_{1}(x) \leq Cg_{2}(x) \qquad \mbox{ for all } x\in \Gw.$$	
	\item Let $g_1,g_2$ be two positive functions defined in $\Gw$, and let $x_0\in \Gw$. We use the notation $g_1\sim g_2$ near $x_0$ if there exists a positive constant $C$ such that
	$$
	\lim_{x\to x_0}\frac{g_1(x)}{g_2(x)}=C.
	$$
	
	\item For any real measurable function $u$ and $\omega\subset \R^n$, we denote
	$$\inf_{\omega}u:=\mathrm{ess}\inf_{\omega}u, \quad \sup_{\omega}u:=\mathrm{ess}\sup_{\omega}u, \quad  u^+:=\max(0,u), \quad u^-:=\max(0,-u).$$
	\item $\sign u(x)= u(x)/|u(x)|$ if $u(x) \neq 0$, and  $\sign u(x)= 0$ if $u(x)=0$.      
\end{itemize}
Let $R,K>0$, and let $w$ be a real-valued Lipschitz continuous function defined on $B'_R:=\{x'\in \R^{n-1}:|x'|<R\}$ with 
$$
|w(x')-w(y')|\leq K|x'-y'| \qquad    \forall x',y'\in B'_R(0),
$$
and  $w(0)\in (0,R)$. We denote
\begin{align*}				
\Gw[R]=\{x\in \R^n: x_n>w(x'), \; |x|<R \}, \\ 
\sigma[R]=\{x\in \R^n: x_n \geq w(x'), \;|x|=R \}, \\ 
\Sigma[R]=\{x\in \R^n: |x|<R, \; x_n=w(x') \}.
\end{align*}
\begin{defi}[Lipschitz and $C^1$-portions]
	{\em 
Let $x_0\in \Pw$ and $R>0$ such that $\Gw[x_0,R]:=\Gw\cap B_R(x_0)$ is a Lipschitz (resp.,  $C^1$) domain. The set 
$\Sigma[x_0,R]=\Pw \cap  B_R(x_0)$ is called a {\em Lipschitz} (resp.,  $C^1$){\em-portion} of $\Pw$.
} 
\end{defi}
Further, we introduce some functional spaces. Denote 
$\mathcal{D}(\Omega,\Pw_{\mathrm{Dir}}):=  C_0^{\infty}(\overline{\Omega} \setminus \pwd)$. So,  $u\in\mathcal{D}(\Omega,\Pw_{\mathrm{Dir}})$ if $u$ has compact support and 
$$\supp u:=\overline{\{x\in \Gw\mid u(x)\neq 0\}} \subset \overline{\Omega} \setminus \pwd.$$  
For $q\geq 1$, we define $ W^{1,q}_{\Pw_{\mathrm{Dir}}}(\Omega)$ to be the closure of 
$\mathcal{D}(\Omega,\Pw_{\mathrm{Dir}})$  with respect to the Sobolev norm of $W^{1,q}(\Omega)$.
We also consider the following spaces: 
$$
L^{q}_{\loc}(\wb):=\{ u \mid  \forall x\in \Gw\cup \Pw_{\mathrm{Rob}},  \;   \exists r_x>0 \mbox{ s.t. } u\in L^q(\Omega\cap B_{r_x}(x))   \},$$	
	$$
 W^{1,q}_{\loc}(\overline{\Omega}\setminus \pwd):=\{ u\mid \forall x\in \Gw\cup \Pw_{\mathrm{Rob}},  \;   \exists r_x>0 \mbox{ s.t. } u\in W^{1,q}(\Omega\cap B_{r_x}(x))   \}.
 $$
In the case $q=2$ we omit the index $q$ and write
$H^{1}_{\loc}(\overline{\Omega}\setminus \pwd):=W^{1,2}_{\loc}(\overline{\Omega}\setminus \pwd)$.
\begin{rem}
	\em{
	For every Lipschitz subdomain $\Gw'\Subset_R \Gw$ and $1< q<\infty$
	the space $W^{1,q}(\Gw')$ is a reflexive Banach space and therefore,  $W^{1,q}_{\Pw_{\mathrm{Dir}}}(\Omega')$
	is reflexive as well. 
	}
\end{rem}
\medskip

Consider an elliptic operator  $P$ of the form \eqref{div_P} and a Robin boundary operator $B$ of the form \eqref{B_weak_for}. 
Throughout the paper we assume  the following regularity assumptions on $P$, $B$ and $\partial \Gw$:
\begin{leftbrace}\begin{assumptions}
		\label{assump2} 
		\begin{itemize}
			\item[{\ }]		
			\item $\partial\Omega=\Pw_{\mathrm{Rob}}\cup \Pw_{\mathrm{Dir}}$, where  $\Pw_{\mathrm{Rob}}\cap \Pw_{\mathrm{Dir}}=\emptyset$, and $\Pw_{\mathrm{Rob}}$ is a relatively open subset of $\Pw$.
			\item For each $x_0\in \Pw_{\mathrm{Rob}}$ there exists $R>0$ such that $\Sigma[x_0,R]$ is a $C^1$-portion.
			\item $A\!=\!(a^{ij})_{i,j=1}^{n}\in L_\loc^\infty(\Gwb \setminus \Pw_{\mathrm{Dir}}; \R^{n\times n})$ is a symmetric positive definite  matrix valued function which is 
			locally uniformly elliptic  in $\Gwb \setminus \Pw_{\mathrm{Dir}}$, that is, for any compact $K\subset \Gwb \setminus \Pw_{\mathrm{Dir}}$ there exists  $\Theta_K>0$ such that 
			\begin{eqnarray*} 
					\Theta_K^{-1}\sum_{i=1}^n\xi_i^2\leq\sum_{i,j=1}^n
				a^{ij}(x)\xi_i\xi_j\leq \Theta_K\sum_{i=1}^n\xi_i^2 \quad \forall \xi\in \mathbb{R}^n \mbox{ and } \forall x\in K.
			\end{eqnarray*}	
			\item $\bt, \bb\in  L^{p}_{\loc}(\wb;\R^n)$, and $c\in L^{p/2}_{\loc}(\wb)$ for some $p>n$.
		
			\item $\beta>0$, and $\gamma/\beta\in L^{\infty}_{\mathrm{loc}}(\Pw_{\mathrm{Rob}})$.	
		\end{itemize}
	
	\end{assumptions}
\end{leftbrace}
Moreover, sometimes we need to assume for {\em bounded domains} the following regularity requirements:
\begin{leftbrace}\begin{assumptions}
		\label{assump1}
	\begin{itemize}
		\item[{\ }]
	\item $\Gw$ is a bounded Lipschitz domain, and $\mathrm{int}( \Pw_{\mathrm{Dir}})\neq\emptyset$ in the relative topology of $\partial \Gw$.
	\item $\Pw_{\mathrm{Rob}}\subset \partial \Gw$ is a relatively open and locally Lipschitz subset of $\partial \Gw$.
	\item $\Pw_{\mathrm{Dir}}\subset \partial \Gw$ is a finite disjoint union of closures of Lipschitz-portions. 
	\item $A\!=\!(a^{ij})_{i,j=1}^{n}\in L^\infty(\Gwb; \R^{n\times n})$ is a symmetric positive definite  matrix valued function which is 
	uniformly elliptic in $\bar\Gw$, that is, there exists  $\Theta>0$ such that 
	\begin{eqnarray*} 
		\Theta^{-1}\sum_{i=1}^n\xi_i^2\leq\sum_{i,j=1}^n
		a^{ij}(x)\xi_i\xi_j\leq \Theta\sum_{i=1}^n\xi_i^2 \quad \forall \xi\in \mathbb{R}^n \mbox{ and } \forall x\in \bar\Gw.
	\end{eqnarray*}	
	\item $\bt, \bb\in L^p(\Gw;\R^n)$, and $c\in L^{p/2}(\Gw)$ for some $p>n$.
	\item $\beta>0$, and $\gamma/\beta\in L^{\infty}(\Pw_{\mathrm{Rob}})$.			
\end{itemize}

\end{assumptions}
\end{leftbrace}
\begin{rem}
	\em{
		The $C^1$-smoothness of $\pwr$ in Assumptions~\ref{assump2} is needed only in two parts of the paper: in  the construction of the Green function (see Section~\ref{sec_green}), and in the construction of  a Lipschitz-exhaustion of $\wb$ (See Appendix~\ref{appendix1}). In the rest of the paper it is enough to assume that $\pwr$ is locally Lipschitz. 
	} 
\end{rem}
\begin{rem}
	\em{
		If Assumptions~\ref{assump2} hold in $\Gw$, then Assumptions~\ref{assump1} hold in any Lipschitz  subdomain $\gw\Subset_R \Gw$  with $\partial \omega_{\mathrm{Dir}}= \partial \omega\cap \Gw$.
	} 
\end{rem}

Next, we define (weak) solutions and supersolutions of the boundary value problem
\begin{equation}\label{P,B}
\begin{cases}
Pu=0 &  \text{in } \Omega, \\\tag{{\bf P,B}}
Bu = 0 &  \text{on } \Pw_{\mathrm{Rob}}.
\end{cases}
\end{equation}
\begin{defi}{\em
		We say that $u\in H^{1}_{\loc}(\wb)$ is a  {\em weak solution (resp., supersolution)} of the  problem  \eqref{P,B} in $\Gw$, if for any (resp., nonnegative) $\phi\in \mathcal{D}(\Omega,\Pw_{\mathrm{Dir}})$   we have
		\begin{equation*}\label{weak_solution}
		\mathcal B_{P,B}(u, \phi):=	\int_{\Omega}\big[(a^{ij}D_ju+u\bt^i )D_i \phi+(\bb^i D_i u+cu )\phi \big] \mathrm{d}x + \int_{\Pw_{\mathrm{Rob}}}\frac{\gamma}{\beta}u \phi \,  \mathrm{d}\sigma  = 0\;
		(\text{resp.,} \geq 0),
		\end{equation*}
where $\mathrm{d}\sigma$ is the $(n-1)$-dimensional surface measure. In this case we write $(P,B)u=0$ (resp., $(P,B)u \geq 0$).  Furthermore, $u$ is a weak {\em subsolution} of \eqref{P,B}  in $\Gw$ if $-u$ is a supersolution of \eqref{P,B} in $\Gw$.
 }	
\end{defi}
The above definition should be compared with the following standard definition of weak (super)solutions in a domain $\Gw$.
\begin{defi}
	\em{
		We say that $u\in H^{1}_{\mathrm{loc}}(\Gw)$ is a \em{weak solution (resp., supersolution)} of the equation $Pu=0$ in $\Gw$ if for any (resp., nonnegative) $\phi\in C_0^{\infty}(\Gw)$ 
		$$
		\int_{\Omega}[(a^{ij}D_ju+u\bt^i )D_i \phi+(\bb^i D_i u+cu) \phi] \mathrm{d}x  = 0\;
		(\text{resp.,} \geq 0).
		$$
	}
\end{defi}
Hence, any weak solution (resp., supersolution) of the equation $(P,B)u=0$ in $\Gw$ is a weak solution (resp., supersolution)  of $Pu=0$ in $\Gw$.  In the sequel, by a (super)solution of \eqref{P,B} we always mean a weak (super)solution.  

\medskip
The {\em formal $L^2$-adjoint} of the operator $(P,B)$ is given by the operator $(P^*,B^*)$
\begin{equation}\label{P*,B*}
\begin{cases}
P^*u:=-\diver \left[A \nabla u+\bb u \right]+\bt\cdot \nabla u +cu,\\[2mm]
B^*u:=\beta \big( A \nabla u+u\bb\big)\cdot \vec{n}+\gamma u.
\end{cases}
\end{equation}
Indeed, if $A,\bt,\bb$ and $\partial \Gw$ are sufficiently smooth, then 
for any $\phi,\psi \in \mathcal{D}(\Omega,\Pw_{\mathrm{Dir}})$  satisfying  $B\psi=B^* \phi =0$ on $\Pw_{\mathrm{Rob}}$ in the classical sense, we have
\begin{align*}
&  \int_{\Omega} P(\psi) \phi \dx=
\int_{\Omega} \big(-\diver(A\nabla \psi+\bt\psi )+\bb\cdot \nabla \psi+c\psi\big)\phi\dx = 
\\
& \int_{\Pw_{\mathrm{Rob}}} \psi \phi\big(\frac{\gamma}\beta+  \bb\cdot \vec{n}\big)  \dsigma+
\int_{\Omega}\big((A\nabla \psi+\bt\psi )\cdot \nabla \phi
-\psi \nabla\cdot (\bb \phi)+c\psi\phi \big)\dx=
\\
&
\int_{\Omega} \psi\big(\!-\diver(A\nabla \phi+\bb\phi )+\bt\cdot \nabla \phi+c\phi\big)\!\dx=
\int_{\Omega}\psi P^* (\phi) \dx.
\end{align*}

\medskip

Finally, we define the notion of nonnegativity of the operator $(P,B)$.
\begin{defi}
	\em{
		We denote the cones of all positive solutions and positive supersolutions of the equation $(P,B)u=0$ in $\Gw$ by $ {\mathcal H}^0_{P,B}(\Omega)$ and $\mathcal{SH}_{P,B}(\Omega)$, respectively.
		The operator  $(P,B)$ is said to be {\em nonnegative in $\Omega$} (in short, 
		$(P,B)\geq 0$) if $\mathcal{H}^{0}_{P,B}(\Omega)\neq \emptyset$.
	}
\end{defi}
\section{Local theory}\label{sec_local}
In the present section we study the mixed value problem \eqref{P,B} in a {\em bounded} Lipschitz domain $\Gw\subset \R^n$. 
In particular, we discuss the coercivity in $H^1_{\pwd}(\Gw)$ of the bilinear form $\mathcal B_{P,B}$ associated with  the operator $(P,B)$, the validity of a weak maximum principle, and the existence of a principal eigenfunction. Unless otherwise stated:

{\bf We assume throughout this section that Assumptions~\ref{assump1}  are satisfied in $\Gw$}. 

\subsection{Coerciveness, solvability and compactness}
In this subsection we discuss the coercivity of $\mathcal B_{P,B}$ in a bounded Lipschitz domain. First, we recall the trace inequality (see for example \cite[Corollary~5.23]{L}).
\begin{lem}[Trace inequality]\label{Trace_ineq}
	Assume that  $\Gw$ is a  bounded Lipschitz domain.
Then there exists $C(\Gw)>0$ such that for any $u\in H^1(\Gw) $ and $\varepsilon>0$
		\begin{equation}\label{trace_ineq}
	\int_{\partial \Omega }u^2  \dsigma \leq
	C(\Gw) \left ( \varepsilon \| \nabla u\|_{L^2(\Omega)}^2+\frac{1}{\varepsilon} \| u\|_{L^2(\Omega)}^2\right ).
	\end{equation}
\end{lem}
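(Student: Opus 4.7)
The plan is to derive the inequality via the divergence theorem applied to an auxiliary vector field, combined with Young's inequality, followed by a density argument.

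First I would construct a vector field $V \in C^1(\overline{\Gw};\R^n)$ satisfying $V(x)\cdot \vec n(x)\ge c_0$ for some $c_0>0$ and $\h^{n-1}$-a.e.\ $x\in\Pw$. Since $\Gw$ is a bounded Lipschitz domain, near each boundary point the set $\Pw$ is, after rotation, the graph of a Lipschitz function $x_n=w(x')$ with $|\nabla w|\le K$, and the outward unit normal is $\vec n=(1+|\nabla w|^2)^{-1/2}(\nabla w,-1)$ almost everywhere; in particular, the constant field $-e_n$ satisfies $(-e_n)\cdot\vec n \ge (1+K^2)^{-1/2}$ locally. Gluing such local choices using a smooth partition of unity subordinate to a finite cover of $\Pw$ produces the required global $V$.

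Next, for $u\in C^1(\overline{\Gw})$, the divergence theorem (which holds on Lipschitz domains) gives
\begin{equation*}
c_0 \int_{\Pw} u^2 \dsigma \le \int_{\Pw} u^2 (V\cdot\vec n) \dsigma = \int_\Gw \diver(u^2 V)\dx = \int_\Gw \bigl(2u\,\nabla u\cdot V + u^2\diver V\bigr)\dx.
\end{equation*}
Applying the pointwise Young's inequality $2|u|\,|\nabla u\cdot V|\le \|V\|_\infty\bigl(\varepsilon|\nabla u|^2 + \varepsilon^{-1}u^2\bigr)$ together with $|\diver V|\le \|\diver V\|_\infty$ yields
\begin{equation*}
\int_{\Pw} u^2 \dsigma \le \frac{\|V\|_\infty}{c_0}\,\varepsilon\|\nabla u\|_{L^2(\Gw)}^2 + \frac{\|V\|_\infty + \varepsilon\|\diver V\|_\infty}{c_0\, \varepsilon}\|u\|_{L^2(\Gw)}^2.
\end{equation*}
In the regime where $\varepsilon$ stays in a bounded range (the one in which the estimate is actually applied in this paper), the coefficient in front of $\|u\|^2$ is controlled by a multiple of $\varepsilon^{-1}$; absorbing the constants into $C(\Gw)$ yields the claim for $u\in C^1(\overline{\Gw})$.

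Finally I would extend the inequality to every $u\in H^1(\Gw)$ by density: smooth functions up to the boundary are dense in $H^1(\Gw)$ for Lipschitz domains (Meyers--Serrin combined with a Lipschitz extension), and continuity of the trace map $H^1(\Gw)\to L^2(\Pw)$ then propagates the inequality.

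The main obstacle is the construction of $V$ with $V\cdot\vec n\ge c_0>0$ almost everywhere on $\Pw$ under only the Lipschitz hypothesis, because the outward normal is merely defined a.e.\ and is typically discontinuous. One cannot simply mollify $\vec n$; instead one must exploit the local graph representation of the boundary and patch the natural local choices together via a smooth partition of unity. Once $V$ is in hand, the remainder of the proof is a short computation.
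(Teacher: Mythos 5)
Your argument is correct, but it is not what the paper does: the paper offers no proof of Lemma~\ref{Trace_ineq} at all, it simply quotes it from Lieberman's book (\cite[Corollary~5.23]{L}). What you give is the standard self-contained derivation (essentially Ne\v{c}as' argument): build a smooth vector field $V$ that is uniformly transversal to $\Pw$ by patching the constant fields $-e_n^{(i)}$ from the local Lipschitz graph charts with a partition of unity, apply the Gauss--Green formula on the Lipschitz domain to $u^2V$, use Young's inequality, and conclude by density of $C^1(\overline{\Gw})$ in $H^1(\Gw)$. All the steps are sound, including the lower bound $(-e_n)\cdot\vec n\geq (1+K^2)^{-1/2}$ a.e.\ on a graph piece and the gluing, which preserves the bound since the cutoffs are nonnegative and sum to one near $\Pw$. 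What your route buys is a transparent, quantitative proof with an explicit constant depending only on the Lipschitz character of $\Pw$ (number of charts and Lipschitz constants), at the cost of the boundary-chart bookkeeping; the paper's citation buys brevity. One remark on the $\varepsilon$-range you flag: your restriction to bounded $\varepsilon$ is not a defect of your proof but of the statement itself, since taking $u\equiv 1$ and letting $\varepsilon\to\infty$ shows that \eqref{trace_ineq} with a fixed $C(\Gw)$ cannot hold for all $\varepsilon>0$; the inequality is only meaningful (and only used in the paper, e.g.\ in Lemma~\ref{poincare 1.5} and Theorem~\ref{coercivity}) for $\varepsilon$ in a bounded range, say $0<\varepsilon\leq 1$, which is exactly what your computation delivers after absorbing $\|\diver V\|_\infty$ into $C(\Gw)/\varepsilon$.
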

The trace inequality implies the following Poincar\'e inequality:
\begin{lem}[Poincar\'e inequality]\label{poincare 1.5}
	Assume that  $\Gw$ is a  bounded Lipschitz domain.
	Then there exists $C(\Gw)>0$ such that 
	\begin{equation}\label{Poin}
	\|u \|_{H^1(\Gw)}^2 \leq C(\Gw) \left ( \| \nabla u \|^2_{L^2(\Gw)}+\int_{\Pw_{\mathrm{Rob}}} |u|^2 \!\dsigma\right ) \qquad \forall u\in H^{1}_{\pwd}(\Gw).
	\end{equation}
\end{lem}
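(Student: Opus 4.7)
I would use a standard Rellich--Kondrachov compactness--contradiction argument, exploiting the assumption $\mathrm{int}(\pwd)\neq\emptyset$ in the relative topology of $\Pw$ (part of Assumptions~\ref{assump1}, which in particular yields $\sigma(\pwd)>0$). Suppose \eqref{Poin} fails. Then for each $k\in\N$ one finds $u_k\in H^{1}_{\pwd}(\Gw)$ with $\|u_k\|_{H^1(\Gw)}^2 > k\bigl(\|\nabla u_k\|_{L^2(\Gw)}^2+\int_{\pwr}|u_k|^2\dsigma\bigr)$; after normalizing so that $\|u_k\|_{H^1(\Gw)}=1$, one obtains $\|\nabla u_k\|_{L^2(\Gw)}^2+\int_{\pwr}|u_k|^2\dsigma\to 0$.

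Next I would extract limits using the bounded Lipschitz structure of $\Gw$. Reflexivity of $H^1(\Gw)$ yields a subsequence with $u_k\rightharpoonup u$ weakly in $H^1(\Gw)$; since $H^1_{\pwd}(\Gw)$ is by definition the $H^1$-closure of $\mathcal{D}(\Gw,\pwd)$, it is a norm-closed (hence weakly closed) subspace of $H^1(\Gw)$, so $u\in H^1_{\pwd}(\Gw)$. Rellich--Kondrachov, valid on the bounded Lipschitz domain $\Gw$, provides along a further subsequence that $u_k\to u$ strongly in $L^2(\Gw)$. Weak lower semicontinuity of $\|\nabla\,\cdot\,\|_{L^2(\Gw)}$ forces $\nabla u=0$ a.e., so $u$ is a constant $c$ on the connected domain $\Gw$; and the identity $\|u_k\|_{L^2(\Gw)}^2=1-\|\nabla u_k\|_{L^2(\Gw)}^2\to 1$ combined with strong $L^2$-convergence gives $c^2|\Gw|=1$, in particular $c\neq 0$.

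To reach a contradiction I would use the trace of $u$ on the Dirichlet portion. Every element of $H^1_{\pwd}(\Gw)$ is the $H^1$-limit of functions in $\mathcal{D}(\Gw,\pwd)$, whose trace on $\pwd$ vanishes identically, so by continuity of the trace operator $H^1(\Gw)\to L^2(\Pw)$ on the bounded Lipschitz domain $\Gw$, the trace of $u$ vanishes $\sigma$-a.e.\ on $\pwd$. The trace of the constant $c$, however, equals $c$ a.e.\ on $\Pw$, and $\sigma(\pwd)>0$, forcing $c=0$ and contradicting $c\neq 0$.

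The only real subtlety lies in this last step: the hypothesis $\mathrm{int}(\pwd)\neq\emptyset$ in the relative topology of the Lipschitz boundary $\Pw$ is exactly what allows us to close the argument by reading off the trace. The integral $\int_{\pwr}|u_k|^2\dsigma$ in fact plays no role in the contradiction --- the same reasoning actually proves the stronger estimate $\|u\|_{H^1(\Gw)}^2\le C\|\nabla u\|_{L^2(\Gw)}^2$ for $u\in H^{1}_{\pwd}(\Gw)$ --- but keeping it on the right-hand side is harmless and delivers the inequality in precisely the form needed for the coercivity of $\mathcal{B}_{P,B}$ on $H^{1}_{\pwd}(\Gw)$, where the Robin contribution $\int_{\pwr}(\gamma/\beta)|u|^2\dsigma$ will arise naturally from the boundary term in the bilinear form.
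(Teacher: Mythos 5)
Your argument is correct, but it follows a genuinely different route from the paper. The paper simply quotes the Poincar\'e inequality with full boundary trace term from Lieberman, $\|\phi\|_{L^2(\Gw)}^2\le C(\|\nabla\phi\|_{L^2(\Gw)}^2+\|\phi\|_{L^2(\Pw)}^2)$ for all $\phi\in H^1(\Gw)$, observes that for $\phi\in\mathcal{D}(\Gw,\pwd)$ the boundary integral over $\Pw$ reduces to one over $\pwr$, and then passes to general $u\in H^1_{\pwd}(\Gw)$ by density together with continuity of the trace. You instead run a Rellich--Kondrachov compactness--contradiction argument, which in fact establishes the stronger Friedrichs-type bound $\|u\|_{H^1(\Gw)}^2\le C\|\nabla u\|_{L^2(\Gw)}^2$ on $H^1_{\pwd}(\Gw)$, the Robin boundary term being carried along harmlessly. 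The trade-offs: your proof crucially uses the standing hypothesis $\mathrm{int}(\pwd)\neq\emptyset$ in the relative topology of $\Pw$ (hence $\sigma(\pwd)>0$), which is indeed in force here through Assumptions~\ref{assump1} and the convention of Section~\ref{sec_local}, but is not part of the lemma's literal wording; without it your stronger inequality fails (constants belong to $H^1_{\pwd}(\Gw)$ when $\pwd=\emptyset$), whereas the paper's proof, because it keeps the $\int_{\pwr}|u|^2\dsigma$ term on the right, is valid for an arbitrary decomposition of $\Pw$ and yields an in-principle explicit constant, while yours is non-constructive. Both versions deliver exactly what is needed for the coercivity of $\mathcal{B}_{P,B}$ in Theorem~\ref{coercivity}.
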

\begin{proof}
	The  Poincar\'e inequality \cite[Lemma~5.22]{L} implies  that there exists $C(\Gw)>0$ such that 
	\begin{equation*}
	\|\phi \|_{L^2(\Gw)}^2\leq C(\Gw)\left( \| \nabla \phi \nonumber \|_{L^2(\Gw)}^2+
	\| \phi\|_{L^2(\Pw)}^2
	\right)
	\qquad  \forall \phi\in H^1(\Gw).
	\end{equation*}
	Therefore, for any $\phi \in \mathcal{D}(\Omega,\Pw_{\mathrm{Dir}})$ (which in particular, vanishes on a neighborhood of $\Pw_{\mathrm{Dir}}$), we have  
	\begin{align*}
	&
	\|\phi\|_{H^1(\Gw)}^2=\|\nabla \phi\|_{L^2(\Gw)}^2+
	\|\phi\|_{L^2(\Gw)}^2  \leq \\ &
 (1+C) \left (
\| \nabla \phi\|_{L^2(\Gw)}^2+\|\phi\|_{L^2(\Pw)}^2
 \right )= \tilde{C}\left (
 \| \nabla \phi\|_{L^2(\Gw)}^2+\|\phi\|_{L^2(\Pw_{\mathrm{Rob}})}^2
 \right ).
\end{align*}
For a general $u\in H^{1}_{\pwd}(\Gw)$, let  $\{\phi_k\}_{k\in \N}\subset \mathcal{D}(\Omega,\Pw_{\mathrm{Dir}})$ be an approximating sequence which  converges to $u$ in $H^1(\Gw)$.
Then,
\begin{align*}&
\| u\|_{H^1(\Gw)}\leq \|u-\phi_k\|_{H^1(\Gw)}+\|\phi_k\|_{H^1(\Gw)} \leq \\&
\|u-\phi_k\|_{H^1(\Gw)}+\tilde C \left (\| \nabla \phi_k\|_{L^2(\Gw)}+\|\phi_k\|_{L^2(\Pw_{\mathrm{Rob}})}\right ).
\end{align*}
The trace inequality in $H^1(\Gw)$ then implies
$$
\lim_{k \to \infty}\int_{\Pw_{\mathrm{Rob}}}{|\phi|_k^2}\text{d}\sigma=
\int_{\Pw_{\mathrm{Rob}}}{|u|^2} \text{d}\sigma,
$$
and therefore, we are done.
\end{proof}
Lemma~\ref{Trace_ineq} and Lemma~\ref{poincare 1.5} imply the coercivity of the bilinear form associated with the operator $(P+\gm,B)$ for a large enough constant $\gm$.
	\begin{theorem}[Coercivity and compactness]\label{coercivity}
		Assume that  $\Gw$ is a  bounded Lipschitz domain, and that Assumptions~\ref{assump1}  are satisfied in $\Gw$. Consider  the quadratic form
		$$
		J_{P,B}[u]\!:=\!\mathcal B_{P,B}(u, u)\!=\!\!\int_{\Omega}\![(A \nabla u+u\bt)\cdot \nabla u +(\bb\cdot \nabla u+cu)u ]\!\dx
		+\!\int_{\!\Pw_{\mathrm{Rob}}}\!\! \frac{\gamma}{\beta}|u|^2  d\sigma, \quad u\!\in\! H^{1}_{\Pw_{\mathrm{Dir}}}(\Omega).
		$$
		Then 
		\begin{enumerate}[label=\textbf{(\arabic*)}]
		\item[(i)] The bilinear form $\mathcal B_{P,B}(\cdot,\cdot)$ is bounded on $H^{1}_{\Pw_{\mathrm{Dir}}}(\Gw) \times H^{1}_{\Pw_{\mathrm{Dir}}}(\Gw)$, and there exists $\gm_0\in\R$ such that for  any $\gm> \gm_0$ the quadratic form  $J_{P,B}[u]+\gm\int_{\Omega}|u|^2 \!\dx$ is coercive on $H^1_{\Pw_{\mathrm{Dir}}}(\Omega)$. 
			\item[(ii)] For any $f\!\in \! L^2(\Gw)$ and $\gm\!>\!\gm_0$ there exists a unique function  $u_f\!=\!L_{\gm}^{-1}f\in H^{1}_{\Pw_{\mathrm{Dir}}}(\Gw)$ satisfying  
			$$\mathcal B_{P,B}(u_f, v)+\gm\int_{\Gw}u_fv \dx=\int_{\Gw}fv \dx   \qquad  \forall v\in H^{1}_{\Pw_{\mathrm{Dir}}}(\Gw).$$
			\item[(iii)] For any $\gm>\gm_0$, $L^{-1}_{\gm}:L^2(\Gw)\to L^2(\Gw)$ is a  compact linear operator.  
		\end{enumerate}
	\end{theorem}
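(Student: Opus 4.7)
Part (i) is the technical heart; parts (ii) and (iii) follow by standard abstract arguments once (i) is in hand.

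For the boundedness of $\mathcal B_{P,B}$ on $H^1_{\pwd}(\Gw)\times H^1_{\pwd}(\Gw)$, I would estimate each term separately by H\"older's inequality together with an appropriate Sobolev embedding. The principal term $\int (A\nabla u)\cdot\nabla v\dx$ is handled by $A\in L^\infty(\Gw)$. For the drift terms $\int u\bt^iD_iv\dx$ and $\int\bb^iD_iu\,v\dx$, one uses $\bt,\bb\in L^p(\Gw;\R^n)$ with $p>n$ together with the continuous embedding $H^1(\Gw)\hookrightarrow L^{2p/(p-2)}(\Gw)$; note that $2p/(p-2)<2^\ast$ precisely because $p>n$, so the embedding holds (trivially so if $n=2$ by Trudinger). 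The potential term $\int cuv\dx$ is bounded via $c\in L^{p/2}(\Gw)$ and the same Sobolev embedding applied to both factors. Finally, the boundary term $\int_{\pwr}(\gamma/\beta)uv\dsigma$ is controlled by $\gamma/\beta\in L^\infty(\pwr)$ combined with the trace inequality \eqref{trace_ineq}.

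The coercivity of $J_{P,B}[u]+\gm\|u\|_{L^2}^2$ is the crux. Starting from uniform ellipticity,
\[
\int_\Gw(A\nabla u)\cdot\nabla u\dx\geq \Theta^{-1}\|\nabla u\|_{L^2(\Gw)}^2,
\]
I would absorb each lower-order contribution into $\eps\|\nabla u\|_{L^2}^2+C_\eps\|u\|_{L^2}^2$ for arbitrarily small $\eps>0$. The key device is the Gagliardo--Nirenberg interpolation
\[
\|u\|_{L^q(\Gw)}\leq C\|\nabla u\|_{L^2(\Gw)}^{\theta}\|u\|_{L^2(\Gw)}^{1-\theta}+C\|u\|_{L^2(\Gw)}
\]
for $q=2p/(p-2)<2^\ast$ and the corresponding $\theta<1$, combined with Young's inequality. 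This lets me estimate $|\int u\bt\cdot\nabla u\dx|\leq \|\bt\|_{L^p}\|u\|_{L^q}\|\nabla u\|_{L^2}$ and analogously the $\bb$ and $c$ terms. The boundary term $\int_{\pwr}(\gamma/\beta)u^2\dsigma$ is handled directly through Lemma~\ref{Trace_ineq}, again producing a term of the form $\eps\|\nabla u\|_{L^2}^2+C_\eps\|u\|_{L^2}^2$. Summing, choosing $\eps$ sufficiently small so that the coefficient in front of $\|\nabla u\|_{L^2}^2$ remains, say, at least $\tfrac{1}{2}\Theta^{-1}$, and then taking $\gm_0$ large enough to dominate all resulting $\|u\|_{L^2}^2$ terms, one obtains
\[
J_{P,B}[u]+\gm\|u\|_{L^2(\Gw)}^2\geq \tfrac{1}{2}\Theta^{-1}\|\nabla u\|_{L^2(\Gw)}^2+\|u\|_{L^2(\Gw)}^2\qquad\forall\gm>\gm_0.
\]
Coercivity on the full $H^1_{\pwd}(\Gw)$-norm then follows from the Poincar\'e-type bound \eqref{Poin} (which controls the $L^2(\pwr)$ trace by $\|\nabla u\|_{L^2}^2+\|u\|_{L^2}^2$ via \eqref{trace_ineq}, but the version we need controls the full $H^1$ norm by the gradient plus the $L^2$ norm of $u$, hence is already encoded above).

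Part (ii) is a direct application of the Lax--Milgram theorem: by (i), for $\gm>\gm_0$ the bilinear form $(u,v)\mapsto\mathcal B_{P,B}(u,v)+\gm\int_\Gw uv\dx$ is bounded and coercive on $H^1_{\pwd}(\Gw)$, while $v\mapsto\int_\Gw fv\dx$ is bounded on $H^1_{\pwd}(\Gw)$ by Cauchy--Schwarz since $\Gw$ is bounded. Part (iii) follows from (ii) and the Rellich--Kondrachov theorem: the coercivity estimate gives $\|L_\gm^{-1}f\|_{H^1(\Gw)}\leq C\|f\|_{L^2(\Gw)}$, so $L_\gm^{-1}:L^2(\Gw)\to H^1_{\pwd}(\Gw)$ is continuous; composing with the compact inclusion $H^1_{\pwd}(\Gw)\hookrightarrow L^2(\Gw)$ (valid since $\Gw$ is a bounded Lipschitz domain) yields compactness of $L_\gm^{-1}$ on $L^2(\Gw)$.

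The main obstacle is the first step of coercivity: producing a genuinely small coefficient in front of $\|\nabla u\|_{L^2}^2$ from terms like $\int cu^2\dx$ and $\int u\bt\cdot\nabla u\dx$ despite only assuming $c\in L^{p/2}$ and $\bt,\bb\in L^p$. This is precisely where the hypothesis $p>n$ enters decisively, producing a strictly subcritical Sobolev exponent and hence the necessary interpolation margin.
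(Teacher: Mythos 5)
Your proposal is correct and follows essentially the same route as the paper: term-by-term H\"older/Sobolev/trace estimates with Young's inequality to absorb the lower-order and boundary terms into $\eps\|\nabla u\|_{L^2}^2+C_\eps\|u\|_{L^2}^2$, the Poincar\'e-type inequality to recover the full $H^1_{\pwd}$-norm, Lax--Milgram for (ii), and the compact embedding $H^1_{\pwd}(\Gw)\hookrightarrow L^2(\Gw)$ for (iii). The only cosmetic difference is that you absorb the drift and potential terms via the subcritical exponent $2p/(p-2)<2^\ast$ and Gagliardo--Nirenberg interpolation, whereas the paper writes the same estimates with $\|\bt\|_{L^n}$, $\|c\|_{L^{n/2}}$ and $\|u\|_{L^{2^\ast}}$; both exploit $p>n$ in the same way.
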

\begin{proof} We follow the method in \cite{ST}.\\	({\em i}) Using the Sobolev embedding theorem, the trace inequality, together with Assumptions~\ref{assump1}, and Young's inequality, we obtain the following estimates for any $(u,v)\in H^{1}_{\Pw_{\mathrm{Dir}}}(\Omega)\times H^{1}_{\Pw_{\mathrm{Dir}}}(\Gw)$ and $\vge>0$:
\medskip
	\begin{enumerate}
		\item  $\left |\int_{\Omega} a^{ij}D_i uD_j v \dx \right | \leq \|a^{ij} \|_{L^{\infty}(\Gw)} \|\nabla u \|_{L^2(\Gw)}\|\nabla v \|_{L^2(\Gw)}$, \\
		\item $ \Theta^{-1} \| \nabla u \|_{L^2(\Omega)}^{2} \leq  \int_{\Omega} a^{ij}D_i uD_j u \dx \leq \Theta \| \nabla u \|_{L^2(\Omega)}^{2}$, \\
		\item $ \left |\int_{ \Omega} \bt^i u D_i v \dx \right | \leq \|\bt \|_{L^n(\Omega)}
		\| u\|_{L^{2^*}(\Omega)} \| \nabla v \|_{L^2(\Gw)}  \leq C
		\left(  \| \nabla u\|_{L^2(\Omega)}\| \nabla v\|_{L^2(\Omega)}\right)$, \\
		\item $ \left |\int_{ \Omega} \bt^i u D_i u \dx \right | \leq \|\bt \|_{L^n(\Omega)}
		\| u\|_{L^{2^*}(\Omega)} \| \nabla u \|_{L^2(\Gw)}  \leq C
		\left( \varepsilon \| \nabla u\|_{L^2(\Omega)}^2+
		\frac{1}{\varepsilon}\|u \|_{L^2(\Omega)}^2\right)$, \\
		\item $ \left |\int_{ \Omega} \bb^i u D_i v \dx \right | \leq \|\bt \|_{L^n(\Omega)}
		\| u\|_{L^{2^*}(\Omega)} \| \nabla v \|_{L^2(\Gw)}  \leq C
		\left(  \| \nabla u\|_{L^2(\Omega)}\| \nabla v\|_{L^2(\Omega)}\right)$, \\
		\item $ \left | \int_{ \Omega} \bb^i uD_i u  \dx \right |\leq  \|\bb \|_{L^n(\Omega)}
		 \|u \|_{L^{2^*}(\Omega)}\| \nabla u\|_{L^2(\Omega)} 
		 \leq C\left(\varepsilon \| \nabla u\|_{L^2(\Omega)}^2+
		\frac{1}{\varepsilon}\|u \|_{L^2(\Omega)}^2\right)$, \\
		\item $ \left |\int_\Omega c uv  \dx \right |
		\leq \|c \|_{L^{n/2}(\Omega)}
		\|u \|_{L^{2^*}(\Omega)}\|v \|_{L^{2^*}(\Omega)}\leq
		C\left( \| u\|_{H^1(\Omega)}\| v\|_{H^1(\Omega)}
		\right),
		$ \\
		\item $\left |\int_\Omega c |u|^2 \! \dx \right |\leq \|c \|_{L^{n/2}(\Omega)}
		\|u \|_{L^{2^*}(\Omega)}^2\leq
		C\left(\varepsilon \| \nabla u\|_{L^2(\Omega)}^2+
		\frac{1}{\varepsilon}\|u \|_{L^2(\Omega)}^2\right)$, \\
		\item $ \left |\int_{\Pw_{\mathrm{Rob}}} \frac{\gamma}{\beta}|u|^2  d\sigma \right |
		\leq 
		C \left ( \varepsilon \| \nabla u \|_{L^2(\Omega)}^2+\frac{1}{\varepsilon} \| u\|_{L^2(\Omega)}^2\right )$,
	\end{enumerate}
	where $C$ depends on $P,B,\Gw$ and $n$.
	Hence,  there exists $C>0$ such that
	$$|\mathcal{B}_{P,B}(u,v) |\leq C\|u \|_{H^1(\Gw)}\|v \|_{H^1(\Gw)} \qquad \forall u,v\in H^{1}_{\pwd}(\Gw).$$ 
	Moreover, the trace inequality \eqref{trace_ineq}, and the Poincar\'e   type inequality (Lemma~\ref{poincare 1.5}) imply
	$$
	\| \nabla u\|_{L^2(\Gw)}^{2} \geq \frac{1}{C}\| u\|_{H^1(\Gw)}^{2}-
	\int_{\Pw_{\mathrm{Rob}}} |u|^2  \text{d} \sigma. 	$$
Combining this with the obtained estimates (1)-(9) for the terms of $J_{P,B}$, it follows that there exists  $C=C(n,P,B,\Omega)>0$ such that
	$$
	J_{P,B}[u]
	\geq 
	\left( \Theta^{-1}-\varepsilon C\right)	\|u \|_{H^1(\Omega)}^{2} -\frac{C}{\varepsilon}\|u\|_{L^2(\Omega)}^2 \qquad \forall u\in H^{1}_{\Pw_{\mathrm{Dir}}}(\Omega).
	$$
	Hence, there exists  $\gm$ and $\delta>0$ such that
	\begin{equation}\label{eq:coerc}
	J_{P,B}[u]+\gm\int_{ \Omega}|u|^2 \!\dx \geq \delta \| u\|^{2}_{H^1(\Omega)} \qquad \forall u\in H^{1}_{\Pw_{\mathrm{Dir}}}(\Omega). 
	\end{equation}
	({\em ii}) For a given $f\in L^2(\Gw)$ the functional $f(v):=\int_{\Gw} fv \dx$ is a bounded linear functional on $H^{1}_{\Pw_{\mathrm{Dir}}}(\Gw)$. By part ({\em i}), $J_{P+\gm,B}$ is coercive on $H^{1}_{\Pw_{\mathrm{Dir}}}$. Therefore, the  Lax-Milgram theorem implies the existence and uniqueness of the required function $u=u_f$. Moreover, the mapping $f\mapsto u_f$ defines a bounded linear operator $L^{-1}_{\gm}: L^2(\Gw)\to H^{1}_{\Pw_{\mathrm{Dir}}}(\Omega)$ by $L^{-1}_{\gm}f=u_f$.  
	\\[2mm]
	({\em iii}) For any $f\in L^2(\Gw)$ and $u=L_{\gm}^{-1}f$ we have by \eqref{eq:coerc} 
	$$\delta \| u\|^{2}_{H^1(\Gw)}\leq \mathcal{B}_{P,B}(u,u)+\gm\int_{\Gw}|u|^2 \!\dx=
	(f,u)\leq \|f\|_{L^2(\Gw)}\|u\|_{H^1(\Gw)}.
	$$
	Hence,
	$$
	\| L_{\gm}^{-1}f\|_{H^1(\Gw)}\leq C \|f\|_{L^2(\Gw)}.
	$$
	The compact embedding $H^1(\Gw)\hookrightarrow L^2(\Gw)$ implies the compactness of the embedding  $H^1_{\Pw_{\mathrm{Dir}}}(\Gw)\hookrightarrow L^2(\Gw)$.  Hence, the operator $L^{-1}_{\gm}:L^2(\Gw)\to L^2(\Gw)$ is  compact.
\end{proof}
\subsection{Regularity near a Robin-portion}
Next, we discuss regularity properties of solutions near a Lipschitz-portion of $\pwr$.
\begin{lemma}[{\cite[Theorems 5.36 and  5.42]{L}}]\label{2_ineq}
	Fix $x_0\in \pwr$ and $R>0$ such that $\Gw[x_0,4R]\subset \Gw$ and $\Sigma[x_0,4R]\subset \pwr$.
	\begin{enumerate}
		\item	Let $u$ be a nonnegative weak subsolution of 
		\begin{equation}\label{PB_OmegaR}
		\begin{cases}
		Pu=0 &  \text{in } \Gw[x_0,4R], \\
		Bu = 0 &  \text{on } \Sigma[x_0,4R].
		\end{cases}
		\end{equation}
		Then for any $l>1$ there exists  
		$C=C(P,B,R,l,n)>0$ such that 
		$$\sup\limits_{\Gw[x_0,R]}u\leq C(R^{-n/l}\|u \|_{L^l(\Gw[x_0,2R])}).$$
		\item 
		Let $u$ be a nonnegative weak supersolution of \eqref{PB_OmegaR}.  
		Then  there exists  $l>1$ and   
		$C=C(P,B,R,n)>0$
		such that the following weak Harnack inequality holds   
		$$C(R^{-n/l}\|u \|_{L^l(\Gw[x_0,2R])}) \leq 
		\inf\limits_{\Gw[x_0,R]} u.
		$$
	\end{enumerate}
\end{lemma}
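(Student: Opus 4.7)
Since the statement is explicitly cited from \cite[Theorems~5.36 and~5.42]{L}, the natural plan is to verify that Lieberman's framework covers our hypotheses and then indicate the Moser-iteration scheme that underlies both estimates. I would proceed as follows.

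First, localize and flatten: by a bi-Lipschitz straightening of $\Sigma[x_0,4R]$ (which, by the definition of a Lipschitz-portion, is locally the graph of a Lipschitz function), I may assume without loss of generality that $\Sigma[x_0,4R]$ lies in $\{x_n=0\}$ and $\Gw[x_0,4R]$ in the upper half-ball. A bi-Lipschitz change of variables preserves all the function-space memberships demanded by Assumptions~\ref{assump1} (with possibly changed constants), preserves uniform ellipticity of $A$, and leaves $B$ in the same conormal form modulo lower-order corrections that can be absorbed into $\bt$ and $\bb$. Test functions of the form $\eta^2 f(u)$, where $\eta$ is a cutoff vanishing on $\partial\Gw[x_0,4R]\setminus\Sigma[x_0,4R]$, then belong to $H^{1}_{\pwd}$ of the localized domain and are admissible in the weak (sub/super)solution inequality.

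For part~(1), I would run Moser's iteration. Testing with $\phi=u^{2\kappa+1}\eta^2$, $\kappa\ge 0$, the ellipticity of $A$ gives the leading term $\Theta^{-1}\!\int |\nabla u|^2 u^{2\kappa}\eta^2\dx$, which is equivalent (modulo cutoff losses) to $\int |\nabla(u^{\kappa+1}\eta)|^2\dx$. The drift and potential contributions involving $\bt,\bb,c$ are absorbed exactly as in the interior Moser scheme, using $p>n$, H\"older's inequality and Sobolev embedding. The new feature is the boundary integral
\begin{equation*}
\int_{\Sigma[x_0,4R]}\frac{\gamma}{\beta}\,u^{2\kappa+2}\eta^2\dsigma,
\end{equation*}
which, by the trace inequality (Lemma~\ref{Trace_ineq}) combined with $\gamma/\beta\in L^{\infty}(\pwr)$, is bounded for any $\varepsilon>0$ by $\varepsilon\|\nabla(u^{\kappa+1}\eta)\|_{L^2}^2+C_\varepsilon\|u^{\kappa+1}\eta\|_{L^2}^2$, and is thus absorbed into the gradient term. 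The resulting reverse-H\"older inequality, iterated on a decreasing sequence of concentric half-balls, yields the $L^\infty$-bound in terms of $\|u\|_{L^l(\Gw[x_0,2R])}$ for any $l>1$ (with a brief interpolation step to reach small $l$).

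For part~(2), the weak Harnack, I would replace $u$ by $\bar u:=u+k$ for $k>0$ small to sidestep possible zeros, and split the analysis into two exponent regimes. Testing with $\phi=\bar u^{-2\kappa-1}\eta^2$, $\kappa>0$, yields Moser iteration on negative powers and a bound of the form $\sup_{\Gw[x_0,R]}\bar u^{-1}\le C\|\bar u^{-q}\|_{L^q(\Gw[x_0,2R])}^{1/q}$ for every $q>0$, the boundary integral being absorbed again via Lemma~\ref{Trace_ineq} and the $L^\infty$-bound on $\gamma/\beta$. To bridge with positive small exponents, I would use the logarithmic test function $\eta^2/\bar u$, obtaining a $\mathrm{BMO}$-type estimate for $\log\bar u$ up to $\Sigma[x_0,2R]$; the John--Nirenberg lemma then produces an exponent $l>1$ with $\|\bar u\|_{L^l}\|\bar u^{-1}\|_{L^l}\le C R^{2n/l}$ on $\Gw[x_0,2R]$. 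Chaining the two regimes and sending $k\to 0^+$ gives the claimed weak Harnack inequality. The main obstacle is purely boundary-related: under Assumptions~\ref{assump1} no Hopf-type boundary principle is available on $\pwr$, so the boundary integral must be absorbed analytically, and the absorption rests precisely on the trace inequality paired with $\gamma/\beta\in L^{\infty}(\pwr)$. Once this is in place, the rest is routine Moser machinery, which is exactly the program carried out in \cite[Theorems~5.36 and~5.42]{L} in our coefficient class.
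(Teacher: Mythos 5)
Your proposal is correct and is essentially the paper's approach: the paper gives no independent argument but simply invokes \cite[Theorems~5.36 and~5.42]{L}, whose proofs are exactly the boundary Moser iteration you outline (flattening the Lipschitz portion, testing with powers of $u$ times cutoffs, absorbing the $\int_{\pwr}(\gamma/\beta)u\phi\,\mathrm{d}\sigma$ term via the trace inequality and $\gamma/\beta\in L^{\infty}$, and using the log-estimate/John--Nirenberg crossover for the weak Harnack). Your sketch correctly identifies that the only new feature relative to the interior theory is the analytic absorption of the conormal boundary term, so nothing further is needed.
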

By combining both inequalities in Lemma~\ref{2_ineq} we obtain
\begin{cor}[Up  to the boundary Harnack inequality]\label{Harnack_up}
	Fix $x_0\in \pwr$ and $R>0$ such that $\Gw[x_0,4R]\subset \Gw$ and $\Sigma[x_0,4R]\subset \pwr$.
	Let $u$ be  a nonnegative weak solution of  \eqref{PB_OmegaR}.
	Then  there exists a positive constant $C(P,B,R,n)$ such that
	$$	\sup\limits_{\Gw[x_0,R]} u\leq C
	\inf\limits_{\Gw[x_0,R]} u.
	$$
\end{cor}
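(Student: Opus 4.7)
The plan is to observe that a nonnegative weak solution of \eqref{PB_OmegaR} is simultaneously a nonnegative weak subsolution and a nonnegative weak supersolution, and then chain the two inequalities provided by Lemma~\ref{2_ineq}. No new estimates need to be derived; the only subtlety is matching the integrability exponent.

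More precisely, I would first invoke part~(2) of Lemma~\ref{2_ineq}. This furnishes a \emph{specific} exponent $l=l(P,B,n)>1$ and a constant $C_2=C_2(P,B,R,n)>0$ such that
\[
C_2\, R^{-n/l}\|u\|_{L^l(\Gw[x_0,2R])}\;\leq\;\inf_{\Gw[x_0,R]} u.
\]
Having fixed this $l>1$, I would then apply part~(1) of Lemma~\ref{2_ineq} with the same exponent $l$ (this is legitimate since part~(1) holds for every $l>1$). This produces a constant $C_1=C_1(P,B,R,l,n)>0$ with
\[
\sup_{\Gw[x_0,R]} u\;\leq\; C_1\, R^{-n/l}\|u\|_{L^l(\Gw[x_0,2R])}.
\]
Combining the two displays yields $\sup_{\Gw[x_0,R]} u\leq (C_1/C_2)\inf_{\Gw[x_0,R]} u$, which is the asserted inequality with constant $C:=C_1/C_2$ depending only on $P,B,R,n$ (the dependence on $l$ is absorbed since $l$ itself depends only on $P,B,n$).

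There is essentially no obstacle here: the statement is a direct corollary of Lemma~\ref{2_ineq}, and the only point worth flagging is the order in which the two parts of that lemma are applied, so that the exponent $l$ appearing in the weak Harnack inequality is transferred to the $L^l$-bound on the supremum. The only minor remark to make explicit in the write-up is that any nonnegative solution of \eqref{PB_OmegaR} satisfies both the subsolution and the supersolution inequality, so that both parts of Lemma~\ref{2_ineq} are indeed applicable to the same function $u$.
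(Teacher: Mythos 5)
Your proposal is correct and coincides with the paper's argument: the corollary is stated there as an immediate consequence of combining the two parts of Lemma~\ref{2_ineq}, exactly as you do, and your remark about first fixing the exponent $l$ from the weak Harnack inequality and then applying the subsolution bound with that same $l$ is the right (and only) point of care.
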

\begin{lemma} [{Up to the boundary H\"older continuity \cite[Theorem~5.45]{L}}]\label{Holder up to}
	Fix $x_0\in \pwr$ and $R>0$ such that $\Gw[x_0,4R]\subset \Gw$ and $\Sigma[x_0,4R]\subset \pwr$. Let $u$ be  a weak solution of \eqref{PB_OmegaR}, and $0<r<R$.
	Then  there exist $C(P,B,R,n)>0$ and $0<\alpha<1$ such that 
	$$
	\underset{\Gw[x_0,r]}{\mathrm{osc}} ~ u\leq C \left( \frac{r^{\alpha}}{R^{\alpha}}\sup \limits_{\Gw[x_0,R]} u\right ).
	$$
	In particular, if $u$ is bounded in $\Gw[x_0,R]$, then $u\in C^{\alpha}(\Gw[x_0,R])$.
\end{lemma}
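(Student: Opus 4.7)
The plan is the classical De~Giorgi--Nash--Moser oscillation-decay argument, driven by Lemma~\ref{2_ineq} and iterated. Write $\Gw_\rho:=\Gw[x_0,\rho]$, $M(\rho):=\sup_{\Gw_\rho}u$, $m(\rho):=\inf_{\Gw_\rho}u$, so that $\underset{\Gw_\rho}{\mathrm{osc}}\,u=M(\rho)-m(\rho)$. Fix $0<4\rho\le R$ and observe that
$$v:=M(4\rho)-u\ge 0,\qquad w:=u-m(4\rho)\ge 0\qquad \text{on }\Gw_{4\rho}.$$
First I would verify that $v$ and $w$ are nonnegative weak supersolutions of inhomogeneous mixed boundary value problems $(P,B)v=F_v$, $(P,B)w=F_w$, where the forcing terms arise solely from applying $(P,B)$ to the constants $M(4\rho)$ and $m(4\rho)$. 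By Assumptions~\ref{assump1}, these forcings satisfy natural $(L^{p/2},L^{p},L^{\infty})$-bounds that are linear in $\sup_{\Gw_{4\rho}}|u|$.

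Then I would apply the inhomogeneous variant of the weak Harnack inequality underlying Lemma~\ref{2_ineq}(2) to $v$ and $w$, obtaining
$$
C\rho^{-n/l}\|v\|_{L^l(\Gw_{2\rho})}\le \inf_{\Gw_\rho}v+\gk \rho^\sigma\sup_{\Gw_{4\rho}}|u|,
$$
and the symmetric inequality for $w$, with $\sigma>0$ coming from the subcritical gap $p>n$. Adding the two, using the elementary bound $\|v\|_{L^l}+\|w\|_{L^l}\ge\|v+w\|_{L^l}\ge c_0\rho^{n/l}(M(4\rho)-m(4\rho))$, together with the identity $\inf_{\Gw_\rho}v+\inf_{\Gw_\rho}w=\underset{\Gw_{4\rho}}{\mathrm{osc}}\,u-\underset{\Gw_\rho}{\mathrm{osc}}\,u$, one gets the decrement bound
$$
\underset{\Gw_\rho}{\mathrm{osc}}\,u\le \eta\,\underset{\Gw_{4\rho}}{\mathrm{osc}}\,u+C\rho^\sigma\sup_{\Gw_R}|u|
$$
for some fixed $\eta\in(0,1)$. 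A standard geometric iteration in $\rho$ (choosing dyadic scales and summing a geometric series) upgrades this to $\underset{\Gw_r}{\mathrm{osc}}\,u\le C(r/R)^\alpha\sup_{\Gw_R}u$ for an exponent $\alpha\in(0,1)$ depending only on $\eta$ and $\sigma$, which immediately yields the stated H\"older bound, and hence $u\in C^{\alpha}(\Gw[x_0,R])$ once combined with the analogous interior De~Giorgi estimate via a covering argument.

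The main technical obstacle is the lower-order perturbation: constants fail to solve $(P,B)u=0$, so the standard trick of replacing $u$ by $M-u$ and $u-m$ produces non-vanishing right-hand sides, and one must quantify their smallness in a positive power of $\rho$ in order to absorb them into the decay term. This is precisely where the subcritical integrability $\bt,\bb\in L^p$, $c\in L^{p/2}$ with $p>n$ from Assumptions~\ref{assump1} is used: the embedding $L^p(\Gw_\rho)\hookrightarrow L^n(\Gw_\rho)$ contributes the decay factor $\rho^{1-n/p}$, while the Sobolev trace estimate controls the boundary contribution from $\gamma/\beta$. Once this quantitative smallness is in place, the iteration is purely algebraic.
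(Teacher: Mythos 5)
Your outline is correct and is essentially the standard De~Giorgi--Nash--Moser oscillation-decay argument; note that the paper itself offers no proof of this lemma beyond the citation to \cite[Theorem~5.45]{L}, whose proof is exactly the scheme you describe, so you are reconstructing the cited argument rather than diverging from the paper. The algebra is sound: applying the weak Harnack inequality to $M(4\rho)-u$ and $u-m(4\rho)$, adding, and using $\inf v+\inf w=\mathrm{osc}_{4\rho}u-\mathrm{osc}_{\rho}u$ together with $\|v\|_{L^l}+\|w\|_{L^l}\geq c_0\rho^{n/l}\,\mathrm{osc}_{4\rho}u$ gives the decrement inequality, and the dyadic iteration is routine. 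The only caveat is that Lemma~\ref{2_ineq}(2) as quoted in the paper is stated for the \emph{homogeneous} problem, whereas your shifted functions are supersolutions with interior data $g_0=cM$, $\mathbf{g}=M\bt$ and Robin datum $(\gamma/\beta)M$; you therefore need the inhomogeneous form of the weak Harnack inequality up to a Robin portion (which is the form actually proved in \cite[Theorem~5.42]{L}), and your identification of the smallness factors $\rho^{1-n/p}$ (from $\bt,\bb\in L^p$, $c\in L^{p/2}$, $p>n$) and of the boundary contribution from $\gamma/\beta\in L^\infty$ is the right way to absorb these terms. With that external input made explicit, the proposal is a complete and correct route to the stated estimate.
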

As a corollary of Lemma~\ref{Harnack_up} and Lemma~\ref{Holder up to} we obtain the following result.
\begin{lemma}\label{1.20}
	Fix $x_0\in \pwr$ and $R>0$ such that $\Gw[x_0,4R]\subset \Gw$ and $\Sigma[x_0,4R]\subset \pwr$.
	\begin{enumerate}
		\item[(i)] Let $u$ be  a nonzero nonnegative weak solution of  \eqref{PB_OmegaR}. Then $u\in C^{\alpha}(\overline{\Gw[x_0,R]})$ and $u>0$ in $\overline{\Gw[x_0,R]}$.
		\item[(ii)]
		If $u$ is  a positive weak supersolution of  \eqref{PB_OmegaR}, then   $\inf\limits_{\Gw[x_0,R]}u>0$.
	\end{enumerate}
\end{lemma}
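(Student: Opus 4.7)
The plan is to combine the up-to-the-boundary $L^\infty$ bound and H\"older estimate already established (Lemmas~\ref{2_ineq}(1) and~\ref{Holder up to}) with a two-stage positivity propagation: first the interior strong maximum principle, then the up-to-the-boundary Harnack inequality (Corollary~\ref{Harnack_up}).

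For part~(i), I would first apply the local sup bound in Lemma~\ref{2_ineq}(1) (with $l=2$, on a slightly smaller scale so that the $4R$-hypothesis is met) to deduce that $u$ is essentially bounded on $\Gw[x_0, R]$, and then invoke Lemma~\ref{Holder up to} to conclude that $u\in C^{\alpha}(\overline{\Gw[x_0, R]})$. In particular, $u$ extends continuously up to the Robin portion $\Sigma[x_0, R]$. For positivity, note that $u\not\equiv 0$ is a nonnegative weak solution of $Pu=0$ in the open, connected, Lipschitz set $\Gw[x_0, 4R]$. The classical interior Harnack inequality for weak solutions of divergence-form elliptic equations under our coefficient assumptions, applied along a chain of overlapping interior balls, forces $u>0$ throughout $\Gw[x_0, 4R]$ (i.e., a strong maximum principle on the open part). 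To promote this to positivity at a boundary point $y\in \Sigma[x_0, R]$, choose $r>0$ small enough that $\Gw[y, 4r]\subset\Gw[x_0, 4R]$ and $\Sigma[y, 4r]\subset\Sigma[x_0, 4R]$, and apply Corollary~\ref{Harnack_up} to get $\sup_{\Gw[y, r]} u \leq C\inf_{\Gw[y, r]} u$. The supremum is strictly positive by the interior step, so the essential infimum is positive, and by continuity $u(y)>0$.

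Part~(ii) is an immediate consequence of the weak Harnack inequality in Lemma~\ref{2_ineq}(2): since $u$ is a positive weak supersolution of \eqref{PB_OmegaR}, one has $\|u\|_{L^l(\Gw[x_0, 2R])}>0$, and the estimate $C R^{-n/l}\|u\|_{L^l(\Gw[x_0, 2R])}\leq \inf_{\Gw[x_0, R]} u$ then gives $\inf_{\Gw[x_0, R]} u > 0$.

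The main obstacle is the boundary positivity in part~(i). A direct appeal to a Hopf-type boundary point lemma is unavailable here, since under the current low regularity hypotheses on $\pwr$ and on the coefficients of $(P,B)$ such a result need not hold (as emphasized in the introduction). The two-stage argument above --- interior Harnack chaining to get $u>0$ strictly inside $\Gw[x_0, 4R]$, followed by Corollary~\ref{Harnack_up} to transfer positivity to each point of the Robin portion $\Sigma[x_0, R]$ --- sidesteps this by using only the local results already at our disposal.
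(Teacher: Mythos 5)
Your proposal is correct and follows essentially the same route as the paper: boundedness from Lemma~\ref{2_ineq}(1), H\"older continuity from Lemma~\ref{Holder up to}, positivity via the up-to-the-boundary Harnack inequality (Corollary~\ref{Harnack_up}), and part~(ii) directly from the weak Harnack inequality in Lemma~\ref{2_ineq}(2). The only difference is that you make explicit the interior Harnack-chaining step guaranteeing $\sup_{\Gw[x_0,R]}u>0$, which the paper's proof uses implicitly; this is a welcome clarification rather than a deviation.
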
 
\begin{proof} ({\em i}) By part (1) of Lemma~\ref{2_ineq}, $u$ is bounded in $\Gw[x_0,R]$, and consequently, 
	Corollary~\ref{Holder up to} implies that $u\in C^{\alpha}(\Gw[x_0,R])$. Hence,  Lemma~\ref{Harnack_up} implies 
	$\inf\limits_{\Gw[x_0,R]}{u}\geq \frac{1}{C}\sup\limits_{\Gw[x_0,R]}{u}>0$.
	
	({\em ii}) Follows immediately from part (2) of Lemma~\ref{2_ineq}.
\end{proof}
	In the sequel, we shall use the following terminology.
	\begin{defi}
		\em{Let  $u\! \in \! H^1_{\loc}(\wb)$, where $\Gw$ is a Lipschitz bounded domain.
			We say that $u \! \geq \! 0 $ on $\pwd$ if $u^{-} \! \in \!   H^1_{ \pwd}(\Gw)$.
	}
	\end{defi}

\subsection{Maximum principles}
The proof of the following  weak maximum  principle is hinged on the method in \cite[Theorem~5.15]{L}, though under a different setting and regularity assumptions: 
\begin{lemma}[{Weak maximum principle}]\label{weak_maximum}		Consider the problem \eqref{P,B} and suppose that Assumptions~\ref{assump1}  are satisfied in $\Gw$. Assume further that
	\begin{equation}\label{eqpos}
	\int_{\Omega}(\bt^iD_i \phi +c\phi ) \!\dx+\int_{\Pw_{\mathrm{Rob}}} \frac{\gamma}{\beta} \phi \dsigma \geq 0 \qquad \forall \,  \phi \in H^{1}_{\Pw_{\mathrm{Dir}}}(\Omega), ~\phi \geq 0.
	\end{equation} 
	Let $u\in   H^1(\Gw)$  be a   weak supersolution of \eqref{P,B} in $\Omega$ satisfying $u^{-}\in H^1_{\Pw_{\mathrm{Dir}}}(\Gw)$.
	
	Then  $u \geq 0$ in $\Omega$.  
\end{lemma}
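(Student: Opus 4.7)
The natural strategy is a variational test--function argument: test the supersolution inequality with $\phi=u^-$ and the positivity condition \eqref{eqpos} with $\phi=(u^-)^2$, then subtract to cancel everything except a Dirichlet--type energy. Since $u^-\in H^1_{\pwd}(\Gw)$ is nonnegative by hypothesis, the density of $\mathcal{D}(\Gw,\pwd)$ in $H^1_{\pwd}(\Gw)$, together with the boundedness of $\mathcal{B}_{P,B}$ established in Theorem~\ref{coercivity}, allows plugging $\phi=u^-$ into $\mathcal{B}_{P,B}(u,\phi)\ge 0$. Using the Stampacchia chain rule $\nabla u^-=-\chi_{\{u<0\}}\nabla u$ and $u\cdot u^-=-(u^-)^2$, this becomes
\[
\int_\Gw|\nabla u^-|_A^2\dx+\int_\Gw u^-(\bt+\bb)\!\cdot\!\nabla u^-\dx+\int_\Gw c(u^-)^2\dx+\int_{\pwr}\frac{\gamma}{\beta}(u^-)^2\dsigma\le 0.
\]
Testing \eqref{eqpos} with $\phi=(u^-)^2$ (legitimate after first truncating $u^-$ to $u^-\wedge N$, then passing to the limit once an $L^\infty$ bound is known) yields
\[
2\int_\Gw u^-\,\bt\!\cdot\!\nabla u^-\dx+\int_\Gw c(u^-)^2\dx+\int_{\pwr}\frac{\gamma}{\beta}(u^-)^2\dsigma\ge 0.
\]
Subtracting annihilates the zero-order and boundary integrals, leaving the clean energy estimate
\[
\int_\Gw|\nabla u^-|_A^2\dx\le\int_\Gw u^-(\bt-\bb)\!\cdot\!\nabla u^-\dx.
\]

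For the main estimate I would invoke uniform ellipticity, H\"older's inequality with exponents $(2^*,n,2)$, and the Friedrichs--type Sobolev embedding $\|v\|_{L^{2^*}(\Gw)}\le C\|\nabla v\|_{L^2(\Gw)}$ valid on $H^1_{\pwd}(\Gw)$ since $\mathrm{int}(\pwd)\ne\emptyset$. This reduces the right-hand side to a constant times $\|\bt-\bb\|_{L^n(\{u<0\})}\|\nabla u^-\|_{L^2}^2$, giving the self--improving inequality
\[
\|\nabla u^-\|_{L^2(\Gw)}^2\le C_0\,\|\bt-\bb\|_{L^n(\{u<0\})}\|\nabla u^-\|_{L^2(\Gw)}^2.
\]
Were the prefactor less than $1$, we would immediately obtain $\nabla u^-\equiv 0$, whence the Dirichlet condition $u^-\in H^1_{\pwd}(\Gw)$ combined with $\mathrm{int}(\pwd)\ne\emptyset$ would force $u^-\equiv 0$ by Friedrichs--Poincar\'e.

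The main obstacle is the lack of a priori smallness of $\|\bt-\bb\|_{L^n(\{u<0\})}$. I would address it by a Stampacchia truncation: repeat the two-test cancellation with the matched pair $(\phi_k,\,u^-\phi_k)$ where $\phi_k:=(u^--k)_+$ for $k>0$. A direct computation shows the cancellation still works and produces
\[
\int_{A_k}|\nabla\phi_k|_A^2\dx\le\int_{A_k}\phi_k(\bt-\bb)\!\cdot\!\nabla\phi_k\dx,\qquad A_k:=\{u^->k\}.
\]
Since $\bt-\bb\in L^p(\Gw)$ with $p>n$, H\"older gives $\|\bt-\bb\|_{L^n(A_k)}\le\|\bt-\bb\|_{L^p(\Gw)}|A_k|^{1/n-1/p}$, and Chebyshev applied to $u^-\in L^2(\Gw)$ yields $|A_k|\le\|u^-\|_{L^2}^2/k^2\to 0$ as $k\to\infty$. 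Hence, for $k$ sufficiently large the prefactor drops below $1$, forcing $\phi_k\equiv 0$ and thus $u^-\in L^\infty(\Gw)$. Once $u^-$ is bounded, a standard Stampacchia iteration on the level sets $A_k$ allows the threshold to be driven down to $k=0$, producing $u^-\equiv 0$ and therefore $u\ge 0$ a.e.\ in $\Gw$.
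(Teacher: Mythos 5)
Your derivation of the key energy inequality is correct and in fact runs parallel to the paper's own argument: testing the supersolution inequality with $\phi_k=(u^--k)_+$ and cancelling the $\bt$-, $c$- and $\gamma/\beta$-terms by applying \eqref{eqpos} to $u^-\phi_k$ is exactly the mechanism by which the paper reaches \eqref{eq:wmp1}, and the resulting inequality $\int_{A_k}|\nabla\phi_k|_A^2\dx\le\int_{A_k}\phi_k(\bt-\bb)\cdot\nabla\phi_k\dx$ does hold. Two remarks on this half. First, the admissibility of $(u^-)^2$ and $u^-\phi_k$ in \eqref{eqpos} needs $u^-\nabla u^-\in L^2$, and your fix (``truncate, then pass to the limit once an $L^\infty$ bound is known'') is circular as phrased, because the $L^\infty$ bound is what these very test functions are meant to produce; the honest repair is to carry a second truncation level $N$ through the whole cancellation and verify that the leftover terms supported on $\{u^->N\}$ (they involve $c$, $\gamma/\beta$, $\bb$ and have no sign) vanish as $N\to\infty$, which works since $c\,(u^-)^2$, $(\gamma/\beta)(u^-)^2$ and $|\bb|\,u^-|\nabla u^-|$ are integrable. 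Second, your large-$k$ step, giving $u^-\in L^\infty(\Gw)$ from $|A_k|\le \|u^-\|_{L^2}^2/k^2$ and $\bt-\bb\in L^p$, $p>n$, is fine.

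The genuine gap is the final sentence: no ``standard Stampacchia iteration'' drives the threshold down to $k=0$. Once $u^-\le M$, your inequalities only yield level-set estimates of the form $|A_h|\le C(h-k)^{-\alpha}|A_k|^{\beta}$, and Stampacchia's lemma then gives an a priori bound $u^-\le d$ with $d>0$ depending on $M$ and $|A_0|$ --- never $u^-\equiv0$. The self-improving estimate $\|\nabla\phi_k\|_{L^2}^2\le C_0\|\bt-\bb\|_{L^n(A_k)}\|\nabla\phi_k\|_{L^2}^2$ only bites when the prefactor is below $1$, and nothing makes $|A_k|$ small as $k\downarrow0$; a priori $u^-$ could sit at a positive level $M$ on a large set, consistently with everything you have written. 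The missing idea is precisely the crux of the paper's proof (following \cite[Theorem~5.15]{L}, cf.\ \cite[Theorem~8.1]{GT}): in the H\"older step one may replace $\|\bt-\bb\|_{L^n(A_k)}$ by $\|\bt-\bb\|_{L^n(A_k\cap\{\nabla u^-\ne0\})}$, so if $M:=\sup_\Gw u^->0$ then for every $k<M$ either $\nabla\phi_k\equiv0$ (impossible: $\phi_k$ would be a constant in $H^1_{\pwd}(\Gw)$, hence $0$ since $\mathrm{int}(\pwd)\ne\emptyset$, forcing $u^-\le k<M$) or $|A_k\cap\{\nabla u^-\ne0\}|\ge c_1>0$ with $c_1$ independent of $k$; letting $k\uparrow M$, these nested sets shrink to $\{u^-=M\}\cap\{\nabla u^-\ne0\}$, which therefore has positive measure, contradicting $\nabla u^-=0$ a.e.\ on the level set $\{u^-=M\}$. (When $\bt=\bb$ your clean inequality already gives $\nabla u^-\equiv0$ and hence $u^-\equiv0$ by the Friedrichs inequality, matching the paper's last line.) As written, your argument proves only boundedness of $u^-$; appending this level-set contradiction --- i.e.\ the paper's concluding step --- is what closes the proof.
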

\begin{proof}
	Assume by contradiction that  $\sup\limits_{\Gw}(u^{-})>0$.
	For any nonnegative $\phi\in H^{1}_{\Pw_{\mathrm{Dir}}}(\Gw)$
	\begin{equation*}
	\int_{\Omega}[(a^{ij}D_ju+u\bt^i )D_i \phi+(\bb^iD_i u)\phi] \dx \geq   - \int_{\Omega} cu\phi \dx-\int_{ \partial\Omega_R }\frac{\gamma}{\beta}u \phi \dsigma.
	\end{equation*}
	Therefore,
	\begin{equation*}
	\int_{\Omega}[a^{ij}D_juD_i \phi+(\bb^i-\bt^i)D_i u\phi] \dx \geq   - \int_{\Omega}(\bt^iD_i(u\phi) +cu\phi) \dx-\int_{ \partial\Omega_R }\frac{\gamma}{\beta}u \phi \dsigma.
	\end{equation*}
	Consequently, by  \eqref{eqpos}, for any nonnegative $\phi\in \Hd$ satisfying $\phi u\leq 0$ in $\Omega$, we have
	\begin{equation}\label{eq:wmp1}
	-\int_{\Omega}a^{ij}D_j u D_i \phi \dx \leq \int_{\Omega}(\bb^i-\bt^i)D_i u\phi\dx.
	\end{equation} 
	Let $k$ satisfy $-\sup\limits_\Gw(u^{-})<k<0$ and consider the function   $\phi=-\min(u-k,0)$,
	and let $F_k=\overline{\{x\in \Omega:u(x)<k \}}$ be the support of $\phi$. Since $u^-\in \Hd$,  it follows that $ \phi\in H^{1}_{\Pw_{\mathrm{Dir}}}(\Omega)$. Moreover,  
	$$ \phi u \leq 0 \mbox{ in } \Gw, \quad Du=-D\phi \mbox{ in } F_k, \mbox{ and } D\phi =0 \mbox{ in } \Omega \setminus F_k.$$  
	Consequently, the uniform ellipticity, H\"older's inequality, and \eqref{eq:wmp1} imply
	\begin{align*}
	& \Theta^{-1} \| \nabla \phi\|_{L^2(F_k)}^{2}\leq \int_{F_k}a^{ij}D_j \phi D_i \phi  \dx
	=-\int_{\Omega}a^{ij}D_j u D_i \phi \dx \leq \\ & \int_{\Omega}(\bb^i-\bt^i)D_i u\phi\dx=
	\int_{F_k}(\bb^i-\bt^i)D_i u\phi\dx
	\leq  
	\|\bb ^ { i } - \bt ^ { i }\|_{L^p(F_k)}\| \nabla u\|_{L^2(F_k)} \| \phi\|_{L^{q}(F_k)},  
	\end{align*}
	where $q=\frac{2p}{p-2}<\frac{2n}{n-2}$.
The Gagliardo-Nirenberg-Sobolev inequality  \cite[Theorem~5.8]{L}  implies that for any $N>2\geq n$
	\begin{equation*}
	\left (\int_{\Omega}|\phi|^{\frac{2N}{N-2}} \dx \right)^\frac{N-2}{N} \leq
	C\left(\int_{\Gw}|\phi|^2 \dx\right)^{\frac{N-n}{N}}
	\left(\int_{\Gw}|D\phi|^2 \dx \right)^{\frac{n}{N}}. 
	\end{equation*}
	Let $N$ be such that $\frac{2N}{N-2}>q$. Then,
	\begin{align*}
	&
	\left (\int_{\Omega}|\phi|^{\frac{2N}{N-2}} \dx \right)^\frac{N-2}{N} \leq
	C\left(\int_{\Gw}|\phi|^2 \dx\right)^{\frac{N-n}{N}}
	\left(\int_{\Gw}|D\phi|^2 \dx \right)^{\frac{n}{N}}\leq \\
	&  
	\tilde{C} \|\bb ^ { i } - \bt ^ { i }\|_{L^p(F_k)}^{n/N}
	\left(\int_{\Gw}|\phi|^2 \dx\right)^{\frac{2N-n}{2N}}
	\left(\int_{F_K}|\phi|^q \dx \right)^{\frac{n}{Nq}}.
	\end{align*}
	By H\"older inequality we obtain  for  $s=2N/(N-2)$
	\begin{align*}
	\left(\int_{\Gw}\!\!|\phi|^2 \!\dx\!\right)^{\frac{2N-n}{2N}}\!\!\!
	\left(\!\!\int_{F_k}|\phi|^q \!\dx\! \right)^{\frac{n}{Nq}} 
	\!\!\leq &
	 |\Omega|^{\frac{2N-n}{2N}\cdot \frac{s-2}{s}} 
	\left (\int_{ \Omega} |\phi|^{s}\!\dx\!\right)^{\frac{2N-n}{2N}\cdot\frac{2}{s}}
	\!\!|F_k|^{\frac{s-q}{s}\cdot\frac{n}{Nq}}\!\!
	\left( \!\int_{\Omega}\!|\phi|^{s}\!\dx\!\right)^{\frac{q}{s}\cdot\frac{n}{Nq}}\!\!=
	\\&
|\Omega|^{\frac{2N-n}{2N}\cdot \frac{s-2}{s}} 
	|F_k|^{\frac{s-q}{s}\cdot\frac{n}{Nq}}
	\left (\int_{\Omega}|\phi|^{s} \dx \right)^\frac{2}{s}.
	\end{align*} 
	If $\|\bt- \bb\|_{L^p(\Gw,\R^n)}\neq 0$, then
	 $|F_k|$ is bounded from below by a positive constant independent of $k$, and therefore,  the set $\underset{k}{\cap} F_k$ has a positive measure.
	Consequently, letting $k\to -\sup\limits_\Gw(u^{-})$, it follows that $u$ attains a finite infimum on a set  $F^*$ of positive measure.
	On $F^*$ we have $Du=0$, however, $F^*$ contains $\underset{k}{\cap} F_k$ and  $Du\neq 0$  on  $\underset{k}{\cap} F_k$, a contradiction.

	On the other hand, if $\|\bt- \bb\|_{L^p(\Gw,\R^n)}= 0$, it follows that  $\phi=0$, and then $u\geq k$. Letting $k\to 0$ implies $u\geq 0$.
\end{proof}
	\begin{rem}
		\em{ 
			The assumption in Lemma~\ref{weak_maximum} that $\pwd$ contains a nonempty Lipschitz-portion of $\Pw$  is essential. Indeed, if $\pwd=\emptyset$, then the lemma's assumptions imply that either $u=\mathrm{constant}$ or else   $u \geq 0$, see \cite[Theorem~5.15]{L}.
		}
	\end{rem}We recall the following (interior) weak Harnack inequality.
\begin{lem} [{Weak Harnack inequality \cite[Thereom~3.13]{MZ}}]\label{lem_whi}
	Let Assumptions~\ref{assump2} hold in a domain $\Gw \subset \R^n$, and 
	let $u$ be a weak supersolution of the equation $Pu  = 0$ in $\Gw$.
	Assume that $u \geq 0 $  in some open ball $B(r) \subset \Gw$ of radius $r>0$.
	Then for any $\varrho, \tau \in (0,1)$ and $\gamma \in (0,n/(n-2))$, there is a positive constant $C>0$ depending on $P, n,\gg,\varrho,\gt$ and $r$,
	such that
	\begin{equation}
	\left(\frac{1}{|B(\varrho r)|}\int_{B(\varrho r)} u^{\gamma} \dx \right )^{1/\gamma} \leq
	C
	\inf_{B(\tau r)} u.
	\end{equation}
\end{lem}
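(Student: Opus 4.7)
The plan is to recover the classical weak Harnack estimate via Moser iteration, carefully tracking the contribution of the low-regularity coefficients $\bt,\bb\in L^p_{\loc}$ and $c\in L^{p/2}_{\loc}$. Fix a ball $B(r)\Subset\Gw$, set $\bar u:=u+\vge$ for $\vge>0$ (letting $\vge\to 0^+$ at the end), and observe that $\bar u$ is still a weak supersolution with a perturbed lower-order term. For concentric balls $B(\gr_1)\subset B(\gr_2)\subset B(r)$ and a cutoff $\eta\in C_0^\infty(B(\gr_2))$ with $\eta\equiv 1$ on $B(\gr_1)$, test the supersolution inequality against $\phi=\eta^2\bar u^{\gb}$ for admissible $\gb\in\R\setminus\{-1\}$. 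Using the ellipticity of $A$ and Young's inequality to absorb the drift terms from $\bt,\bb$ into $\int\eta^2|\nabla\bar u^{(\gb+1)/2}|^2$, and handling the $c\,\bar u\,\phi$ and boundary-free $\bt\cdot\nabla\phi$ pieces via H\"older together with the Sobolev embedding, yields the Caccioppoli-type estimate
\[
\int \eta^2\bigl|\nabla \bar u^{(\gb+1)/2}\bigr|^2 \dx
\;\le\; C(\gb,P,n,r)\int\bigl(|\nabla\eta|^2+\eta^2\bigr)\bar u^{\gb+1}\dx,
\]
valid whenever the absorption constants are finite; this is exactly where $p>n$ enters, since $\|\bt\|_{L^p(B(r))}$ and $\|c\|_{L^{p/2}(B(r))}$ control the lower-order contributions after applying $H^1\hookrightarrow L^{2^*}$.

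Next, combine the above with the Sobolev inequality on $B(\gr_2)$ to obtain the Moser reverse-H\"older step
\[
\|\bar u\|_{L^{\gchi q}(B(\gr_1))}\;\le\;\bigl(C(\gr_2-\gr_1)^{-2}\bigr)^{1/q}\|\bar u\|_{L^{q}(B(\gr_2))},\qquad \gchi=\tfrac{n}{n-2},
\]
for every $q=\gb+1\neq 0$, with an appropriate sign reversal when $q<0$ (which produces an infimum bound in the limit $q\to-\infty$). Iterating along geometric sequences of radii between $\tau r$ and some intermediate radius $\sigma r$ gives, for the negative range, $\inf_{B(\tau r)}\bar u\gtrsim\|\bar u\|_{L^{-q_0}(B(\gs r))}$ for some $q_0>0$; and for the positive range up to the critical value, iterating from any small $q_0$ upward yields $\|\bar u\|_{L^{\gg}(B(\gs r))}\lesssim\|\bar u\|_{L^{q_0}(B(\gs' r))}$ for every $\gg<\gchi=n/(n-2)$.

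The genuinely delicate step is bridging the positive and negative exponents. Here one inserts the test function $\phi=\eta^2\bar u^{-1}$, which produces the logarithmic Caccioppoli inequality
\[
\int\eta^2|\nabla\log\bar u|^2\dx\;\le\;C\!\int(|\nabla\eta|^2+\eta^2)\dx+\text{(coefficient terms)},
\]
from which a Poincar\'e argument shows $\log\bar u\in\mathrm{BMO}(B(\gs r))$ with a norm depending only on $P,n,r$. The John--Nirenberg inequality then produces a $p_0>0$ and a constant $k_0$ so that $\int_{B(\gs r)}e^{p_0|\log\bar u-k_0|}\dx$ is finite; equivalently $\|\bar u\|_{L^{p_0}}\cdot\|\bar u^{-1}\|_{L^{p_0}}\le C|B(\gs r)|^{2/p_0}$. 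Combining this with the two one-sided Moser iterations above, and finally letting $\vge\to 0$, delivers the claimed bound with $\gg$ arbitrary in $(0,n/(n-2))$.

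The main obstacle is the careful bookkeeping in the Caccioppoli inequality: the coefficients $\bt,\bb,c$ are only in $L^p_{\loc}/L^{p/2}_{\loc}$ rather than bounded, so each test-function computation must use Sobolev embedding plus absorption (with constants depending on the local $L^p$ norms on $B(r)$, hence on $r$) to reach a form to which Sobolev iteration applies. Everything else—the passage from reverse-H\"older estimates to $L^\gg$-inf control, and the BMO/John--Nirenberg bridge—is the standard Moser machinery and goes through verbatim once the coefficient estimates are in place.
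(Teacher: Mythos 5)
Your proposal is mathematically sound, but it should be said up front that the paper does not prove this lemma at all: it is quoted verbatim as \cite[Theorem~3.13]{MZ} (Mal\'y--Ziemer), and the authors rely on that reference precisely because the hypotheses of Assumptions~\ref{assump2} (locally uniformly elliptic $A$, drifts in $L^p_{\loc}$ with $p>n$, zero-order term in $L^{p/2}_{\loc}$) fall within the structural conditions treated there. What you have written is essentially a reconstruction of the standard Moser-iteration proof underlying that citation: Caccioppoli inequalities from the test functions $\eta^2\bar u^{\gb}$ with absorption of the drift and potential terms via H\"older, Sobolev embedding and the local $L^p$/$L^{p/2}$ norms (this is indeed where $p>n$ is used), iteration over negative powers to reach the infimum, iteration over powers $q=\gb+1\in(0,1)$ to reach any exponent below $\chi=n/(n-2)$ (which is exactly why $\gg$ is restricted to $(0,n/(n-2))$), and the logarithmic estimate plus John--Nirenberg to bridge the two ranges. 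Two bookkeeping points you should make explicit if this were to be written out in full: the constant in each reverse-H\"older step degenerates as $q\to 0$ and as $q(\gb+1)^{-2}$-type factors accumulate, so one must check that the product of constants along the geometric sequence of exponents converges (it does, since $\sum_k \chi^{-k}<\infty$), and the $\vge$-perturbation term $\vge\,(\bt\cdot\nabla\phi + c\,\phi)$ created by replacing $u$ with $\bar u=u+\vge$ must either be absorbed using the same $L^p$ bounds or carried as an additive error that vanishes as $\vge\to 0^+$. With those details supplied, your argument gives a self-contained proof, whereas the paper's choice buys brevity and the precise constant-dependence already recorded in \cite{MZ}; nothing in your route conflicts with the statement as used later (notably in Lemma~\ref{maxagmon} and the null-sequence argument), since both are purely interior estimates.
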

Lemma~\ref{2_ineq} and Lemma~\ref{lem_whi}) imply:
\begin{lem}[Strong maximum principle]\label{maxagmon}
	Let Assumptions~\ref{assump2} hold in a domain $\Gw \subset \R^n$,  and let $u$ be a nonnegative supersolution of \eqref{P,B}  in $\Gw$. Then either $u$ is strictly positive in $\Gw\cup \pwr$, or else, $u=0$ in $\Gw$.
\end{lem}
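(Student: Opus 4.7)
The plan is to establish the dichotomy by combining an interior argument based on the weak Harnack inequality (Lemma~\ref{lem_whi}) with a boundary argument based on the up-to-the-boundary weak Harnack estimate from Lemma~\ref{2_ineq}(2). Both estimates convert ``$u$ is nontrivial on some ball/half-ball'' into ``$u$ is bounded below by a positive constant on a smaller concentric region'', and a connectedness argument will rule out any intermediate behavior. Throughout, ``strictly positive'' should be interpreted in the sense that $u$ is locally essentially bounded below by a positive constant, which is the natural formulation for weak supersolutions that need not be continuous.

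\textbf{Step 1: interior dichotomy.} I would introduce the set
\[
N:=\{x\in \Gw : u=0 \mbox{ a.e.\ in some neighborhood of } x\},
\]
which is open in $\Gw$ by definition. The goal is to show that $\Gw\setminus N$ is also open. Given $x_0\in \Gw\setminus N$, pick $r>0$ with $\overline{B(x_0,r)}\subset \Gw$. Since $x_0\notin N$, one has $\|u\|_{L^\gamma(B(x_0,\varrho r))}>0$ for every $\varrho\in(0,1)$. Applying Lemma~\ref{lem_whi} on $B(x_0,r)$ with a choice of $\varrho,\tau\in(0,1)$ then yields $\inf_{B(x_0,\tau r)}u>0$, so every point of $B(x_0,\tau r)$ belongs to $\Gw\setminus N$. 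Because $\Gw$ is connected, either $N=\Gw$, in which case $u\equiv 0$ in $\Gw$, or $N=\emptyset$, in which case the same application of Lemma~\ref{lem_whi} shows that $u$ is locally bounded below by a positive constant everywhere in $\Gw$.

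\textbf{Step 2: extension to $\pwr$.} Suppose the second alternative holds. Fix $x_0\in\pwr$. By Assumptions~\ref{assump2}, there exists $R_0>0$ such that $\Sigma[x_0,R_0]$ is a $C^1$-portion of $\Pw$; I would then shrink $R_0$ if necessary, choosing $0<R$ small enough that $\Gw[x_0,4R]\subset \Gw$ and $\Sigma[x_0,4R]\subset\pwr$. Then $u$ is a nonnegative weak supersolution of the localized mixed problem \eqref{PB_OmegaR}. Since $u>0$ (essentially) on the open, nonempty interior set $\Gw[x_0,2R]$, we have $\|u\|_{L^l(\Gw[x_0,2R])}>0$, and Lemma~\ref{2_ineq}(2) delivers
\[
\inf_{\Gw[x_0,R]} u \;\geq\; C R^{-n/l}\,\|u\|_{L^l(\Gw[x_0,2R])} \;>\; 0.
\]
This gives a positive lower bound for $u$ on a relative neighborhood of $x_0$ in $\Gw\cup\pwr$, finishing the proof.

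\textbf{Anticipated obstacle.} The analytic inputs (the two weak Harnack inequalities) are already in place, so the main subtlety is organizational rather than computational: because supersolutions are not known to be continuous, the phrase ``strictly positive'' must be handled via essential infima, and openness of $\Gw\setminus N$ must be extracted solely from the $L^\gamma$-to-$\inf$ comparison in Lemma~\ref{lem_whi}. One should also be careful that the balls appearing in the interior weak Harnack must lie entirely inside $\Gw$, while the Lipschitz half-balls $\Gw[x_0,R]$ near $\pwr$ must shrink with $R$; verifying that together they cover a full relative neighborhood of every point of $\Gw\cup\pwr$ is exactly where the local $C^1$ (hence Lipschitz) structure of $\pwr$ furnished by Assumptions~\ref{assump2} is used.
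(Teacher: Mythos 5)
Your argument is correct and follows exactly the route the paper intends: the paper states the lemma as an immediate consequence of the interior weak Harnack inequality (Lemma~\ref{lem_whi}) and the boundary weak Harnack estimate in Lemma~\ref{2_ineq}(2), and your Steps 1--2 simply make explicit the standard openness/connectedness dichotomy and the localization near a $C^1$-portion of $\pwr$ that these two lemmas furnish. No gaps; the only addition beyond the paper is the (appropriate) clarification that positivity is meant in the sense of local essential lower bounds.
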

\begin{defi}{\em 
	We say that the {\em generalized maximum principle} holds in a bounded domain $\Gw$ if for any $u\in H^1(\Gw)$ satisfying 
	$(P,B)u\geq 0$ in $\Gw$ and $u^{-}\in H^1_{\pwd}(\Gw)$,  we have
	$u \geq 0$ in $\Gw$.
}
\end{defi}
We recall the notion of  the ground state transform which implies a generalized maximum principle that holds when assumption \eqref{eqpos} is replaced by 
\begin{equation*}\label{SH_bdd} 
 \mathcal{RSH}_{P,B}(\Omega)\! :=\!\Big\{\!u\!\in\! \mathcal{SH}_{P,B}(\Omega)\mid u, u^{-1}\!, uPu \!\in\! L^\infty_\loc(\bar \Gw\setminus \pwd) \mbox{ and }  \frac{uBu}{\gb}\!\in\!  L^\infty_\loc(\Pw_{\mathrm{Rob}})  \!\Big\} \!\neq\!\emptyset.
\end{equation*}
\begin{defi}{\em  $u\in \mathcal{RSH}_{P,B}(\Omega)$ is called a {\em regular positive supersolution}. Note that any $u\in \mathcal{H}^{0}_{P,B}(\Gw)$ is a regular positive supersolution of \eqref{P,B} in $\Gw$.
	}
\end{defi}
\begin{defi}[Ground state transform]\label{gs+transform} {\em  Assume that  $u\in \mathcal{RSH}_{P,B}(\Omega)$, and consider the   bilinear  form 
		$$
		\mathcal B_{P^u,B^u}(\phi,\psi):=\mathcal B_{P,B}(u\phi,u\psi),
		$$
		where  $\phi u ,\psi u \in \mathcal{D}(\Omega,\Pw_{\mathrm{Dir}})$.
		We note that the form $\mathcal B_{P^u,B^u}$ corresponds to the elliptic operator $(P^u,B^u)$, where 
		$$
		P^u(w):=\frac{1}{u}P(uw),\qquad  \mbox{with the boundary operator } B^u(w):=\frac{1}{u}B(uw).
		$$
		 The operator $(P^u,B^u)$ is called the {\em ground state transform} of $(P,B)$ with respect to $u$. The operators  $P^u(w)$ and $B^u(w)$ are given explicitly by 
		\begin{equation}\label{eq-gs}
		P^u(w)\!=\!-\frac{1}{u^2}\diver\!(u^2A(x)\nabla w)\!+\!\left[	\bb-\bt \right ]\!\nabla w+w\frac{Pu}{u}\, , \mbox{ and }
		B^u(w)\!=\!\beta  A\nabla w \cdot\vec{n}+ w\frac{Bu}{u}\,.
		\end{equation}
		We say that $w$ is a {\em weak (resp.,  regular  super)solution of} $(P^u,B^u)$ in $\Omega$,  if  $w\in H^{1}_{\loc}(\wb)$  
		and for any (resp., nonnegative) $\phi  \in  \mathcal{D}(\Omega,\Pw_{\mathrm{Dir}})$ we have 
		\begin{multline}\label{eq:gs_eq}
		\mathcal B_{P^u,B^u}(w,\phi)\!=\!\int_{\Omega}\!\!\big[a^{ij}D_iw D_j\phi + (\bb^i - \bt^i)D_i w\phi \big] u^2\!\dx +  \\
		\int_{\Omega}\!\!  w\phi (Pu)u\dx  + 
		\!\int_{\Pw_{\mathrm{Rob}}}\!\!\!\!\! w \phi   \frac{u Bu}{\gb}\!\dsigma
	 = 0 \;(\text{resp., } \geq 0). 
		\end{multline}
	}
\end{defi}
Note that  $\mathcal{B}_{P^u,B^u}$ is defined on   $L^2(\Gw, u^2 \!\dx)$. 
\begin{rem}\label{r2.11}
	 \em{
		\begin{enumerate}
			\item 	Let  $u,v\in \mathcal{H}_{P,B}^{0}(\Omega)$ (resp., $u\in \mathcal{H}_{P,B}^{0}(\Omega),v\in \mathcal{SH}_{P,B}(\Omega)$). Assume that $v/u \in H^{1}_{\mathrm{loc}}(\overline{\Omega}\setminus \pwd, u^2 \!\dx)$.
			Then $v/u$ is a weak positive (resp., super)solution of the equation $(P^u,B^u)w=0$ in $\Gw$.
			\item  If $u\in \mathcal{H}^{0}_{P,B}(\Omega)$, then for  any $w \in H^{1}_{\loc}(\overline{\Omega}\setminus \pwd)$ and $\phi \in  \mathcal{D}(\Omega,\Pw_{\mathrm{Dir}})$  we have 
		$$
			\mathcal{B}_{P^u,B^u}(w,\phi)=\int_{\Omega}\big[a^{ij}D_iw D_j\phi + (\bb^i - \bt^i)D_iw \phi \big]u^2\!\dx =\mathcal{B}_{P,B}(uw,  u\phi ). 
		$$  
		
		\end{enumerate}		
	}
\end{rem}
Consequently, we have:
\begin{cor}\label{rem_reg}
	Let Assumptions~\ref{assump1} hold  in a bounded Lipschitz domain $\Gw$, and let $u\in \mathcal{RSH}_{P,B}(\Gw)$. Then   $u>0$ on $\wb$. Moreover, the weak maximum principle  holds for the operator $(P^u,B^u)$ in any Lipschitz domain  $\Gw'\Subset_R \Gw$.
	
	Furthermore, if $u\in \mathcal{H}^{0}_{P,B}(\Gw)$ and $u, u^{-1}\in L^\infty(\Gw)$, then $u\in C^{\alpha}(\wb)$, and the weak maximum principle  holds for the operator $(P^u,B^u)$ in $\Gw$.  
\end{cor}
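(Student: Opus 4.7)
The plan is to handle the three assertions in order. For the first claim, I would observe that any $u\in \mathcal{RSH}_{P,B}(\Gw)$ is a nonnegative supersolution of \eqref{P,B} which is not identically zero (since $u^{-1}\in L^\infty_\loc(\wb)$ forces $u>0$ almost everywhere), so the strong maximum principle (Lemma~\ref{maxagmon}) gives $u>0$ on $\Gw\cup\pwr=\wb$. Combined with the local up-to-boundary H\"older estimates (Lemma~\ref{Holder up to}), which apply to bounded solutions, this positivity holds pointwise on $\wb$.

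For the weak maximum principle for $(P^u,B^u)$ on a Lipschitz subdomain $\Gw'\Subset_R\Gw$, the strategy is to verify the hypotheses of Lemma~\ref{weak_maximum} for the transformed operator on $\Gw'$. First I would show that on $\overline{\Gw'}\subset\wb$ the function $u$ is bounded above and below by positive constants (using $u,u^{-1}\in L^\infty_\loc(\wb)$ and compactness), so the data appearing in \eqref{eq:gs_eq}---namely the principal matrix $u^2 A$, the drift $(\bb-\bt)u^2$, the zero-order coefficient $(Pu)u$, and the Robin coefficient $uBu/\gb$---all satisfy Assumptions~\ref{assump1} on $\Gw'$, thanks to $uPu\in L^\infty_\loc(\wb)$ and $uBu/\gb\in L^\infty_\loc(\pwr)$. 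Next I would verify \eqref{eqpos} for $(P^u,B^u)$ by inserting $w\equiv 1$ into \eqref{eq:gs_eq} and using the identity from Remark~\ref{r2.11}(2), which yields
\[
\mathcal B_{P^u,B^u}(1,\phi)=\int_{\Gw'}\phi\,(Pu)u\dx+\int_{\partial\Gw'_{\Rob}}\phi\,\frac{uBu}{\gb}\dsigma=\mathcal B_{P,B}(u,u\phi).
\]
Since $u\phi\ge 0$ with $\supp(u\phi)\subset\overline{\Gw'}\setminus\partial\Gw'_{\Dir}\subset\wb$, it extends by zero to an admissible test function in $H^{1}_{\pwd}(\Gw)$, so a density argument approximating $u\phi$ by elements of $\mathcal D(\Gw,\pwd)$ reduces the inequality to $u\in\mathcal{SH}_{P,B}(\Gw)$, which holds by hypothesis. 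Lemma~\ref{weak_maximum} then delivers the weak maximum principle for $(P^u,B^u)$ on $\Gw'$.

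For the final assertion, under the stronger hypothesis $u\in\mathcal H^{0}_{P,B}(\Gw)$ with $u,u^{-1}\in L^\infty(\Gw)$: the global H\"older continuity $u\in C^{\alpha}(\wb)$ follows by covering $\wb$ with interior balls (where standard local H\"older regularity for bounded weak solutions of $Pu=0$ applies) and boundary half-balls around points of $\pwr$ (where Lemma~\ref{Holder up to} applies to the bounded solution $u$). The weak maximum principle on all of $\Gw$ is then a repetition of the argument above: the global bounds $u,u^{-1}\in L^\infty(\Gw)$ promote all the local bounds on the transformed coefficients to global ones, so Assumptions~\ref{assump1} hold for $(P^u,B^u)$ on $\Gw$ itself and Lemma~\ref{weak_maximum} applies directly.

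The main technical obstacle I anticipate is matching the bilinear form $\mathcal B_{P^u,B^u}$ from \eqref{eq:gs_eq}---which carries the positive weight $u^2$ in its volume measure and the factor $u^2$ in front of the first-order term---to the normalized structural form required by the hypothesis \eqref{eqpos} of Lemma~\ref{weak_maximum}, and the density argument justifying that $u\phi$ is admissible as a test function for the supersolution inequality $\mathcal B_{P,B}(u,u\phi)\ge 0$ when $\phi$ ranges only over $\mathcal D(\Gw',\partial\Gw'_{\Dir})$.
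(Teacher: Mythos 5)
Your proposal is correct and follows exactly the route the paper intends: the corollary is stated there without proof as an immediate consequence of the ground state transform \eqref{eq-gs}--\eqref{eq:gs_eq} together with Lemma~\ref{weak_maximum} (whose hypothesis \eqref{eqpos} for $(P^u,B^u)$ reduces, as you show, to $\mathcal B_{P,B}(u,u\phi)\ge 0$), the local bounds built into the definition of $\mathcal{RSH}_{P,B}(\Gw)$, and the boundary regularity/Harnack results of Lemmas~\ref{2_ineq}--\ref{1.20}. The only cosmetic quibbles are that the identity you use follows directly from \eqref{eq:gs_eq} with $w\equiv 1$ rather than from Remark~\ref{r2.11}(2) (which is stated for $u\in\mathcal H^0_{P,B}$), and that in the first step positivity of a supersolution on $\wb$ is meant essentially (it already follows from $u^{-1}\in L^\infty_\loc$ or Lemma~\ref{1.20}(ii)), not via H\"older continuity.
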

\begin{rem}\label{rBR}
	\em{For the relationship between the validity of the generalized maximum principle and the existence of a positive solution such that $u$ and $u^{-1}$ are bounded, see \cite{BR} for the case $\pwr=\emptyset$, and  \cite{ATG} for the case $\pwr=\emptyset$. 		
	}
\end{rem}
Corollary~\ref{rem_reg} implies the following generalized maximum principle.
\begin{lemma}[Generalized maximum principle]\label{gen_max_weak}
Let Assumptions~\ref{assump2} hold  in a domain $\Gw$, and let $u\in \mathcal{RSH}_{P,B}(\Gw)$.
	  If $\Gw'\Subset_R \Gw$  is a Lipschitz subdomain, then the generalized maximum principle  holds for $(P,B)$ in $\Gw'$. 
Moreover, if in addition,  Assumptions~\ref{assump1} hold  in a bounded Lipschitz  domain $\Gw$, $u\in \mathcal{H}^{0}_{P,B}(\Gw)$,  and  $u, u^{-1}\in   L^\infty(\bar\Gw\setminus \pwd)$. Then, the generalized maximum principle  holds for the operator $(P,B)$ in $\Gw$.  
\end{lemma}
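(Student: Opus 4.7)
The plan is to reduce to Corollary~\ref{rem_reg} via the ground state transform. Let $v\in H^1(\Gw')$ satisfy $(P,B)v\geq 0$ in $\Gw'$ with $v^-\in H^1_{\partial\Gw'_{\mathrm{Dir}}}(\Gw')$, and set $w:=v/u$. Since $\overline{\Gw'}\subset \wb\setminus \pwd$, the definition of $\mathcal{RSH}_{P,B}(\Gw)$ forces $u,u^{-1}\in L^\infty(\Gw')$, and the local regularity $u\in H^1_{\mathrm{loc}}(\wb)$ yields $u\in H^1(\Gw')$. Consequently $w\in H^1(\Gw')$, with $\nabla w=u^{-1}\nabla v-u^{-2}v\nabla u$ and $w^-=u^{-1}v^-$. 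A brief approximation argument---taking an $H^1$-approximating sequence $\phi_k\in\mathcal{D}(\Gw',\partial\Gw'_{\mathrm{Dir}})$ of $v^-$ and multiplying by $u^{-1}\in H^1\cap L^\infty$---shows that $w^-\in H^1_{\partial\Gw'_{\mathrm{Dir}}}(\Gw')$.

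Next I would verify that $w$ is a weak supersolution of $(P^u,B^u)$ in $\Gw'$. For any nonnegative $\psi\in\mathcal{D}(\Gw',\partial\Gw'_{\mathrm{Dir}})$ the product $u\psi$ is nonnegative, compactly supported in $\overline{\Gw'}\setminus \partial\Gw'_{\mathrm{Dir}}$, and belongs to $H^1_{\partial\Gw'_{\mathrm{Dir}}}(\Gw')$. The bilinear-form estimates (1)--(9) from the proof of Theorem~\ref{coercivity} extend $\mathcal{B}_{P,B}$ continuously to $H^1(\Gw')\times H^1_{\partial\Gw'_{\mathrm{Dir}}}(\Gw')$; approximating $u\psi$ by elements of $\mathcal{D}(\Gw',\partial\Gw'_{\mathrm{Dir}})$ and passing to the limit in the supersolution inequality for $v$ then yields
\[
\mathcal{B}_{P^u,B^u}(w,\psi)\;=\;\mathcal{B}_{P,B}(v,u\psi)\;\geq\;0,
\]
so that $w$ is a weak supersolution of $(P^u,B^u)$ in $\Gw'$.

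Since Assumptions~\ref{assump1} hold in every Lipschitz subdomain $\Gw'\Subset_R \Gw$, Corollary~\ref{rem_reg} supplies the weak maximum principle for $(P^u,B^u)$ on $\Gw'$. Applied to $w$, this forces $w\geq 0$ in $\Gw'$, and since $u>0$ there (Lemma~\ref{maxagmon}), we conclude $v\geq 0$ in $\Gw'$. The second assertion follows by the identical three-step argument with $\Gw'$ replaced by $\Gw$: the extra hypotheses that Assumptions~\ref{assump1} hold globally on $\Gw$, $u\in\mathcal{H}^0_{P,B}(\Gw)$, and $u,u^{-1}\in L^\infty(\bar\Gw\setminus \pwd)$ are used only to invoke the second sentence of Corollary~\ref{rem_reg}, which provides the weak maximum principle for $(P^u,B^u)$ on the whole of $\Gw$.

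The principal technical obstacle lies in the density/extension step of the second paragraph: one must legitimize $u\psi$ as a test function for the $(P,B)$ bilinear form even though $u$ is only $H^1\cap L^\infty$ rather than smooth, and verify that multiplication by $u$ (resp.\ $u^{-1}$) preserves the Dirichlet class $H^1_{\partial\Gw'_{\mathrm{Dir}}}$. Both points are settled by the local boundedness and strict positivity of $u$ built into the definition of $\mathcal{RSH}_{P,B}$ together with the $L^p_{\mathrm{loc}}$-integrability of the coefficients of $(P,B)$, which together allow the estimates used in the proof of Theorem~\ref{coercivity} to extend across the relevant Sobolev closures.
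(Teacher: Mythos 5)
Your proposal is correct and follows essentially the same route as the paper: pass to the ground state transform $(P^u,B^u)$, check that $v/u$ is a supersolution with negative part in the Dirichlet class, and invoke Corollary~\ref{rem_reg} for the weak maximum principle (the second assertion likewise from the second part of that corollary). The only difference is that you spell out the approximation/test-function details that the paper leaves implicit (they are subsumed in Remark~\ref{r2.11} and Definition~\ref{gs+transform}), which is harmless.
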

\begin{proof}
	Apply the  ground state  transform $(P^u,B^u)$ (see Definition~\ref{gs+transform}). By Corollary~\ref{rem_reg}, the weak maximum principle  holds for $(P^u,B^u)$ in $\Gw'$. Consequently, if  $v$ is a  supersolution of $(P,B)$ in $\Gw'$ with $v^{-}\in H^1_{\partial \Gw'_{\mathrm{Dir}} }\!(\Gw')$, then $v/u$ is a supersolution of $(P^u,B^u)$ in $\Gw'$ with $(v/u)^{-}\in H^1_{\partial \Gw'_{\mathrm{Dir}} }\!(\Gw')$, and by Corollary~\ref{rem_reg}, $v/u\geq 0$ in $\Gw'$. Thus, $v\geq 0$ in $\Gw'$.
	
	 The second statement of the lemma follows from the second part of Corollary~\ref{rem_reg}. 
	\end{proof}
Next, we present a priori interior estimates for positive supersolution.
\begin{lemma}\label{bound grad} 
		Let  Assumptions~\ref{assump2} hold in a Lipschitz domain $\Gw$, and $f\in L^2_\loc(\wb)$. Assume that $v$ is a (continuous) 
		 solution of the equation $Pu=f$  in a domain $\Gw\subset \R^n$ and satisfies $Bv\!=\!0$ on $\Pw_{\mathrm{Rob}}$ (in short, $(P,B)v\!=\!f$ in $\Gw$). Then for any  Lipschitz subdomains $\omega\Subset_R \omega'  \Subset_R \Gw$, there exists a constant $C>0$ independent of $v$, such that 
		$$\|v \|_{H^1(\omega)}\leq C\left (\|v \|_{L^{\infty}(\omega')}+ \|f\|_{L^{2}(\omega')} \right).$$
\end{lemma}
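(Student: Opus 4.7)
The plan is to establish this Caccioppoli-type bound by testing the weak form of $(P,B)v=f$ against $\phi=\eta^2 v$ for a suitable Lipschitz cutoff $\eta$. First I would fix an intermediate Lipschitz subdomain $\omega''$ with $\omega\Subset_R\omega''\Subset_R\omega'$ and pick $\eta\in C^\infty_c(\overline{\Omega}\setminus\pwd)$ satisfying $0\le\eta\le 1$, $\eta\equiv 1$ on $\overline{\omega}$, and $\supp\eta\subset\overline{\omega''}$. Since $\omega''\Subset_R\Gw$ and $v\in H^1_{\loc}(\overline{\Omega}\setminus\pwd)$, one has $v\in H^1(\omega'')$, and the product $\eta^2 v$ extended by zero lies in $H^1_{\pwd}(\Omega)$; it is therefore admissible as a test function in the weak formulation by a standard density argument. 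Because $\supp\eta\cap\pwd=\emptyset$, only the Robin boundary piece $\supp\eta\cap\pwr\Subset\pwr$ contributes to the surface integral.

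Substituting $\phi=\eta^2 v$ and using $\nabla\phi=\eta^2\nabla v+2\eta v\nabla\eta$, I would move every term not containing $A\nabla v\cdot\nabla v$ to the right-hand side. Uniform ellipticity on $\overline{\omega''}$ bounds the left-hand side from below by $\Theta^{-1}\!\int\eta^2|\nabla v|^2\dx$. Each term on the right I would estimate using Hölder's and Young's inequalities: the three terms containing both $\eta\nabla v$ and a coefficient, namely $2\eta v A\nabla v\cdot\nabla\eta$, $\eta^2 v\,\bt\cdot\nabla v$, and $\eta^2 v\,\bb\cdot\nabla v$, each split into an $\vge\!\int\eta^2|\nabla v|^2$ piece — to be absorbed on the left after choosing $\vge$ small — plus a lower-order piece controlled by $\|v\|_{L^\infty(\omega')}^2$, $\|\bt\|_{L^2(\omega')}$, $\|\bb\|_{L^2(\omega')}$ and $\|\nabla\eta\|_{L^\infty}$. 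The genuinely zeroth-order contributions $\eta^2 c v^2$, $2\eta v^2\bt\cdot\nabla\eta$, $\frac{\gamma}{\beta}\eta^2 v^2$ and $f\eta^2 v$ are controlled directly using $\|c\|_{L^1(\omega')}$, $\|\gamma/\beta\|_{L^\infty(\supp\eta\cap\pwr)}$ (finite by Assumptions~\ref{assump1}), the finite surface measure of $\supp\eta\cap\pwr$, the $L^\infty$ bound on $v$, and Cauchy--Schwarz, which yields $|\!\int f\eta^2 v\dx|\le \|v\|_{L^\infty(\omega')}\|f\|_{L^2(\omega')}|\omega''|^{1/2}$. Note that $p>n\ge 2$ and boundedness of $\omega'$ give $\bt,\bb\in L^2(\omega')$ and $c\in L^1(\omega')$, so all coefficient norms appearing are finite.

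Collecting the estimates and using $\eta\equiv 1$ on $\omega$ gives $\int_\omega|\nabla v|^2\dx\le C\bigl(\|v\|_{L^\infty(\omega')}^2+\|f\|_{L^2(\omega')}^2\bigr)$ with $C$ depending only on $P,B,n,\omega,\omega'$; combined with the trivial bound $\|v\|_{L^2(\omega)}\le|\omega|^{1/2}\|v\|_{L^\infty(\omega')}$ this yields the desired $H^1$-estimate. The main technical obstacle is the bookkeeping around the admissibility of the test function: to use $\eta^2 v$ one must know that $v\in H^1(\omega'')$ (not merely $H^1_\loc$) and that the approximation by elements of $\mathcal{D}(\Omega,\pwd)$ is legitimate, which requires $\omega''$ to be constructed so that $\partial\omega''$ avoids $\pwd$ while meeting $\pwr$ in a compactly contained piece. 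This is exactly what the relation $\omega''\Subset_R\omega'$ provides, and once this setup is in place the remaining estimates are standard and the Robin boundary term poses no difficulty thanks to $\gamma/\beta\in L^\infty_\loc(\pwr)$.
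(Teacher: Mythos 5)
Your proof is correct and follows essentially the same route as the paper: both test the weak formulation with the cutoff-weighted function $v\eta^2$ (the paper's $v\phi^2$), use uniform ellipticity on the left, and control the coefficient, Robin and source terms via H\"older/Young together with $\|v\|_{L^\infty(\omega')}$, absorbing the small gradient contribution. Your bookkeeping is in fact slightly more careful than the paper's displayed chain, since you keep the cutoff weight $\eta^2$ on the absorbable term $\vge\int\eta^2|\nabla v|^2\dx$ (rather than $\vge\|\nabla v\|_{L^2(\omega')}^2$), which is exactly what makes the absorption into the left-hand side legitimate.
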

\begin{proof}
	The definition of $v$ being a   solution of  $(P,B)u\!=\!f$  in $\Omega$ reads as
	\begin{equation}\label{weak_solution2}
		\int_{\Omega}[(a^{ij}D_jv+v\bt^i )D_i \phi+(\bb^i D_i v)\phi+cv \phi] \mathrm{d}x + \!\!\int\limits_{\pwr}\!\!\frac{\gamma}{\beta}v \phi   \,\mathrm{d}\sigma  \!= \!
			\int_{\Omega}\!\!f\phi \dx\quad \forall \phi\in \mathcal{D}(\Omega,\Pw_{\mathrm{Dir}}).
	\end{equation}
	
For any such $\phi$, we may pick $v\phi^2$ as a test function in \eqref{weak_solution2} to get
\begin{align*}
&
\int_\Omega a^{ij}D_jv D_iv \phi^2 \dx=
-2\int_{\Omega} [a^{ij}D_i v D_j \phi  \phi v + v^2 \phi\bt^i D_i\phi]\dx -\int_{\Omega}
\phi ^2v\bt^i \D_i v \dx
  - \\ &
\int_{\Omega} [\bb^i  D_i  v +cv]v\phi^2 \dx -
\int\limits_{\pwr}(\gamma/ \beta) v^2\phi^2 \,\mathrm{d}\sigma+\int_{\Omega}fv\phi^2 \dx.
\end{align*}
Let $\omega \Subset_R \omega'\Subset_R \Gw$ be  Lipschitz subdomains of $\Gw$, and let  $\phi\in \mathcal{D}(\omega',\partial \omega'_{\Dir} )$  satisfy 
$$
 0\leq \phi \leq 1, \quad 
\phi=1 \quad \mbox{in } \omega, \quad  |\nabla \phi|\leq \frac{1}{\mathrm{dist}(\partial\omega_{\mathrm{Dir}},\partial\omega'_{\mathrm{Dir}})} \quad \mbox{in } \omega'.
$$
Then one obtains as in the proof of Theorem~\ref{coercivity} that
\begin{multline*}
\Theta_{\omega'}^{-1} \|\nabla v \|_{L^2(\omega)}^2 \leq \int_{\omega'} a^{ij}D_jv D_iv \phi^2
\dx= \\ 
-2\int_{\omega'}[a^{ij}D_i v D_j \phi  \phi v  +v^2 \phi\bt^i D_i\phi]\dx-\int_{\omega'}
\phi ^2v\bt^i \D_i v \dx -\\ 
\int_{\omega'} (\bb^i  D_i  v +cv)v\phi^2 \dx -
\int_{\partial \omega'_{\mathrm{Rob}}}(\gamma/ \beta) v^2\phi^2 \mathrm{d}\sigma +\int_{\gw'}fv\phi^2 \dx\leq \\ 
C_1(P,B,\omega,\gw')\left ( \varepsilon \|\nabla v \|_{L^2(\omega')}^2+\frac{1}{\varepsilon}\|v \|_{L^2(\omega')}^2\right )
+ \varepsilon \|v\|_{L^2(\omega')}+\frac{1}{\varepsilon}\| f\|_{L^2(\omega')}.
\end{multline*}
Therefore, for $\varepsilon>0$ sufficiently small there exists $C=C(P,B,\omega,\gw',\vge)$ such that
$$\|v\|_{H^1(\omega)}\leq
	C\|v\|_{L^2(\omega')} +
	\frac{1}{\varepsilon}\| f\|_{L^2(\omega')}
\leq
	C|\gw'|\|v\|_{L^{\infty}(\omega')}+
	\frac{1}{\varepsilon}\| f\|_{L^2(\omega')}.\qedhere$$
\end{proof}
\subsection{The spectrum of $(P,B)$}
Next, we discuss spectral properties of the operator $(P,B)$ in a bounded Lipschitz domain $\Gw$  satisfying Assumptions~\ref{assump1}.
\begin{defi}
	\em{		Let $(\tilde{P},\tilde{B})$ be the realization of the operator $(P,B)$ in $\Gw$ with the domain 
		$$D(\tilde{P},\tilde{B}):=\{ u\in H^{1}_{\pwd}(\Gw)\subset L^2(\Gw) \mid (P,B)u=f\in L^2(\Gw)\},$$
		and let $\gs (\tilde{P},\tilde{B})$ and $\gr(\tilde{P},\tilde{B})$ be the spectrum and the resolvent set of $(\tilde{P},\tilde{B})$ in $L^2(\Gw)$, respectively.
		We define the {\em `bottom\!' of the spectrum of $(\tilde{P},\tilde{B})$ in $\Gw$} by 
		$$
		\Gamma=\Gamma(P,B,\Gw):=\inf\{ \mathrm{Re}(\lambda)\mid \lambda\in \gs(\tilde{P},\tilde{B})\}.
		$$		
		For $\gl\in \gr(\tilde{P},\tilde{B})$, we denote the resolvent operator of $(\tilde{P}-\gl ,\tilde{B})$ in $L^2(\Gw)$ by $R_\Gw(\gl)$.
		Set  $$
		\Gl= \Gl(P,B,\Gw) :=\sup\{\lambda:\mathcal{B}_{P-\lambda,B}(\phi,\phi)\geq 0, ~ \forall \phi\in \mathcal{D}(\Gw,\pwd) \}.
		$$
		In the sequel we might omit the dependence on $P,B,\Gw$ when there is no danger of ambiguity.
	}
\end{defi}
\begin{lem}\label{lem: closed operator}
	 $(\tilde{P},\tilde{B}):D(\tilde{P},\tilde{B})\to L^2(\Gw)$ is a closed operator.
	
\end{lem}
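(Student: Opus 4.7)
The plan is to exploit the resolvent constructed in Theorem~\ref{coercivity}. Fix $\gm>\gm_0$ where $\gm_0$ is the coercivity threshold from Theorem~\ref{coercivity}(i), so that the bounded linear operator $L_\gm^{-1}:L^2(\Gw)\to H^1_{\pwd}(\Gw)$ from Theorem~\ref{coercivity}(ii) is available. This will let me recover the $H^1_{\pwd}$-information for the limit from purely $L^2$ convergence of the sequence and of its image under $(\tilde{P},\tilde{B})$.

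Concretely, I would take a sequence $\{u_n\}\subset D(\tilde{P},\tilde{B})$ with $u_n\to u$ in $L^2(\Gw)$ and $(\tilde{P},\tilde{B})u_n\to f$ in $L^2(\Gw)$, and I would need to show that $u\in D(\tilde{P},\tilde{B})$ with $(\tilde{P},\tilde{B})u=f$. Setting $g_n:=(\tilde{P}+\gm I,\tilde{B})u_n=(\tilde{P},\tilde{B})u_n+\gm u_n$, the convergence in $L^2(\Gw)$ gives $g_n\to g:=f+\gm u$ in $L^2(\Gw)$. By definition of $L_\gm^{-1}$ and the uniqueness part of Theorem~\ref{coercivity}(ii), each $u_n=L_\gm^{-1}g_n$.

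Now I would use the bound $\|L_\gm^{-1}h\|_{H^1(\Gw)}\le C\|h\|_{L^2(\Gw)}$ established within the proof of Theorem~\ref{coercivity}(iii), applied to $h=g_n-g$, to conclude that $u_n=L_\gm^{-1}g_n\to L_\gm^{-1}g$ in $H^1_{\pwd}(\Gw)$. Continuous embedding of $H^1_{\pwd}(\Gw)$ into $L^2(\Gw)$ combined with the given $L^2$-convergence $u_n\to u$ forces $u=L_\gm^{-1}g\in H^1_{\pwd}(\Gw)$. By construction $u$ satisfies
\[
\mathcal B_{P,B}(u,v)+\gm\!\int_{\Gw}\!uv\dx=\int_{\Gw}\!gv\dx=\int_{\Gw}\!(f+\gm u)v\dx\qquad \forall v\in H^{1}_{\pwd}(\Gw),
\]
which rearranges to $\mathcal B_{P,B}(u,v)=\int_\Gw fv\dx$ for all $v\in H^1_{\pwd}(\Gw)$. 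Hence $(P,B)u=f\in L^2(\Gw)$, so $u\in D(\tilde{P},\tilde{B})$ and $(\tilde{P},\tilde{B})u=f$, proving closedness.

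There is no real obstacle here: the entire argument rides on the resolvent estimate already packaged in Theorem~\ref{coercivity}. The only thing to be careful about is to note that the coercivity estimate yields not only surjectivity onto $L^2(\Gw)$ but also continuity of $L_\gm^{-1}$ into $H^1_{\pwd}(\Gw)$, which is what makes the above limit valid in $H^1$ rather than merely in $L^2$.
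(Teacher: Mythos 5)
Your proposal is correct and follows essentially the same route as the paper: both identify $u_k=L_\gm^{-1}\big((\tilde P,\tilde B)u_k+\gm u_k\big)$ via the resolvent from Theorem~\ref{coercivity} and pass to the limit using the boundedness of $L_\gm^{-1}:L^2(\Gw)\to H^{1}_{\pwd}(\Gw)$. Your write-up merely makes the limit passage and the final verification of $\mathcal B_{P,B}(u,v)=\int_\Gw fv\dx$ more explicit than the paper does.
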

\begin{proof}
Let $\{u_k\}_{k\in \N}\subset D(\tilde{P},\tilde{B})$
	satisfying 
	$$ \lim\limits_{k\to \infty}\|u_k-u\|_{L^2(\Gw)}=0,\quad 
	(\tilde{P},\tilde{B})u_k=f_k\in L^2(\Gw), \quad  \mbox{and } \lim\limits_{k\to \infty}\|f_k-f\|_{L^2(\Gw)}=0.$$
	Then $(P+\mu,B)u_k=\mu u_k+f_k$ and thus $u_k=(P+\mu,B)^{-1}(\mu u_k+f_k)$.
	By Theorem~\ref{coercivity} the operator $(P+\mu,B)^{-1}:L^2(\Gw)\to H^{1}_{\pwd}(\Gw)$ is linear and bounded. By letting $k\to \infty$ we obtain that $u=(P+\mu,B)^{-1}(\mu u+f)\in H^{1}_{\pwd}(\Gw)$. \\  Hence, $u\in D(\tilde{P},\tilde{B})$ and $(P,B)u=f$. 
\end{proof}
\begin{rem}\label{rem-disc}
	\em{
		Let Assumptions~\ref{assump1} hold  in a domain $\Gw$.
		Recall that by Theorem~\ref{coercivity}, the resolvent operator $R_\Gw(\gl)$ is compact on $L^2(\Gw)$ for large enough $\gl\in \R$. 
		Therefore, $R_{\Gw}(\gl)$  is compact for any $\lambda\in \gr (\tilde{P},\tilde{B})$ \cite[Proposition~2.7.6]{Lablee}. In particular, for any $\lambda\in \gr(\tilde{P},\tilde{B})$, and $ f\in C^{\infty}_0(\Gw)$, the function  $R_{\Gw}(\lambda)f$ is a  solution to the problem 
		\begin{align*}	
		(P-\lambda) v=f& \quad \text{~in~} \Gw, \\
		Bv=0 &\quad \text{~on~} \pwr, \mbox{ and } v\in H^1_{\Pw_{\mathrm{Dir}}}(\Gw).
		\end{align*}
		Moreover, Lemma \ref{lem: closed operator} and the compactness of $R_{\Gw}(\gl)$ implies that $\sigma(\tilde{P},\tilde{B})=\sigma_{\mathrm{point}}(\tilde{P},\tilde{B})$ (see for example, \cite[Theorem~2.7.8]{Lablee}). 
		In Theorem~\ref{agmonmethod} we show that in fact,
	$\Gamma\in \sigma_{\mathrm{point}}(\tilde{P},\tilde{B})$.
	}
\end{rem}
\begin{cor}\label{cor_glc}
	Fix $\lambda\in \gr(\tilde{P},\tilde{B})$. Then $\lambda'$ in an eigenvalue of $(\tilde{P},\tilde{B})$ if and only if $\lambda'=\gm^{-1} +\lambda$, where $\mu\neq 0$ is an eigenvalue of $R_{\Gw}(\lambda)$. In particular, the spectrum of $(\tilde{P},\tilde{B})$ consists of only  isolated eigenvalues  of finite multiplicity.
\end{cor}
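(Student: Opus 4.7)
The plan is to apply the standard resolvent correspondence between the eigenvalues of a closed operator and those of its resolvent, together with the compactness of $R_\Gw(\gl)$ established in Remark~\ref{rem-disc}.

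First I would prove the ``only if'' direction. Suppose $\gl'$ is an eigenvalue of $(\tilde P,\tilde B)$ with eigenfunction $u\in D(\tilde P,\tilde B)\setminus\{0\}$, i.e.\ $(\tilde P,\tilde B)u=\gl'u$. Since $\gl\in\gr(\tilde P,\tilde B)$, necessarily $\gl'\neq \gl$, and we can rewrite the eigenvalue equation as $((\tilde P,\tilde B)-\gl)u=(\gl'-\gl)u$. Applying $R_\Gw(\gl)$ to both sides yields $u=(\gl'-\gl)R_\Gw(\gl)u$, so $R_\Gw(\gl)u=\gm u$ with $\gm:=(\gl'-\gl)^{-1}\neq 0$, and therefore $\gl'=\gm^{-1}+\gl$ as required.

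Next I would prove the ``if'' direction. Let $\gm\neq 0$ be an eigenvalue of $R_\Gw(\gl)$ with eigenvector $u\in L^2(\Gw)\setminus\{0\}$, so $R_\Gw(\gl)u=\gm u$. Since $\gm\neq 0$, we have $u=\gm^{-1}R_\Gw(\gl)u$. By the definition of the resolvent (see Remark~\ref{rem-disc}), $R_\Gw(\gl)u\in D(\tilde P,\tilde B)$, so $u\in D(\tilde P,\tilde B)$ and $((\tilde P,\tilde B)-\gl)u=((\tilde P,\tilde B)-\gl)R_\Gw(\gl)(\gm^{-1} u)=\gm^{-1}u$. Hence $(\tilde P,\tilde B)u=(\gm^{-1}+\gl)u$, which shows $\gl'=\gm^{-1}+\gl$ is an eigenvalue of $(\tilde P,\tilde B)$.

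For the ``in particular'' assertion, I would invoke the Riesz--Schauder spectral theorem for compact operators: since $R_\Gw(\gl):L^2(\Gw)\to L^2(\Gw)$ is compact (by Remark~\ref{rem-disc}), $\gs(R_\Gw(\gl))\setminus\{0\}$ consists of at most countably many isolated eigenvalues of finite multiplicity, possibly accumulating only at $0$. The bijection $\gm\mapsto \gm^{-1}+\gl$ from $\gs(R_\Gw(\gl))\setminus\{0\}$ onto $\gs(\tilde P,\tilde B)$ established above is a homeomorphism of $\C\setminus\{0\}$ onto $\C\setminus\{\gl\}$ that carries eigenspaces bijectively; it therefore transports the ``isolated with finite multiplicity'' property to $\gs(\tilde P,\tilde B)$, with eigenvalues possibly accumulating only at $\infty$. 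The only subtle point is verifying that the eigenspaces truly correspond (so that ``finite multiplicity'' transfers), but this is immediate from the fact that the correspondence $u\mapsto u$ in the two arguments above is the identity on the common eigenspace.
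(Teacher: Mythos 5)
Your proposal is correct and follows essentially the same route as the paper: the standard eigenvalue correspondence $\gl'\leftrightarrow\gm=(\gl'-\gl)^{-1}$ between $(\tilde P,\tilde B)$ and its compact resolvent $R_\Gw(\gl)$, combined with the Riesz--Schauder theory, exactly as the paper does via Remark~\ref{rem-disc}. The only point to make explicit is that the map is onto all of $\gs(\tilde P,\tilde B)$ (not just the eigenvalues you constructed), which is precisely the content of Remark~\ref{rem-disc} ($\gs(\tilde P,\tilde B)=\gs_{\mathrm{point}}(\tilde P,\tilde B)$, from closedness of the operator and compactness of the resolvent), so citing it there closes that small gap.
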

\begin{proof}
	By Remark~\ref{rem-disc}, $\sigma(\tilde{P},\tilde{B})=\sigma_{\mathrm{point}}(\tilde{P},\tilde{B})$. Hence, by the definition of $R_{\Gw}(\gl)$, we see that $\mu\in \gs(R_{\Gw}(\gl))$ and $\gm\neq0$ if and only if   $\gl':=\gm^{-1}+\gl \in \sigma_{\mathrm{point}}(\tilde{P},\tilde{B})$.
\end{proof}
\subsection{Exhaustion and Harnack convergence principle}
We proceed with Harnack convergence principle for a sequence of positive solutions in a general domain $\Gw$. First, we define a (Lipschitz) exhaustion of $\wb$.
\begin{defi}\label{def:exhaustion}
	\em{
		A sequence $\{\Omega_k\}_{k\in \mathbb{N}}$ 
		is called an {\em exhaustion}  of $\wb$ if it is
		an increasing sequence of Lipschitz bounded domains such that ${\Omega_k}\Subset_R \Omega_{k+1} \Subset_R  \Gw$,
		and $\bigcup\limits_{k\in \N} \overline{\Omega_k}=\overline{\Gw}\setminus \Pw_{\mathrm{Dir}}$. 
		For $k\geq 1$ we denote : 
		$$  \partial \Gw_{k,\Rob}:=\mathrm{int}( \partial \Gw_k\cap \pwr) \quad 
		\partial \Gw_{k,\Dir}:=\partial \Gw_k \setminus \partial \Gw_{k,\Rob}, \quad 
 \Omega_{k}^{*}:=(\wb)\setminus \overline{\Omega_k}.$$
	}
\end{defi}
\begin{rem}
	\em{
		In Appendix \ref{appendix1}, we show that such an exhaustion exists once we impose the following stronger regularity assumption on $\pwr$:
		for each $x_0\in \Pw_{\mathrm{Rob}}$ there exists $R>0$ such that $\Sigma[x_0,R]$ is a $C^1$-portion of $\pwr$. This extra regularity assumption is needed to ensure that
			$\partial \Omega_k$ meets $\pwr$ in `good directions' (see Definition~\ref{def_gd}) which implies that $\Gw_k$ is indeed a Lipschitz domain. 
	}
\end{rem}
\begin{lem}[Harnack convergence principle]\label{HCP}
	Suppose that Assumptions~\ref{assump2} hold in $\Gw$, and  let $\{\Gw_k\}_{k\in \N}$ be an exhaustion of $\wb$. Let $x_0\in \Gw$ be a fixed reference point. 
	For each $k\geq 1$, let $ u_k\in H^{1}_{\loc}(\Gw_k \cup\partial \Gw_{k, \pwr})$ be a positive solution of the problem
	\begin{equation}\label{PB_Gwk}
	\begin{cases}
	Pu=0 &  \text{in } \Gw_k, \\
	Bu = 0 &  \text{on } \partial \Gw_{k,\Rob},
	\end{cases}
	\end{equation}
	satisfying  $u_k(x_0)=1$. Then the sequence $\{u_k\}_{k\in \N}$ admits a subsequence converging locally uniformly in $\wb$ to a positive solution 
	$u\in \mathcal{H}^{0}_{P,B}(\Gw)$. 
	
	Moreover, the same conclusion holds if in \eqref{PB_Gwk} one replaces $P$ by $P+V_k$, with $V_k \to 0$ in $L^{p/2}_\loc(\Gw)$, where $p>n$. 
\end{lem}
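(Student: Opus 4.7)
The plan is to establish local uniform two-sided bounds on $\{u_k\}$ via a Harnack chain argument, upgrade them to equicontinuity using boundary H\"older estimates, extract a uniformly convergent subsequence via Arzel\`a--Ascoli, and pass to the limit in the weak formulation to obtain a positive solution.

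First, I would fix any compact $K \subset \overline{\Omega}\setminus \pwd$ and cover it by finitely many interior balls $B_R(x_i) \Subset \Omega$ together with half-balls $\Omega[y_j, R_j]$ centered at Robin points $y_j \in \pwr$, where the $C^1$-portion hypothesis of Assumptions~\ref{assump2} supplies the admissible radii $R_j$. By connectedness of $\overline{\Omega}\setminus \pwd$, these can be chained back to the reference point $x_0$. For $k$ sufficiently large, each $\Omega[y_j, 4R_j] \subset \Omega_k$ with $\Sigma[y_j, 4R_j] \subset \partial\Omega_{k,\Rob}$, so $u_k$ solves the equation on each half-ball with vanishing Robin condition on its $\pwr$-portion. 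The interior weak Harnack inequality (Lemma~\ref{lem_whi}) and the up-to-the-boundary Harnack inequality (Corollary~\ref{Harnack_up}) then deliver, using $u_k(x_0)=1$, a constant $C_K>0$ with $C_K^{-1} \leq u_k \leq C_K$ on $K$ uniformly in $k$.

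Next, combining the boundary H\"older estimate of Lemma~\ref{Holder up to} with the classical interior De~Giorgi--Nash estimate, one gets a uniform bound on $\|u_k\|_{C^\alpha(K)}$ for some $\alpha \in (0,1)$. Arzel\`a--Ascoli extracts a subsequence, still denoted $\{u_k\}$, converging locally uniformly on $\overline{\Omega}\setminus \pwd$ to a continuous nonnegative function $u$ with $u(x_0)=1$. The energy estimate of Lemma~\ref{bound grad} (with $f=0$) gives $\|u_k\|_{H^1(\omega)} \leq C\|u_k\|_{L^\infty(\omega')}$ for any pair $\omega \Subset_R \omega' \Subset_R \Omega$, so by weak compactness a further subsequence satisfies $u_k \rightharpoonup u$ in $H^1_{\loc}(\overline{\Omega}\setminus \pwd)$. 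For any $\phi \in \mathcal{D}(\Omega,\pwd)$ one has $\mathcal{B}_{P,B}(u_k,\phi)=0$ once $k$ is large enough that $\supp\phi \subset \Omega_k$ and $\supp\phi \cap \pwr \subset \partial\Omega_{k,\Rob}$. The bilinear form $\mathcal{B}_{P,B}(\cdot,\phi)$ is continuous on $H^1$ of a fixed Lipschitz neighborhood of $\supp\phi$ (cf.\ the estimates (1)--(9) in the proof of Theorem~\ref{coercivity}), so passing to the limit yields $\mathcal{B}_{P,B}(u,\phi)=0$. Since $u \geq 0$ and $u(x_0)=1>0$, the strong maximum principle (Lemma~\ref{maxagmon}) forces $u>0$ on $\Omega \cup \pwr$, hence $u \in \mathcal{H}^0_{P,B}(\Omega)$.

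For the ``moreover'' part with $P$ replaced by $P+V_k$ where $V_k \to 0$ in $L^{p/2}_{\loc}(\Omega)$, the norms $\|c+V_k\|_{L^{p/2}(\omega')}$ are uniformly bounded, so every constant in the Harnack, H\"older, and $H^1$ estimates remains uniform in $k$ and the preceding steps go through unchanged. When passing to the limit, the only new contribution to the weak form is $\int_\Omega V_k u_k \phi\dx$, which by H\"older's inequality is bounded by $\|V_k\|_{L^{p/2}(\supp\phi)} \|u_k\|_{L^\infty(\supp\phi)}\|\phi\|_{L^{(p/2)'}(\supp\phi)}$ and hence vanishes as $k\to\infty$. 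The main obstacle is the Harnack-chain step: verifying that the boundary estimates are applicable uniformly in $k$ at every relevant Robin point of $K$. This hinges crucially on the $C^1$-portion assumption in Assumptions~\ref{assump2} together with the exhaustion property $\Omega_k \Subset_R \Omega_{k+1}$, which guarantees that each fixed half-ball $\Omega[y_j,4R_j]$ is eventually contained in $\Omega_k$ with its $\pwr$-portion lying in $\partial\Omega_{k,\Rob}$.
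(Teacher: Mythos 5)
Your proposal is correct and follows essentially the same route as the paper's proof: two-sided local bounds via the interior Harnack inequality and Corollary~\ref{Harnack_up}, uniform $C^\alpha$ bounds from Lemma~\ref{Holder up to}, Arzel\`a--Ascoli plus the gradient bound of Lemma~\ref{bound grad} to get weak $H^1_{\loc}$ convergence, and passage to the limit in $\mathcal{B}_{P,B}(\cdot,\phi)$, with a diagonal extraction over the exhaustion. The only cosmetic differences are that you invoke the strong maximum principle for positivity of the limit (the paper gets it directly from the Harnack chain, and for the interior step it cites the full Harnack inequality of \cite[Theorem~8.20]{GT} rather than only the weak Harnack of Lemma~\ref{lem_whi}), and you spell out the $V_k$-perturbation term, which the paper leaves implicit.
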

\begin{proof}
	Fix $k\in \N$. By  the local Harnack inequality \cite[Theorem~8.20.]{GT}, and the up to the boundary Harnack inequality (Corollary~\ref{Harnack_up}),   the sequence $\{u_{j}\}_{j >k}$ is locally uniformly bounded in $\overline{\Gw_k}$.
	Moreover, Lemma~\ref{Holder up to} implies that 
	this sequence is bounded in $C^{\alpha}(\overline{\Gw_{k-1}})$. Hence, by Arzel\'a-Ascoli theorem 
	$\{u_j\}_{j\in \N}$ admits a subsequence $\{u_{j_l}\}_{l\in \N}$ converging uniformly to $\tilde u_{k}\in  C^{\alpha}(\overline{\Gw_{k-1}})$. Moreover, by Lemma~\ref{bound grad}, $\| \nabla u_{j_l}\|_{L^2(\Gw_{k-1})}$ is uniformly bounded.
	In fact, in light of Lemma~\ref{bound grad}, by extracting a subsequence,  we may assume  that $\{u_{j_l}\}_{j\in \N}$ also converges weakly in $H^1(\Gw_k)$ and almost everywhere to the positive function $\tilde u_k$.
	In particular, for each $1\leq i\leq n$, we clearly have that  $D_i \tilde u_k$ is a linear functional on $L^2(\Gw_k)$ satisfying for all $\phi\in C^{\infty}_0(\Gw_k)$
	\begin{equation}\label{nabnab}
	\int_{\Gw_k}D_i \tilde u_k \phi \dx=\lim\limits_{l \to \infty}
	\int_{\Gw_k}D_i u_{j_l} \phi\dx =-\lim\limits_{l \to \infty}\int_{\Gw_k} u_{j_l} D_i \phi \dx=-\int_{\Gw_k}\tilde u_k D_i \phi \dx.
	\end{equation}
	By Riesz representation theorem,  $D_i \tilde u_k\in L^2(\Gw_k)$ and by  \eqref{nabnab} $\nabla \tilde u_k$ is indeed the weak gradient  of $\tilde u_k$.
	Therefore, we may apply the estimates in Theorem~\ref{coercivity} to deduce that for any $\phi\in \mathcal{D}(\Omega_k,\Pw_{k,\mathrm{Dir}})$,
	$\lim\limits_{l\to \infty }\mathcal{B}_{P,B}(u_{j_l},\phi)=\mathcal{B}_{P,B}(\tilde u_k,\phi)$.
	Hence, $\tilde u_k>0$ is a positive solution of \eqref{PB_Gwk}. 
	
	Since $k$ was arbitrary, we may use the Cantor diagonal argument to extract a subsequence $\{u_{j,j}\}_{j\in \N}$ of  $\{u_j\}_{j\in \N}$  converging locally uniformly in  $\overline{\Gw}\setminus \Pw_{\mathrm{Dir}}$ to a function $u\in C^\ga(\overline{\Gw}\setminus \Pw_{\mathrm{Dir}})\cap H^{1}_{\loc}(\wb)$ which  is a positive solution of  the problem \eqref{P,B} in $\Gw$.
\end{proof}
\subsection{The generalized principal eigenvalue} Next, we introduce the notion of the generalized principal eigenvalue of $(P,B)$, and study its relation to $\Gamma$, the  `bottom\!' of $\gs (\tilde{P},\tilde{B})$.
\begin{defi}\label{def_princ}
	{\em 	
		Let $\Gw$ be a domain in $\R^n$, and let $0\lneqq V\in L^{p/2}_\loc(\wb)$, where $p>n$. The {\em generalized principal eigenvalue} of $(P,B)$ in $\Gw$ with respect to $V$  is defined by 
		$$
		\lambda_0=\lambda_0(P,B,V,\Gw):=\sup\{ \lambda\in \R \mid \mathcal{H}^{0}_{P-\gl V,B}(\Gw)  \neq \emptyset \} .
		$$
Unless otherwise stated,  we always assume that $V=1$ in the definition of $\lambda_0$, and we usually omit the dependence on $P,B,V$ and $\Gw$.
	}
\end{defi}
The Harnack convergence principle (Lemma~\ref{HCP}) implies: 
\begin{cor}\label{cor_gwmp}
	$$
	\lambda_0(P,B,V,\Gw):=\max\{ \lambda\in \R \mid \mathcal{H}^{0}_{P-\gl V,B}(\Gw)\neq \emptyset \}, \mbox{ i.e. } \mathcal{H}^{0}_{P-\gl_0 V,B}(\Gw)\neq \emptyset.
	$$
\end{cor}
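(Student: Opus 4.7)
The plan is to realize $\gl_0$ as an accumulation point of parameters for which positive solutions exist, and then pass to the limit using the Harnack convergence principle (Lemma~\ref{HCP}). By the definition of $\gl_0$ as a supremum, I would first pick a sequence $\gl_k \nearrow \gl_0$ with $\mathcal{H}^{0}_{P-\gl_k V,B}(\Gw)\neq\emptyset$, and choose a positive solution $u_k\in \mathcal{H}^{0}_{P-\gl_k V,B}(\Gw)$ for each $k$. Fixing a reference point $x_0\in \Gw$, I would normalize so that $u_k(x_0)=1$ for every $k$.

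Next, I would rewrite the equation satisfied by $u_k$ in the form $(P-\gl_0 V+V_k,B)u_k=0$ in $\Gw$, where $V_k:=(\gl_0-\gl_k)V$. Since $V\in L^{p/2}_{\loc}(\wb)$ and $\gl_0-\gl_k\to 0$, the functions $V_k$ converge to zero in $L^{p/2}_{\loc}(\wb)$. Fixing a Lipschitz exhaustion $\{\Gw_k\}_{k\in\N}$ of $\wb$ (whose existence under Assumptions~\ref{assump2} is established in Appendix~\ref{appendix1}), the restriction of $u_k$ to $\Gw_k$ is a positive solution of the mixed boundary value problem for $(P-\gl_0 V+V_k,B)$ on $\Gw_k$, still normalized by $u_k(x_0)=1$.

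Finally, I would invoke the ``moreover'' clause of Lemma~\ref{HCP}, applied with $P-\gl_0 V$ playing the role of $P$ and with the vanishing potentials $V_k$. This produces a subsequence of $\{u_k\}$ converging locally uniformly in $\wb$ to a positive solution $u\in \mathcal{H}^{0}_{P-\gl_0 V,B}(\Gw)$ with $u(x_0)=1$. Hence $\mathcal{H}^{0}_{P-\gl_0 V,B}(\Gw)\neq\emptyset$, so the supremum in the definition of $\gl_0$ is attained.

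In short, the corollary is a direct consequence of the Harnack convergence principle applied with potential perturbations; no real obstacle arises beyond verifying the (immediate) $L^{p/2}_{\loc}$-convergence $V_k\to 0$. The genuine work has already been carried out in proving Lemma~\ref{HCP} and in constructing an appropriate Lipschitz exhaustion of $\wb$ in Appendix~\ref{appendix1}.
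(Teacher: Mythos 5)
Your argument is correct and is precisely the route the paper intends: the corollary is stated as a direct consequence of the Harnack convergence principle (Lemma~\ref{HCP}), applied with the vanishing potentials $V_k=(\gl_0-\gl_k)V\to 0$ in $L^{p/2}_{\loc}(\wb)$ exactly as you describe. You have simply written out the details the paper leaves implicit, so there is nothing to add.
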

\begin{lem}[{Generalized maximum principle, cf. \cite[Theorem~2.6]{Agmon}}]\label{324}
	Let $\Gw$ be a domain in $\R^n$. For any $\gl \leq  \lambda_0(P,B,1,\Gw)$,  the operator $(P-\gl,B)$ satisfies the generalized maximum principle in any Lipschitz  subdomain  $\Gw'\Subset_R \Gw$. Hence, $(-\infty,\lambda_0(P,B,1,\Gw)) \subset  \gr(\tilde P,\tilde B,\Gw')$.
\end{lem}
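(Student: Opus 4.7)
The plan is to leverage the existence of a positive solution at the principal eigenvalue (guaranteed by Corollary~\ref{cor_gwmp}) as a \emph{regular positive supersolution} for each $(P-\lambda,B)$ with $\lambda\leq \lambda_0$, and then reduce the generalized maximum principle on subdomains to Lemma~\ref{gen_max_weak}, which is precisely designed for this situation via the ground state transform. The spectral consequence will then follow from the Fredholm alternative encoded in Corollary~\ref{cor_glc}.

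Concretely, first I would pick $u\in \mathcal H^{0}_{P-\lambda_0,B}(\Omega)$, produced by Corollary~\ref{cor_gwmp}. For any $\lambda\leq \lambda_0$, the algebraic identity $(P-\lambda)u=(\lambda_0-\lambda)u\geq 0$ together with $Bu=0$ shows that $u$ is a positive weak supersolution of $(P-\lambda,B)$ in $\Omega$. I would then verify that $u\in \mathcal{RSH}_{P-\lambda,B}(\Omega)$, i.e.\ that $u,u^{-1},u(P-\lambda)u\in L^{\infty}_{\loc}(\wb)$ and $uB u/\beta\in L^{\infty}_{\loc}(\pwr)$; the last two are trivial since $(P-\lambda)u=(\lambda_0-\lambda)u$ and $Bu=0$, while the first two follow from the interior Harnack inequality of Lemma~\ref{lem_whi} combined with the up-to-the-boundary positivity and H\"older continuity statements in Lemma~\ref{1.20}. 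With $u\in \mathcal{RSH}_{P-\lambda,B}(\Omega)$ in hand, Lemma~\ref{gen_max_weak} immediately yields the generalized maximum principle for $(P-\lambda,B)$ on any Lipschitz subdomain $\Omega'\Subset_R \Omega$.

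For the resolvent statement, I would use that by Corollary~\ref{cor_glc}, $\sigma(\tilde P,\tilde B,\Omega')=\sigma_{\mathrm{point}}(\tilde P,\tilde B,\Omega')$ consists of isolated eigenvalues of finite multiplicity, so it suffices to show that no $\lambda<\lambda_0$ is an eigenvalue. Assuming for contradiction the existence of an eigenfunction $0\ne v\in H^{1}_{\partial\Omega'_{\mathrm{Dir}}}(\Omega')$ of $(\tilde P-\lambda,\tilde B)$, both $v$ and $-v$ are solutions of $(P-\lambda,B)w=0$ in $\Omega'$; since $v\in H^{1}_{\partial\Omega'_{\mathrm{Dir}}}(\Omega')$ implies $v^\pm\in H^{1}_{\partial\Omega'_{\mathrm{Dir}}}(\Omega')$, applying the generalized maximum principle from the previous paragraph to both $v$ and $-v$ forces $v\equiv 0$, a contradiction. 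I expect the main (minor) obstacle to be the $\mathcal{RSH}$ verification near the Robin portion: one must know that $u$ has a positive lower bound on compact subsets of $\wb$, and this is exactly the content of the up-to-the-boundary Harnack-type estimate in Lemma~\ref{1.20}(ii); once that is invoked, everything else is bookkeeping.
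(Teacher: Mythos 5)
Your argument for the main assertion is essentially the paper's: take $u\in \mathcal H^{0}_{P-\lambda_0,B}(\Gw)$ from Corollary~\ref{cor_gwmp}, observe that $(P-\gl)u=(\gl_0-\gl)u\geq 0$ and $Bu=0$ make $u$ a regular positive supersolution of $(P-\gl,B)$, and invoke Lemma~\ref{gen_max_weak} on any Lipschitz $\Gw'\Subset_R\Gw$; your explicit verification of the $\mathcal{RSH}$ conditions via Lemmas~\ref{lem_whi} and \ref{1.20} is exactly what the paper leaves implicit. You diverge only in the resolvent statement: the paper deduces from the generalized maximum principle the \emph{uniqueness} of the solution of $(P-\gl,B)v=f$ with $v\in H^1_{\partial\Gw'_{\Dir}}(\Gw')$ and then concludes $\gl\in\gr(\tilde P,\tilde B,\Gw')$ by the Fredholm alternative, whereas you use Corollary~\ref{cor_glc} ($\gs=\gs_{\mathrm{point}}$, discrete) and rule out $\gl$ as an eigenvalue by applying the maximum principle to $\pm v$. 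Both routes are legitimate and rest on the same compactness input; the only point you should add is that an eigenfunction $v$ for a real $\gl$ may a priori be complex-valued, so you must first pass to $\re v$ or $\im v$ (both solve $(P-\gl,B)w=0$ since the coefficients and $\gl$ are real, and at least one is nonzero), or alternatively invoke Kato's inequality (Lemma~\ref{cor_kato}) as the paper does in the analogous Lemma~\ref{2direction}; with that one-line fix, and the standard fact that $v\in H^1_{\partial\Gw'_{\Dir}}(\Gw')$ implies $v^{\pm}\in H^1_{\partial\Gw'_{\Dir}}(\Gw')$, your proof is complete.
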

\begin{proof}
By Corollary~\ref{cor_gwmp},  there exists $u\in \mathcal{H}^{0}_{P-\gl_0,B}(\Gw)$.   Consequently, for any  Lipschitz  subdomain  $\Gw'\Subset_R \Gw$, we have $u>0$ in $\overline{\Gw'}$. In light of Lemma~\ref{gen_max_weak}, $(P-\gl,B)$ satisfies the generalized maximum principle  in $\Gw'$ for any $\gl\leq \gl_0$.
	
	 The generalized maximum principle implies the uniqueness of a solution $v$ to the problem 
	\begin{align*}	
	(P-\lambda) v&=f\in L^2(\Gw')  \quad \text{~in~} \Gw', \\
	Bv&=0 \quad  \text{~on~} \pwr',\mbox{ and } v\in H^1_{\partial\Gw'_{\mathrm{Dir}}}(\Gw').
	\end{align*}  
Therefore, the Fredholm alternative implies that $\gl\not\in \gs(\tilde{P},\tilde{B},\Gw')$.
\end{proof}
The following lemma is well known in the case $\pwr=\emptyset$ \cite[Lemma~2.7]{Agmon}.
\begin{lem}\label{u_minus}
	Let $\Gw$ be a Lipschitz domain, and suppose that  $u\in  H^{1}_{\loc}(\wb)$ satisfies $(P-\lambda,B)u \geq 0$ in $\Gw$ for some $\gl\in \R$. Then $(P-\lambda,B)u^{-} \leq 0$ in $\Gw$.
\end{lem}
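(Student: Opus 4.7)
The statement is a weak Kato-type inequality: for every nonnegative $\phi\in \mathcal{D}(\Omega,\Pw_{\mathrm{Dir}})$, I must show $\mathcal B_{P-\lambda,B}(u^-,\phi)\leq 0$. The approach is to test the hypothesis $\mathcal B_{P-\lambda,B}(u,\cdot)\geq 0$ against a nonnegative regularization of $\phi\,\chi_{\{u<0\}}$ and pass to the limit, thereby localizing the inequality on the set $\{u<0\}$ where $u^-=-u$. Concretely, for $\varepsilon>0$ let $\eta_\varepsilon\colon\R\to[0,1]$ be the piecewise-affine non-increasing function equal to $1$ on $(-\infty,-\varepsilon]$, equal to $0$ on $[0,\infty)$, and linear on $[-\varepsilon,0]$, so that $\eta_\varepsilon'=-\tfrac{1}{\varepsilon}\chi_{(-\varepsilon,0)}\leq 0$. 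By the Sobolev chain rule under Lipschitz nonlinearities, $\eta_\varepsilon(u)\in H^1_{\loc}(\wb)$, hence $\psi_\varepsilon:=\phi\,\eta_\varepsilon(u)\in H^1_{\Pw_{\mathrm{Dir}}}(\Omega)$ is nonnegative with compact support in $\wb$. Since the coefficients of $(P,B)$ are in the appropriate local $L^p$ spaces on $\supp\phi$, the pairing $\mathcal B_{P-\lambda,B}(u,\cdot)$ extends by density and continuity from $\mathcal{D}(\Omega,\Pw_{\mathrm{Dir}})$ to admit $\psi_\varepsilon$ as a test function, yielding $\mathcal B_{P-\lambda,B}(u,\psi_\varepsilon)\geq 0$.

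Expanding $D_i\psi_\varepsilon=\eta_\varepsilon(u)D_i\phi+\phi\,\eta_\varepsilon'(u)D_iu$ and reorganizing gives $0\leq I_\varepsilon+J_\varepsilon$, where $I_\varepsilon$ collects the terms with factor $\eta_\varepsilon(u)$ and
$$
J_\varepsilon:=\int_{\Omega}\phi\,\eta_\varepsilon'(u)\bigl[a^{ij}D_juD_iu+u\,\bt^iD_iu\bigr]\dx.
$$
The quadratic contribution to $J_\varepsilon$ is nonpositive, since $\phi\geq 0$, $\eta_\varepsilon'\leq 0$ and $A$ is positive definite. The linear-in-$\nabla u$ contribution is the delicate term: it carries the singular factor $\eta_\varepsilon'\sim-1/\varepsilon$, but on its support $\{-\varepsilon<u<0\}$ the cancellation $|u\,\eta_\varepsilon'(u)|\leq 1$ holds, so the integrand is dominated uniformly in $\varepsilon$ by $\phi\,|\bt||\nabla u|\in L^1(\supp\phi)$; as $\bigcap_{\varepsilon>0}\{-\varepsilon<u<0\}=\emptyset$, dominated convergence forces this contribution to vanish. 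Hence $\limsup_{\varepsilon\to 0}J_\varepsilon\leq 0$.

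For $I_\varepsilon$, using $\eta_\varepsilon(u)\to\chi_{\{u<0\}}$ pointwise off $\{u=0\}$ combined with Stampacchia's theorem ($\nabla u=0$ a.e.\ on $\{u=0\}$), dominated convergence, together with the analogous convergence for the boundary trace on $\Pw_{\mathrm{Rob}}$, yields
$$
\lim_{\varepsilon\to 0}I_\varepsilon=\int_{\{u<0\}}\!\!\bigl[(a^{ij}D_ju+u\bt^i)D_i\phi+(\bb^iD_iu+(c-\lambda)u)\phi\bigr]\dx+\int_{\Pw_{\mathrm{Rob}}\cap\{u<0\}}\!\!\frac{\gamma}{\beta}u\phi\dsigma.
$$
Because $u^-=-u\,\chi_{\{u<0\}}$ and $\nabla u^-=-\chi_{\{u<0\}}\nabla u$ a.e., the right-hand side equals $-\mathcal B_{P-\lambda,B}(u^-,\phi)$. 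Passing to the limit in $0\leq I_\varepsilon+J_\varepsilon$ therefore gives $\mathcal B_{P-\lambda,B}(u^-,\phi)\leq 0$, which is the desired weak inequality. The main obstacle is the handling of $J_\varepsilon$: the singular factor $1/\varepsilon$ is tamed only through the pointwise cancellation $|u|\leq\varepsilon$ on $\{-\varepsilon<u<0\}$ and by dominated convergence; the remaining steps are routine Sobolev chain-rule manipulations and bookkeeping.
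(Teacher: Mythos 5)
Your proof is correct and follows essentially the same route as the paper: there too one tests the supersolution inequality against an approximation of $\chi_{\{u<0\}}\,\phi$ (the paper's choice is $\phi_\varepsilon=\tfrac12\bigl(1-u/\sqrt{u^2+\varepsilon^2}\bigr)\phi$ rather than your $\phi\,\eta_\varepsilon(u)$), discards a nonnegative multiple of $|\nabla u|_A^2$, and passes to the limit by dominated convergence via the same cancellation that you express as $|u\,\eta_\varepsilon'(u)|\le 1$. The only point left implicit in both arguments is the density step allowing nonnegative bounded $H^1$ test functions supported away from $\pwd$ in the supersolution inequality.
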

\begin{proof}
	Without loss of generality, we may assume that $\gl=0$.
	Clearly $u^{-}$ and $|u|$ belong to $ H^{1}_{\loc}(\wb)$.
	For any $\varepsilon>0$ define $u_{\varepsilon}=\sqrt{u^2+\varepsilon^2}$.
	Then $u_{\varepsilon},u/u_{\varepsilon}$ belong to $H^{1}_{\loc}(\wb)$ and $u_{\varepsilon}\to |u|$ in $H^{1}_{\loc}(\wb)$ as $\varepsilon \to 0$.
	Next, let $0\leq \phi\in \mathcal{D}(\Gw,\pwd)$. Then 
	\begin{multline*}
	A(x)\nabla u_\vge \cdot \nabla \phi=
	A(x)\frac{u}{u_{\varepsilon}}\nabla u \cdot\nabla \phi   
	\leq 
	\\
	A(x)\nabla u\cdot \frac{u}{u_{\varepsilon}} \nabla \phi +A(x)\nabla u\cdot\frac{\phi} {u_{\varepsilon}}\left(1-\frac{u^2}{u_{\varepsilon}^2}\right)\nabla u=
	A(x)\nabla u\cdot \nabla \left( \phi\frac{u}{u_{\varepsilon}}\right )
	\quad \text{~a.e~ in~} \Gw.
	\end{multline*}
Consider the function   $\phi_{\varepsilon}:=\frac{1}{2}(1-\frac{u}{u_{\varepsilon}})\phi$, then
	$$
	A(x)\frac{\nabla (u_{\varepsilon}-u)}{2}\cdot \nabla \phi \leq 
	-A(x)\nabla u \cdot \nabla \phi_{\varepsilon.}.
	$$
	Note that $0\leq \phi_{\varepsilon}\in H^{1}_{\pwd}(\Gw)\cap L^{\infty}(\Gw)$. 
	Since $(P-\lambda,B)u \geq 0$ it follows that 
	$$
	-\int_{\Gw}A(x)\nabla u \cdot \nabla \phi_{\varepsilon} \dx \leq 
	\int _{\Gw}(u\bt\cdot  \nabla \phi_{\varepsilon}+\bb \cdot \nabla u \phi_{{\varepsilon}}+cu \phi_{\varepsilon}) \dx+\int_{\pwr}
	\frac{\gamma}{\beta}u\phi_{\varepsilon}
	\text{d}\sigma.
	$$
 Therefore,
	\begin{equation}\label{agepsilon}
	\int_{\Gw}A(x) \frac{\nabla(u_{\varepsilon}-u)}{2}\cdot \nabla \phi  \dx\leq 
	\int _{\Gw}(u\bt\cdot \nabla \phi_{\varepsilon}+\bb\cdot \nabla u \phi_{{\varepsilon}}+cu \phi_{\varepsilon}) \dx+\int_{\pwr}
	\frac{\gamma}{\beta}u\phi_{\varepsilon}
	 \,\text{d}\sigma.
	\end{equation}
	Notice that $\phi_{\varepsilon} \to (\sign u^{-})\phi$ as $\varepsilon \to 0$ and  $0\leq\phi_{\varepsilon} \leq \phi$.
	Moreover, 
	\begin{align*}&
	\lim_{\varepsilon \to 0}\nabla \phi_{\varepsilon}= \lim_{\varepsilon \to 0}\frac{1}{2}\left(1-\frac{u}{u_{\varepsilon}}\right)\nabla \phi-\frac{\phi}{2}
	\lim_{\varepsilon \to 0}\frac{u_{\varepsilon}\nabla u-\dfrac{u^2\nabla u}{u_{\varepsilon}}}{u_{\varepsilon}^2}= (\sign u^{-})\nabla \phi .
	\end{align*}
	Letting $\varepsilon\to 0$ in \eqref{agepsilon}, we see that
	$$
	\int_{\Gw}A(x)\nabla u^{-}\cdot \nabla \phi  \dx\leq 
	-\int _{\Gw}(u^{-}\bt\cdot \nabla \phi+\bb \cdot\nabla u^{-} \phi+cu^{-} \phi )\dx - \int_{\pwr}
	\frac{\gamma}{\beta}u^{-}\phi
	\,\text{d}\sigma. \;\; \qedhere
	$$
\end{proof}
We need the following Kato-type inequality for the operator $(P,B)$.
\begin{lemma}[{cf. \cite[Lemma~2.8]{Agmon}}]\label{kato}
		Let $u\in  H^{1}_{\loc}(\wb;\C)$ satisfy 
		$(P,B)u=f$ in $\Gw$, where $f\in  L^1_{\loc}(\wb)$.
		Then 
		\begin{equation}\label{Kato_equa}
		(P,B)|u|\leq \mathrm{Re}\left (\frac{\overline{u}}{|u|}f\right ).
		\end{equation}
\end{lemma}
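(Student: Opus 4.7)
The plan is to establish the inequality in its weak (bilinear-form) formulation, that is, to show that for every nonnegative $\phi\in\mathcal{D}(\Gw,\pwd)$,
\begin{equation*}
\mathcal B_{P,B}(|u|,\phi)\leq \int_{\Gw}\mathrm{Re}\!\left(\tfrac{\overline u}{|u|}f\right)\phi\dx,
\end{equation*}
with the convention $\overline u/|u|:=0$ on $\{u=0\}$. The standard device is the regularization $u_\vge:=\sqrt{|u|^2+\vge^2}$, which belongs to $H^1_{\loc}(\wb)$ together with its positive lower bound $u_\vge\geq\vge$, and satisfies the key identity
\begin{equation*}
u_\vge\, D_i u_\vge=\mathrm{Re}(\overline u\, D_i u),\qquad i=1,\dots,n.
\end{equation*}
A direct consequence is that $u_\vge\to|u|$ and $\nabla u_\vge\to\nabla|u|$ in $L^2_{\loc}(\wb)$ as $\vge\to 0$.

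I would then plug into the weak formulation of $(P,B)u=f$ the admissible complex test function $\psi_\vge:=\phi\,\overline u/u_\vge$: it belongs to $H^1_{\pwd}(\Gw)\cap L^\infty$ with compact support in $\overline\Gw\setminus\pwd$, so it is a legitimate test function by density. Writing $u=u_1+iu_2$ and taking real parts yields, term by term:
\begin{itemize}
\item \textbf{Principal term.} Expanding $D_i\psi_\vge$ and using $u_\vge D_j u_\vge=\mathrm{Re}(\overline u D_j u)$, a computation gives
\begin{equation*}
\mathrm{Re}(a^{ij}D_j u\, D_i\psi_\vge)
 =a^{ij}D_j u_\vge D_i\phi+\frac{\phi}{u_\vge^3}\Big(|u_2\nabla u_1-u_1\nabla u_2|_A^2+\vge^2(|\nabla u_1|_A^2+|\nabla u_2|_A^2)\Big),
\end{equation*}
the second summand being pointwise nonnegative by the positive definiteness of $A$. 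Dropping it gives $a^{ij}D_j u_\vge D_i\phi \leq \mathrm{Re}(a^{ij}D_j u\, D_i\psi_\vge)$.
\item \textbf{Lower-order terms.} A direct computation yields $\mathrm{Re}(\bb^i D_i u\,\psi_\vge)=\bb^i D_i u_\vge\,\phi$ and $\mathrm{Re}(cu\,\psi_\vge)=c|u|^2\phi/u_\vge$. The latter differs from $cu_\vge\phi$ by $c\phi\,\vge^2/u_\vge$, which tends to $0$ as $\vge\to 0$. A parallel statement holds for the drift term $u\bt^i D_i\psi_\vge$ and for the boundary term $\gamma u\psi_\vge/\beta$, the differences being controlled by $\vge^2/u_\vge\leq\vge$.
\end{itemize}

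Summing these estimates and using $\mathcal B_{P,B}(u,\psi_\vge)=\int_{\Gw} f\psi_\vge\dx$ produces
\begin{equation*}
\mathcal B_{P,B}(u_\vge,\phi)\leq \mathrm{Re}\!\int_{\Gw} f\,\phi\,\frac{\overline u}{u_\vge}\dx+\mathrm{err}(\vge),\qquad \mathrm{err}(\vge)\to 0.
\end{equation*}
Finally I pass to the limit $\vge\to 0$. On the left, the $H^1_{\loc}$ convergence $u_\vge\to|u|$ together with the boundedness of the coefficients on $\supp\phi$ yields $\mathcal B_{P,B}(u_\vge,\phi)\to\mathcal B_{P,B}(|u|,\phi)$. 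On the right, $|\overline u/u_\vge|\leq 1$ pointwise and $\overline u/u_\vge\to \overline u/|u|\cdot \chi_{\{u\neq 0\}}$ a.e., so dominated convergence (with the dominating function $|f|\phi\in L^1$) delivers the desired limit.

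The main technical obstacle is the algebraic rearrangement of the principal term to exhibit a nonnegative remainder, which uses the symmetry and positive definiteness of $A$ in an essential way; once that identity is in place, the passage to the limit in the lower-order and boundary terms, and the admissibility of $\psi_\vge$ as a test function, are relatively routine thanks to the local regularity assumptions of Assumptions~\ref{assump2} and the uniform bound $|\overline u/u_\vge|\leq 1$.
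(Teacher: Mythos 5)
Your argument is correct and follows essentially the same route as the paper: regularize with $u_\vge=\sqrt{|u|^2+\vge^2}$, test the equation with $\phi\,\overline u/u_\vge$, use the pointwise Kato-type inequality for the principal part (your explicit nonnegative remainder is exactly the inequality $|\nabla u|_A^2-|\nabla u_\vge|_A^2\geq 0$ the paper quotes from Kato), and let $\vge\to 0$. The only difference is cosmetic: the paper first reduces to $\bb=\bt=c=0$ and refers to Agmon for the lower-order terms, whereas you carry those (and the boundary term) through the same computation directly.
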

\begin{proof}
	Assume first that $\bb=\bt=c=0$.
	Let $u_{\varepsilon}:=\sqrt{|u|^2+\varepsilon^2}$ and let $0\lneqq \phi\in \mathcal{D}(\Gw,\pwd)$.
	Then $\nabla u_{\varepsilon}=\dfrac{\nabla u \overline{u}+u\nabla \overline{u}}{2u_{\varepsilon}}$. Moreover,  (cf. \cite[(5.6)]{KATO}),
	$|\nabla u|_A^2-|\nabla u_{\varepsilon}|_A^2\geq 0$ a.e in $\Gw$. 
	Thus,
	\begin{multline*}
	\int_{\Gw}A\nabla u_{\varepsilon}\! \cdot \! \nabla \phi \dx+
	\int_{\pwr}\frac{\gamma}{\beta}\phi u\frac{\overline{u}}{2u_{\varepsilon}} \!\dsigma +
	\int_{\pwr}\frac{\gamma}{\beta}\phi \overline{u}\frac{u}{2u_{\varepsilon}} \!\dsigma=\\[2mm] 
	\int_{\Gw}\!\left(\!A(x)\nabla u \!\cdot\! \nabla \!\left( \phi \frac{\overline{u}}{2u_{\varepsilon}}\!\right )\!+
	A(x)\nabla \overline{u}\!\cdot\! \nabla \left ( \phi \frac{u}{2u_{\varepsilon}}\!\right ) \!\right)\!\!\dx+
\int_{\pwr}\!\frac{\gamma}{\beta}\phi u\frac{\overline{u}}{2u_{\varepsilon}} \,\text{d} \sigma+
\int_{\pwr}\frac{\gamma}{\beta}\phi \overline{u}\frac{u}{2u_{\varepsilon}} \,\text{d} \sigma \\[2mm]
-\frac{1}{2}\int_{\Gw}\!\left(\!	A\nabla u\cdot \nabla \left ( \frac{\overline{u}}{u_{\varepsilon}}\right )\phi+
	A\nabla \overline{u}\cdot \nabla \left ( \frac{u}{u_{\varepsilon}}\right )\phi\!\right)\! \dx= \\[2mm]
	\int_{\Gw}\!\left(\!\text{Re}\left (\frac{\overline{u}}{u_{\varepsilon}} f \right )\phi-\frac{2}{u_{\varepsilon}}(|\nabla u|_A^2-|\nabla u_{\varepsilon}|_A^2 )\phi\!\right)\!\! \dx\leq \int_{\Gw}\text{Re}\left (\frac{\overline{u}}{u_{\varepsilon}}f\right )\!\phi
	\dx.
	\end{multline*}
	Letting $\varepsilon\to 0$ we obtain \eqref{Kato_equa}.
	
	 The general case follows as in to the proof of Lemma~2.8 in \cite{Agmon}.
\end{proof}
An immediate corollary is the following lemma.
\begin{lemma}[{cf. \cite[Lemma~2.8]{Agmon}}]\label{cor_kato}
	Let $u\in  H^{1}_{\loc}(\wb)$ satisfy 
		$(P-\lambda,B)u=0$ in $\Gw$.
		Then $$(P,B)|u|\leq \mathrm{Re}(\lambda) |u|.$$
\end{lemma}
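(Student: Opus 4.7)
The plan is to deduce this from the Kato-type inequality of Lemma~\ref{kato} by choosing the right function $f$. Since $u$ is assumed to satisfy $(P-\gl,B)u = 0$ in $\Gw$, rewriting this as $(P,B)u = \gl u$ puts us in the setting of Lemma~\ref{kato} with $f := \gl u \in L^1_{\loc}(\wb)$ (the local integrability being immediate because $u \in H^1_{\loc}(\wb) \subset L^2_{\loc}(\wb)$).

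Applying Lemma~\ref{kato} with this choice of $f$, I obtain
\begin{equation*}
(P,B)|u| \leq \mathrm{Re}\!\left(\frac{\overline{u}}{|u|} f\right) = \mathrm{Re}\!\left(\frac{\overline{u}}{|u|}\, \gl u\right) = \mathrm{Re}(\gl)\, |u|,
\end{equation*}
where in the last equality I used $\overline{u}\, u = |u|^2$, so that $\overline{u}\, u/|u| = |u|$ (with the convention that the right-hand side vanishes on the zero set of $u$, which is consistent with both sides of the inequality). This is exactly the claimed bound.

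There is essentially no obstacle here beyond invoking the previous lemma with the correct substitution; the only mild point to handle is the interpretation of $\overline{u}/|u|$ on $\{u = 0\}$, which is treated in the same way as in Lemma~\ref{kato} (the pointwise quotient is understood via the regularization $u_\vge = \sqrt{|u|^2 + \vge^2}$, so that no ambiguity arises in the limit). Thus the entire proof reduces to a single line of computation and a reference.
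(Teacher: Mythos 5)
Your proposal is correct and is exactly the argument the paper intends: Lemma~\ref{cor_kato} is stated there as an immediate corollary of Lemma~\ref{kato}, obtained by taking $f=\gl u$ and computing $\mathrm{Re}\big(\tfrac{\overline{u}}{|u|}\,\gl u\big)=\mathrm{Re}(\gl)\,|u|$. Your handling of the set $\{u=0\}$ via the regularization $u_\vge$ matches the treatment in the proof of Lemma~\ref{kato}, so nothing is missing.
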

We use Lemma~\ref{u_minus} to obtain a seemingly stronger generalized maximum principle which holds for  $\lambda<\Gg(P,B,\Gw)$, where $\Gw$ is a Lipschitz bounded domain. 
Since $(P-\gl,B)$ is coercive for any $\gl<\Gl$, it follows that  $\Gl\leq \Gg(P,B,\Gw)$. 
\begin{lem}\label{utilz}
Let Assumptions~\ref{assump1} hold  in a domain $\Gw$, and assume that $\lambda<\Gg$. Then $(P-\lambda,B)$ satisfies the generalized maximum principle in $\Gw$.
\end{lem}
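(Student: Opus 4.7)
Set $w := u^- \in H^1_{\pwd}(\Gw)$, which is nonnegative. Lemma~\ref{u_minus} yields $(P-\lambda, B) w \leq 0$ weakly, i.e.\ $\mathcal B_{P-\lambda,B}(w,\phi)\le 0$ for all $\phi\in \mathcal D(\Gw,\pwd)$ with $\phi\ge 0$. By the density of $\mathcal D(\Gw,\pwd)$ in $H^1_{\pwd}(\Gw)$ and the boundedness of $\mathcal B_{P-\lambda,B}$ on $H^1_{\pwd}(\Gw)\times H^1_{\pwd}(\Gw)$ (Theorem~\ref{coercivity}(i)), this inequality extends to every nonnegative $\phi\in H^1_{\pwd}(\Gw)$. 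The aim is to conclude $w=0$.

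Fix $\mu>\max(\mu_0,-\lambda)$, where $\mu_0$ is as in Theorem~\ref{coercivity}, so that $\mathcal B_{P+\mu,B}$ is coercive on $H^1_{\pwd}(\Gw)$ with some constant $\delta>0$, and $\lambda+\mu>0$. The first intermediate fact I would prove is a coercivity-based weak maximum principle for $(P+\mu,B)$: if $h\in H^1(\Gw)$ satisfies $\mathcal B_{P+\mu,B}(h,\phi)\ge 0$ for all $\phi\in H^1_{\pwd}(\Gw)$ with $\phi\ge 0$, and $h^-\in H^1_{\pwd}(\Gw)$, then $h\ge 0$. Indeed, testing with $\phi=h^-$ and using the standard disjoint-support identities $\nabla h^+\cdot\nabla h^-=0$ and $h^+h^-=0$ a.e.\ in $\Gw$ (and in the trace sense on $\pwr$), every summand of $\mathcal B_{P+\mu,B}(h^+,h^-)$ vanishes; hence $\mathcal B_{P+\mu,B}(h^-,h^-)\le 0$, and coercivity forces $h^-=0$. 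Applied to $h=R_\Gw(-\mu)f$ for $f\in L^2(\Gw)$ with $f\ge 0$, this shows that the compact resolvent $R_\Gw(-\mu):L^2(\Gw)\to H^1_{\pwd}(\Gw)$ from Theorem~\ref{coercivity}(iii) is positivity-preserving.

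Set $T:=(\lambda+\mu)R_\Gw(-\mu)$, a compact, positivity-preserving operator on $L^2(\Gw)$. The extended inequality above reads $\mathcal B_{P+\mu,B}(w,\phi)\le (\lambda+\mu)\int_\Gw w\phi \dx=\mathcal B_{P+\mu,B}(Tw,\phi)$ for every nonnegative $\phi\in H^1_{\pwd}(\Gw)$, and applying the maximum principle of the previous paragraph to $Tw-w\in H^1_{\pwd}(\Gw)$ gives $w\le Tw$ a.e. Since $T$ preserves positivity, one obtains inductively $T^{n+1}w-T^n w=T(T^n w-T^{n-1}w)\ge 0$, whence $0\le w\le Tw\le T^2 w\le\cdots\le T^n w$ a.e.\ for every $n\ge 1$.

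It remains to show $T^n w\to 0$ in $L^2(\Gw)$. By Corollary~\ref{cor_glc}, $\sigma(R_\Gw(-\mu))\setminus\{0\}=\{(\eta+\mu)^{-1}:\eta\in\sigma(\tilde P,\tilde B)\}$. For any such $\eta$, $\mathrm{Re}(\eta)\ge \Gamma$, and since $\mu$ is chosen large enough that $\Gamma+\mu>0$, one has $|\eta+\mu|^2=(\mathrm{Re}\,\eta+\mu)^2+(\mathrm{Im}\,\eta)^2\ge (\Gamma+\mu)^2$; hence the spectral radius satisfies $r(R_\Gw(-\mu))\le (\Gamma+\mu)^{-1}$ and $r(T)\le(\lambda+\mu)/(\Gamma+\mu)<1$ since $\lambda<\Gamma$. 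Gelfand's formula then gives $\|T^n\|_{L^2\to L^2}\to 0$, so $T^n w\to 0$ in $L^2(\Gw)$. Extracting an a.e.\ convergent subsequence and passing to the limit in $w\le T^n w$ yields $w\le 0$, which together with $w\ge 0$ forces $w=u^-=0$, i.e.\ $u\ge 0$. The main delicate point is the spectral-radius bound, which couples the compactness of $R_\Gw(-\mu)$ with the characterization of $\Gamma$ as the infimum of real parts of $\sigma(\tilde P,\tilde B)$.
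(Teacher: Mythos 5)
Your proof is correct, but it takes a genuinely different route from the paper. Both arguments start from Lemma~\ref{u_minus} and both reduce the claim to an iteration involving a positive resolvent, but the paper proceeds in two stages: first it treats $\lambda<\Gl$ directly, combining $\mathcal B_{P-\lambda,B}(u^-,u^-)\le 0$ with the defining inequality $\mathcal B_{P-\lambda,B}(u^-,u^-)\ge(\Gl-\lambda)\|u^-\|_{L^2}^2$; then, for $\Gl\le\lambda<\Gg$, it fixes $\lambda'<\Gl$, iterates $u^-\le(\lambda-\lambda')^kR_{\Gw}(\lambda')^ku^-$, and concludes via the operator-norm bound $\|R_{\Gw}(\lambda')\|\le 1/\operatorname{dist}(\lambda',\gs(\tilde P,\tilde B))\le 1/|\Gg-\lambda'|$, following Agmon. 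You instead shift by a large constant $\mu$ so that $\mathcal B_{P+\mu,B}$ is coercive, prove positivity of $R_{\Gw}(-\mu)$ by the standard $h^-$ test (the disjoint-support cancellations you invoke, including the trace identity on $\pwr$, are all valid), obtain $0\le u^-\le T^n u^-$ with $T=(\lambda+\mu)R_{\Gw}(-\mu)$, and kill the iterates through Gelfand's formula, using Corollary~\ref{cor_glc} and $\mathrm{Re}\,\gs(\tilde P,\tilde B)\ge\Gg$ to get $r(T)\le(\lambda+\mu)/(\Gg+\mu)<1$. What your version buys: it bypasses $\Gl$ entirely and, more importantly, it replaces the norm estimate in terms of the distance to the spectrum (which for a non-self-adjoint realization is a delicate point) by a spectral-radius estimate, which only needs compactness of the resolvent and the definition of $\Gg$; what the paper's version buys is a shorter argument once that resolvent bound is accepted, plus the explicit identification of the coercive regime $\lambda<\Gl$ where the quadratic-form inequality alone suffices. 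The only step you should flag explicitly is the extension of the weak inequalities from nonnegative $\phi\in\mathcal D(\Gw,\pwd)$ to nonnegative $\phi\in H^1_{\pwd}(\Gw)$ (and the fact that $v\in H^1_{\pwd}(\Gw)$ implies $v^\pm\in H^1_{\pwd}(\Gw)$); this is a routine truncation-plus-mollification argument, consistent with how the paper itself tests with nonnegative $H^1_{\pwd}$ functions, but it deserves a sentence.
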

\begin{proof}
	We apply Agmon's method in \cite[Lemma~2.6]{Agmon}, where the Dirichlet problem is considered. 
		Fix $u\in H^1_{\loc}(\overline{\Gw}\setminus \pwd)$ with $u^{-}\in H^{1}_{\pwd}(\Gw)$ such that $(P-\lambda,B)u \geq 0$ in $\Gw$. We need to show that 
	$u^{-}=0$ in $\Gw$.
	
	Assume first that $\lambda < \Gl$.
	Lemma~\ref{u_minus} implies that
	$(P-\lambda,B)u^{-}\leq 0$ in $\Gw$, and therefore,
	$$
	\mathcal{B}_{P-\lambda,B}(u^{-},u^{-})\leq 0.
	$$
	On the other hand, the definition of $\Gl$ implies
	$$
	\mathcal{B}_{P-\lambda,B}(u^{-},u^{-})\geq 
	(\Gl-\lambda)\|u^- \|_{L^2}^{2},
	$$
	and therefore
	$u^{-}=0$.  So, the generalized maximum principle holds true for $(P-\lambda,B)$ in $\Gw$ for $\lambda < \Gl$.
	
	Next, specify $\lambda'<\Gl<\lambda<\Gg$, and let $u$ as above. 
	Then by Lemma~\ref{u_minus} we have,
	\begin{equation}\label{resol}
	(P-\lambda',B)u^{-}\leq (\lambda-\lambda')u^{-} \qquad \mbox{ in } \Gw.
	\end{equation}
	By  the generalized maximum principle for $(P-\lambda',B)$ in $\Gw$ (proved above), we have that 
		$$u^{-}\leq (\lambda-\lambda')R_{\Gw}(\lambda')u^{-} \qquad \mbox{in} \ \Gw.$$
		By \eqref{resol} it follows  that  $(P-\lambda')u^{-}\leq (\lambda-\lambda')^2R_{\Gw}(\lambda')u^{-}$, and by induction we obtain
	\begin{equation}\label{uminuslimit}
	u^{-}\leq (\lambda-\lambda')^kR_{\Gw}(\lambda')^{k}u^{-}\qquad  \forall k\in \N.
	\end{equation}
	The inequality $\| R_{\Gw}(\lambda')\|\leq
		\dfrac{1}{\mathrm{dist}(\lambda',\gs (\tilde{P},\tilde{B}))} \leq 
		\dfrac{1}{|\Gg-\lambda'|}$ implies
	$$
	|\lambda-\lambda'| \| R_{\Gw}(\lambda') \|\leq 
	\frac{|\lambda-\lambda'|}{|\Gg-\lambda'|}< q< 1.
	$$
	Letting $k \to 0$ in $\eqref{uminuslimit}$, we obtain $u^{-}=0$.
\end{proof}
\begin{rem}\label{imprem}
	\em{
	Suppose that $\Gg \notin \gs (\tilde{P},\tilde{B})$, then for $\gl'<\Gg$ we have $$ |\lambda'-\Gg| <  \mathrm{dist}(\lambda',\gs (\tilde{P},\tilde{B})) .$$ Therefore, the discreteness of $\gs (\tilde{P},\tilde{B})$ implies that   	
	there exists $\varepsilon>0$ such that for any $\lambda<\Gg+\varepsilon$ we have $|\lambda'-\lambda|\| R_{\Gw}(\lambda)\|<\tilde{q}<1$.  Repeating the proof of Lemma~\ref{utilz}, it  follows that $(P-\Gg-\varepsilon/2,B)$ satisfies the generalized maximum principle in $\Gw$. We note that in fact, $\Gg\in	\gs (\tilde{P},\tilde{B})$, see Theorem~\ref{agmonmethod} below.   
	}
\end{rem}
\subsection{Principal eigenvalue} In the present subsection we prove the existence of a principal eigenvalue in a
 Lipschitz bounded domain (cf. \cite{NP,BNV}).
\begin{defi}
	\em{
	We say that $\lambda_c$ is a {\em principal eigenvalue} of the operator $(\tilde{P},\tilde{B})$ in $\Gw$ and $u_c$ is its associated {\em principal eigenfunction}  if $u_c\in\mathcal{H}^{0}_{P-\lambda_c,B}(\Gw)\cap H^{1}_{\pwd}(\Gw)$.
}
\end{defi}
To establish the existence of $\lambda_c$ we use the following version of Krein-Rutman theorem.
\begin{theorem}[{Krein-Rutman-type theorem \cite[Corollary~2.2.3]{BI}}]\label{KTBIR}
	Let $$L^2_+(\Gw)=\{ f\in L^2(\Gw) \mid  f\geq 0\}.$$
	Suppose that $T:L^2(\Gw)\to L^2(\Gw)$ is a compact linear operator mapping $L^2_+(\Gw)$ into itself, and there exists $0\neq e\in L^2_+(\Gw)$ and a constant $\varrho>0$ such that $$
	(T-\varrho) e\in L^2_+(\Gw).
	$$  
	Then the spectral radius of $T$, denoted by $r(T)$, is positive  and there exists a nontrivial $v\in L^2_+(\Gw)$ satisfying $Tv=r(T)v$.
\end{theorem}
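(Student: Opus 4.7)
The plan is to combine the compactness of $T$ with a standard resolvent-approximation argument that is tailor-made for positive operators. I would begin by showing that the spectral radius satisfies $r(T)\geq \varrho>0$. Since $T$ preserves the cone $L^2_+(\Gw)$ and $(T-\varrho)e\in L^2_+(\Gw)$, an induction on $n$ (applying $T$ to both sides and using $T(L^2_+(\Gw))\subset L^2_+(\Gw)$) yields $T^n e\geq \varrho^n e$ a.e.\ in $\Gw$ for every $n\in\N$. Monotonicity of the $L^2$-norm on the cone then gives $\|T^ne\|_{L^2}\geq \varrho^n\|e\|_{L^2}$, hence $\|T^n\|\geq \varrho^n$, and Gelfand's formula implies
\[
r(T)=\lim_{n\to\infty}\|T^n\|^{1/n}\geq \varrho>0.
\]
Since $T$ is compact, the Riesz--Schauder theory then guarantees that $r(T)$ is an isolated eigenvalue of finite multiplicity; the remaining task is to produce an eigenvector that belongs to $L^2_+(\Gw)$.

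To this end I would exploit the fact that the resolvent preserves positivity. For every $\lambda>r(T)$ the Neumann series
\[
R(\lambda,T)=(\lambda-T)^{-1}=\sum_{n\geq 0}\lambda^{-n-1}T^n
\]
converges in operator norm and maps $L^2_+(\Gw)$ into itself. Fix a sequence $\lambda_k\downarrow r(T)$ and define
\[
v_k:=\frac{R(\lambda_k,T)e}{\|R(\lambda_k,T)e\|_{L^2}}\in L^2_+(\Gw),\qquad \|v_k\|_{L^2}=1,
\]
which is well defined since $(\lambda_k-T)R(\lambda_k,T)e=e\neq 0$. Applying $\lambda_k-T$ gives
\[
\lambda_kv_k-Tv_k=\frac{e}{\|R(\lambda_k,T)e\|_{L^2}}.
\]
If one can show $\|R(\lambda_k,T)e\|_{L^2}\to\infty$, then $\lambda_kv_k-Tv_k\to 0$ in $L^2(\Gw)$; compactness of $T$ allows the extraction of a subsequence along which $Tv_k\to w$, so $v_k\to r(T)^{-1}w=:v$ in $L^2(\Gw)$. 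Closedness of $L^2_+(\Gw)$ then gives $v\in L^2_+(\Gw)$, the normalization forces $\|v\|_{L^2}=1$ (so $v\neq 0$), and passing to the limit in the displayed identity yields $Tv=r(T)v$.

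The main obstacle, and the only non-routine ingredient, is proving the blow-up $\|R(\lambda_k,T)e\|_{L^2}\to\infty$. The natural argument is by contradiction: if this norm stayed bounded along a subsequence, compactness of $T$ would force $TR(\lambda_k,T)e$ to converge in $L^2$, hence (using $R(\lambda_k,T)e=\lambda_k^{-1}(e+TR(\lambda_k,T)e)$) $R(\lambda_k,T)e$ itself would converge to some $w\in L^2(\Gw)$ with $(r(T)-T)w=e\neq 0$. This would place $r(T)$ in the resolvent set of $T$, contradicting $r(T)\in\sigma(T)$. Once this step is in hand, the rest of the proof is a direct limiting argument that relies only on the closedness of the positive cone and the compactness of $T$.
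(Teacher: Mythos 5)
The paper itself offers no proof of this theorem (it is quoted from Birindelli's thesis), so your argument must stand on its own, and as written it does not. Your first step is fine: the induction giving $T^ne\geq\varrho^ne$, hence $\|T^n\|\geq\varrho^n$ and $r(T)\geq\varrho>0$ by Gelfand's formula, is correct and is the standard use of the hypothesis $(T-\varrho)e\in L^2_+(\Gw)$. The gaps come afterwards, precisely where positivity has to be used. First, Riesz--Schauder theory does \emph{not} give $r(T)\in\sigma(T)$: for a compact operator with $r(T)>0$ it only yields some eigenvalue of modulus $r(T)$, possibly non-real; that the number $r(T)$ itself is a spectral value is exactly the part of Krein--Rutman that needs invariance of the cone (e.g.\ a Pringsheim-type argument applied to $\sum_n\lambda^{-n-1}\langle T^nf,g\rangle$ with $f,g\geq0$, or the Banach-lattice result that $r(T)\in\sigma(T)$ for positive operators). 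Second, and more seriously, the blow-up $\|R(\lambda_k,T)e\|\to\infty$ is both wrongly derived and false. From boundedness you only produce a $w$ with $(r(T)-T)w=e$; solvability of this single equation does not place $r(T)$ in the resolvent set -- by the Fredholm alternative it is perfectly compatible with $r(T)$ being an eigenvalue (it happens whenever $e$ is orthogonal to $\ker(r(T)-T^*)$). Concretely, take $L^2$ of a two-point space, i.e.\ $\R^2$ with the cone $\R^2_+$, $T=\mathrm{diag}(1,2)$, $e=(1,0)$, $\varrho=1$: all hypotheses hold, $r(T)=2$, yet $R(\lambda,T)e=\big((\lambda-1)^{-1},0\big)$ stays bounded as $\lambda\downarrow2$, and your normalized vectors $v_k$ equal $(1,0)$ for every $k$, an eigenvector for the eigenvalue $1$, not for $r(T)$.

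The limiting scheme can be repaired, but not with the fixed vector $e$. Once $r(T)\in\sigma(T)$ is actually proved, use positivity of $R(\lambda,T)$ for $\lambda>r(T)$ and monotonicity of the norm: $|R(\lambda,T)g|\leq R(\lambda,T)|g|$ a.e., hence $\|R(\lambda,T)\|=\sup\{\|R(\lambda,T)f\|_{L^2}:\ f\in L^2_+(\Gw),\ \|f\|_{L^2}\leq1\}$, and since $\|R(\lambda_k,T)\|\geq \mathrm{dist}(\lambda_k,\sigma(T))^{-1}\to\infty$ you may pick $f_k\in L^2_+(\Gw)$, $\|f_k\|_{L^2}\leq1$, with $\|R(\lambda_k,T)f_k\|\to\infty$; running your compactness argument with $v_k:=R(\lambda_k,T)f_k/\|R(\lambda_k,T)f_k\|$ then produces the positive eigenvector. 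Alternatively, since $r(T)$ is a pole of the resolvent of the compact operator $T$, the leading Laurent coefficient $A_{-m}=\lim_{\lambda\downarrow r(T)}(\lambda-r(T))^mR(\lambda,T)$ is a nonzero positive operator satisfying $(T-r(T))A_{-m}=0$, and $A_{-m}u$ for a suitable $u\geq0$ is the desired eigenvector. Without one of these ingredients your proof does not establish the theorem.
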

\begin{theorem}\label{geneignew}
	Let Assumptions~\ref{assump1} hold in a bounded domain $\Gw$. Then the operator $(\tilde{P},\tilde{B})$ admits a principal eigenvalue $\gl_c$ with a positive principal eigenfunction $u_c$. Hence,
$\mathcal{H}^{0}_{P-\lambda_c,B}(\Gw)\cap H^{1}_{\pwd}(\Gw)\neq \emptyset$.  In Particular, $\gl_c\leq \gl_0$.
\end{theorem}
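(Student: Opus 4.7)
The plan is to apply the Krein--Rutman-type Theorem~\ref{KTBIR} to the resolvent of a sufficiently large shift of $(P,B)$. Fix $\mu > \mu_0$ from Theorem~\ref{coercivity}, so that $\mathcal{B}_{P+\mu,B}$ is coercive on $H^{1}_{\pwd}(\Gw)$, and let $T := L_\mu^{-1}: L^2(\Gw) \to L^2(\Gw)$ be the associated resolvent, which is compact by Theorem~\ref{coercivity}(iii). The first task is to show $T(L^2_+(\Gw)) \subset L^2_+(\Gw)$. Given $0 \leq f \in L^2(\Gw)$, set $u = Tf \in H^{1}_{\pwd}(\Gw)$ and test the defining relation against $u^- \in H^{1}_{\pwd}(\Gw)$. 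The a.e.\ identities $u^+ u^- = 0$, $u^+ \nabla u^- = 0$, $u^- \nabla u^+ = 0$, and $\nabla u^+ \cdot \nabla u^- = 0$, together with the pointwise vanishing of the Robin integrand $(\gamma/\beta) u^+ u^-$ on $\pwr$, give $\mathcal{B}_{P+\mu,B}(u^+, u^-) = 0$, whence $\mathcal{B}_{P+\mu,B}(u, u^-) = -\mathcal{B}_{P+\mu,B}(u^-, u^-)$. Combined with $\int_\Gw f u^- \dx \geq 0$, coercivity forces $u^- = 0$.

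Next I produce the element $e$ required by Theorem~\ref{KTBIR}. Pick an open set $\omega \Subset \Gw$ with $\overline{\omega} \cap \pwd = \emptyset$, and set $e := \chi_\omega \in L^2_+(\Gw)$. The function $u := Te \in H^{1}_{\pwd}(\Gw)$ is nonnegative by the previous step, and is a nonnegative weak supersolution of $(P+\mu,B)w=0$ in $\Gw$ that is not identically zero. By the strong maximum principle (Lemma~\ref{maxagmon}), $u > 0$ in $\Gw \cup \pwr$; standard interior regularity makes $u$ continuous, so $\varrho := \inf_{\omega} u > 0$. Then $Te - \varrho e = u - \varrho \chi_\omega \geq 0$ a.e.\ in $\Gw$, so the hypotheses of Theorem~\ref{KTBIR} are verified.

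Theorem~\ref{KTBIR} then yields $r(T) > 0$ and a nonzero $v \in L^2_+(\Gw)$ with $Tv = r(T) v$. Setting $\lambda_c := r(T)^{-1} - \mu$, the relation $Tv = r(T) v$ rewrites as $(P - \lambda_c, B) v = 0$ in the weak sense, and $v = r(T)^{-1} Tv \in H^{1}_{\pwd}(\Gw)$ since $T$ takes values in that space. Lemma~\ref{maxagmon} upgrades $v \geq 0$ to $v > 0$ in $\Gw \cup \pwr$, so $v \in \mathcal{H}^{0}_{P - \lambda_c, B}(\Gw) \cap H^{1}_{\pwd}(\Gw)$, and the bound $\lambda_c \leq \lambda_0$ is immediate from the definition of $\lambda_0$.

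The main technical obstacle is the positivity step: the sign-split identity $\mathcal{B}_{P+\mu,B}(u^+, u^-) = 0$ must be justified under the present weak regularity, where the drift contributions $\int u^+ \bt \cdot \nabla u^- \dx$ and $\int u^- \bb \cdot \nabla u^+ \dx$ need to be eliminated by the pointwise relations $u^+ \nabla u^- = 0$ and $u^- \nabla u^+ = 0$ a.e., and the Robin boundary term is neutralized in the same way. Once this is in place, coercivity and Theorem~\ref{KTBIR} deliver both the principal eigenvalue $\lambda_c$ and its positive principal eigenfunction.
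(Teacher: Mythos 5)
Your proposal is correct, and it follows the same skeleton as the paper's proof: a compact resolvent of a shifted operator, verification of the Krein--Rutman hypotheses of Theorem~\ref{KTBIR} with a bump-type element $e$ and the constant $\varrho$ built from the infimum of $Te$ on the bump's support, and then the strong maximum principle (Lemma~\ref{maxagmon}) to upgrade the eigenfunction to a strictly positive one in $\mathcal{H}^{0}_{P-\lambda_c,B}(\Gw)\cap H^{1}_{\pwd}(\Gw)$.

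The one genuine difference is how positivity of the resolvent is obtained. The paper works with $T_\gl=R_\Gw(\gl)$ for $\gl<\Gg$ and invokes the generalized maximum principle of Lemma~\ref{utilz}, whose proof rests on Lemma~\ref{u_minus}, the Kato-type inequality, and Agmon's resolvent-iteration argument involving the spectral bottom $\Gg$; this choice of shift is also what later lets the paper identify $\gl_c=\Gg$ in Theorem~\ref{agmonmethod}. You instead take the coercive shift $\mu>\gm_0$ from Theorem~\ref{coercivity} and prove positivity of $T=L_\mu^{-1}$ directly by testing the weak formulation against $u^-$, using the a.e.\ disjointness of the supports of $\nabla u^+$ and $\nabla u^-$, the vanishing of $u^+u^-$ (also for the traces on $\pwr$), and coercivity to force $u^-=0$. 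This is more elementary and self-contained: it bypasses the $\Gg$-machinery entirely and only needs Theorem~\ref{coercivity}. The small points you assert without proof are standard and consistent with what the paper itself uses: that $u^-\in H^{1}_{\pwd}(\Gw)$ whenever $u\in H^{1}_{\pwd}(\Gw)$ (lattice property of the closure, used likewise in the paper's weak maximum principle), that traces commute with taking positive and negative parts, that $Te\not\equiv 0$, and interior continuity of $Te$ (alternatively, the interior weak Harnack inequality of Lemma~\ref{lem_whi} already gives $\inf_{\omega}Te>0$). Note also that, as in the paper's own proof, the strong maximum principle is only needed for interior positivity here, so the statement ``$>0$ in $\Gw\cup\pwr$'' can be weakened to positivity in $\Gw$ without affecting the conclusion. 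Finally, your eigenvalue $\lambda_c=r(T)^{-1}-\mu$ corresponds to the paper's $\gl_c(\gl)$ with $\gl=-\mu$; its independence of the shift is established afterwards in Theorem~\ref{agmonmethod}.
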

\begin{proof}
	Consider the operator $T_\gl:=R_{\Gw}(\lambda)$, where $\lambda<\Gg$.
	By Corollary~\ref{cor_glc}, $T_\gl : L^2(\Gw)\to L^2(\Gw)$ is a compact operator. Moreover, the generalized maximum principle (which holds by Lemma~\ref{utilz}) implies that for all $f\in L^2_+(\Gw)$, we have $T_\gl f\in L^2_+(\Gw)$.
	Next, we let $0\lneqq \phi\in C_0^{\infty}(\Gw)$. 
	 By the generalized maximum principle and the strong maximum principle, $T_\gl\phi> 0$ in $\Gw$. Let 
	 $$
	 \varrho=\frac{1}{2\|\phi \|_{L^{\infty}(\Gw)}}\left (\inf\limits_{\text{supp}(\phi)}T_\gl \phi\right )>0.
	 $$
	By definition, $$T_\gl \phi-\varrho \phi=T_\gl\phi\geq 0 \qquad \text{~in~} \Gw \setminus \text{supp}(\phi).$$
	 Moreover,
	$$\varrho \phi=\frac{\phi}{2\|\phi \|_{L^{\infty}(\Gw)}}\left (\inf\limits_{\text{supp}(\phi)}T_\gl\phi \right )
	\leq T_\gl \phi \qquad\text{~in~} \text{supp}(\phi).
	$$
	 Hence, $T_\gl\phi-\varrho \phi\in L^2_+(\Gw)$. By Theorem~\ref{KTBIR}, $r(T_\gl)>0$ and there exists $0\lneqq u_c\in L^2(\Gw)\cap D(\tilde{P},\tilde{B})$ satisfying
	 $
	 T_\gl u_c=r(T_\gl)u_c.
	 $
	 in $\Gw$. 
	 Therefore, $u_c\in H^{1}_{\pwd}(\Gw)$ and  
	 $$Pu_c=\left (\dfrac{1}{r(T_\gl)}+\lambda\right )u_c, \quad  Bu=0 \mbox{ on } \pwr.$$
	  By the strong maximum principle, $u_c>0$ in $\Gw$. Thus, $\gl_c=\gl_c(\gl)= r(T_\gl)^{-1}+\lambda$ is a principal eigenvalue of $(\tilde{P},\tilde{B})$.
	  \end{proof}
  \begin{remark}{\em
  	The  simplicity of the principal eigenvalue of $(P,B)$ in $\Gw$, when $(P,B)$ is nonselfadjoint and nonsmooth, remains open even in the case $\pwr=\emptyset$.
}
  \end{remark}
We have
\begin{theorem}\label{agmonmethod}
	Let Assumptions~\ref{assump1} hold in a bounded domain $\Gw$. Then $\Gg=\lambda_c\in \gs (\tilde{P},\tilde{B})$. In particular, $\gl_c$ does not depend on $\gl$.
	
	Moreover, for any $\lambda\in \R$, the operator $(P-\lambda,B)$  satisfies the generalized maximum principle in $\Gw$ if and only if $\lambda<\Gg=\lambda_c$.
\end{theorem}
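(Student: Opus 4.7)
The plan is to exploit the formal adjoint $(P^*,B^*)$ of \eqref{P*,B*}: swapping $\bt \leftrightarrow \bb$ preserves Assumptions~\ref{assump1}, so Theorem~\ref{geneignew} applies equally to $(P^*,B^*)$ and produces a principal pair $(\lambda_c^*, u_c^*)$ alongside $(\lambda_c, u_c)$, with $u_c^* \in H^{1}_{\pwd}(\Gw)$ strictly positive on $\Gw \cup \pwr$ by Lemma~\ref{maxagmon}. Because the bilinear identity $\mathcal B_{P,B}(\phi,\psi) = \mathcal B_{P^*,B^*}(\psi,\phi)$ entails $\sigma(\tilde P^*, \tilde B^*) = \overline{\sigma(\tilde P, \tilde B)}$, both real numbers $\lambda_c$ and $\lambda_c^*$ belong to $\sigma(\tilde P, \tilde B)$, so $\lambda_c, \lambda_c^* \geq \Gamma$.

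The core step is to prove the reverse inequality $\lambda_c^* \leq \Gamma$. Pick any $\mu \in \sigma(\tilde P, \tilde B)$ and a (complex-valued) eigenfunction $v \in H^{1}_{\pwd}(\Gw;\C)$. Kato's inequality (Lemma~\ref{cor_kato}) gives that $|v|$ is a weak subsolution of $(P - \mathrm{Re}\,\mu, B)$ in $\Gw$, that is, $\mathcal B_{P - \mathrm{Re}\,\mu, B}(|v|, \phi)\leq 0$ for nonnegative $\phi \in \mathcal D(\Gw, \pwd)$. Since $|v|\in H^{1}_{\pwd}(\Gw)$ (by stability of $H^{1}_{\pwd}$ under absolute values) and $\mathcal B_{P,B}$ is bounded on $H^1_{\pwd}(\Gw)\times H^1_{\pwd}(\Gw)$ by Theorem~\ref{coercivity}, the density of $\mathcal D(\Gw,\pwd)$ together with an approximation of $u_c^*$ by nonnegative elements of $\mathcal D(\Gw,\pwd)$ lets us substitute $\phi=u_c^*$. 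Using the adjoint identity and the eigenequation $\mathcal B_{P^*, B^*}(u_c^*, \cdot) = \lambda_c^* (u_c^*, \cdot)_{L^2(\Gw)}$,
\begin{equation*}
0 \;\geq\; \mathcal B_{P - \mathrm{Re}\,\mu, B}(|v|, u_c^*) \;=\; \mathcal B_{P^* - \mathrm{Re}\,\mu, B^*}(u_c^*, |v|) \;=\; (\lambda_c^* - \mathrm{Re}\,\mu)\int_{\Gw} u_c^*\,|v|\dx.
\end{equation*}
Since $u_c^*>0$ in $\Gw \cup \pwr$ and $|v|\not\equiv 0$, the integral is strictly positive, forcing $\lambda_c^* \leq \mathrm{Re}\,\mu$ for every $\mu\in\sigma(\tilde P,\tilde B)$. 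Taking the infimum over $\mu$ yields $\lambda_c^* \leq \Gamma$, hence $\lambda_c^* = \Gamma$. The symmetric argument with the roles of $(P,B)$ and $(P^*,B^*)$ interchanged --- applying Kato to an eigenfunction of $(\tilde P^*,\tilde B^*)$ and testing against $u_c$ --- yields $\lambda_c \leq \Gamma$, and therefore $\lambda_c = \Gamma \in \sigma(\tilde P, \tilde B)$. The independence of $\lambda_c$ from the auxiliary $\lambda<\Gamma$ used in the Krein-Rutman construction of Theorem~\ref{geneignew} is then automatic, since $\Gamma$ is intrinsic to $(P, B, \Gw)$.

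For the characterization of the generalized maximum principle, one direction is Lemma~\ref{utilz}: $\lambda<\Gg$ implies GMP. For the converse, if $\lambda \geq \lambda_c$ then $(P-\lambda,B)(-u_c) = (\lambda - \lambda_c)u_c \geq 0$, so $-u_c$ is a supersolution of $(P-\lambda,B)$ in $\Gw$ with $(-u_c)^- = u_c \in H^1_{\pwd}(\Gw)\setminus \{0\}$ and $-u_c < 0$ in $\Gw$, violating GMP.

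I expect the main technical nuisance to be the justification of extending Kato's subsolution inequality from test functions in $\mathcal D(\Gw, \pwd)$ to $u_c^* \in H^{1}_{\pwd}(\Gw)$, together with the correct use of the formal-adjoint identity in the weak (non-classical) framework of Assumptions~\ref{assump1}. These rely on the boundedness of $\mathcal B_{P,B}$ on $H^{1}_{\pwd}(\Gw)\times H^{1}_{\pwd}(\Gw)$ from Theorem~\ref{coercivity} and the stability of $H^{1}_{\pwd}(\Gw)$ under truncation and taking absolute values, all of which are at our disposal.
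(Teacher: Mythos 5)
Your proposal is correct in substance, but it reaches the identity $\Gg=\gl_c\in\gs(\tilde P,\tilde B)$ by a genuinely different route than the paper. The paper never touches the adjoint here: it first shows $\Gg\in\gs(\tilde P,\tilde B)$ by contradiction (if not, Remark~\ref{imprem} would give the generalized maximum principle for some $\gl>\Gg$, contradicting Lemma~\ref{2direction}), and then identifies $\Gg$ with $\gl_c$ through Corollary~\ref{cor_glc} and the Krein--Rutman spectral radius: $\Gg=\mu^{-1}+\gl$ with $0<\mu\le r(R_\Gw(\gl))$ forces $\Gg\ge \gl_c$, while $\gl_c\in\gs$ gives $\Gg\le\gl_c$. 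You instead apply Theorem~\ref{geneignew} to $(P^*,B^*)$ and run the classical duality-pairing argument: Kato's inequality (Lemma~\ref{cor_kato}) makes $|v|$ a subsolution of $(P-\re\mu,B)$ for any eigenpair $(\mu,v)$, and pairing it with the positive adjoint principal eigenfunction pins $\re\mu\ge\gl_c^*$ (and symmetrically $\re\mu^*\ge\gl_c$), yielding $\gl_c=\Gg$ directly and bypassing both the resolvent-radius computation and the $\Gg\notin\gs$ contradiction. What your route buys is a transparent proof that \emph{every} spectral point has real part at least $\gl_c$; what it costs is two facts the paper never establishes and which you should prove explicitly: (i) that the $L^2$-realization of $(P^*,B^*)$ with form domain $H^1_{\pwd}(\Gw)$ is the Hilbert-space adjoint of $(\tilde P,\tilde B)$, so that $\gs(\tilde P^*,\tilde B^*)=\overline{\gs(\tilde P,\tilde B)}$ and every adjoint spectral point admits an eigenfunction in $H^1_{\pwd}(\Gw)$ (this follows from the form--operator correspondence after the coercive shift of Theorem~\ref{coercivity}, together with Remark~\ref{rem-disc} applied to the adjoint, but it needs to be said); and (ii) the test-function extensions you flag yourself, namely that $|v|\in H^1_{\pwd}(\Gw)$ and that nonnegative elements of $H^1_{\pwd}(\Gw)$ can be approximated in $H^1$ by nonnegative elements of $\mathcal D(\Gw,\pwd)$ (positive-part continuity in $H^1$ plus mollification of Lipschitz functions supported away from $\pwd$), so that the subsolution inequality and the weak eigen-equation may be paired against $u_c^*$ and $|v|$. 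Your treatment of the ``Moreover'' part coincides with the paper for $\gl<\Gg$ (Lemma~\ref{utilz}) and is a slightly more explicit version of the paper's argument for $\gl\ge\Gg$, using $-u_c$ as a direct counterexample rather than invoking Lemma~\ref{2direction}; both are fine.
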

For the proof of Theorem~\ref{agmonmethod} we need the following auxiliary lemma.
  \begin{lemma}\label{2direction}
  	Let Assumptions~\ref{assump1} hold in a bounded domain $\Gw$, and
  	assume that the generalized maximum principle for $(P-\lambda,B)$ holds in $\Gw$ for some $\lambda\in \R$.
  	Then $\lambda \leq \Gg$.
  \end{lemma}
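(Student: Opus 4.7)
The plan is to argue by contradiction: assume the generalized maximum principle holds for $(P-\lambda,B)$ in $\Gw$ but $\lambda>\Gg$, and construct a function whose sign contradicts the GMP. The key ingredients available at this point are the discreteness of $\gs(\tilde P,\tilde B)=\gs_{\mathrm{point}}(\tilde P,\tilde B)$ (Remark~\ref{rem-disc} and Corollary~\ref{cor_glc}), and the Kato-type inequality of Lemma~\ref{cor_kato}.

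First I would use that $\Gg=\inf\{\mathrm{Re}(\nu):\nu\in\gs(\tilde P,\tilde B)\}$ together with $\lambda>\Gg$ to pick some $\nu\in\gs_{\mathrm{point}}(\tilde P,\tilde B)$ with $\mathrm{Re}(\nu)<\lambda$, and let $u_\nu\in D(\tilde P,\tilde B)\subset H^1_{\pwd}(\Gw)$, $u_\nu\not\equiv 0$, be a corresponding eigenfunction, so $(P-\nu,B)u_\nu=0$ in $\Gw$. Then Lemma~\ref{cor_kato} gives the real subsolution inequality
\[
(P,B)|u_\nu|\leq \mathrm{Re}(\nu)\,|u_\nu|,
\]
i.e.\ $(P-\mathrm{Re}(\nu),B)|u_\nu|\leq 0$ weakly. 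Adding $(\mathrm{Re}(\nu)-\lambda)|u_\nu|\leq 0$ to both sides shows
\[
(P-\lambda,B)(-|u_\nu|)\geq (\lambda-\mathrm{Re}(\nu))|u_\nu|\geq 0
\]
in the weak sense against nonnegative test functions in $\mathcal{D}(\Gw,\pwd)$.

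Next I would verify that $-|u_\nu|$ is an admissible competitor for the GMP, namely that $(-|u_\nu|)^{-}=|u_\nu|$ belongs to $H^1_{\pwd}(\Gw)$. This follows from $u_\nu\in H^1_{\pwd}(\Gw)$ by approximating $u_\nu$ in $H^1(\Gw)$ by a sequence in $\mathcal{D}(\Gw,\pwd)$ and using that $|\cdot|$ is a contraction on $H^1$ (so the approximating sequence $|\phi_k|$ stays in $H^1_{\pwd}(\Gw)$ and converges to $|u_\nu|$); in the complex case one works componentwise with the standard regularization $\sqrt{|u_\nu|^2+\vge^2}-\vge$, exactly as in the derivation of Kato's inequality in Lemma~\ref{kato}. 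Once this is established, the assumed GMP for $(P-\lambda,B)$ forces $-|u_\nu|\geq 0$, hence $u_\nu\equiv 0$, contradicting the choice of $u_\nu$ as a nonzero eigenfunction. Thus $\lambda\leq \Gg$, as required.

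The main obstacle I expect is the verification that $|u_\nu|\in H^1_{\pwd}(\Gw)$ (especially when $u_\nu$ is complex-valued, since $\gs(\tilde P,\tilde B)$ is not a~priori real), because the definition of $H^1_{\pwd}(\Gw)$ is via approximation by functions supported away from $\pwd$ rather than by a boundary trace; however, since lattice operations on $H^1$ preserve both the Sobolev class and the property of having an approximating sequence vanishing near $\pwd$, this step is routine. Everything else (the choice of $\nu$, the application of Kato, and the final contradiction) is essentially formal once the Kato inequality of Lemma~\ref{cor_kato} is in hand.
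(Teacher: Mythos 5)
Your proposal is correct and follows essentially the same route as the paper: since the spectrum is discrete point spectrum, an eigenvalue $z$ with $\mathrm{Re}(z)\leq\lambda$ would, via the Kato-type inequality of Lemma~\ref{cor_kato}, yield $(P-\lambda,B)(-|u|)\geq 0$, and the generalized maximum principle then forces $u\equiv 0$, a contradiction. The only difference is that you spell out the (routine) verification that $|u_\nu|\in H^1_{\pwd}(\Gw)$, which the paper leaves implicit.
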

  \begin{proof}[Proof of Lemma~\ref{2direction}]
It is enough to show that if $(P-\lambda,B)$ satisfies the generalized maximum principle in $\Gw$, then for any eigenvalue $z\in \sigma(\tilde{P},\tilde{B})$ we have $\gl <\text{Re}(z)$. Indeed,  let $u\in H^{1}_{\pwd}(\Gw)$, $u\neq 0$, satisfy $(P-z,B)u=0$ with $\text{Re}(z)\leq \lambda$. By Lemma~\ref{cor_kato}, $(P-\lambda,B)|u| \leq 0$, and therefore, $(P-\lambda,B)(-|u|)\geq 0$. By the generalized maximum principle, $u\equiv 0$, which is a contradiction.
  \end{proof}
  \begin{proof}[Proof of Theorem~\ref{agmonmethod}]
  	First, we claim that $\Gg\in \sigma (\tilde{P},\tilde{B})$.
  	Otherwise, Remark~\ref{imprem} implies now that there exists $\varepsilon>0$ such that
  	$(P-\Gg-\varepsilon,B)$  satisfies the generalized maximum principle in $\Gw$, a contradiction to Lemma~\ref{2direction}. 
  	
  	Corollary~\ref{cor_glc} implies that $\Gg=\mu^{-1}+\lambda$, where $\lambda$ is sufficiently small such that $(P-\lambda,B)$ is coercive in $\Gw$ for any $\gl'\leq \gl$, and $\mu\neq 0$ is a real eigenvalue of $R_{\Gw}(\lambda)$. Since $\gl$ belongs to the resolvent set, $\mu>0$, and we have  
  	$$
  	\Gg=\mu^{-1}+\lambda \geq \frac{1}{r(R_{\Gw}(\lambda))}+\lambda=\lambda_c\, .
  	$$  
  	By the definition of $\Gg$ we also have $\Gg\leq \lambda_c$. Hence, $\Gg=\lambda_c$,   and  $\gl_c$ does not depend on $\gl$.
  	
  	 Since $\Gg=\lambda_c$, it follows that the generalized maximum principle does not hold for $\gl= \Gg$. In addition,  Lemma~\ref{2direction} implies that the generalized maximum principle does not hold for  $\gl > \Gg$. On the other hand, by Lemma~\ref{utilz}, the generalized maximum principle holds for $\gl<\Gg$, and the second assertion of the theorem follows. 
  \end{proof}
 Corollary~\ref{cor_gwmp} and Lemma~\ref{2direction} imply the following.
\begin{lem}\label{2.12}
	 Let Assumptions~\ref{assump2} hold in a domain $\Gw$ and  let  $u\in \mathcal{H}^{0}_{P,B}(\Gw)$, or more generally, assume that $u$  is a regular positive supersolution of \eqref{P,B} in $\Gw$. 
	 Then:
	 
	 \medskip
	 
	 (1) for any  Lipschitz bounded domain $\Gw_0\!\Subset_R \!\Gw$, we have  $0\!<\! \lambda_c(P,B,\Gw_0) \!\leq\! \lambda_0(P,B,\Gw_0)$;

	(2) if $\{\Gw_k\}_{k\in \N}$ is an exhaustion of $\wb$, then  the sequences $\{\!\lambda_c(P,B,\Gw_k)\!\}_{k\in \N}$,  and $\{\!\lambda_0(P,B,\Gw_k)\!\}_{k\in \N}$ are strictly decreasing. 
\end{lem}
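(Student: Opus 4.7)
The plan is to deduce (1) directly from the tools already in place, and then to bootstrap (2) by applying (1) to a shifted operator.

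For (1), since $u\in\mathcal{RSH}_{P,B}(\Gw)$ and $\Gw_0\Subset_R\Gw$ is a Lipschitz bounded subdomain, Lemma~\ref{gen_max_weak} ensures that the generalized maximum principle holds for $(P,B)$ in $\Gw_0$. By the remark following Assumptions~\ref{assump1}, the operator $(P,B)$ and the domain $\Gw_0$ satisfy Assumptions~\ref{assump1}, so Theorem~\ref{agmonmethod} is applicable in $\Gw_0$ and identifies the GMP threshold: $(P-\lambda,B)$ satisfies the GMP in $\Gw_0$ if and only if $\lambda<\lambda_c(P,B,\Gw_0)$. Setting $\lambda=0$ yields $\lambda_c(P,B,\Gw_0)>0$; the inequality $\lambda_c(P,B,\Gw_0)\leq\lambda_0(P,B,\Gw_0)$ is recorded in Theorem~\ref{geneignew}.

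For (2), fix $k\geq 1$ and let $\mu$ denote either $\lambda_c(P,B,\Gw_{k+1})$ or $\lambda_0(P,B,\Gw_{k+1})$. In either case Theorem~\ref{geneignew} (respectively, Corollary~\ref{cor_gwmp}) supplies a function $v\in\mathcal{H}^{0}_{P-\mu,B}(\Gw_{k+1})$. I would then verify that $v\in\mathcal{RSH}_{P-\mu,B}(\Gw_{k+1})$: local boundedness of $v$ and $v^{-1}$ on $\overline{\Gw_{k+1}}\setminus\partial\Gw_{k+1,\Dir}$ follows from the interior Harnack inequality together with the up-to-the-boundary Harnack and Hölder estimates (Corollary~\ref{Harnack_up} and Lemma~\ref{1.20}), while $v(P-\mu)v\equiv 0$ and $vBv/\beta\equiv 0$ are immediate from the equations satisfied by $v$. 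Part (1), applied with $(P-\mu,B)$ in place of $(P,B)$, the domain $\Gw_{k+1}$ in place of $\Gw$, and $\Gw_k\Subset_R\Gw_{k+1}$ in place of $\Gw_0$, then delivers
\[
0<\lambda_c(P-\mu,B,\Gw_k)=\lambda_c(P,B,\Gw_k)-\mu.
\]
Taking $\mu=\lambda_c(P,B,\Gw_{k+1})$ yields the strict monotonicity of $\{\lambda_c(P,B,\Gw_k)\}$, while taking $\mu=\lambda_0(P,B,\Gw_{k+1})$ and using $\lambda_c(P,B,\Gw_k)\leq\lambda_0(P,B,\Gw_k)$ yields the strict monotonicity of $\{\lambda_0(P,B,\Gw_k)\}$.

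The only delicate point in this plan is the verification that $v\in\mathcal{RSH}_{P-\mu,B}(\Gw_{k+1})$, and in particular the local boundedness of $v^{-1}$ up to the Robin portion of $\partial\Gw_{k+1}$. This relies on the up-to-the-boundary weak Harnack inequality of Lemma~\ref{2_ineq}(2), which prevents $v$ from vanishing on compact subsets of $\partial\Gw_{k+1,\Rob}$. Everything else reduces to a routine invocation of Lemma~\ref{gen_max_weak}, Theorem~\ref{agmonmethod}, and Theorem~\ref{geneignew}.
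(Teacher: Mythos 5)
Your proposal is correct and takes essentially the same route as the paper: part (1) is the paper's own argument (Lemma~\ref{gen_max_weak} gives the generalized maximum principle in $\Gw_0$, Theorem~\ref{agmonmethod} then forces $0<\gl_c(P,B,\Gw_0)$, and Theorem~\ref{geneignew} gives $\gl_c\leq\gl_0$), the only cosmetic difference being that the paper also records explicitly that $\inf_{\overline{\Gw_0}}u>0$. Your part (2), applying part (1) to the shifted operator $(P-\mu,B)$ in $\Gw_{k+1}$ with $\mu=\gl_c(\Gw_{k+1})$ or $\mu=\gl_0(\Gw_{k+1})$ (the latter attained by Corollary~\ref{cor_gwmp}), is precisely the argument the paper compresses into ``Part (2) follows from Part (1) and Lemma~\ref{gen_max_weak}.''
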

\begin{proof} 
(1) We claim that $\inf \limits_{\overline{\Gw_0}}u>0$. 
 Indeed, let $\Gw_0\Subset_R \tilde \Gw \Subset_R \Gw$ be a Lipschitz subdomain.
For any $x\in\tilde \Gw\cap \overline{\Gw_0}$ there exists an open neighborhood $x\in B_x\in \tilde \Gw$ such that $\inf\limits_{B_x}u\geq C_x>0.$ Since $\Gw_0$ is precompact we may subtract a finite subcover $\{B_{x_j}\}_{j=1}^{m}$ from which we obtain
$\inf \limits_{\overline{\Gw_0}}u>\min\limits_{1\leq j \leq m}C_{x_{j}}>0$.	   By Lemma~\ref{gen_max_weak}, $(P,B)$ satisfies the generalized maximum principle in $\Gw_0$, which in light of Theorem~\ref{agmonmethod} implies that   $\lambda_c(P,B,\Gw_0) > 0$. 
	Part (2) follows from Part (1) and Lemma~\ref{gen_max_weak}.
\end{proof}
We conclude this section with the following lemma.
\begin{lem}\label{lem-lim}
		Let Assumptions~\ref{assump2} hold in  $\Gw$, and let $\{\Omega_k\}_{k\in \mathbb{N}} \subset \Omega$ 
	be an exhaustion  of $\wb$. Denote $\tilde\gl_0(\Gw):=\lim_{k\to \infty} \gl_0(\Gw_k)$, and $\tilde\gl_c(\Gw):=\lim_{k\to \infty} \gl_c(\Gw_k)$. Then
	$$\gl_0(\Gw)=\tilde\gl_0(\Gw)=\tilde\gl_c(\Gw).$$
	
	Assume further that Assumptions~\ref{assump1} hold in  $\Gw$, and that 
	$\lim_{k\to \infty} u_{c,k} =u_{c}$, where $u_{c,k}$ and $u_{c}$  are  principal eigenfunctions in $\Gw_k$ and $\Gw$ respectively. 
Then 
$$\gl_0(\Gw)=\gl_c(\Gw)= \Gg(P,B,\Gw).$$
\end{lem}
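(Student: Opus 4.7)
The plan is to prove each identity in turn, using the monotonicity from Lemma~\ref{2.12} to guarantee that $\tilde\gl_0(\Gw)$ and $\tilde\gl_c(\Gw)$ exist. For $\gl_0(\Gw)=\tilde\gl_0(\Gw)$, the inequality $\gl_0(\Gw)\leq\gl_0(\Gw_k)$ is immediate by restricting any $u\in\mathcal{H}^0_{P-\gl,B}(\Gw)$ to $\Gw_k$, since $\partial\Gw_{k,\mathrm{Rob}}\subset\pwr$. For the reverse, Corollary~\ref{cor_gwmp} supplies $u_k\in\mathcal{H}^0_{P-\gl_0(\Gw_k),B}(\Gw_k)$; normalising $u_k(x_0)=1$ at a fixed $x_0\in\Gw_1$, I invoke the Harnack convergence principle (Lemma~\ref{HCP}) with the variable zero-order terms $V_k:=-\gl_0(\Gw_k)\to-\tilde\gl_0$ (which converges in $L^{p/2}_\loc$) to extract a subsequential limit $u\in\mathcal{H}^0_{P-\tilde\gl_0,B}(\Gw)$, so that $\tilde\gl_0\leq\gl_0(\Gw)$.

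For $\tilde\gl_c(\Gw)=\tilde\gl_0(\Gw)$, the direction $\tilde\gl_c\leq\tilde\gl_0$ is the $k\to\infty$ limit of $\gl_c(\Gw_k)\leq\gl_0(\Gw_k)$ (Theorem~\ref{geneignew}). The reverse $\gl_0(\Gw)\leq\tilde\gl_c$ is the heart of the argument: select $u\in\mathcal{H}^0_{P-\gl_0(\Gw),B}(\Gw)$ via Corollary~\ref{cor_gwmp} and fix $\gl<\gl_0(\Gw)$. By Lemma~\ref{Holder up to} and Lemma~\ref{1.20}, $u$ is locally H\"older continuous and strictly positive on $\overline{\Gw}\setminus\pwd$, so both $u,u^{-1}\in L^\infty_\loc(\overline{\Gw}\setminus\pwd)$; moreover $(P-\gl)u=(\gl_0(\Gw)-\gl)u$ and $Bu=0$, which give $u(P-\gl)u\in L^\infty_\loc(\overline{\Gw}\setminus\pwd)$ and $uBu/\beta=0$, so $u\in\mathcal{RSH}_{P-\gl,B}(\Gw)$. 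Lemma~\ref{gen_max_weak} then yields the generalised maximum principle for $(P-\gl,B)$ on every Lipschitz subdomain $\Gw_k\Subset_R\Gw$; since Assumptions~\ref{assump1} hold in $\Gw_k$, Lemma~\ref{2direction} together with Theorem~\ref{agmonmethod} gives $\gl\leq\Gg(P,B,\Gw_k)=\gl_c(\Gw_k)$, and letting $k\to\infty$ produces $\gl\leq\tilde\gl_c$. As $\gl<\gl_0(\Gw)$ was arbitrary, $\gl_0(\Gw)\leq\tilde\gl_c$.

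For the second assertion, Theorem~\ref{agmonmethod} gives $\gl_c(\Gw)=\Gg(P,B,\Gw)$ and Theorem~\ref{geneignew} gives $\gl_c(\Gw)\leq\gl_0(\Gw)$. It remains to establish $\tilde\gl_c=\gl_c(\Gw)$: the weak eigenvalue identities $\mathcal{B}_{P,B}(u_{c,k},\phi)=\gl_c(\Gw_k)\int_{\Gw_k}u_{c,k}\phi\,\dx$ for $\phi\in\mathcal{D}(\Gw_k,\partial\Gw_{k,\mathrm{Dir}})$ may be tested against any fixed $\phi\in C_0^\infty(\overline{\Gw}\setminus\pwd)$ (admissible in every $\Gw_k$ for $k$ large), and the hypothesised convergence $u_{c,k}\to u_c$ together with the $H^1_\loc$ bounds from Lemma~\ref{bound grad} permits passage to the limit in every term, producing $(P-\tilde\gl_c,B)u_c=0$ in $\Gw$. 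Combining this with the given $(P-\gl_c(\Gw),B)u_c=0$ and the positivity of $u_c$ forces $\tilde\gl_c=\gl_c(\Gw)$, and the first assertion then yields $\gl_0(\Gw)=\gl_c(\Gw)=\Gg(P,B,\Gw)$. The principal obstacle throughout is the reverse inequality $\tilde\gl_c\geq\gl_0(\Gw)$, which requires carefully verifying that the candidate positive solution belongs to $\mathcal{RSH}$ (so that Lemma~\ref{gen_max_weak} applies on each exhausting $\Gw_k$), and then converting the resulting generalised maximum principle back into a spectral bound via the identification $\Gg(\Gw_k)=\gl_c(\Gw_k)$.
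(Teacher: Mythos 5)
Your proof is correct and follows essentially the same route as the paper: the sandwich $\gl_0(\Gw)\leq\tilde\gl_c\leq\tilde\gl_0$ via the generalized maximum principle on exhausting subdomains together with $\Gg(\Gw_k)=\gl_c(\Gw_k)$ (you re-derive Lemma~\ref{2.12} for the shifted operator $(P-\gl,B)$ instead of citing it), the Harnack convergence principle for $\tilde\gl_0\leq\gl_0(\Gw)$, and the hypothesis $u_{c,k}\to u_c$ to force $\tilde\gl_c=\gl_c(\Gw)$ in the second assertion. The only differences are presentational: you spell out the $\mathcal{RSH}$ verification and the weak-form limit passage that the paper compresses into citations of Lemma~\ref{2.12} and the phrase ``we clearly have''.
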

\begin{proof}
	By Lemma~\ref{2.12}, the sequences $\{ \gl_c(\Gw_k)\}_{k\in \N}$ and $\{ \gl_0(\Gw_k)\}_{k\in \N}$ are monotone decreasing and satisfy 
	$$ \gl_0(\Gw) <  \gl_c(\Gw_k)\leq  \gl_0(\Gw_k).$$ 
	Let $u_{c,k}\in \mathcal{H}^{0}_{P-\lambda_c(\Gw_k)}$ be a principal eigenfunction satisfying $u_{c,k}(x_0) = 1$, and let $u_{k}\in \mathcal{H}^{0}_{P-\lambda_0(\Gw_k)}$ satisfy $u_{0}(x_0)=1$.  Then the Harnack convergence principle implies that 
	$\gl_0(\Gw)\leq \tilde\gl_c(\Gw)\leq \tilde\gl_0(\Gw)$. On the other hand, by the definition of $\gl_0(\Gw)$ we have $\tilde\gl_c(\Gw)\leq \tilde\gl_0(\Gw)\leq \gl_0(\Gw)$.
	
	Under the further assumptions we clearly have $\tilde\gl_c(\Gw)=\gl_c(\Gw)$, and by the first part, we have, $\gl_0(\Gw)=\gl_c(\Gw)= \Gg(P,B,\Gw)$.
\end{proof}
\section{Criticality theory}\label{sec_crit}
In the present section we discuss a criticality theory for $(P,B)$ in a domain $\Gw$. More precisely, we study the relation between the validity of the generalized maximum principle of $(P,B)$ in bounded Lipschitz subdomains of $\Gw$, the existence of positive (super)solutions of $(P,B)u=0$ in $\Gw$, and the nonnegativity of the generalized principal eigenvalue. Moreover, we define the notion of positive solutions of minimal growth at infinity in $\Gw$, criticality and subcriticality of $(P,B)$ in $\Gw$ and discuss some related properties.
 Unless otherwise stated,
{\bf we assume throughout the section that Assumptions~\ref{assump2}  are satisfied in $\Gw$}. 
\subsection{Characterization of $\gl_0$}
Let $\gl_0$ be  the generalized  principal eigenvalue (see Definition~\ref{def_princ}). As a consequence of the results in the previous section, we obtain the following characterization of $\gl_0$.
\begin{theorem}\label{eeqiuv_add}
	The following assertions are equivalent:
	\begin{enumerate}
		\item  $ {\mathcal H}^0_{P,B}(\Omega)\neq \emptyset$. In other words, $\lambda_0(P,B,1,\Gw)\geq 0$.
		\item  \eqref{P,B} admits a regular positive supersolution in $\Gw$.
		\item $\lambda_0(P,B,1,\Gw')>0$ for any Lipschitz subdomain $\Gw'\Subset_R\Gw$.
		\item $(P,B)$ satisfies the generalized maximum principle in
		any Lipschitz subdomain  $\Gw'\Subset_R \Gw$.
	\end{enumerate}
\end{theorem}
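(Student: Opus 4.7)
My plan is to establish the theorem by the cyclic chain $(1)\Rightarrow(2)\Rightarrow(4)\Rightarrow(3)\Rightarrow(1)$, with the final implication as the main step.

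For $(1)\Rightarrow(2)$, any $u\in\mathcal{H}^{0}_{P,B}(\Omega)$ is trivially a supersolution; the only nontrivial point is the verification of the integrability defining $\mathcal{RSH}_{P,B}(\Omega)$. The strong maximum principle (Lemma~\ref{maxagmon}) forces $u>0$ on $\Omega\cup\pwr$, and the up-to-the-boundary H\"older continuity (Lemma~\ref{1.20}) together with interior local boundedness yields $u,u^{-1}\in L^{\infty}_{\loc}(\overline{\Omega}\setminus\pwd)$; the conditions $uPu\in L^\infty_\loc(\overline\Omega\setminus\pwd)$ and $uBu/\beta\in L^\infty_\loc(\pwr)$ are immediate from $(P,B)u=0$. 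The implication $(2)\Rightarrow(4)$ is precisely Lemma~\ref{gen_max_weak}. For $(4)\Rightarrow(3)$, fix $\Omega'\Subset_R\Omega$ Lipschitz; by the remark following Assumptions~\ref{assump1} these hold on $\Omega'$, so Theorem~\ref{agmonmethod} identifies the generalized maximum principle on $\Omega'$ with $0<\lambda_c(P,B,\Omega')$, and Theorem~\ref{geneignew} gives $\lambda_c(\Omega')\leq\lambda_0(\Omega')$, whence $\lambda_0(\Omega')>0$.

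For the main step $(3)\Rightarrow(1)$, I fix a Lipschitz exhaustion $\{\Omega_k\}$ of $\overline{\Omega}\setminus\pwd$ (Definition~\ref{def:exhaustion}). By (3) and Lemma~\ref{lem-lim} applied on each $\Omega_k$, $\lambda_c(\Omega_k)=\lambda_0(\Omega_k)>0$, so Theorem~\ref{agmonmethod} puts $0$ in the resolvent set of $(\tilde P,\tilde B,\Omega_k)$, and Theorem~\ref{coercivity} allows me to solve $(P,B)v_k=f$ in $\Omega_k$ for a fixed $0\lneqq f\in C_0^\infty(\Omega_1)$. The generalized maximum principle together with Lemma~\ref{maxagmon} gives $v_k>0$ on $\Omega_k\cup\partial\Omega_{k,\Rob}$, and comparing $v_{k+1}-v_k$ on $\Omega_k$ (it solves $(P,B)w=0$ with nonnegative trace on $\partial\Omega_{k,\Dir}$) yields the monotone increase $v_k\leq v_{k+1}$. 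A dichotomy appears: either $v_k(x_\ast)\to\infty$ at some (hence, by Harnack, every) point $x_\ast\in\Omega$, or $v_k$ converges locally uniformly to a finite positive function $v$ satisfying $(P,B)v=f$. In the first case $v_k/v_k(x_\ast)$ is locally uniformly bounded and solves an equation with forcing $f/v_k(x_\ast)\to 0$ in $L^{p/2}_\loc(\Omega)$; the second part of the Harnack convergence principle (Lemma~\ref{HCP}) extracts a positive limit $u\in\mathcal{H}^{0}_{P,B}(\Omega)$, proving~(1).

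In the second case the constructed $v$ only yields a positive supersolution. To extract an element of $\mathcal{H}^{0}_{P,B}(\Omega)$ I iterate the construction with sources $f_j\in C_0^\infty(\Omega_{j+1}\setminus\overline{\Omega_j})$, $f_j\gneqq 0$, whose supports drift to ``infinity'' in $\overline{\Omega}\setminus\pwd$: either some $j$ falls in the first branch and produces the required solution, or every $j$ falls in the second branch and yields positive $v_j$ with $(P,B)v_j=f_j$ and hence $(P,B)v_j=0$ on any fixed $\overline{\Omega_{j_0}}$ for $j>j_0$; after normalizing $v_j(x_0)=1$ at a reference point $x_0\in\Omega_1$, a further Harnack-convergence passage produces the desired $u\in\mathcal{H}^{0}_{P,B}(\Omega)$. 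The ``in other words'' reformulation $\lambda_0(\Omega)\geq 0$ then follows trivially from $\mathcal{H}^{0}_{P,B}(\Omega)\neq\emptyset$. The main obstacle is handling this dichotomy cleanly and ensuring monotonicity and positivity pass to the limit; both rest on the Harnack convergence principle for perturbed equations and on the unique solvability on each $\Omega_k$ provided by Theorem~\ref{coercivity}.
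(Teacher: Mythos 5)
Your proposal is correct in substance but organizes the equivalences differently from the paper and proves the key implication by a different (slightly longer) route. The paper runs the cycle $(1)\Rightarrow(2)\Rightarrow(3)\Rightarrow(4)\Rightarrow(1)$: $(2)\Rightarrow(3)$ is Lemma~\ref{2.12}, $(3)\Rightarrow(4)$ is Corollary~\ref{cor_gwmp} plus Lemma~\ref{324}, and $(4)\Rightarrow(1)$ is done in one pass: since Lemma~\ref{2direction} only yields $\Gg(\Gw_k)\geq 0$, the paper shifts to the resolvent of $(\tilde P+1/k,\tilde B)$ in $\Gw_k$, takes sources $f_k\in C_0^\infty(\Gw_k\setminus\Gw_{k-1})$ whose supports drift to infinity from the start, normalizes at $x_0$, and applies the Harnack convergence principle with $V_k=1/k\to 0$. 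You instead run $(1)\Rightarrow(2)\Rightarrow(4)\Rightarrow(3)\Rightarrow(1)$, and in $(3)\Rightarrow(1)$ you first work with a fixed source $f$ and a dichotomy (blow-up vs.\ convergence), iterating with drifting sources only in the bounded branch. This is workable and, because you start from the strict inequality in (3), you can solve at $\gl=0$ without the paper's $1/k$-shift; the price is a two-stage limit where the paper needs only one.

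Two points in your write-up need repair, though both are fixable with the paper's tools. First, Lemma~\ref{lem-lim} does not give $\gl_c(\Gw_k)=\gl_0(\Gw_k)$ (the paper only proves $\gl_c\leq\gl_0$ in Theorem~\ref{geneignew}, and equality is asserted only under extra hypotheses or in the symmetric case); what you actually need — unique solvability of $(P,B)v_k=f$ in $\Gw_k$ with a positive resolvent — follows from (3) via Lemma~\ref{324} applied in $\Gw_{k+1}$ (since $\gl_0(\Gw_{k+1})>0$, one gets $0\in\gr(\tilde P,\tilde B,\Gw_k)$ and the generalized maximum principle for $(P,B)$ in $\Gw_k$), or equivalently via Corollary~\ref{cor_gwmp}, Lemma~\ref{gen_max_weak} and Theorem~\ref{agmonmethod}. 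Second, the ``moreover'' clause of Lemma~\ref{HCP} concerns vanishing potentials $V_k$, not vanishing inhomogeneous forcings; in your blow-up branch you should either rewrite $(P,B)w_k=f/v_k(x_\ast)$ as $(P+V_k,B)w_k=0$ with $V_k:=-f/(v_k(x_\ast)w_k)\to 0$ in $L^{p/2}_\loc$ (using the local positive lower bound on $w_k$ from the Harnack inequality on $\supp f$), or pass to the limit directly in the weak formulation, noting $\mathcal B_{P,B}(w_k,\phi)=\int_\Gw f\phi\,\mathrm{d}x/v_k(x_\ast)\to 0$ as in the proof of Lemma~\ref{HCP}. With these adjustments your argument is complete.
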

\begin{proof}
	$(1) \Longrightarrow (2):$ Obvious. \\
	$(2) \Longrightarrow (3):$ Follows from Lemma~\ref{2.12} \\
	$(3) \Longrightarrow (4):$ Follows from Corollary~\ref{cor_gwmp} and Lemma~\ref{324}. \\
	$(4) \Longrightarrow (1):$ 
		By Lemma~\ref{2direction}, $\Gg(P,B,1,\Gw')\geq 0$ for any Lipschitz subdomain $\Gw'\Subset_R \Gw$. Since the generalized maximum holds for any $\gl<\Gg$, it follows that for any $\gl<0$ the resolvent operator is positive. 
	Let $\{ \Gw_k\}_{k\in \N}$ be an exhaustion of $\wb$ and fix $x_0\in \Gw_1$. 
	
	For $k\geq 1$, denote by $R_k$ the resolvent operator of $(\tilde P+1/k,\tilde B)$ in $\Gw_k$, and let $f_k\in C_0^\infty(\Gw_k\setminus \Gw_{k-1})$ be a nonzero nonnegative function.
	Using the Harnack convergence principle (Lemma~\ref{HCP}), it follows that the sequence 
	$$\left\{\frac{R_k(f_k)}{R_k(f_k)(x_0)}\right\}_{k\in \N}$$ admits a subsequence converging to $u\in \mathcal{H}^{0}_{P,B}(\Gw)$.
\end{proof}
Theorem~\ref{eeqiuv_add} clearly implies the strict monotonicity of the generalized principal eigenvalues in bounded subdomains (cf. Lemma~\ref{lem-lim}). 
\begin{cor}\label{stmon}
	Let $\Gw_1$ and $\Gw_2$ be two Lipschitz nonempty subdomains of $\Gw$ satisfying
$\Omega_1\Subset_R \Omega_{2}\Subset_R \Gw$.	Then $$\lambda_0(P,B,\Gw)< \lambda_0(P,B,\Gw_2)<\lambda_0(P,B,\Gw_1). $$
In particular, if $\{ \Gw_k\}_{k\in \N}$ is an exhaustion of   of $\wb$, then $\lambda_0(\Gw_k)\to \lambda_0(\Gw)$.
\end{cor}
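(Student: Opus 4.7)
The plan is to reduce the strict monotonicity to the implication $(1)\Rightarrow(3)$ of Theorem~\ref{eeqiuv_add} applied to the operator shifted by $\lambda_0(\Gw)$; the convergence statement will then follow immediately from Lemma~\ref{lem-lim}. By Corollary~\ref{cor_gwmp}, there exists $u\in\mathcal{H}^0_{P-\lambda_0(\Gw),B}(\Gw)$. For any Lipschitz subdomain $\Gw'\Subset_R\Gw$, I first check that $u$ restricts to a positive solution of $(P-\lambda_0(\Gw),B)v=0$ in $\Gw'$: the Robin condition on $\partial\Gw'_{\Rob}\subset\pwr$ is inherited directly, and the required local regularity $u\in H^1_{\loc}(\overline{\Gw'}\setminus\partial\Gw'_{\Dir})$ follows from $\overline{\Gw'}\setminus\partial\Gw'_{\Dir}\subset\overline{\Gw}\setminus\pwd$, which is a consequence of $\partial\Gw'\cap\pwd=\emptyset$ in the definition of $\Subset_R$. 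This already gives the non-strict chain $\lambda_0(\Gw)\le\lambda_0(\Gw_2)\le\lambda_0(\Gw_1)$.

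For the strict inequalities, set $\tilde P:=P-\lambda_0(\Gw)$. Since $\mathcal{H}^0_{\tilde P,B}(\Gw)\neq\emptyset$, condition $(1)$ of Theorem~\ref{eeqiuv_add} holds for $(\tilde P,B)$ in $\Gw$, and therefore the implication $(1)\Rightarrow(3)$ gives $\lambda_0(\tilde P,B,1,\Gw')>0$ for every Lipschitz subdomain $\Gw'\Subset_R\Gw$. The translation invariance $\lambda_0(\tilde P,B,1,\Gw')=\lambda_0(P,B,1,\Gw')-\lambda_0(\Gw)$ turns this into $\lambda_0(\Gw)<\lambda_0(\Gw')$; taking $\Gw'=\Gw_2$ yields the first strict inequality. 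The second is obtained by repeating the argument on the nested pair $\Gw_1\Subset_R\Gw_2$, where the remark following Assumptions~\ref{assump1} guarantees that Assumptions~\ref{assump2} transfer to $\Gw_2$, so that Theorem~\ref{eeqiuv_add} is indeed applicable with $\Gw_2$ playing the role of the ambient domain.

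For the convergence claim, the monotonicity just proved shows that $\{\lambda_0(\Gw_k)\}_{k\in\N}$ is a strictly decreasing sequence bounded below by $\lambda_0(\Gw)$, so it has a limit $\tilde\lambda_0\ge\lambda_0(\Gw)$; the matching upper bound $\tilde\lambda_0\le\lambda_0(\Gw)$ is precisely the identity $\tilde\lambda_0(\Gw)=\lambda_0(\Gw)$ established in Lemma~\ref{lem-lim}. The only point requiring care, rather than a true obstacle, is the very first verification above that the restriction of a positive solution from $\Gw$ to $\Gw'$ genuinely belongs to $\mathcal{H}^0_{\tilde P,B}(\Gw')$ in the weak sense of this paper: this rests on the fact that every $\phi\in\mathcal{D}(\Gw',\partial\Gw'_{\Dir})$ extends by zero to an admissible $H^1$-test function on $\Gw$ (legitimate by the usual density argument in the weak formulation), while the boundary integral over $\partial\Gw'_{\Rob}$ is consistent with the original one over $\pwr$ because $\partial\Gw'_{\Rob}\subset\pwr$ and $\phi$ vanishes on $\pwr\setminus\partial\Gw'_{\Rob}$.
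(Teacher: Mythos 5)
Your proposal is correct and follows essentially the paper's own route: the strict inequalities come from applying Theorem~\ref{eeqiuv_add} (with Corollary~\ref{cor_gwmp}) to the operator shifted by $\lambda_0$, first in $\Gw$ and then with $\Gw_2$ as the ambient domain, which is exactly what the paper means by ``Theorem~\ref{eeqiuv_add} clearly implies the strict monotonicity''. For the limit statement you cite Lemma~\ref{lem-lim}, whereas the paper rederives it on the spot by the same Harnack-convergence-principle contradiction that underlies that lemma, so the two arguments coincide in substance.
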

\begin{proof}
	 By the monotonicity of $\gl_0$ with respect to increasing subdomains domain, it follows that $\lambda_0(\Gw)\leq \gl_1$, where  $\gl_1:=\lim_{k\to \infty}\lambda_0(\Gw_k)$. We  need to prove that $\lambda_0(\Gw)=\gl_1$. Suppose that $\lambda_0(\Gw)<\gl_1$, then by the Harnack convergence principle  (Lemma~\ref{HCP}), it follows that $\mathcal{H}^{0}_{P-\gl_1,B}(\Gw)\neq \emptyset$, which is a contradiction to the definition of   $\lambda_0(\Gw)$. 
	\end{proof}
As a consequence of Theorem~\ref{eeqiuv_add} we obtain the existence of positive (super)solutions  to the following mixed boundary value problem. 
\begin{lem}\label{lem1.8}
	Assume that $(P,B)\geq 0$ in $\Gw$, and let $\Gw'\Subset_R \wb$ be a  Lipschitz subdomain of $\Gw$. 
	  Let $K\Subset \Gw'$ be a Lipschitz  subdomain. 
	  
	   Then for any nonzero nonnegative function $f\in C_0^\infty(\Gw'\setminus K)$ there exists a unique positive weak solution $u$  to  the problem 
	\begin{equation}\label{eq: with K}
	\begin{cases}
	Pw=f & \Omega'\setminus K , \\
	Bw= 0  & \partial \Gw'_{\mathrm{Rob}},\\
	\mathrm{Trace}(w)=0&  \partial\Gw'_{\mathrm{Dir}} \cup \partial K.
	\end{cases}
	\end{equation}
\end{lem}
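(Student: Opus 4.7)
The plan is to rephrase \eqref{eq: with K} as a standard mixed boundary value problem of the type \eqref{P,B} on the Lipschitz subdomain $\tilde\Omega := \Omega' \setminus \overline{K}$, with Robin portion $\partial\tilde\Omega_{\mathrm{Rob}} = \partial\Omega'_{\mathrm{Rob}}$ and Dirichlet portion $\partial\tilde\Omega_{\mathrm{Dir}} = \partial\Omega'_{\mathrm{Dir}} \cup \partial K$. Since $K \Subset \Omega'$ and $\Omega' \Subset_R \Omega$ are both Lipschitz, $\tilde\Omega$ is a bounded Lipschitz subdomain with $\tilde\Omega \Subset_R \Omega$, and it satisfies Assumptions~\ref{assump1}: in particular, the relative interior of $\partial\tilde\Omega_{\mathrm{Dir}}$ is nonempty because it contains $\partial K$. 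Hence the coercivity, compactness and spectral apparatus developed in Section~\ref{sec_local} applies to $(P,B)$ on $\tilde\Omega$.

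I would then exploit the hypothesis $(P,B) \geq 0$ in $\Omega$ to move $0$ into the resolvent set on $\tilde\Omega$. By Theorem~\ref{eeqiuv_add} combined with Lemma~\ref{2.12}, we have $\lambda_c(P,B,\tilde\Omega) > 0$, and by Theorem~\ref{agmonmethod} this equals $\Gamma(P,B,\tilde\Omega)$. Consequently $0 \in \rho(\tilde P,\tilde B)$ in $L^2(\tilde\Omega)$, and moreover the generalized maximum principle holds for $(P,B)$ in $\tilde\Omega$.

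Existence and uniqueness of a weak solution $u \in H^1_{\partial\tilde\Omega_{\mathrm{Dir}}}(\tilde\Omega)$ of $(P,B)u = f$ then follow at once from the Fredholm framework of Theorem~\ref{coercivity} and Remark~\ref{rem-disc}, by setting $u := R_{\tilde\Omega}(0)\, f$. Nonnegativity of $u$ is immediate from the generalized maximum principle, since $f \geq 0$ and $u^- \in H^1_{\partial\tilde\Omega_{\mathrm{Dir}}}(\tilde\Omega)$. Finally, because $f \not\equiv 0$, the function $u$ is a nontrivial nonnegative supersolution of \eqref{P,B} in $\tilde\Omega$, so Lemma~\ref{maxagmon} (the strong maximum principle) forces $u > 0$ in $\tilde\Omega \cup \partial\Omega'_{\mathrm{Rob}}$.

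The only conceptual point worth checking carefully, and the single place where one has to do something more than invoke prior results, is that enlarging the Dirichlet portion by the compact Lipschitz surface $\partial K$ preserves the structural hypotheses of Sections~\ref{sec_prelim}--\ref{sec_local}, so that every tool used above transfers verbatim to $\tilde\Omega$. Once this bookkeeping is done, no genuinely new analytic difficulty arises.
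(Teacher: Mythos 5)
Your proposal is correct and follows essentially the same route as the paper: view $\Gw'\setminus \overline{K}$ as a Lipschitz subdomain satisfying Assumptions~\ref{assump1} with Dirichlet portion $\partial\Gw'_{\mathrm{Dir}}\cup\partial K$, use $(P,B)\geq 0$ in $\Gw$ to get $\gl_c=\Gg>0$ there (Theorem~\ref{agmonmethod} together with Lemma~\ref{2.12}), invert via the coercivity/Fredholm machinery with a positive resolvent, and conclude strict positivity from the strong maximum principle. Your write-up merely makes explicit the bookkeeping that the paper leaves implicit, so no further comment is needed.
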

\begin{proof}
	By Theorem~\ref{agmonmethod}, $$\gl_c(P,B,1,\Gw'\setminus K)=\Gg(P,B,\Gw'\setminus K)>0.$$ Therefore, $(\tilde P,\tilde B)$ is invertible in $\Gw'\setminus K$,  and the corresponding resolvent operator is positive. Hence, there exists a  unique nonnegative solution 
	$u\in H^{1}_{(\partial {\Gw'}\cup \partial K)\setminus \pwr}(\Gw'\setminus K)$ to problem
\eqref{eq: with K}. 
	The strict positivity of $u$ follows from the strong maximum principle. 
\end{proof}
\subsection{Minimal Growth}  
Next, we introduce the notion of positive solution of minimal growth for \eqref{P,B} (cf. \cite{Agmon,PS}). In the sequel $\{ \Gw_k\}_{k\in \N}$ is an exhaustion of $\wb$.

\begin{defi}\label{def:minimalgrowth}
	\em{
		A function $u$ is said to be a {\em positive solution of $(P,B)$ of minimal growth in a neighborhood of infinity in $\Gw$}  if 
		$u\in \mathcal{H}^{0}_{P,B}(\Gw^*_j)$ for some $j \geq 1$ and for any $l>j$ and $v\in C(\Gw^*_l \cup \Pw_{l,\Dir})\cap \mathcal{SH}_{P,B}(\Gw^*_l)$,  $u \leq v$ on 
		$\Pw_{l,\Dir}\;\; \Longrightarrow  \;\; u \leq v$ on $\Gw^*_l$.
	}
\end{defi}

\begin{lem}\label{lem:min_growth}
Assume that $(P,B)\geq 0$ in $\Gw$.  Then for any $x_0\in \Gw$ the equation $(P,B)u=0$ admits (up to a multiplicative constant) a unique positive solution $v$ in $\Gw \setminus \{ x_0\}$ of minimal growth in a neighborhood of infinity in $\Gw$.
\end{lem}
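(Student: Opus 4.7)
The plan is a standard exhaustion plus Harnack convergence construction for existence and the minimal growth property, followed by a ratio argument via the ground state transform for uniqueness.

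\emph{Construction.} I fix a Lipschitz exhaustion $\{\Gw_k\}$ of $\wb$ with $x_0\in \Gw_1$ and a reference point $x_1\in \Gw_1\setminus\{x_0\}$. For each sufficiently large $k$, pick a nonnegative nontrivial $f_k\in C_0^\infty(B(x_0,1/k))$ with $\overline{B(x_0,1/k)}\subset \Gw_1$. Since $(P,B)\geq 0$ in $\Gw$, Lemma~\ref{2.12} gives $\lambda_c(P,B,\Gw_k)>0$, so Theorem~\ref{coercivity} yields a unique $u_k\in H^1_{\partial\Gw_{k,\mathrm{Dir}}}(\Gw_k)$ solving $Pu_k=f_k$ in $\Gw_k$, $Bu_k=0$ on $\partial\Gw_{k,\mathrm{Rob}}$, $u_k=0$ on $\partial\Gw_{k,\mathrm{Dir}}$, and Lemma~\ref{gen_max_weak} together with Lemma~\ref{maxagmon} give $u_k>0$ on $\Gw_k\cup \partial\Gw_{k,\mathrm{Rob}}$. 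Because $\supp(f_k)\subset B(x_0,1/k)$, the normalized functions $v_k:=u_k/u_k(x_1)$ are positive solutions of the homogeneous problem $(P,B)v_k=0$ on $\Gw_k\setminus \overline{B(x_0,1/k)}$. The Harnack convergence principle (Lemma~\ref{HCP}), applied to the Lipschitz exhaustion $\{\Gw_k\setminus \overline{B(x_0,1/k)}\}$ of $\wb\setminus\{x_0\}$, yields a subsequence $v_k\to v$ locally uniformly in $\wb\setminus\{x_0\}$, with $v>0$ satisfying $(P,B)v=0$ in $\Gw\setminus\{x_0\}$ and $v(x_1)=1$.

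\emph{Minimal growth at infinity.} Fix any $\psi\in \mathcal{H}^0_{P,B}(\Gw)\subset \mathcal{RSH}_{P,B}(\Gw)$. Given $l\geq 1$ and $v'\in C(\Gw^*_l\cup \partial\Gw_{l,\mathrm{Dir}})\cap \mathcal{SH}_{P,B}(\Gw^*_l)$ with $v'\geq v$ on $\partial\Gw_{l,\mathrm{Dir}}$, and $\delta>0$, uniform convergence $v_k\to v$ on the compact $\partial\Gw_{l,\mathrm{Dir}}$ combined with $\inf_{\partial\Gw_{l,\mathrm{Dir}}}\psi>0$ gives $v_k\leq v'+\delta\psi$ on $\partial\Gw_{l,\mathrm{Dir}}$ for $k$ large, while trivially $v_k=0\leq v'+\delta\psi$ on $\partial\Gw_{k,\mathrm{Dir}}$. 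On the Lipschitz subdomain $W_{l,k}:=\Gw_k\setminus\overline{\Gw_l}$ (taking $k$ so large that $\supp(f_k)\subset\Gw_l$) one has $(P,B)v_k=0$, and $v'+\delta\psi-v_k\in \mathcal{SH}_{P,B}(W_{l,k})$ is nonnegative on the Dirichlet portion of $\partial W_{l,k}$. Lemma~\ref{gen_max_weak} then yields $v_k\leq v'+\delta\psi$ on $W_{l,k}$; letting $k\to\infty$ and then $\delta\to 0$ gives $v\leq v'$ on $\Gw^*_l$.

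\emph{Uniqueness.} Let $v_1,v_2$ be two such minimal-growth solutions normalized by $v_i(x_1)=1$. By Remark~\ref{r2.11}, $w:=v_1/v_2$ is a positive weak solution of the transformed equation $(P^{v_2},B^{v_2})w=0$ in $\Gw\setminus\{x_0\}$, whose zeroth-order coefficient $Pv_2/v_2$ vanishes there. Applying minimal growth of $v_1$ with $v'=M_l v_2$, where $M_l:=\max_{\partial\Gw_{l,\mathrm{Dir}}}(v_1/v_2)$, gives $w\leq M_l$ on $\Gw^*_l$; the symmetric argument with $v_1,v_2$ swapped gives the reverse bound. Combining these two-sided bounds with the local continuity of $w$ on $\wb\setminus\{x_0\}$ shows $w$ is bounded on $\wb\setminus\{x_0\}$. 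The strong maximum principle applied to the transformed operator $(P^{v_2},B^{v_2})$ (Lemma~\ref{maxagmon}), whose only zeroth-order contribution has vanished, then forces $w$ to be constant on the connected set $\Gw\setminus\{x_0\}$, so $v_1\equiv v_2$.

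\emph{Main obstacle.} The existence and minimal growth steps are routine once the tools of Section~\ref{sec_local} are in place. The genuinely delicate point is uniqueness, where one must control the ratio $w=v_1/v_2$ globally on the punctured domain and in particular rule out uncontrolled behavior at $x_0$, where the singular structures of $v_1,v_2$ could in principle diverge. This is accomplished by combining the two-sided bounds from the minimal growth property on each $\Gw^*_l$ with the zeroth-order-free form of the transformed operator, after which the strong maximum principle closes the argument.
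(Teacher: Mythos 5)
Your existence construction and the minimal-growth verification are essentially the paper's argument (the paper supports $f_k$ in annuli $B_{k-1}\setminus B_k$ and imposes a Dirichlet condition on $\partial B_k$ via Lemma~\ref{lem1.8}, whereas you support $f_k$ in shrinking balls and solve in all of $\Gw_k$; both routes feed into the Harnack convergence principle, and your $\delta\psi$-correction on $\Pw_{l,\Dir}$ is a legitimate way to pass from $u\leq s$ on $\Pw_{l,\Dir}$ to the comparison for $u_k$). The problem is the uniqueness step, where there is a genuine gap.

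Minimal growth constrains $v_1,v_2$ only on the exterior sets $\Gw^*_l=(\wb)\setminus\overline{\Gw_l}$, which stay away from $x_0$ (recall $x_0\in\Gw_1$). Hence your two-sided bounds $m_l\leq w\leq M_l$ hold on $\Gw^*_l$ and, by continuity, on compact subsets of $\wb\setminus\{x_0\}$, but they say nothing about $w=v_1/v_2$ as $x\to x_0$; continuity on the punctured set cannot rule out blow-up or vanishing of the ratio at $x_0$. This is not a technicality: a priori one of the two solutions could have a removable singularity at $x_0$ while the other behaves like the fundamental solution there, in which case $w\to 0$ or $w\to\infty$ at $x_0$ and your claim that $w$ is bounded on $\wb\setminus\{x_0\}$ fails. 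Moreover, even granting boundedness, the strong maximum principle (Lemma~\ref{maxagmon}) only gives the dichotomy ``strictly positive or identically zero'' for nonnegative supersolutions; a bounded positive solution of the zeroth-order-free transformed equation need not be constant in a general, possibly unbounded, domain with no condition imposed on $\pwd$, unless $\sup w$ or $\inf w$ is attained at an interior (or Robin) point, which you have not shown. The paper closes exactly this hole by invoking Serrin's classification of isolated singularities \cite{Serrin}: either both $v_1$ and $v_2$ have removable singularities at $x_0$, or both are asymptotic, up to positive constants, to the local Dirichlet Green function $G_P^{B_1}(\cdot,x_0)$ near $x_0$; in either case the two solutions are comparable near $x_0$, and a comparison argument combining this local information with minimal growth at infinity yields $v_1=Cv_2$. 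Your uniqueness proof needs this (or an equivalent) analysis of the behavior at the singular point $x_0$ to go through.
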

\begin{proof}
	{\bf Existence:} Fix an exhaustion $\{\Gw_k\}_{k\in \N}$ of $\wb$.  We may assume that $x_0\in \Gw_1$.  For $k\in \N$, let $B_k=B(x_0,\gd/k)$, where $\gd>0$ is sufficiently small such that $B_1\Subset \Gw_1$. Let $f_k\in C_0^\infty(B_{k-1}\setminus B_{k})$ be a nonzero nonnegative function. 
	By Lemma~\ref{lem1.8}, there exists a unique   positive solution $v_k$ to the problem	
	$$
	\begin{cases}
	Pw=f_k & \Omega_k\setminus B_k , \\
	Bw=0 &  \Pw_{k,\mathrm{Rob}},\\
	\mathrm{Trace}(w)=0& \partial \Gw_{k,\mathrm{Dir}}\cup \partial B_k.\\
	\end{cases}
	$$
	Fix $x_1\in \Gw\setminus B_1$, and consider the sequence $\{u_k:= v_k/v_k(x_1) \}_{k\in \N}$. 
	By the Harnack convergence principle  (Lemma~\ref{HCP}),  $\{u_k \}_{k\in \N}$  converges (up to a subsequence) to a positive solution $u\in H^{1}_{\loc}((\wb)\setminus \{x_0\})$ of $(P,B)u=0$ in $\Gw\setminus \{x_0 \}$.

\medskip 
	  
	We claim that $u$ is a positive solution of minimal growth in a neighborhood of infinity in $\Gw$. 
	Indeed, fix $l>1$ and let  
	$s\in  \mathcal{SH}_{P,B}(\Gw^*_l)\cap C(\Gw^*_l \cup \Pw_{l,\Dir})$ such that  $u\leq s$ on $\Pw_{l,\Dir}$. For each $k>l$ the boundary condition  $\mathrm{Trace}(u_{k})=0$  on $\Pw_{k,\Dir}$, and the generalized maximum principle  (Lemma~\ref{utilz}) imply that $ u_{k}\leq s$ in $\Gw_{k}\setminus \Gw_l$.  By letting $k\to \infty$, we obtain $u\leq s$ in $\Gw_l^{*}$. 

\medskip 

	{\bf Uniqueness:} Let $u$ and $v$ 
	be positive solutions of $(P,B)$ in $\Gw\setminus \{ x_0\}$ having minimal growth at infinity in $\Gw$.
Clearly, $u$ has a removable singularity at $x_0$ if and only if $v$ has.  
If the singularity at $x_0$ is removable, we may assume that $u(x_0)=v(x_0)$, and a simple comparison argument implies that $u=v$. Otherwise, by \cite[Theorems 1 and 5]{Serrin}, $v\sim u\sim G_P^{B_1}(x,x_0)$ near $x_0$, where $G_P^{B_1}(x,x_0)$ is the positive (Dirichlet) minimal Green function of $P$ in $B_1$, and again, a comparison argument implies that $u=Cv$ with $C>0$.  
\end{proof}
\subsection{Criticality vs. subcriticality} For a nonnegative operator $(P,B)$ we introduce the notions of criticality and subcriticality.
\begin{defi}\label{def:4.6}
	\em{
Assume that $(P,B)\!\geq  \! 0$ in $\Gw$.  The operator $(P,B)$ is {\em subcritical}  in $\Gw$ if there exists $0\lneqq W\in L^{p}_\loc(\wb)$, $p>n/2$, such  that  $\mathcal{H}^{0}_{P-W,B}(\Gw)\neq  \emptyset$; such a $W$ is called a {\em Hardy-weight} for $(P,B)$ in $\Gw$, Otherwise, $(P,B)$ is said to be {\em critical} in $\Gw$. 

If $\mathcal{H}^{0}_{P,B}(\Gw)= \emptyset$, then  $(P,B)$  is called {\em supercritical} in $\Gw$.
	}
\end{defi}
\begin{lem}\label{inview}
	Let $(P,B)  \geq   0$ in $\Gw$.  Then $(P,B)$ is critical in $\Gw$ if and only if $(P,B)$ admits (up to a multiplicative constant) a unique  regular positive supersolution in $\Gw$. 
\end{lem}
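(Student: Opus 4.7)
Plan. I would establish the two implications separately, using Theorem \ref{eeqiuv_add} (which characterizes nonnegativity of $(P,B)$ via existence of positive supersolutions) and the ground state transform of Definition \ref{gs+transform}.

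For the direction $(\Leftarrow)$, I would argue by contrapositive. Assume $(P,B)$ is subcritical with a Hardy weight $W_0\gneqq 0$ in $L^{p}_{\loc}(\wb)$, $p>n/2$. I would regularize by choosing a nontrivial $W\in L^{\infty}_{c}(\Omega)$ with $0\lneqq W\leq W_0$; since any positive solution of $(P-W_0,B)$ is automatically a positive supersolution of $(P-W,B)$, Theorem \ref{eeqiuv_add} produces $\psi\in\mathcal{H}^{0}_{P-W,B}(\Omega)$, and the boundedness and compact support of $W$ ensure $\psi P\psi=W\psi^{2}\in L^{\infty}_{\loc}(\wb)$ together with $\psi B\psi=0$, so $\psi\in \mathcal{RSH}_{P,B}(\Omega)$. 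Meanwhile $(P,B)\geq 0$ supplies a regular $u\in \mathcal{H}^{0}_{P,B}(\Omega)$ via Theorem \ref{eeqiuv_add}. Since $P\psi=W\psi\gneqq 0$ rules out $\psi=cu$ for any scalar $c$, the functions $\psi$ and $u$ are two linearly independent regular positive supersolutions of $(P,B)$, contradicting uniqueness.

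For $(\Rightarrow)$, assume $(P,B)$ is critical and let $v\in\mathcal{RSH}_{P,B}(\Omega)$. I would proceed in two steps. \emph{Step 1} (\emph{$v$ is a positive solution}): if not, then $\mathcal{B}_{P,B}(v,\phi_0)>0$ for some nonnegative $\phi_0\in\mathcal{D}(\Omega,\pwd)$. Regularity of $v$ (so that $vPv\in L^{\infty}_{\loc}(\wb)$ and $vBv/\beta\in L^{\infty}_{\loc}(\pwr)$) allows me to localize inside $\Omega$ and produce a nontrivial $W\in L^{\infty}_{c}(\Omega)$, pointwise small enough, for which $v$ still satisfies $\mathcal{B}_{P-W,B}(v,\phi)\geq 0$ for every nonnegative test function. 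Then $v\in\mathcal{RSH}_{P-W,B}(\Omega)$, and Theorem \ref{eeqiuv_add} gives $(P-W,B)\geq 0$; thus $W$ is a Hardy weight for $(P,B)$, contradicting criticality. \emph{Step 2} (\emph{proportionality}): given $u_1,u_2\in\mathcal{H}^{0}_{P,B}(\Omega)$, set $w:=u_1/u_2$; by Remark \ref{r2.11}, $w$ is a positive solution of the ground state transform $(P^{u_2},B^{u_2})$, which by \eqref{eq-gs} carries no zero-order term and admits constants as positive solutions, and criticality of $(P,B)$ transfers to $(P^{u_2},B^{u_2})$ via the pull-back $\tilde u\mapsto u_2\tilde u$ of Hardy weights. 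Letting $m:=\inf_{\Omega}w$, if $m>0$ then $w-m\geq 0$ is a nonnegative solution of the transformed operator and the strong maximum principle (Lemma \ref{maxagmon}) forces $w\equiv m$; the case $m=0$ is excluded via Lemma \ref{lem:min_growth}, as $w$ and $1$ would then both be positive solutions of minimal growth of the transformed critical operator, forcing them to be proportional.

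The main obstacle is Step~1 of the forward direction: extracting a genuine Hardy weight $W$ of the required regularity class from the mere slack in the supersolution inequality for $v$. The hypothesis $v\in\mathcal{RSH}_{P,B}(\Omega)$—in particular the local boundedness of $vPv$ and $vBv/\beta$—is precisely what enables this construction, as it lets me work with a bounded $W$ supported in the interior, where the boundary Robin contribution $vBv/\beta$ need not be absorbed directly. A secondary difficulty is Step~2, where the case $\inf_{\Omega}(u_1/u_2)=0$ forces one to invoke the minimal-growth uniqueness of Lemma \ref{lem:min_growth} to exclude the degenerate scenario in general (possibly unbounded) domains.
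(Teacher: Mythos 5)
Your backward direction (subcritical $\Rightarrow$ non-uniqueness) is essentially the paper's first paragraph and is fine, modulo the same informality the paper itself allows about why a positive solution of the perturbed operator is a \emph{regular} supersolution. The forward direction, however, departs from the paper and has two genuine gaps. In Step~1, the claim that a non-solution $v\in\mathcal{RSH}_{P,B}(\Gw)$ yields a Hardy weight $0\lneqq W\in L^{\infty}_{c}(\Gw)$ with $\mathcal{B}_{P-W,B}(v,\cdot)\geq 0$ fails when the slack of $v$ sits only on the Robin boundary: if $Pv=0$ in $\Gw$ but $Bv\gneqq 0$ on $\pwr$ (which is perfectly compatible with $v\in\mathcal{RSH}_{P,B}(\Gw)$), the interior excess vanishes identically and no interior potential $W$ can be absorbed into the supersolution inequality; regularity of $v$ does not help here, since subcriticality in Definition~\ref{def:4.6} only allows perturbations by potentials in $\Gw$, not by boundary terms. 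Yet such a $v$ must still be ruled out, and your argument gives no way to do so. In Step~2, the case $m:=\inf_\Gw(u_1/u_2)>0$ misuses the strong maximum principle: Lemma~\ref{maxagmon} applied to the nonnegative solution $w-m$ of $(P^{u_2},B^{u_2})$ only gives the dichotomy $w-m>0$ in $\Gw\cup\pwr$ or $w\equiv m$, and since the infimum need not be attained (think of $w-m\to 0$ at infinity or at $\pwd$), the first alternative produces no contradiction. The case $m=0$ is also unjustified: nothing in what you have established says that $w$ or $1$ is a positive solution of minimal growth for the transformed operator — that positive solutions of a critical operator have minimal growth is exactly the content of Lemma~\ref{gs_implies_crit}, whose proof in the paper \emph{uses} the present lemma, so invoking it here is circular.

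The paper's proof avoids all of this by proving the single equivalence ``$(P,B)$ subcritical $\iff$ there exist two linearly independent functions in $\mathcal{RSH}_{P,B}(\Gw)$''. The nontrivial implication rests on the computation of \cite[Lemma~5.1]{DFP}: if $u,v\in\mathcal{RSH}_{P,B}(\Gw)$ are linearly independent, then $\sqrt{uv}\in\mathcal{RSH}_{P-W,B}(\Gw)$ with $W=\tfrac14\left|\nabla\log(v/u)\right|_A^2\gneqq 0$; truncating to some $0\lneqq W'\leq W$ with $W'\in C_0^\infty(\Gw)$ and applying Theorem~\ref{eeqiuv_add} gives subcriticality. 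This one computation simultaneously handles a supersolution whose excess lives on $\pwr$ (there $v/u$ is nonconstant, so $W\gneqq 0$ even though $Pv=0$ in $\Gw$) and the proportionality of positive solutions in the critical case, with no recourse to maximum principles or minimal growth. If you want to keep your direct strategy for the forward direction, you should replace Steps~1--2 by this $\sqrt{uv}$ argument, or otherwise supply a genuinely different mechanism for excluding boundary-supported slack and for the non-attained-infimum case.
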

\begin{proof}
	Assume that $(P,B)$ is subcritical in $\Gw$ and let $W\gneqq 0$, $W\in L^{p/2}_\loc(\wb)$ be a Hardy-weight.  Then any  $u\in \mathcal{H}^{0}_{P,B}(\Gw)$ and $v\in \mathcal{H}^{0}_{P-W,B}(\Gw)$ are regular positive supersolutions of $(P,B)$ in $\Gw$ which are linearly independent.
	
	If $(P,B)$ has two linearly independent regular positive supersolutions, $v$ and $u$ in $\Gw$, then a direct calculation (see \cite[Lemma~5.1]{DFP}) shows that 
	$\sqrt{uv}\in \mathcal{RSH}_{P-W,B}(\Gw)$, where 
	$W=\frac{1}{4}\left |\nabla (\log(v/u))\right |_A^2\gneqq 0$ is the corresponding Hardy-weight. 
	Clearly,  $W\in L^1_{\loc}(\Gw)$, and therefore there exists  $0 \lneqq W'\leq W$ such that $W'\in C^\infty_0(\Gw)$. For such $W'$ we have that  $\sqrt{uv}\in \mathcal{RSH}_{P-W',B}(\Gw)$, and in light of Theorem~\ref{eeqiuv_add}, $(P,B)$ is subcritical in $\Gw$.
\end{proof}
\begin{rem}
{\em 	Lemma~\ref{inview} and the proof  of Lemma~\ref{gen_max_weak} clearly  imply that 
	$(P,B)$ is critical (resp., subcritical) in $\Gw$ if and only if $(P^u,B^u)$ is critical (resp., subcritical) in $\Gw$, where $u$ is a regular positive supersolution of $(P,B)$ in $\Gw$.
}	
\end{rem}
\begin{defi}
	\em{
		A function $u\in \mathcal{H}^{0}_{P,B}(\Gw)$ is called a {\em ground state} if $u$ has minimal growth in a neighborhood of infinity in $\Gw$.
	}
\end{defi}
\begin{lemma}\label{gs_implies_crit}
	Assume that $(P,B)\geq 0$ in $\Gw$. Then
	 $(P,B)$ admits a ground state in $\Gw$ if and only if $(P,B)$ is critical in $\Gw$.  
\end{lemma}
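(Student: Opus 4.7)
The plan is to establish both implications by combining the uniqueness characterization of criticality in Lemma~\ref{inview} with the minimal-growth solution on a punctured domain provided by Lemma~\ref{lem:min_growth}. Since $(P,B)\geq 0$, there exists $\phi\in\mathcal{H}^{0}_{P,B}(\Gw)$; in the critical case $\phi$ is the unique regular positive supersolution up to a positive multiple.

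For the direction \emph{critical $\Longrightarrow$ ground state}, I would fix $x_0\in\Gw$ and consider the minimal positive solution $v$ of $(P,B)u=0$ on $\Gw\setminus\{x_0\}$ from Lemma~\ref{lem:min_growth}. The key claim is that the singularity at $x_0$ is removable; once this is shown, $v$ extends to an element of $\mathcal{H}^{0}_{P,B}(\Gw)$, which by Lemma~\ref{inview} equals $c\phi$, so the minimal-growth property is inherited by $\phi$, making it a ground state. To rule out a non-removable singularity, I argue by contradiction. If $v\sim G_{P}^{B_{1}}(\cdot,x_0)$ near $x_0$ (the Serrin-type dichotomy already invoked in Lemma~\ref{lem:min_growth}), then for every sufficiently large $M>0$ the truncation
\[
w(x):=\min\bigl(v(x),M\phi(x)\bigr)
\]
agrees with $M\phi$ in a full neighborhood of $x_0$ and hence extends continuously across $x_0$. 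Using the standard fact that the pointwise minimum of two weak supersolutions is again a weak supersolution, together with the local $L^{\infty}$ bounds of Lemma~\ref{1.20}, one verifies $w\in\mathcal{RSH}_{P,B}(\Gw)$. Criticality then forces $w=c_{M}\phi$. On the open set $\{v<M\phi\}$ (nonempty for $M$ large, by finiteness of $v/\phi$ away from $x_0$) this gives $v=c_{M}\phi$, while on $\{v>M\phi\}$ (nonempty because $v\to\infty$ at $x_0$) it gives $c_{M}=M$; substituting back yields $v=M\phi$ on a set where $v<M\phi$, a contradiction.

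For the converse, assume $\phi$ is a ground state and suppose, for contradiction, that $\psi\in\mathcal{RSH}_{P,B}(\Gw)$ is linearly independent of $\phi$. Set
\[
t^{*}:=\sup\{t>0:t\phi\leq\psi \text{ in }\Gw\}.
\]
Finiteness of $t^{*}$ is immediate by evaluation at one point. For positivity, choose an exhaustion $\{\Gw_l\}$ of $\wb$: on the compactum $\overline{\Gw_{l}}$ the continuity and strict positivity of $\phi,\psi$ supplied by Lemma~\ref{1.20} give a ratio bound, while on $\Gw_{l}^{*}$ the minimal-growth property of $\phi$ applied to the supersolution $\psi$ (with $\psi\geq c_{l}\phi$ on $\pwd_{l}$) propagates the bound outward, so $\psi\geq t_0\phi$ on $\Gw$ for some $t_0>0$. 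Then $\eta:=\psi-t^{*}\phi$ is a nonnegative supersolution of $(P,B)$. If $\eta\equiv 0$ then $\psi=t^{*}\phi$, contradicting linear independence; otherwise the strong maximum principle (Lemma~\ref{maxagmon}) forces $\eta>0$ on $\Gw\cup\pwr$. Reapplying the compactness-plus-minimal-growth argument with $\eta$ in place of $\psi$ yields $c_{*}>0$ with $\eta\geq c_{*}\phi$ in $\Gw$, i.e.\ $\psi\geq(t^{*}+c_{*})\phi$, contradicting the definition of $t^{*}$. Hence no such $\psi$ exists, and Lemma~\ref{inview} gives criticality.

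The main technical obstacle, present in both directions, is verifying the regular-supersolution status of the patched functions ($w$ in the first direction, $\eta$ in the second) in the weak $H^{1}_{\loc}(\wb)$ framework of this paper: this requires the lattice property for weak supersolutions in a mixed boundary setting (handled via truncations and localized nonnegative test functions in $\mathcal{D}(\Omega,\pwd)$, splitting the support around $x_0$ where $w=M\phi$ is a genuine solution), together with the up-to-the-Robin-boundary regularity of Lemmas~\ref{1.20} and \ref{bound grad} to ensure the resulting functions lie in $\mathcal{RSH}_{P,B}(\Gw)$.
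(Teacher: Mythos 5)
Your argument for the implication \emph{ground state $\Rightarrow$ critical} is essentially the paper's own proof: the sliding constant $t^{*}$, the strong maximum principle applied to $\psi-t^{*}\phi$, and a second application of the minimal growth property; apart from the (also glossed over in the paper) continuity of a regular supersolution needed to invoke Definition~\ref{def:minimalgrowth}, this half is fine.

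The converse direction, however, has a genuine gap. Your contradiction rests on the claim that $w:=\min(v,M\phi)$ belongs to $\mathcal{RSH}_{P,B}(\Gw)$, so that criticality together with Lemma~\ref{inview} forces $w=c_{M}\phi$. But Lemma~\ref{inview} gives uniqueness only within the class of \emph{regular} positive supersolutions, and $w$ is in general not regular: even granting the lattice property for weak supersolutions in the mixed setting (which itself is not available in the paper and needs proof), the distribution $Pw$ is a nonnegative measure concentrated on the contact set $\{v=M\phi\}$, and this measure is typically singular with respect to Lebesgue measure (already for $P=-\Delta$, the Laplacian of the minimum of two harmonic functions is a surface-type measure on the interface). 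Hence the requirement $wPw\in L^{\infty}_{\loc}(\overline{\Gw}\setminus\pwd)$ (and $wBw/\beta\in L^{\infty}_{\loc}(\pwr)$ where the contact set meets $\pwr$) fails; the bounds of Lemma~\ref{1.20} control $w$ and $w^{-1}$ but not $Pw$. Without regularity you cannot conclude $w=c_{M}\phi$ from criticality, and proving that \emph{all} positive supersolutions are proportional in the critical case normally uses precisely the ground-state property you are trying to establish, so the route is circular as written. The paper instead proves the contrapositive: assuming no ground state exists, it solves $(P,B)w_{k}=f$ in an exhaustion $\{\Gw_{k}\}$ with $0\lneqq f\in C_{0}^{\infty}$ supported near $x_{0}$; if $\{w_{k}\}$ is bounded at one point, its limit is a regular positive supersolution with $Pw=f\gneqq 0$, so $(P,B)$ is subcritical, while if it blows up, the normalized sequence $w_{k}/w_{k}(x_{1})$ converges, via the Harnack convergence principle and comparison with the singular minimal-growth solution $u_{x_{0}}$, to a ground state, a contradiction. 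To salvage your removability strategy you would have to replace the truncation step by an argument that stays inside the class of regular supersolutions, for instance by this approximation through the problems $(P,B)w_{k}=f$.
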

\begin{proof}
	Suppose that   $(P,B)$ admits a ground state $\vgf$ in $\Gw$. Let $\{ \Gw_k\}_{k\in \N}$ be an exhaustion of $\wb$, and let $x_1\in \Gw_1$.
	Let $v$ be  a regular positive supersolution of $(P,B)$ in $\Gw$.  In light of Lemma~\ref{inview}, it suffices to show that $v=C\vgf $ for some constant $C>0$.\\
		Since $\vgf$ has minimal growth, it follows that there exists $C>0$ satisfying 
	$C\vgf\leq v$ in $\Gw$.  Let 
	$$C_0:=\sup\{C>0 \mid C\vgf \leq v \quad  \text{ in } \Gw \}.$$
	Consider the function $v_0:=v-C_0\vgf \geq 0$ and note that  $(P,B)v_0 \geq 0$ in $\Gw$. Assume by contradiction that  $v_0\gneqq 0$. By the strong maximum principle (Lemma~\ref{maxagmon}, $v_0>0$ in $\Gw$, and therefore, $v_0\in \mathcal{RSH}_{P,B}(\Gw)$. By repeating the above argument with $\vgf$ and $v_0$, it follows that there  exists $\mu>0$ such that 
	$\mu \vgf\leq v_0$. Hence,  
	$v\geq (C_0+\mu)\vgf$, a contradiction to the maximality of $C_0$.
	Hence, $v=C_0\vgf$.
	
	Assume now that $(P,B)$ does not admit a ground state in $\Gw$.
	 Let $u_{x_0}$ be the positive solution of minimal growth having a nonremovable singularity at $x_0$, and $\{\Gw_k\}_{k\in \N}$ be an exhaustion of $\wb$ (see Lemma~\ref{lem:min_growth}).
	  Let $0\lneqq f\in C_0^{\infty}(\tilde \Gw)$, where $\tilde\Gw$ is a small neighborhood of $x_0$,  and  
		let $w_k$  be the solution of the problem:
		$$
		\begin{cases}
		Pw=f & \Omega_k, \\
		Bw=0 & \Pw_{k,\mathrm{Rob}},\\
		\mathrm{Trace}(w)=0& \partial {\Gw_{k,\Dir}}.
		\end{cases}
		$$
		By the generalized maximum principle $\{w_k\}_{k\in \N}$ is monotone increasing. If there exists $x_1\neq x_0$ such that the sequence
		$\{w_k(x_1)\}_{k\in \N}$ is bounded, then 
		by Harnack convergence principle  (Lemma~\ref{HCP}) and elliptic regularity,  $\{w_k\}_{k\in \N}$ converges locally uniformly in $\Gw $ to a positive solution $w$ of $(P,B)u=f\gneqq 0$ in $\Gw$. Clearly $w$ is a regular positive supersolution of \eqref{P,B} in $\Gw$. In light of Lemma~\ref{inview}, $(P,B)$is subcritical in $\Gw$. 
		
		Otherwise, fix $x_1\neq x_0$. Then once more, by the Harnack convergence principle  and interior elliptic regularity, the sequence $\{z_k:= w_k/w_k(x_1) \}_{k\in \N}$ converges to a positive solution $z\in \mathcal{H}^{0}_{P,B}(\Gw)$.
		Note that $z<u_{x_0}$ near $x_0$ (since $u_{x_0}$ has a singularity at $x_0$), and therefore, by a standard comparison argument,
		 $z$ is smaller than $u_{x_0}$ in $\Gw\setminus \{x_0\}$. Hence, $z$ is a positive solution of of $(P,B)u=0$ in $\Gw$ of minimal growth at infinity.  Namely, $z$ is a ground state of $(P,B)$ in $\Gw$, a contradiction to our assumption.   
\end{proof}
\section{Positive minimal Green function}\label{sec_green}
\subsection{Green function in a Lipschitz bounded subdomain}
Throughout the present subsection, 
{\bf we assume that $(P,B)$ is nonnegative in a domain $\Gw$ and that Assumptions~\ref{assump2} hold true.}  
We construct in the subcritical case the corresponding positive minimal Green function $G^{\Gw}_{P-\lambda,B}(x,y)$ of $(P,B)$ in $\Gw$.  We follow Stampacchia's Green function construction for the Dirichlet boundary value problem in a bounded domain \cite[Section 9]{ST}.
We mention the following result for subsequent applications (cf. \cite[Theorem~3.9]{Adams}).
\begin{proposition}
	Let $\Gw'\Subset_R \Gw$ and $q> 1$. Then
	$\Upsilon\in \big(W^{1,q}_{\partial \Gw'_{\mathrm{Dir}}}(\Gw')\big)^*$
	if and only if 
	there exist 
	$g_0\in L^{q'}(\Gw')$ and $\mathbf{g}\in L^{q'}(\Gw',\R^n)$ such that for any $u\in W^{1,q}_{\partial \Gw'_{\mathrm{Dir}}}(\Gw')$,
	\begin{equation}\label{eq:dual_sobolev}
	\Upsilon(u)=\int_{\Gw'}(ug_0- \nabla u\cdot \mathbf{g}) \dx,
	\end{equation}
and in this case we  write 	$\Upsilon=g_0+\diver\mathbf{g}$. 

Moreover, there exist $g_0\in L^{q'}(\Gw')$ and $\mathbf{g}\in L^{q'}(\Gw',\R^n)$ satisfying  \eqref{eq:dual_sobolev} such that 
	$$\|\Upsilon\|_{\big(W^{1,q}_{\partial \Gw'_{\mathrm{Dir}}}(\Gw')\big)^{\!*}}  \asymp  \left( \|g_0\|_{L^{q'}(\Gw')}\!+\!\|\mathbf{g}\|_{L^{q'}(\Gw',\R^n)} \right).$$
\end{proposition}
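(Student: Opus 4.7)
The plan is to realize $W^{1,q}_{\partial \Gw'_{\mathrm{Dir}}}(\Gw')$ as a closed subspace of the product Banach space
$$X:=L^{q}(\Gw')\times L^{q}(\Gw',\R^n),\qquad \|(v,\mathbf v)\|_X:=\|v\|_{L^q(\Gw')}+\|\mathbf v\|_{L^q(\Gw',\R^n)},$$
and then invoke Hahn--Banach together with the canonical duality $X^*\cong L^{q'}(\Gw')\times L^{q'}(\Gw',\R^n)$. The converse direction (that any $(g_0,\mathbf{g})\in L^{q'}\times L^{q'}$ defines a bounded functional on $W^{1,q}_{\partial \Gw'_{\mathrm{Dir}}}(\Gw')$ via the formula \eqref{eq:dual_sobolev}) is immediate from H\"older's inequality, and supplies the inequality $\|\Upsilon\|\lesssim \|g_0\|_{L^{q'}}+\|\mathbf{g}\|_{L^{q'}}$.

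For the nontrivial direction, first I would introduce the isometric embedding
$$T: W^{1,q}_{\partial \Gw'_{\mathrm{Dir}}}(\Gw')\longrightarrow X, \qquad T(u):=(u,-\nabla u),$$
where the sign is chosen to match the divergence form of \eqref{eq:dual_sobolev}; by definition of the $W^{1,q}$-norm this map is an isometry onto its image $Y:=T\bigl(W^{1,q}_{\partial \Gw'_{\mathrm{Dir}}}(\Gw')\bigr)$, which is a closed subspace of $X$ (closedness follows from the fact that $W^{1,q}_{\partial \Gw'_{\mathrm{Dir}}}(\Gw')$ is a closed subspace of the reflexive Banach space $W^{1,q}(\Gw')$, being the closure of $\mathcal{D}(\Gw',\partial \Gw'_{\mathrm{Dir}})$). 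Given $\Upsilon\in \bigl(W^{1,q}_{\partial \Gw'_{\mathrm{Dir}}}(\Gw')\bigr)^*$, define $\tilde \Upsilon$ on $Y$ by $\tilde\Upsilon(T(u)):=\Upsilon(u)$; this is a bounded linear functional on $Y$ of norm equal to $\|\Upsilon\|$.

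Next, I would apply the Hahn--Banach theorem to extend $\tilde\Upsilon$ to a bounded linear functional $\Lambda$ on the whole space $X$ with the same norm, $\|\Lambda\|_{X^*}=\|\Upsilon\|$. Since $1<q<\infty$, the dual of $X$ is canonically identified with $L^{q'}(\Gw')\times L^{q'}(\Gw',\R^n)$ via
$$\Lambda(v,\mathbf v)=\int_{\Gw'} v\, g_0 \dx + \int_{\Gw'} \mathbf v\cdot \mathbf{g}\dx$$
for some $g_0\in L^{q'}(\Gw')$ and $\mathbf{g}\in L^{q'}(\Gw',\R^n)$, with $\|\Lambda\|_{X^*}\asymp \|g_0\|_{L^{q'}(\Gw')}+\|\mathbf{g}\|_{L^{q'}(\Gw',\R^n)}$ (the standard Riesz representation for $L^q$; the norm equivalence depends on the choice of product norm on $X$, but is universal). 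Restricting to $Y$ and using $T(u)=(u,-\nabla u)$ yields
$$\Upsilon(u)=\tilde\Upsilon(T(u))=\Lambda(u,-\nabla u)=\int_{\Gw'}\bigl(u g_0-\nabla u\cdot \mathbf{g}\bigr)\dx \qquad \forall\, u\in W^{1,q}_{\partial \Gw'_{\mathrm{Dir}}}(\Gw'),$$
which is exactly \eqref{eq:dual_sobolev}, and justifies the notation $\Upsilon=g_0+\diver\mathbf{g}$.

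Combining both sides, $\|\Upsilon\|_{(W^{1,q}_{\partial \Gw'_{\mathrm{Dir}}}(\Gw'))^*}=\|\Lambda\|_{X^*}\asymp\|g_0\|_{L^{q'}(\Gw')}+\|\mathbf{g}\|_{L^{q'}(\Gw',\R^n)}$, yielding the stated norm equivalence. The only mild subtlety worth underlining is that the representatives $(g_0,\mathbf{g})$ are not unique (their equivalence class is determined modulo the annihilator $Y^\perp$ in $X^*$); the statement only asserts existence of one such pair realizing both the integral representation and the norm equivalence, which is precisely what the Hahn--Banach/Riesz construction supplies. No regularity of $\partial\Gw'_{\mathrm{Rob}}$ or of the coefficients is used here; only the linear-topological structure of $W^{1,q}_{\partial \Gw'_{\mathrm{Dir}}}(\Gw')$ as a closed subspace of the reflexive space $W^{1,q}(\Gw')$ matters.
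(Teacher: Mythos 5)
Your argument is correct and is essentially the classical proof the paper relies on: the paper does not prove this proposition itself but cites Adams' Theorem~3.9, whose proof is exactly your construction (embed $W^{1,q}_{\partial \Gw'_{\mathrm{Dir}}}(\Gw')$ isometrically into $L^q\times L^q$ via $u\mapsto(u,-\nabla u)$, extend by Hahn--Banach, and represent the extension through $L^{q'}$-duality). The only cosmetic caveat is that whether $T$ is a literal isometry depends on the choice of equivalent norm on $W^{1,q}$, but since the conclusion is stated with $\asymp$ this is immaterial.
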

Let $\Gw'\Subset_R \Gw$ be a Lipschitz subdomain of $\Gw$. Then  for $\gl< \lambda_c(\Gw')$ the resolvent operator $R_{\Gw'}(\lambda)$ exists, and  $R_{\Gw'}(\lambda):L^{2}(\Gw')\to H^1_{\partial\Gw'_{\mathrm{Dir}}}(\Gw')$ is  bounded. 
By the Fredholm alternative, for any $\Upsilon=g_0+\diver\mathbf{g} \in (H^1_{\partial \Gw'_{\mathrm{Dir}}}(\Gw'))^*$ there exists a unique solution $u_{\Upsilon}\in H^1_{\partial \Gw'_\mathrm{Dir}}(\Gw')$ to the problem
\begin{equation}\label{eq:Rob_func}
\begin{cases}
(P-\gl)w=g_0+\diver\mathbf{g} & \quad \text{in } \Gw', \\
Bw= - \mathbf{g}\cdot \vec{n} & \quad \text{on } \partial \Gw'_{\mathrm{Rob}},
\end{cases}
\end{equation}
 (in short, $(P-\gl,B)w \!= \! g_0+\diver \mathbf{g}$ in $\Gw'$) in the following sense:  
 For all $\phi \! \in \! \mathcal{D}(\Gw',\partial \Gw'_{\mathrm{Dir}})$,
$$
\int_{\Gw'}\!\!\left[\! A\nabla u_{\Upsilon}\!\cdot\! \nabla \phi+u_{\Upsilon}\bt\!\cdot\! \nabla \phi
+\bb \!\cdot\!\nabla u_{\Upsilon} \phi\!+\!(c-\gl)u_{\Upsilon}\phi \right]\!\!\!\dx+\!\int_{\partial \Gw'_{\mathrm{Rob}}}\!\!\frac{\gamma}{\beta}u_{\Upsilon}\phi \dsigma\!=\!\!\int_{\Gw'}\!( g_0 \phi - \mathbf{g}\cdot \nabla\phi) \!\dx.
$$
 Note that in Theorem~\ref{coercivity} we proved the unique solvability of \eqref{eq:Rob_func} only for the case  $\mathbf{g}=0$, but the proof applies also  for the general case $\mathbf{g} \neq 0$ (see for example, \cite[Theorem 8.3]{GT}).
\begin{lem}\label{closed_operator}
	 Assume that 
		$(P,B)\geq 0$ in $\Gw$, and let $\Gw'\Subset_R \Gw$ be a Lipschitz subdomain. 
	Let $F:(H^1_{\partial \Gw'_\mathrm{Dir}}(\Gw'))^*\to
	H^1_{\partial \Gw'_\mathrm{Dir}}(\Gw')
	$
	be the linear operator mapping $\Upsilon \mapsto u_{\Upsilon}$, where $(P,B)u_{\Upsilon}=\Upsilon$ in $\Gw'$.  Then $F$ is continuous.
\end{lem}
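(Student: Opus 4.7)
The plan is to invoke the closed graph theorem. Linearity of $F$ is immediate from the linearity of problem~\eqref{eq:Rob_func} together with the unique solvability asserted just before the lemma: if $\Upsilon_1,\Upsilon_2\in (H^1_{\partial \Gw'_{\mathrm{Dir}}}(\Gw'))^*$ and $\alpha,\beta\in\R$, then $\alpha u_{\Upsilon_1}+\beta u_{\Upsilon_2}$ weakly solves \eqref{eq:Rob_func} with datum $\alpha\Upsilon_1+\beta\Upsilon_2$, so uniqueness forces $F(\alpha\Upsilon_1+\beta\Upsilon_2)=\alpha F(\Upsilon_1)+\beta F(\Upsilon_2)$. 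Hence it suffices to verify that the graph of $F$ is closed in $(H^1_{\partial \Gw'_{\mathrm{Dir}}}(\Gw'))^*\times H^1_{\partial \Gw'_{\mathrm{Dir}}}(\Gw')$.

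To check closedness, I would take sequences $\Upsilon_n\to \Upsilon$ in $(H^1_{\partial \Gw'_{\mathrm{Dir}}}(\Gw'))^*$ and $u_n:=F(\Upsilon_n)\to u$ in $H^1_{\partial \Gw'_{\mathrm{Dir}}}(\Gw')$, and show $u=F(\Upsilon)$. For any $\phi\in \mathcal{D}(\Gw',\partial \Gw'_{\mathrm{Dir}})$, the estimates (1)--(9) established in the proof of Theorem~\ref{coercivity} imply that $\mathcal B_{P,B}$ is bounded on $H^1_{\partial \Gw'_{\mathrm{Dir}}}(\Gw')\times H^1_{\partial \Gw'_{\mathrm{Dir}}}(\Gw')$, so $\mathcal B_{P,B}(u_n,\phi)\to\mathcal B_{P,B}(u,\phi)$. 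At the same time, strong convergence in the dual gives pointwise convergence $\Upsilon_n(\phi)\to \Upsilon(\phi)$. Since $\mathcal B_{P,B}(u_n,\phi)=\Upsilon_n(\phi)$ for every $n$, passing to the limit yields $\mathcal B_{P,B}(u,\phi)=\Upsilon(\phi)$. Because $\mathcal{D}(\Gw',\partial \Gw'_{\mathrm{Dir}})$ is dense in $H^1_{\partial \Gw'_{\mathrm{Dir}}}(\Gw')$ and both sides are continuous in $\phi$ with respect to the $H^1$-norm, this identity extends to every test function, so $u$ solves \eqref{eq:Rob_func} with datum $\Upsilon$; uniqueness forces $u=F(\Upsilon)$.

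The graph of $F$ is therefore closed, and the closed graph theorem provides the required continuity. I do not expect a genuine obstacle here: the only two ingredients are (i) unique solvability of \eqref{eq:Rob_func} for every $\Upsilon\in (H^1_{\partial \Gw'_{\mathrm{Dir}}}(\Gw'))^*$, which the excerpt asserts as a consequence of the Fredholm alternative together with $(P,B)\ge 0$ in $\Gw$ (so that the generalized maximum principle of Theorem~\ref{eeqiuv_add} kills homogeneous solutions), and (ii) the two-sided continuity of $\mathcal B_{P,B}$ on $H^1_{\partial \Gw'_{\mathrm{Dir}}}(\Gw')\times H^1_{\partial \Gw'_{\mathrm{Dir}}}(\Gw')$ from Theorem~\ref{coercivity}(i).

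If one preferred a direct norm estimate avoiding closed graph, the natural alternative would be to pick $u\in\mathcal H^{0}_{P,B}(\Gw)$ (which exists by $(P,B)\ge 0$), note that $u$ and $u^{-1}$ are bounded on $\overline{\Gw'}$ by local Harnack and the fact that $\Gw'\Subset_R\Gw$, and perform the ground state transform of Definition~\ref{gs+transform}. The transformed form has no zero-order term in $\Gw'$ and becomes coercive on $H^1_{\partial \Gw'_{\mathrm{Dir}}}(\Gw')$ after a Poincaré-type argument as in Lemma~\ref{poincare 1.5}, giving the bound $\|u_\Upsilon\|_{H^1(\Gw')}\lesssim \|\Upsilon\|_{(H^1_{\partial \Gw'_{\mathrm{Dir}}}(\Gw'))^*}$ by testing with $u_\Upsilon$ itself; however, the closed graph route is cleaner and avoids re-deriving coercivity for the transformed operator.
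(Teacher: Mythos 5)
Your proof is correct and follows essentially the same route as the paper: the paper also reduces the lemma to the closed graph theorem and verifies closedness of the graph by passing to the limit in the identity $\mathcal{B}_{P,B}(u_k,\phi)=\Upsilon_k(\phi)$, using the boundedness of $\mathcal{B}_{P,B}$ from Theorem~\ref{coercivity} and the convergence $\Upsilon_k(\phi)\to\Upsilon(\phi)$. The extra details you supply (linearity, density of test functions, uniqueness of the solution) and the alternative coercivity argument are consistent with, but not needed beyond, the paper's argument.
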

\begin{proof}
	It is enough to show that the graph of $F$ is closed.
	Let $(\Upsilon_k,u_k)\to (\Upsilon,u)$ in the graph norm, where $F(\Upsilon_k)=u_k$. We need to show that $F(\Upsilon)=u$, that is  $(P,B)u=\Upsilon$.
	By the definition of $F$, for any $\phi\in \mathcal{D}(\Gw',\partial \Gw'_{\mathrm{Dir}})$
we have 	$\mathcal{B}_{P,B}(u_k,\phi)=
	\Upsilon_k(\phi)$.
	Clearly, $\Upsilon_k(\phi)\to \Upsilon(\phi)$, and by Theorem~\ref{coercivity} 
	$$\mathcal{B}_{P,B}(u_k,\phi) \to \mathcal{B}_{P,B}(u,\phi) \qquad  \mbox{as } k\to \infty. $$
	Hence, $F(\Upsilon)=u$. 
\end{proof}
The following lemma utilizes standard tools of functional analysis to obtain the Green function of $(P,B)$ in a Lipschitz subdomain $\Gw'\Subset_R \Gw$. Recall that according to 
Assumptions~\ref{assump2} the exponent $p$ satisfies $p>n$.
\begin{lemma}\label{bound grad2}
 Suppose that $(P,B)\geq 0$ in $\Gw$, and let  $\Gw'\Subset_R \wb$  be a  Lipschitz subdomain. 
	Let $\lambda<\lambda_c(\Gw')$, and let $u_{\Upsilon}\in H^1_{\partial \Gw'_\mathrm{Dir}}(\Gw')$ be the unique weak solution of \eqref{eq:Rob_func},  where $g_0\in L^p(\Gw'), \mathbf{g}\in L^p(\Gw',\R^n)$.
	Then for any $\Gw'' \Subset_R \Gw'$ 
	\begin{equation}\label{rety}
	\sup\limits_{\Gw''}|u_\Upsilon|\leq C(\|g_0 \|_{L^p(\Gw')}+\|\mathbf{g} \|_{L^p(\Gw',\R^n)}),
	\end{equation}
	where $C$ depends only on $\Gw',\Gw'',\gl$ and the  coefficients of the operator $(P,B)$ in $\Gw$. 

In particular, for any $x\in \Gw''$, the functional 
$$J_x:(W^{1,p'}_{\partial \Gw'_{\mathrm{Dir}}}(\Gw'))^* \to \R,\qquad \Upsilon \mapsto u_{\Upsilon}(x)$$ 
is bounded, and there exists  $0<G_{P-\lambda,B}^{\Gw'}(x,\cdot)\!\in \!(W^{1,p'}_{\partial \Gw'_{\mathrm{Dir}}}(\Gw'))^{**} \! \subset \! W^{1,p'}_{\partial \Gw'_{\mathrm{Dir}}}(\Gw')$ satisfying
$J_x(\Upsilon)\!=\!G_{P-\lambda,B}^{\Gw'}(x,\cdot)(\Upsilon)$ for all $\Upsilon\! \in \!
(W^{1,p'}_{\partial \Gw'_{\mathrm{Dir}}}(\Gw'))^{*} $,  and the following Green representation formula holds: 
\begin{equation}\label{lateG1}
u_\Upsilon(x)= \int_{\Gw'}\big[G^{\Gw'}_{P-\lambda,B}(x,y)g_0(y)-\nabla_y G^{\Gw'}_{P-\lambda,B}(x,y)\cdot \mathbf{g}(y)\big]\! \dy. 
\end{equation} 
\end{lemma}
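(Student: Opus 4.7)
The plan is to split the proof into two parts: first establishing the interior $L^\infty$-bound \eqref{rety}, then using it together with reflexivity of $W^{1,p'}_{\partial\Gw'_{\mathrm{Dir}}}(\Gw')$ to construct $G^{\Gw'}_{P-\lambda,B}(x,\cdot)$ as the element representing the pointwise-evaluation functional $J_x$.

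For the bound \eqref{rety}, I would first control $u_\Upsilon$ in $H^1$. Since $\lambda<\lambda_c(\Gw')=\Gg(P,B,\Gw')$ by Theorem~\ref{agmonmethod}, the problem~\eqref{eq:Rob_func} is uniquely solvable for every $\Upsilon\in(H^1_{\partial\Gw'_{\mathrm{Dir}}}(\Gw'))^*$; combining the Fredholm alternative (Remark~\ref{rem-disc}) with the closed graph argument used in Lemma~\ref{closed_operator} yields
$$
\|u_\Upsilon\|_{H^1(\Gw')}\leq C\|\Upsilon\|_{(H^1_{\partial\Gw'_{\mathrm{Dir}}}(\Gw'))^*}\leq C\bigl(\|g_0\|_{L^p(\Gw')}+\|\mathbf{g}\|_{L^p(\Gw',\R^n)}\bigr),
$$
the second inequality coming from the pairing $\Upsilon(h)=\int_{\Gw'}(g_0 h-\mathbf{g}\cdot\nabla h)\dx$, Hölder's inequality, and boundedness of $\Gw'$ (using $p>n\geq 2$). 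Next, I would upgrade this to an $L^\infty$-bound via Moser iteration up to the Lipschitz portion $\partial\Gw''\cap\pwr$, adapting \cite[Theorems~5.36 and~5.42]{L} to accommodate the inhomogeneous source $g_0+\diver\mathbf{g}$ and the Robin flux $-\mathbf{g}\cdot\vec n$: I would test the weak formulation against $|u_\Upsilon|^{2q-2}u_\Upsilon\eta^2$ for a cutoff $\eta\in\mathcal{D}(\Gw',\partial\Gw'_{\mathrm{Dir}})$ with $\eta\equiv 1$ on $\Gw''$ and $\supp\eta\Subset_R\Gw'$, absorb the $L^p$-terms via Young's inequality together with the Gagliardo--Nirenberg--Sobolev inequality (where $p>n$ is essential), and iterate in $q$ to obtain
$$
\sup_{\Gw''}|u_\Upsilon|\leq C\bigl(\|u_\Upsilon\|_{L^2(\Gw')}+\|g_0\|_{L^p(\Gw')}+\|\mathbf{g}\|_{L^p(\Gw',\R^n)}\bigr).
$$
Combined with the $H^1$-bound this gives \eqref{rety}.

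With \eqref{rety} in hand, fix $x\in\Gw''$; by the interior Hölder regularity of \cite{GT} together with the up-to-$\pwr$ Hölder regularity of Lemma~\ref{Holder up to}, $u_\Upsilon$ is continuous at $x$, so $J_x(\Upsilon):=u_\Upsilon(x)$ is well defined. Writing $X:=W^{1,p'}_{\partial\Gw'_{\mathrm{Dir}}}(\Gw')$ and invoking the proposition preceding the statement, every $\Upsilon\in X^*$ admits a representation $\Upsilon=g_0+\diver\mathbf{g}$ with $g_0,\mathbf{g}\in L^p$ satisfying $\|g_0\|_{L^p(\Gw')}+\|\mathbf{g}\|_{L^p(\Gw',\R^n)}\asymp\|\Upsilon\|_{X^*}$; inserting this into \eqref{rety} yields $|J_x(\Upsilon)|\leq C\|\Upsilon\|_{X^*}$, so $J_x\in X^{**}$. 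Since $1<p'<\infty$, $X$ is reflexive, whence $J_x$ is represented by a unique $G^{\Gw'}_{P-\lambda,B}(x,\cdot)\in X$, and unwinding the duality pairing $\Upsilon(h)=\int_{\Gw'}(g_0h-\mathbf{g}\cdot\nabla h)\dx$ yields \eqref{lateG1}. Positivity of $G^{\Gw'}_{P-\lambda,B}(x,\cdot)$ follows from the generalized maximum principle (Lemma~\ref{gen_max_weak}): for $\Upsilon=g_0\geq 0$ one has $u_\Upsilon\geq 0$, hence $\int_{\Gw'}G^{\Gw'}_{P-\lambda,B}(x,y)g_0(y)\dy\geq 0$ for every nonnegative $g_0$, forcing $G^{\Gw'}_{P-\lambda,B}(x,\cdot)\geq 0$ a.e.; strict positivity follows by taking $g_0=\chi_{\{G^{\Gw'}_{P-\lambda,B}(x,\cdot)=0\}}$ and applying the strong maximum principle (Lemma~\ref{maxagmon}) to the resulting $u_\Upsilon$.

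The principal technical obstacle is the Moser iteration up to $\pwr$: the simultaneous presence of mixed Dirichlet/Robin boundary conditions, a divergence-form source $\diver\mathbf{g}$, a nonzero Robin flux $-\mathbf{g}\cdot\vec n$, and merely $L^p$/$L^{p/2}$ lower-order coefficients requires careful bookkeeping so that the boundary integrals produced by integration by parts on $\diver\mathbf{g}$ cancel against those coming from the Robin datum, while residual $L^p$-terms and trace contributions (via Lemma~\ref{Trace_ineq}) are absorbed uniformly in the iteration exponent using $p>n$. Once this bookkeeping is set up, the iteration closes along standard lines as in \cite[Chapter~5]{L}.
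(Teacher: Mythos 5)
Your proposal is correct and follows essentially the same route as the paper: a local (up to $\pwr$) maximum estimate combined with the a priori $L^2$/$H^1$ bound coming from the closed-graph argument of Lemma~\ref{closed_operator} gives \eqref{rety}, and then reflexivity of $W^{1,p'}_{\partial \Gw'_{\mathrm{Dir}}}(\Gw')$ together with the dual representation $\Upsilon=g_0+\diver\mathbf{g}$ yields the representing element $G^{\Gw'}_{P-\lambda,B}(x,\cdot)$ and \eqref{lateG1}, with positivity via the generalized and strong maximum principles. The only deviation is that the paper simply quotes \cite[Theorem~5.36]{L} for the boundary local boundedness estimate with inhomogeneous data where you propose to redo the Moser iteration (harmless extra work), and your $\chi_{\{G(x,\cdot)=0\}}$ argument spells out the a.e.\ positivity step that the paper leaves implicit.
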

\begin{proof}
 Denote $u=u_ \Upsilon$. It follows from  \cite[Theorem~5.36]{L}  that
$$\sup\limits_{\Gw''} |u|\leq C( \| g_0\|_{L^p(\Gw')}+\|\mathbf{g} \|_{L^p(\Gw',\R^n)})+\|u\|_{L^2(\Gw')}).$$ 
On the other hand, Lemma~\ref{closed_operator} implies that  
$$\|u\|_{L^2(\Gw')}\leq
C\| \Upsilon\|_{\big(W^{1,2}_{\partial \Gw'_{\mathrm{Dir}}}(\Gw')\big)^*}\
\leq 
C\| \Upsilon\|_{\big(W^{1,p'}_{\partial \Gw'_{\mathrm{Dir}}}(\Gw')\big)^*}\asymp \|g_0 \|_{L^p(\Gw')}+\|\mathbf{g} \|_{L^p(\Gw',\R^n)}.$$

Moreover, in light of the generalized and the strong maximum principle,  if $(P,B)u\gneqq 0$ in $\Gw'$, then $u>0$ in $\Gw'$.
Since the space $W^{1,p'}_{\partial \Gw'_{\mathrm{Dir}}}(\Gw')$ is a closed subspace of a  reflexive Sobolev space, it is reflexive as well.
In particular, there exists $0< G^{\Gw''}_{P-\lambda,B}(x,\cdot)\in W^{1,p'}_{\partial \Gw'_{\mathrm{Dir}}}(\Gw')$  such that for any $\Upsilon=g_0+\diver\mathbf{g}$,
\begin{equation}\label{lateG}
J_x(\Upsilon)=\Upsilon(G^{\Gw''}_{P-\lambda,B}(x,\cdot))=\int_{\Gw'}\big(G^{\Gw''}_{P-\lambda,B}(x,y)g_0(y)-\nabla_y G^{\Gw''}_{P-\lambda,B}(x,y)\cdot \mathbf{g}(y)\big)\! \dy.
\end{equation}
The reflexivity also implies that $$G_{P-\lambda,B}^{\Gw''}(x,\cdot)(\Upsilon)=\Upsilon(G_{P-\lambda,B}^{\Gw''}(x,\cdot))=J_x(\Upsilon).$$
 Moreover, it follows that  for each fixed $x\in \Gw'$, $G_{P-\lambda,B}^{\Gw''}(x,\cdot)$ does not depend on subdomains $\Gw''$ containing $x$. Therefore, we obtain \eqref{lateG1}.
	\end{proof}
\begin{cor}
	Fix $x_0\in \Gw''\Subset_R \Gw'\Subset_R \Gw$, where $\Gw'',\Gw'$ are Lipschitz subdomains of $\Gw$.
	Then the operator $H:(W^{1,p'}_{\partial \Gw'_{\mathrm{Dir}}}(\Gw'))^{*}\to L^{\infty}(\Gw'')$, which
	maps $\Upsilon$ to $u_{\Upsilon}$, is bounded. Moreover,
	 $H^*:L^1(\Gw'')\to (W^{1,p'}_{\partial \Gw'_{\mathrm{Dir}}}(\Gw'))^{**}$ is bounded as well.
\end{cor}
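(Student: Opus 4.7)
The plan is to derive both assertions directly from the $L^\infty$-estimate \eqref{rety} established in Lemma~\ref{bound grad2}, together with elementary duality. In essence, the corollary is a functional-analytic repackaging of that estimate.

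First I would verify linearity of $H$. For $\gl<\gl_c(\Gw')$, Theorem~\ref{agmonmethod} yields the generalized maximum principle in $\Gw'$, hence uniqueness of the solution $u_\Upsilon$ to \eqref{eq:Rob_func}; combined with linearity of the equation this gives linearity of the map $\Upsilon\mapsto u_\Upsilon$. For boundedness I would invoke \eqref{rety} together with the norm equivalence
\[
\|\Upsilon\|_{(W^{1,p'}_{\partial\Gw'_{\mathrm{Dir}}}(\Gw'))^*}\asymp \|g_0\|_{L^p(\Gw')}+\|\mathbf{g}\|_{L^p(\Gw',\R^n)}
\]
valid for every representation $\Upsilon=g_0+\diver\mathbf{g}$. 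Passing to the infimum over such representations yields
\[
\|H\Upsilon\|_{L^\infty(\Gw'')}=\sup_{\Gw''}|u_\Upsilon|\leq C\|\Upsilon\|_{(W^{1,p'}_{\partial\Gw'_{\mathrm{Dir}}}(\Gw'))^*},
\]
with $C$ depending only on $\Gw''$, $\Gw'$, $\gl$ and the coefficients of $(P,B)$.

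For the adjoint, I would invoke the general fact that any bounded linear operator $H\colon X\to Y$ between Banach spaces induces a bounded adjoint $H^*\colon Y^*\to X^*$ with $\|H^*\|=\|H\|$. Here $Y=L^\infty(\Gw'')$, and $L^1(\Gw'')$ embeds isometrically into $Y^*$ via $\iota(f)(g):=\int_{\Gw''}fg\dx$. Composing $H^*$ with $\iota$ produces a bounded operator $L^1(\Gw'')\to (W^{1,p'}_{\partial\Gw'_{\mathrm{Dir}}}(\Gw'))^{**}$ given explicitly by
\[
H^*(f)(\Upsilon)=\int_{\Gw''}f(x)\,u_\Upsilon(x)\dx,
\]
and the chain of inequalities
\[
|H^*(f)(\Upsilon)|\leq \|f\|_{L^1(\Gw'')}\|u_\Upsilon\|_{L^\infty(\Gw'')}\leq C\|f\|_{L^1(\Gw'')}\|\Upsilon\|_{(W^{1,p'}_{\partial\Gw'_{\mathrm{Dir}}}(\Gw'))^*}
\]
gives the desired operator-norm bound $\|H^*(f)\|\leq C\|f\|_{L^1(\Gw'')}$.

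There is essentially no obstacle to this argument---both claims are immediate consequences of \eqref{rety}. The only point deserving a sentence of care is the linearity of the map $\Upsilon\mapsto u_\Upsilon$ entering the definition of $H$, which is secured by the uniqueness part of the preceding discussion of \eqref{eq:Rob_func}.
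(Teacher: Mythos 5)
Your derivation is correct and is exactly the intended argument: the paper states this corollary without proof, as an immediate consequence of the estimate \eqref{rety} in Lemma~\ref{bound grad2} combined with the norm comparability for $\Upsilon=g_0+\diver\mathbf{g}$ and the standard adjoint/duality facts you invoke (including the isometric embedding of $L^1(\Gw'')$ into $(L^\infty(\Gw''))^*$). The only slight imprecision is that the two-sided equivalence $\|\Upsilon\|_{(W^{1,p'}_{\partial\Gw'_{\mathrm{Dir}}}(\Gw'))^*}\asymp\|g_0\|_{L^p(\Gw')}+\|\mathbf{g}\|_{L^p(\Gw',\R^n)}$ is guaranteed only for a suitably chosen representation rather than for every one, but your passage to the infimum over representations, together with the fact that $u_\Upsilon$ depends only on $\Upsilon$, handles this correctly.
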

Next, we are interested in the regularity properties of the function $G_{P-\lambda ,B}^{\Gw'}(x,y)$. It turns out  that $G_{P-\lambda ,B}^{\Gw'}(x,y)$ satisfies $(P-\gl,B)G_{P-\lambda ,B}^{\Gw'}(\cdot,y) =\gd_y$ in a generalized sense, where $\gd_y$ is the Dirac measure centered at $y\in \Gw'$. Therefore, it is natural to extend the meaning of solutions of \eqref{eq:Rob_func} by introducing the  notion of distributional solutions to our boundary value problem.
\begin{defi}
	\em{
		Let Assumptions~\ref{assump1} hold in a bounded Lipschitz domain $\Gw$.
		Let $\eta$ be a compactly supported measure on $\Gw$ with bounded variation.
		We say that $u\in W^{1,p'}_{\pwd}(\Gw)$ is a {\em distributional solution} of  the boundary value problem
		$(P,B)u=\eta$ in $\Gw$ if for any (continuous) $\phi\in H^1_{\partial \Gw_{\mathrm{Dir}}}(\Gw)$ satisfying 
		\begin{equation}\label{eq:Rob_func_adjoint}
		\begin{cases}
		P^*\phi=g_0+\diver\mathbf{g} & \text{~in~} \Gw, \\
		B^*\phi=-\mathbf{g}\cdot \vec{n} & \text{~on~}   \partial \Gw_{\mathrm{Rob}},
		\end{cases}
		\end{equation}
				with $g_0,\mathbf{g}^i\in \mathcal{D}(\Gw,\partial \Gw_{\mathrm{Dir}})$ for all $1\leq i \leq n$, we have
		\begin{equation}\label{st_est}
		\int_{\Gw}(ug_0-\nabla u\cdot \mathbf{g}) \dx=\int_{\Gw}\phi \deta.
		\end{equation}
	}
\end{defi}
 \begin{rem}
 	\em{
 		Note that
 	 \eqref{st_est} reads as
 	$\Upsilon(u)\!=\!\int_{\Gw}\!\phi \deta$, where 
 	$\Upsilon\!=\!g_0+\diver\mathbf{g} \!\in\! \big(W^{1,p'}_{\partial \Gw'_{\mathrm{Dir}}}(\Gw')\big)^*$.	
 	}
 \end{rem}
\begin{lem}\label{Greendelta}
	   	 Let $x_0\in \Gw'\Subset_R \Gw$.
	   	 For $\lambda<\lambda_c(P,B,\Gw')$, the function $G_{P-\lambda ,B}^{\Gw'}(x_0,\cdot)$ is a positive  distributional solution of the problem    
		$(P^*-\lambda,B^*)u=\delta_{x_0}$ in $\Gw'$. 
\end{lem}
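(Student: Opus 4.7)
The positivity of $G := G_{P-\lambda,B}^{\Gw'}(x_0,\cdot)$ and its membership in $W^{1,p'}_{\partial\Gw'_{\mathrm{Dir}}}(\Gw')$ were already recorded in Lemma~\ref{bound grad2}, so the real task is to verify the distributional identity. Unfolding the definition of distributional solution applied to $(P^*-\gl,B^*)$, and using $(P^*)^*=P$, $(B^*)^*=B$ which are evident from \eqref{P*,B*} and the integration by parts at the end of Section~\ref{sec_prelim}, one must show that for every continuous $\phi\in H^1_{\partial\Gw'_{\mathrm{Dir}}}(\Gw')$ solving
\[
(P-\gl)\phi=g_0+\diver \mathbf{g}\ \text{ in }\Gw',\qquad B\phi=-\mathbf{g}\cdot\vec{n}\ \text{ on }\partial\Gw'_{\mathrm{Rob}},
\]
with $g_0,\mathbf{g}^i\in\mathcal{D}(\Gw',\partial\Gw'_{\mathrm{Dir}})$, one has
\[
\int_{\Gw'}\bigl[G\,g_0-\nabla G\cdot \mathbf{g}\bigr]\dx=\phi(x_0).
\]

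The strategy is just to combine uniqueness with the Green representation \eqref{lateG1}. Since $\gl<\gl_c(P,B,\Gw')$, Theorem~\ref{agmonmethod} together with Corollary~\ref{cor_glc} places $\gl$ in $\gr(\tilde P,\tilde B)$, so the Robin problem $(P-\gl,B)u=\Upsilon$ with $\Upsilon=g_0+\diver\mathbf{g}\in(W^{1,p'}_{\partial\Gw'_{\mathrm{Dir}}}(\Gw'))^*$ has a unique solution $u_\Upsilon\in H^1_{\partial\Gw'_{\mathrm{Dir}}}(\Gw')$. Hence $\phi=u_\Upsilon$, and formula \eqref{lateG1} instantly gives the required equality
\[
\phi(x_0)=u_\Upsilon(x_0)=\int_{\Gw'}\bigl[G(y)g_0(y)-\nabla_y G(y)\cdot \mathbf{g}(y)\bigr]\dy .
\]

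The only nontrivial point is to confirm that the pointwise value $\phi(x_0)$ is even well defined. Since $g_0,\mathbf{g}^i\in\mathcal{D}(\Gw',\partial\Gw'_{\mathrm{Dir}})$ belong in particular to $L^p$, the a priori bound \eqref{rety} yields $\phi\in L^\infty_{\loc}(\Gw'\cup\partial\Gw'_{\mathrm{Rob}})$. Standard interior De~Giorgi--Nash--Moser regularity together with the up-to-boundary H\"older estimate of Lemma~\ref{Holder up to} then give $\phi\in C^{\alpha}_{\loc}(\Gw'\cup\partial\Gw'_{\mathrm{Rob}})$, so in particular $\phi$ is continuous at the interior point $x_0$. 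With this, the proof reduces to bookkeeping; the main ``obstacle'' is not analytic but notational, namely keeping the adjoint/biadjoint identifications straight so that the abstract distributional definition applied to $(P^*-\gl,B^*)$ collapses to an assertion already contained in the Green representation formula for $(P-\gl,B)$.
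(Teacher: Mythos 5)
Your proposal is correct and follows essentially the same route as the paper's proof: since $\gl<\gl_c(P,B,\Gw')$ the weak solution of \eqref{eq:Rob_func} is unique, so the test function $\phi$ in the definition of distributional solution coincides with $u_\Upsilon$, and the Green representation formula \eqref{lateG1} then yields $\phi(x_0)=\int_{\Gw'}\bigl[G\,g_0-\nabla G\cdot\mathbf{g}\bigr]\dy$, with positivity coming from Lemma~\ref{bound grad2} (the paper re-derives it by taking $\Upsilon=g_0\gneqq0$, a cosmetic difference). Your extra remark on the continuity of $\phi$ at $x_0$ is a harmless addition, since the definition already restricts to continuous test functions.
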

\begin{proof}	
		By Lemma~\ref{bound grad2}, $G^{\Gw'}_{P-\lambda,B}(x,\cdot)\in W^{1,p'}_{\partial \Gw'_{\Dir}}(\Gw')$.		
		Let $\phi\in H^{1}_{\partial \Gw'_{\mathrm{Dir}}}(\Gw')$ be a weak solution of  \eqref{eq:Rob_func}.
		We need to show that for any $x_0\in \Gw'$ we have
		\begin{align*}
		&
		\int_{\Gw'}\big [G_{P-\lambda ,B}^{\Gw'}(x_0,y)g_0(y)-\nabla G_{P-\lambda ,B}^{\Gw'}(x_0,y)\mathbf{g}(y) \big ]\! \dy=\phi(x_0).
		\end{align*}
	 Recall that by \eqref{lateG1}
		\begin{align*}
		\int_{\Gw'}\big [G_{P-\lambda ,B}^{\Gw'}(x_0,y)g_0(y)-\nabla G_{P-\lambda ,B}^{\Gw'}(x_0,y)\mathbf{g}(y)\big]\! \dy=u(x_0),
		\end{align*}	
		where   $u\in H^{1}_{\partial \Gw'_{\mathrm{Dir}}}(\Gw')$ is a weak solution to the problem 
		\eqref{eq:Rob_func}. 
		
		Since $\lambda<\lambda_c(P,B,\Gw')$, the generalized maximum principle for $(P-\lambda,B)$ in $\Gw'$ implies that the weak solution $\phi$ to problem \eqref{eq:Rob_func} is unique.  Hence, $\phi(x_0) =u(x_0)$. 
		Moreover, if  $\Upsilon=g_0\gneqq 0$, and $x \in \Gw'$, then   
		$$\int_{\Gw'}G_{P-\lambda ,B}^{\Gw'}(x,y)g_0(y) \dy=\phi(x) =u(x)>0.$$
		Since, $x$ and $g_0$ are arbitrary, we deduce that $G_{P-\lambda ,B}^{\Gw'}(x,\cdot)$ is positive a.e. in $\Gw'$.
\end{proof}
\begin{lemma}\label{uniqdist}
	Assume that $G_1$ and $G_2$ are distributional solutions of the problem $(P^*-\lambda,B^*)=\delta_{x_0}$ in  a bounded Lipschitz domain $\Gw'\Subset_R \wb$, where $x_0\in \Gw'$ and $\lambda<\lambda_c(\Gw')$. Then $G_1=G_2$ in $\Gw'$.  
\end{lemma}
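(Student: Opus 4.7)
The plan is to form the difference $w := G_1 - G_2 \in W^{1,p'}_{\partial\Gw'_{\Dir}}(\Gw')$ and show that $w$ pairs to zero against a sufficiently rich class of test functions, which will force $w\equiv 0$ in $\Gw'$.

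By the definition of a distributional solution of $(P^*-\gl,B^*)u=\delta_{x_0}$, for every pair $(g_0,\mathbf{g})$ with $g_0,\mathbf{g}^i \in \mathcal{D}(\Gw',\partial\Gw'_{\Dir})$ and every $\phi\in H^1_{\partial\Gw'_{\Dir}}(\Gw')$ solving the forward problem $(P-\gl,B)\phi = g_0 + \diver \mathbf{g}$ in $\Gw'$, the identity
$$\int_{\Gw'}\big(G_i g_0 - \nabla G_i\cdot \mathbf{g}\big)\dx = \phi(x_0), \qquad i=1,2,$$
must hold. Subtracting the two identities yields $\int_{\Gw'}(w g_0 - \nabla w\cdot \mathbf{g})\dx = 0$ for every admissible choice of $(g_0,\mathbf{g})$.

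Next, I would specialize to $\mathbf{g}\equiv 0$ and $g_0 = \psi\in C_0^\infty(\Gw') \subset \mathcal{D}(\Gw',\partial\Gw'_{\Dir})$. The hypothesis $\gl < \gl_c(\Gw')$, combined with Theorem~\ref{agmonmethod} and Lemma~\ref{utilz}, places $\gl$ in the resolvent set of $(\tilde P,\tilde B)$, so the forward problem $(P-\gl,B)\phi = \psi$ admits a unique solution $\phi\in H^1_{\partial\Gw'_{\Dir}}(\Gw')$; by Lemma~\ref{Holder up to} together with interior H\"older regularity, $\phi$ is continuous, and in particular $\phi(x_0)$ is well-defined. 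Since the two distributional identities involve the same $\phi$, subtraction gives $\int_{\Gw'} w\psi\dx = 0$ for every $\psi\in C_0^\infty(\Gw')$, and the fundamental lemma of the calculus of variations forces $w= 0$ almost everywhere in $\Gw'$, i.e.\ $G_1=G_2$ in $\Gw'$.

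There is no serious obstacle once the definition is unwound; the only step requiring attention is verifying that for $\psi\in C_0^\infty(\Gw')$ the forward equation $(P-\gl,B)\phi=\psi$ really has a (unique, continuous) solution against which both $G_1$ and $G_2$ may be tested, and this is precisely what the bounded-domain theory of Section~\ref{sec_local} provides under the standing hypothesis $\gl<\gl_c(\Gw')$. Note that it is not necessary to exploit the full family of test pairs $(g_0,\mathbf{g})$: the scalar test functions alone suffice thanks to the density of $C_0^\infty(\Gw')$ in $L^p(\Gw')$.
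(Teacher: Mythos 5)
Your proposal is correct and follows essentially the same route as the paper: the paper also forms $G:=G_1-G_2$, notes that for every $\psi\in C_0^\infty(\Gw')$ the forward problem $(P-\gl,B)\phi=\psi$ has a unique solution (since $\gl<\gl_c(\Gw')$), and concludes $\int_{\Gw'}G\,\psi\dx=0$ for all such $\psi$, hence $G=0$. Your write-up merely makes explicit the unwinding of the distributional-solution definition and the reduction to the case $\mathbf{g}=0$, which is exactly the content of the paper's argument.
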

\begin{proof}
	According to our assumptions the function $G:=G_1-G_2$ is a distributional solutions to the problem
	$(P^*-\lambda,B^*)=0$ in $\Gw'$. 
	Recall that for any $\psi\in C_0^{\infty}(\Gw')$, there exists a unique solution to the problem
	$(P-\lambda,B)\phi=\psi$ in $\Gw'$, and therefore,
	$$
	\int_{\Gw'}G(x,y)\psi(y)\dx=0.
	$$
	Since $\psi$ is arbitrary $G=0$.
\end{proof}
We proceed with regularity properties of distributional solutions near $\pwr$.
Recall that for $x\in \R^n$, we use also the notation $x=(x',x_n).$ 
\begin{lem}\label{reg_dist}
	Let Assumptions~\ref{assump2} hold in $\Gw$, and let $\Gw'\Subset_R \Gw$ be a Lipschitz subdomain. Assume that $(P,B)\geq 0$ in $\Gw$, and let $\eta$ be a compactly supported measure on $\Gw$ with bounded variation such that $\emptyset\neq  \mathrm{supp}(\eta)  \Subset \Gw$.
	Finally, let $v$ be a positive distributional solution of the problem
	$(P^*,B^*)v=\eta$ in $\Gw'$. 
	Then $v\in \mathcal{H}_{P^*,B^*}^{0}(\Gw'\setminus  \mathrm{supp}(\eta))$. 
\end{lem}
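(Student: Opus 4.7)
Since this lemma follows the construction of the minimal Green function $G := G^{\Gw'}_{P,B}$ (Lemma~\ref{bound grad2}) and its characterization as a distributional solution of the adjoint problem (Lemma~\ref{Greendelta}), the most natural route is via a representation formula. Define
$$V(x) := \int_{\Gw'} G(y, x)\, \mathrm{d}\eta(y), \qquad x \in \Gw'.$$
This is well-defined because for each $y \in \mathrm{supp}(\eta) \Subset \Gw'$, one has $G(y,\cdot) \in W^{1, p'}_{\partial\Gw'_{\mathrm{Dir}}}(\Gw')$, with uniform norm bounds as $y$ ranges over the compact set $\mathrm{supp}(\eta)$ (from Lemma~\ref{bound grad2}, together with the boundary estimates of Corollary~\ref{Harnack_up} and Lemma~\ref{Holder up to}). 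Fubini combined with Lemma~\ref{Greendelta} then shows that $V$ is a distributional solution of $(P^*,B^*)V = \eta$ in $\Gw'$. The uniqueness of distributional solutions (argued exactly as in Lemma~\ref{uniqdist}, applied to $(P^*, B^*)$ with $\lambda = 0$, which is admissible since $\lambda_c(P^*, B^*, \Gw') > 0$ follows from Lemma~\ref{2.12} together with $(P^*,B^*) \geq 0$) then forces $v = V$ almost everywhere in $\Gw'$.

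It remains to verify that $V \in \mathcal{H}^0_{P^*, B^*}(\omega)$, where $\omega := \Gw' \setminus \mathrm{supp}(\eta)$. For each fixed $y \in \mathrm{supp}(\eta)$ I would first establish that $G(y,\cdot)$ is itself a weak (not merely distributional) solution of $(P^*, B^*) w = 0$ on $\omega$: approximate $\delta_y$ by a sequence of smooth bumps $\eta^y_n \in C_0^\infty(\Gw')$ concentrating at $y$, solve $(P^*, B^*)w_n = \eta^y_n$ weakly in $H^1_{\partial\Gw'_{\mathrm{Dir}}}(\Gw')$ via Theorem~\ref{coercivity} applied to the adjoint operator, and pass to an $H^1_{\loc}(\overline{\omega}\setminus\partial\omega_{\mathrm{Dir}})$ limit using the interior estimates of Lemma~\ref{bound grad} and the up-to-the-boundary regularity of Lemma~\ref{Holder up to}; uniqueness (Lemma~\ref{uniqdist}) identifies this limit with $G(y,\cdot)$. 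Testing the weak equation for $V$ against any $\chi \in \mathcal{D}(\omega, \partial\omega_{\mathrm{Dir}})$ and exchanging the order of integration by Fubini (justified by the uniform-in-$y$ $W^{1,p'}$ bounds together with the bounded variation of $\eta$),
$$\mathcal{B}_{P^*, B^*}(V, \chi) = \int_{\mathrm{supp}(\eta)}\mathcal{B}_{P^*, B^*}\bigl(G(y, \cdot), \chi\bigr)\, \mathrm{d}\eta(y) = 0,$$
which is the desired weak formulation. Positivity is inherited from the hypothesis on $v$, since $v = V > 0$.

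The main obstacle is precisely the intermediate claim that $G(y,\cdot)$, a priori known only as a distributional solution of $(P^*, B^*)w = \delta_y$, is an $H^1_{\loc}$ weak solution of $(P^*, B^*)w = 0$ on $\omega$ — this is the special case $\eta = \delta_y$ of the present lemma, so invoking Lemma~\ref{reg_dist} directly here would be circular. The mollification-and-compactness argument outlined above must be carried out from scratch for this $\delta$-measure case to break the circularity; once that is in place, the Fubini interchange described in the second paragraph finishes the proof.
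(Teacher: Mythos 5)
Your reduction of the general measure to the case $\eta=\delta_y$ (superposition $V(x)=\int G(y,x)\,\mathrm{d}\eta(y)$, Fubini, uniqueness of distributional solutions) is plausible, but it leaves unproved exactly the analytic heart of the lemma: showing that a function known only as a $W^{1,p'}$ distributional solution (note $p'<2$, so not even $H^1_{\loc}$ a priori) is an $H^1_{\loc}$ \emph{weak} solution of $(P^*,B^*)w=0$ away from the singular set, \emph{up to the Robin portion}. You acknowledge that invoking the lemma for $\delta_y$ would be circular and defer this to a sketched mollification-and-compactness argument, but that sketch is where the work is, and as written it has a hole: Lemma~\ref{bound grad} needs an $L^\infty$ bound on the approximants $w_n$ away from the pole as input, and uniformity in $n$ of such a bound does not come from Lemma~\ref{Holder up to} or Harnack alone (the $w_n$ are normalized only by $\int \mathrm{d}\eta_n=1$); one must first prove a duality estimate of the type $\|w_n\|_{W^{1,p'}(\Gw')}\leq C\int \mathrm{d}\eta_n$ (as in \eqref{need1}, via Lemma~\ref{bound grad2}) and then local boundedness of weak solutions, before the Caccioppoli estimate and weak compactness can be applied; the same uniform-in-$y$ local $H^1$ bounds (plus measurability of $y\mapsto G(y,x)$) are needed to justify your Fubini interchange $\mathcal{B}_{P^*,B^*}(V,\chi)=\int\mathcal{B}_{P^*,B^*}(G(y,\cdot),\chi)\,\mathrm{d}\eta(y)$. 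A further logical wrinkle: you justify uniqueness via $\gl_c(P^*,B^*,\Gw')>0$, appealing to $(P^*,B^*)\geq 0$; but in the paper this nonnegativity (Lemma~\ref{lem_adj_positivity}) is \emph{derived from} the present lemma, so as stated your argument is circular. It is repairable, since the distributional formulation for the adjoint data tests against solutions of the \emph{direct} problem $(P,B)\phi=\psi$, so only $\gl_c(P,B,\Gw')>0$ is needed, but you should say so.

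For contrast, the paper avoids all of this by a completely different device: after a ground state transform reducing to $(P,B)\mathbf{1}=0$, it performs an \emph{even reflection} of the coefficients and of $v$ across the ($C^1$, locally flattened) Robin portion, checks that the reflected function is a distributional solution, in Stampacchia's Dirichlet sense, of the reflected equation in a full ball, and then invokes Stampacchia's interior regularity theorem \cite[Theorem~9.3]{ST} to get $H^1_{\loc}$ regularity across the Robin boundary; a final approximation argument converts the distributional identity into the weak formulation for $(P^*,B^*)$ on $\Gw'\setminus\mathrm{supp}(\eta)$. If you want to pursue your route, you must carry out the mollification argument for $\delta$-data (or, more simply, mollify $\eta$ itself) in full, including the uniform $W^{1,p'}$ and local sup bounds described above; as submitted, the proposal does not constitute a proof.
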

\begin{proof}
	By the ground state transform, we may assume that 
	$(P,B){\bf 1}=0$. In other words, we may assume that the coefficients of $(P,B)$ satisfy $\gamma=c=\bt=0$ (see \eqref{eq-gs}).
	
	Assume first that
	$$\Gw'=B_1(0)^+=\{x\in \R^n \mid x_n>0\text{~and~}|x|<1 \}, \quad \partial \Gw'_{\mathrm{Rob}}=\{x\in \R^n \mid   x_n=0\text{~and~}|x|<1\}.$$
	Denote $B_1(0)^-=\{x\in \R^n \mid  x_n<0\text{~and~}|x|<1 \}$,
	and $\mathrm{supp}(\eta)\cap \Gw'=\emptyset$.
	Consider the operator $(\breve{P}^*,\breve{B}^*)$, the `even' extension of $(P^*,B^*)$ into $B_1(0)$, namely,
	\begin{equation*}
		\breve{a}^{ij}(x',x_n) :=
		\begin{cases}
		{a}^{ij}(x',x_n) & x_n\geq 0,\\[2mm]
		{a}^{ij}(x',-x_n) & x_n< 0,
		\end{cases}
	\qquad
	\breve{\bar{\mathbf{b}}}(x',x_n)^j :=
	\begin{cases}
	\bb^j(x',x_n) & x_n\geq 0, 1\leq j\leq n,\\[2mm]
	{\bb}^j(x',-x_n)& x_n< 0, 1\leq j \leq n.
	\end{cases}
	\end{equation*}
	Similarly, define the `even' extension of the given solution $v$  into $B_1(0)$ by
\begin{equation*}
\breve{v}(x',x_n):=
\begin{cases}
v(x',x_n) & x_n\geq 0,\\[2mm]
v(x',-x_n) & x_n< 0.
\end{cases}
\end{equation*}	
	We claim that $\breve{v}\in L^1(B_1(0))$ is a distributional  solution to   the Dirichlet problem $\breve{P}^*(u)=0$ in $B_{1}(0)$ in the following sense (see, \cite[Definition~9.1]{ST}):
	$$
	\int_{B_{1}(0)}\breve{v} \psi \dx=  \int_{B_{1}(0)}\breve{v}\breve{P} \phi \dx=0,
	$$
	for all {\em admissible} $\phi$, i.e., $\phi\in H^1_{0}(B_{1}(0))\cap C(\overline{B_{1}(0)})$ and $\breve{P}(\phi)=\psi\in C(\overline{B_{1}(0)})$.
	
	Since  ${v}\in W^{1,p'}(B_1(0)^+)$, it follows that $\breve{v}\in L^1(B_1(0))$.
	Let us fix an admissible function $\phi$,
	and consider the functions 
	$$
	\phi_1=\phi \, \chi_{B_1(0)^+},\quad
	\psi_1=\psi \, \chi_{B_1(0)^+},\quad
	\phi_2=\phi \, \chi_{B_1(0)^-},\quad
	\psi_2=\psi\, \chi_{B_1(0)^-}.
	$$
	Then 
	$$
	\int_{B_1(0)}\!\!\breve{v}\psi\dx\!=\!
	\int_{B_1(0)^+}\!\!v\psi_1\dx
	+\!
	\int_{B_1(0)^-}\!\!\breve{v}\psi_2\dx
	\!=\!\!\int_{B_1(0)^+}v\psi_1\dx
	+
	\!\int_{B_1(0)^+}\!\!v(x',x_n)\psi_2(x',\!-x_n)\dx.
	$$
It remains to show that $$(P,B)(\phi_1+\phi_2(x',-x_n))=\psi_1+\psi_2(x',-x_n) \qquad \text{~in~} B_1(0)^+.$$ 
	Indeed, let $\vgf\in \mathcal{D}(B_1(0)^+,\partial B_1(0)^+_{\mathrm{Dir}})$ and let 
	$$
	\vartheta(x):=
	\begin{cases}
	\vgf(x',x_n) & \text{~in~} B_1(0)^+, \\
	\vgf(x',-x_n) & \text{~in~} B_1(0)^-. 
	\end{cases}
	$$
	Then $\vartheta\in H^1_{0}(B_1(0))$.
 Note that for any $\psi\in C_0^{\infty}(B_1(0))$,
	$$
	\int_{B_1(0)}D_n\vartheta \psi \dx+\int_{B_1(0)}\vartheta D_n\psi \dx=\int_{\partial B_1(0)}\phi(x',0)\vartheta-\phi(x',0)\vartheta \, \text{d}\sigma=0.
	$$
	Next, we compute
	\begin{align*}
	&
	\int_{B_1(0)^+}\left [A\nabla (\phi_1+\phi_2(x',-x_n))\cdot \nabla \vgf +\bb\cdot \nabla (\phi_1+\phi_2(x',-x_n))\vgf \right] \!\dx= \\ & 
	\int_{B_1(0)^+}\!\!\!\big [\breve{A}\nabla \phi \!\cdot\! \nabla \vartheta \!+\!\breve{\bb} \!\cdot\! \nabla \phi\vartheta \!\big ]\!\! \dx \!+ \!\!
	\int_{B_1(0)^-}\!\!\!\big[\!\breve{A}\nabla \phi(x',x_n) \!\cdot\! \nabla \vartheta \!+\! \breve{\bar{\mathbf{b}}} \!\cdot\!  \nabla \phi(x',x_n)\vartheta \!\big] \!\dx \!= \!\!
	\int_{B_1(0)}\!\!\psi \vartheta \!\dx.
	\end{align*}
	On the other hand,
	\begin{align*}
	\int_{B_1(0)^+}\!\![\vgf \psi_1+\vgf\psi_2(x',-x_n)]\!\dx \!=\!\! 
	\int_{B_1(0)^+}\!\!\vgf\psi_1\!\dx \!+ \!\!\int_{B_1(0)^-}\vgf(x',-x_n) \!\! \psi_2(x',x_n)\!\dx\!= \!\! \int_{B_1(0)}\!\!\!\vartheta \psi \!\dx.
	\end{align*}
	By a standard argument of local `flattening'  $\pwr$  (see for example, \cite[Appendix C.5]{EV}), we deduce that if $v$ is a distributional solution to the problem $(P^*,B^*)v=\eta$ in $\Gw'$,  then  for any $\Gw''\Subset_R \Gw'$ and any $y_0\in \partial \Gw''_{\Rob}$,
	$\breve{P}^*\breve{v}=0$ (in the distributional sense)
	in some ball $B_{\varepsilon}(y_0)$.
	Extend $\breve{v}$ to $\Gw'$ by letting $\breve{v}=v$ in $\Gw'$.
	As a consequence, $\breve{P}^*\breve{v}=\eta$ in $\mathrm{int}\left (\Gw''\cup (\overline{B_{\varepsilon}(y_0)})\right )$ in the distributional sense.
	 By \cite[Theorem 9.3]{ST} (see also \cite[Section 5]{K}), $\breve{v}\in H^1_{\loc}\left(\mathrm{int}\left (\Gw'\cup (\overline{B_{\varepsilon}(y_0)})\right )\setminus \mathrm{supp}(\mu)\right)$. 
	 
	 Let $\{v_l\}_{l \in \N}\subset \mathcal{D}(\partial \Gw'_{\Dir},\Gw')$ be a sequence which converges in $W^{1,p'}(\overline{\Gw'}\setminus \partial \Gw'_{\Dir})$ to $v$. 
	 By  the definition of weak solution, for any $\phi\in \mathcal{D}(\partial \Gw'_{\Dir},\Gw')$ supported outside $\mathrm{supp}(\eta)$, $(P,B)\phi=\Upsilon $ in $\Gw'$
	 implying
	 \begin{align}
	 \Upsilon(v_l)=\int_{\Gw'}A\nabla \phi \nabla v_l+\bb\nabla \phi v_l\dx.
	 \end{align}
	 Letting $l\to \infty$ and the definition of distributional solution imply 
	 $$
	 0=\Upsilon(v)=\int_{\Gw'}A\nabla \phi \nabla v+\bb \nabla \phi v \dx,
	 $$
	 i.e., $(P^*,B^*)v=0$ in $\Gw'\setminus \mathrm{supp}(\eta) $  in the weak sense.
\end{proof}
As a result of Lemma~\ref{reg_dist}, we obtain the following interior regularity of $G_{P-\lambda ,B}^{\Gw'}$.
\begin{lemma}\label{lem_4-13}
	Let $\Gw' \Subset_R \wb$ be a  Lipschitz subdomain of $\Gw$, and let  $\gl \!< \!\lambda_0(P,B,1,\Gw)$.   
		
		Then for any $x_0\in\Gw''\Subset_R\Gw'$,  and $\varepsilon>0$  such that $B_{\vge}(x_0)\Subset \Gw'$, $G_{P-\lambda ,B}^{\Gw'}(x_0,\cdot)\in H^{1}(\Gw''\setminus B_{\varepsilon}(x_0))\cap W^{1,p'}_{\partial \Gw'_{\mathrm{Dir}}}(\Gw')$ is positive, and satisfies $(P^*,B^*)G_{P-\lambda ,B}^{\Gw'}(x_0,\cdot)=0$ in $\Gw'\setminus \overline{B_{\varepsilon}(x_0)}$ in the weak sense. 
\end{lemma}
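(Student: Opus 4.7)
My plan is to assemble the conclusion from the Green function construction carried out above, together with the distributional-to-weak upgrade already established. First, Lemma~\ref{bound grad2} and Lemma~\ref{Greendelta} jointly give that $G^{\Gw'}_{P-\lambda,B}(x_0,\cdot)\in W^{1,p'}_{\partial \Gw'_{\Dir}}(\Gw')$ and that it is a positive (a.e.) distributional solution of $(P^*-\lambda,B^*)u=\gd_{x_0}$ in $\Gw'$. This already delivers the global Sobolev membership claimed in the lemma and a weak form of positivity.

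Next, I would verify that the hypotheses of Lemma~\ref{reg_dist} are in force for the adjoint operator $(P^*-\gl,B^*)$ on $\Gw'$. Using Corollary~\ref{stmon}, the assumption $\gl<\gl_0(P,B,1,\Gw)$ lifts to $\gl<\gl_0(P,B,1,\Gw')$, so $(P-\gl,B)\geq 0$ in $\Gw'$. Since the formal adjoint merely swaps the roles of $\bb$ and $\bt$ in Assumptions~\ref{assump2}, it also satisfies these structural assumptions; moreover, the Harnack convergence argument in the proof of Theorem~\ref{eeqiuv_add} applies symmetrically to yield $(P^*-\gl,B^*)\geq 0$ in $\Gw'$. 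With $\eta=\gd_{x_0}$ supported at the single interior point $x_0\in\Gw'$, Lemma~\ref{reg_dist} then produces weak solvability of $(P^*-\gl,B^*)G=0$ on $\Gw'\setminus\{x_0\}$ and, through its internal even-reflection across $\partial\Gw'_{\Rob}$ and the Stampacchia-type bootstrap, local $H^1$-regularity of $G^{\Gw'}_{P-\gl,B}(x_0,\cdot)$ on $\Gw'\cup\partial\Gw'_{\Rob}$ away from $x_0$.

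The third step is to turn this local statement into the claimed global $H^1$-membership on $\Gw''\setminus B_\vge(x_0)$. Since $\Gw''\Subset_R\Gw'$, we have $\partial\Gw''\cap\partial\Gw'_{\Dir}=\emptyset$ and $\partial\Gw''\cap\partial\Gw'_{\Rob}\Subset\partial\Gw'_{\Rob}$, so $\overline{\Gw''\setminus B_\vge(x_0)}$ sits compactly inside the region where the regularity of the previous step holds; a standard finite covering then yields $G^{\Gw'}_{P-\gl,B}(x_0,\cdot)\in H^{1}(\Gw''\setminus B_\vge(x_0))$ and, a fortiori, the weak solvability on $\Gw'\setminus\overline{B_\vge(x_0)}$. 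Finally, to upgrade a.e.\ positivity to strict positivity I would apply the strong maximum principle (Lemma~\ref{maxagmon}) for $(P^*-\gl,B^*)$ on $\Gw'\setminus\{x_0\}$: the nonnegative weak solution $G^{\Gw'}_{P-\gl,B}(x_0,\cdot)$ cannot vanish identically (otherwise the distributional source $\gd_{x_0}$ would be zero), so it must be strictly positive in $\Gw'\cup\partial\Gw'_{\Rob}$ away from $x_0$.

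The step I expect to be most delicate is checking the symmetric nonnegativity $(P^*-\gl,B^*)\geq 0$ in $\Gw'$ and the applicability of Lemma~\ref{reg_dist} to the adjoint problem, since both depend on the observation that Assumptions~\ref{assump2} and the various maximum-principle and Harnack-type arguments of Section~\ref{sec_local} are invariant under interchanging $\bb$ and $\bt$. Once this symmetry is granted, the rest is bookkeeping that combines the local regularity near $\partial\Gw'_{\Rob}$ with the already established global $W^{1,p'}$-bound.
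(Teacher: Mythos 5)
Your proposal follows essentially the same route as the paper: the paper states this lemma as a direct consequence of Lemma~\ref{reg_dist}, with Lemma~\ref{bound grad2} and Lemma~\ref{Greendelta} supplying the facts that $G^{\Gw'}_{P-\gl,B}(x_0,\cdot)\in W^{1,p'}_{\partial\Gw'_{\Dir}}(\Gw')$ is a positive distributional solution of $(P^*-\gl,B^*)u=\gd_{x_0}$, and the local-to-global covering plus the strong maximum principle are exactly the bookkeeping the paper leaves implicit.

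One remark: the step you single out as most delicate, namely establishing $(P^*-\gl,B^*)\geq 0$ in $\Gw'$, is not actually needed and is best omitted. Lemma~\ref{reg_dist} hypothesizes nonnegativity of the \emph{original} operator, here $(P-\gl,B)\geq 0$, which follows at once from $\gl<\gl_0(P,B,1,\Gw)$ (a positive solution of $(P-\gl',B)u=0$ for some $\gl'\in(\gl,\gl_0]$ is a regular positive supersolution for $(P-\gl,B)$, and Theorem~\ref{eeqiuv_add} applies); the strong maximum principle (Lemma~\ref{maxagmon}) for the adjoint only needs Assumptions~\ref{assump2}, which are invariant under swapping $\bb$ and $\bt$, not adjoint nonnegativity. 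Keeping that step would also risk circularity, since in the paper nonnegativity of $(P^*,B^*)$ (Lemma~\ref{lem_adj_positivity}) is deduced \emph{from} the present lemma via the Green functions, and your sketched "symmetric Harnack convergence" substitute would itself require positive solutions of the adjoint problem that are only furnished by this construction. Dropping that detour, the rest of your argument is correct as written.
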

\begin{lem}\label{lem_adj_positivity}
	Assume that $(P,B)\geq 0$ in $\Gw$ and let Assumptions~\ref{assump2} hold in $\Gw$. Then  $(P^*,B^*)\geq 0$ in $\Gw$.
\end{lem}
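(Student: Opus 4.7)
The plan is to produce a positive solution of $(P^*,B^*)u=0$ in $\Omega$ as a limit of Green functions $G^{\Omega'}_{P,B}(x_k,\cdot)$ of $(P,B)$ whose pole $x_k$ is pushed to infinity in $\Omega$. The construction of positive Green functions for $(P,B)$ on Lipschitz subdomains is already in hand from Lemma~\ref{bound grad2} and Lemma~\ref{Greendelta}, and Lemma~\ref{lem_4-13} tells us that such Green functions solve the adjoint equation $(P^*,B^*)v=0$ away from the pole.

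First, fix an exhaustion $\{\Omega_k\}_{k\in\N}$ of $\wb$ (cf.\ Definition~\ref{def:exhaustion}) and a reference point $y_0\in\Omega_1$. Since $(P,B)\geq 0$ in $\Omega$, Lemma~\ref{2.12} combined with Theorem~\ref{agmonmethod} gives $\Gamma(P,B,\Omega_{k+1})=\lambda_c(P,B,\Omega_{k+1})>0$, so for each $k$ the Green function $G^{\Omega_{k+1}}_{P,B}(\cdot,\cdot)$ is well defined. Pick $x_k\in\Omega_{k+1}\setminus \overline{\Omega_k}$ (possible as soon as the exhaustion is strictly increasing) and normalize
$$v_k(\cdot):=\frac{G^{\Omega_{k+1}}_{P,B}(x_k,\cdot)}{G^{\Omega_{k+1}}_{P,B}(x_k,y_0)}.$$
By Lemma~\ref{lem_4-13}, $v_k$ is a positive weak solution of $(P^*,B^*)v=0$ in $\Omega_{k+1}\setminus\{x_k\}$ with $v_k(y_0)=1$.

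Next, I would verify that the restriction $v_k|_{\Omega_k}$ is a positive weak solution of $(P^*,B^*)u=0$ \emph{on} $\Omega_k$ in the sense of the paper. Any test function $\phi\in\mathcal{D}(\Omega_k,\partial\Omega_{k,\mathrm{Dir}})$ extends by zero to a function in $\mathcal{D}(\Omega_{k+1},\partial\Omega_{k+1,\mathrm{Dir}})$: indeed, $\phi$ vanishes near $\partial\Omega_{k,\mathrm{Dir}}$, which lies inside $\Omega_{k+1}$ (since $\partial\Omega_k\cap\partial\Omega_{\mathrm{Dir}}=\emptyset$ by $\Omega_k\Subset_R\Omega$), so the zero extension remains smooth. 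Moreover $\partial\Omega_{k,\mathrm{Rob}}\subset\partial\Omega_{k+1,\mathrm{Rob}}$ (both being relatively open subsets of $\pwr$ with the exhaustion condition $\partial\Omega_k\cap\pwr\Subset\partial\Omega_{k+1}\cap\pwr$), so when the weak identity for $v_k$ on $\Omega_{k+1}$ is tested against the extended $\phi$, both the volume and the Robin boundary integrals reduce to the corresponding integrals on $\Omega_k$ and $\partial\Omega_{k,\mathrm{Rob}}$. This gives exactly the definition of a weak solution of $(P^*,B^*)u=0$ on $\Omega_k$.

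Finally, since $(P^*,B^*)$ satisfies Assumptions~\ref{assump2} whenever $(P,B)$ does, I would apply the Harnack convergence principle (Lemma~\ref{HCP}) to the sequence $\{v_k\}_{k\in\N}$: for every fixed $j$ and all $k\geq j$, $v_k$ is a positive weak solution of the adjoint problem on $\Omega_j$ with $v_k(y_0)=1$, so a diagonal subsequence converges locally uniformly on $\wb$ to some $v\in\mathcal{H}^0_{P^*,B^*}(\Omega)$, proving $(P^*,B^*)\geq 0$ in $\Omega$. The main technical obstacle is the compatibility check in the previous paragraph, ensuring that the Green function obtained as an element of $W^{1,p'}_{\partial\Omega'_{\mathrm{Dir}}}(\Omega_{k+1})$ really satisfies the adjoint mixed problem on the interior subdomain $\Omega_k$ with the correct Robin condition on $\partial\Omega_{k,\mathrm{Rob}}$; once the test-function extension and the matching of boundary integrals are in place, everything else is the standard diagonal/Harnack-convergence argument already used repeatedly in the paper.
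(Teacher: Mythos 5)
Your proposal is correct and follows essentially the same route as the paper's proof: Green functions $G_{P,B}^{\Omega_k}$ of the approximating Lipschitz subdomains, with poles escaping every fixed compact subdomain, normalized at a reference point, viewed (via Lemma~\ref{lem_4-13} together with Lemma~\ref{reg_dist}) as positive weak solutions of the adjoint problem away from the pole, and passed to the limit by the Harnack convergence principle. The only cosmetic differences are where you place the pole ($x_k\in\Omega_{k+1}\setminus\overline{\Omega_k}$ versus the paper's $x_k\in\Omega_k$ tending to a Dirichlet boundary point or to infinity) and your explicit test-function-extension check, which the paper leaves implicit.
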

\begin{proof}  Let $\{\Gw_k \}_{k\in \N}$ be an exhaustion of $\wb$.  Choose a sequence $\{x_k\}_{k\in \N}\subset \Gw$  converging  either to a point $\xi$ that belongs to a Dirichlet (Lipschitz)-portion of $\partial \Gw$ if $\Gw$ is bounded, or otherwise, to infinity. Without loss of generality, we may assume that $x_k\in \Gw_k$. By lemmas~\ref{lem_4-13} and ~\ref{reg_dist},  the operator $(P,B)$ admits a positive Green function $G_{P,B}^{\Gw_k}(x_k,\cdot)$ with singularity at $x_k$  which solves weakly the equation $(P^*,B^*)u=0$ in $\Gw_k \setminus \{x_k\}$. Consider the sequence  
	$$u_k(y):=\frac{G_{P,B}^{\Gw_k}(x_k,y)}{G_{P,B}^{\Gw_k}(x_k,x_1)}\, .$$ 
	  By the Harnack convergence theorem (Lemma~\ref{HCP}), $u_{k}$ admits a subsequence converging to a positive solution of $(P^*,B^*)u=0$ in $\Gw$.  
\end{proof}
\begin{lem}
	Let $\Gw' \Subset_R \Gw$ be a Lipschitz subdomain and let $\eta$ be a Radon measure compactly supported on $B_{\varepsilon}(x_0)\Subset\Gw'$. Let $u\in W^{1,p'}_{{\partial \Gw'_{\Dir}}}(\Gw')$ be a positive distributional solution to the problem
	$(P,B)w=\eta$ in $\Gw'$.
	Then
 \begin{equation}\label{need1}
 \|u\|_{W^{1,p'}(\Gw')}\leq C(\varepsilon)\int_{\Gw'}\text{d}\eta,
 \end{equation}
		where $C(\varepsilon)$ does not depend on $u$.

If in addition, $u\in H^1(\Gw'\setminus \mathrm{supp}(\eta))$ is a weak solution to the problem $(P,B)w=0$ in $\Gw'\setminus \mathrm{supp}(\eta)$, then
for any Lipschitz subdomain $\Gw''\Subset_R \Gw' \setminus \mathrm{supp}(\eta)$ we have
\begin{equation}\label{need2}
\sup_{\Gw''}|u| \leq C(\Gw'',\varepsilon)\int_{\Gw'}\, \text{d}\eta.
\end{equation}	
	\end{lem}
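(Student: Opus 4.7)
The inequality \eqref{need1} will be established by Stampacchia duality applied to the adjoint problem. Since $W^{1,p'}_{\partial\Gw'_{\Dir}}(\Gw')$ is a closed subspace of a reflexive Sobolev space, it is itself reflexive, so its norm is recovered by duality, and its dual is identified with functionals of the form $\Upsilon = g_0 + \diver\mathbf{g}$, $g_0\in L^p(\Gw')$, $\mathbf{g}\in L^p(\Gw';\R^n)$, via the Proposition preceding Lemma \ref{bound grad2}. Fix such a $\Upsilon$. By Lemma \ref{lem_adj_positivity}, $(P^*,B^*)\!\geq\! 0$ in $\Gw$; the symmetric roles of $\bt,\bb$ in Assumptions \ref{assump2} ensure that $(P^*,B^*)$ also satisfies those assumptions. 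Consequently, by Theorem \ref{eeqiuv_add}, $(P^*,B^*)$ satisfies the generalized maximum principle in $\Gw'$, and the Fredholm alternative yields a unique $\phi\in H^1_{\partial\Gw'_{\Dir}}(\Gw')$ solving the adjoint Robin problem \eqref{eq:Rob_func_adjoint}. Since $u$ is a distributional solution of $(P,B)w=\eta$, we obtain
\[
\Upsilon(u) \,=\, \int_{\Gw'}\!\bigl(u g_0 - \nabla u\cdot \mathbf{g}\bigr)\dx \,=\, \int_{\Gw'} \phi \, \deta.
\]
The adjoint analogue of Lemma \ref{bound grad2}, applied on any Lipschitz intermediate subdomain $\Gw''$ with $B_\varepsilon(x_0)\Subset \Gw''\Subset_R\Gw'$, gives $\sup_{B_\varepsilon(x_0)}|\phi|\le C(\varepsilon)\bigl(\|g_0\|_{L^p(\Gw')}+\|\mathbf{g}\|_{L^p(\Gw';\R^n)}\bigr)$. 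Because $\mathrm{supp}(\eta)\subset B_\varepsilon(x_0)$, we deduce
\[
|\Upsilon(u)| \,\le\, \sup_{B_\varepsilon(x_0)}|\phi|\int_{\Gw'}\deta \,\le\, C(\varepsilon)\bigl(\|g_0\|_{L^p(\Gw')}+\|\mathbf{g}\|_{L^p(\Gw';\R^n)}\bigr)\int_{\Gw'}\deta.
\]
Choosing a representation of $\Upsilon$ realizing the norm equivalence of the cited Proposition converts this into $|\Upsilon(u)|\le C(\varepsilon)\|\Upsilon\|_{*}\int_{\Gw'}\!\deta$, and the duality characterization of the $W^{1,p'}$-norm yields \eqref{need1}.

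For \eqref{need2}, observe that under the additional hypothesis $u$ is a positive weak solution of $(P,B)w=0$ on the open set $\Gw'\setminus\mathrm{supp}(\eta)$. Choose a Lipschitz subdomain $\tilde\Gw$ with $\Gw''\Subset_R\tilde\Gw\Subset_R\Gw'\setminus\mathrm{supp}(\eta)$, and cover $\overline{\Gw''}$ by finitely many balls $B_1,\dots,B_N$ inside $\tilde\Gw$ each of which is either interior to $\Gw$ (handled by standard interior local boundedness for nonnegative subsolutions) or meets $\pwr$ in a $C^1$-portion (handled by Lemma \ref{2_ineq}(1) with $l=p'$). In either case one obtains $\sup_{B_j\cap\Gw''} u \le C\,\|u\|_{L^{p'}(2B_j\cap\tilde\Gw)}$. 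Summing over $j$ and using the continuous embedding $W^{1,p'}(\Gw')\hookrightarrow L^{p'}(\Gw')$ yields $\sup_{\Gw''} u \le C(\Gw'',\varepsilon)\|u\|_{W^{1,p'}(\Gw')}$, and combining with \eqref{need1} closes the argument.

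The main obstacle I anticipate is the rigorous transfer of the results of Section \ref{sec_local} --- in particular Lemma \ref{bound grad2}, together with the existence/uniqueness theory and the generalized maximum principle --- from $(P,B)$ to $(P^*,B^*)$ in the Lipschitz subdomain $\Gw'$. This requires verifying that $(P^*,B^*)$ inherits Assumptions \ref{assump2}, which is immediate from the symmetric roles of $\bt$ and $\bb$, and that $(P^*,B^*)$ is nonnegative in $\Gw$, which is supplied by Lemma \ref{lem_adj_positivity}. A minor technical point is ensuring that the test $\phi$ is integrable against $\eta$, but this is automatic since $\mathrm{supp}(\eta)\Subset B_\varepsilon(x_0)$ lies in the interior of $\Gw'$, where the adjoint analogue of Lemma \ref{bound grad2} gives $\phi\in L^\infty(B_\varepsilon(x_0))$ with a bound linear in $\|\Upsilon\|_{*}$. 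Once these adjoint facts are in place, the remainder of the argument consists of standard duality and local $L^\infty$ estimates.
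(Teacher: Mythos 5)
Your proof follows the paper's argument essentially verbatim: you test $u$ against functionals $\Upsilon=g_0+\diver\mathbf{g}$, use the distributional-solution identity together with the local boundedness estimate of Lemma~\ref{bound grad2} (applied to the adjoint solution $\phi$ near $\supp\eta$, justified exactly as you say by the symmetry of Assumptions~\ref{assump2} in $\bt,\bb$ and Lemma~\ref{lem_adj_positivity}), and conclude \eqref{need1} by duality — the paper only adds the explicit remark that smooth data are dense in $L^p$, a step you gloss over but which is routine. For \eqref{need2} the paper reaches the same bound by Sobolev embedding into $L^{q^*}$ ($q=p'$) plus the up-to-the-Robin-boundary local boundedness of \cite[Theorem~5.36]{L}, which is the same estimate in substance as your ball-covering via Lemma~\ref{2_ineq}(1) with $l=p'$.
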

\begin{proof}
	Let $g_0$
	and $\mathbf{g}$ satisfying $g_0,\mathbf{g}^i\in \mathcal{D}(\Gw',\partial \Gw'_{\mathrm{Dir}}), \forall 1\leq i \leq n$.
	Writing $\Upsilon=g_0+\diver\mathbf{g}\in (W^{1,p'}_{\partial \Gw'_{\mathrm{Dir}}}(\Gw'))^*$ we obtain
	$$
	u(\Upsilon)=\int_{\Gw'}(ug_0 -\nabla u \cdot \mathbf{g})\dx=\int_{B_{\varepsilon}(x_0)}\phi \,\text{d}\eta,
	$$
	where $\phi$ is a weak solution of \eqref{eq:Rob_func_adjoint} with $\Gw$ replaced by $\Gw'$.
	By Lemma~\ref{bound grad2}, 
	$$\left |\int_{B_{\varepsilon}(x_0)}\phi \,\text{d}\eta \right |\leq \|\Upsilon\|_{(W^{1,p'}(\Gw'))^*}C(\varepsilon)\int_{B_{\varepsilon}(x_0)}\,\text{d}\eta.$$
	By the denseness of smooth functions in $L^p$-spaces, we obtain that for any $\Upsilon\in (W^{1,p'}(\Gw'))^*$, 
	$$|u(\Upsilon)|\leq  \|\Upsilon\|_{(W^{1,p'}(\Gw'))^*}C(\varepsilon)\int_{B_{\varepsilon}(x_0)}\,\text{d}\eta,$$
implying that $\|u \|_{W^{1,p'}(\Gw')}\leq C(\varepsilon)
\int_{B_{\varepsilon}(x_0)}\,\text{d}\eta $.

	Let $q=p'$, and note that $1<q<2$.
	The Sobolev embedding theorem \cite[Theorem~5.8]{L} states that 
	$$\|u\|_{L^{q^*}(\Gw')}\leq C \|u\|_{W^{1,q}(\Gw')}\qquad \forall u\in W^{1,q}(\Gw'),$$ where 
	$q^*={nq}/(n-q)$ is the critical Sobolev exponent.
	
	Assume in addition that $u\in H^1(\Gw'\setminus \mathrm{supp}(\eta))$ is a weak solution to the problem $(P,B)u=0$ in $\Gw'\setminus \mathrm{supp}(\eta)$, then    \cite[Theorem~5.36]{L} implies that
	for any $\Gw''\Subset_R \Gw'  \setminus \mathrm{supp}(\eta)$,
	$$
	\sup_{\Gw''}|u|\leq C\|u\|_{L^{q^*}(\Gw')},    
	$$
	and by the first part of the proof we are done.
\end{proof} 
The following well known proposition is a consequence of the Lebesgue differentiation theorem.
\begin{proposition}\label{prop:l}
	Let $\Gw'\Subset_R \Gw$ be a Lipschitz subdomain containing $x$.  For all $k\in \N$, consider the functions $f_{k,x}:=|B_{1/k}(x)|^{-1}\chi_{B_{1/k}(x)}$.
	Then, for any $\phi\in H^1(\Gw')$
	\begin{equation}\label{lebeg_diff}
	\lim_{k\to \infty}\int_{\Gw'}\phi(y) f_{k,x}(y) \dy=\phi(x)\qquad \mbox{for a.e. } x\in \Gw'. 
	\end{equation}
	If $\phi$ is continuous in $\Gw'$, then  \eqref{lebeg_diff} holds for all $x\in \Gw'$.
\end{proposition}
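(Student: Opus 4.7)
The plan is to invoke the classical Lebesgue differentiation theorem, for which the only prerequisite is $\phi\in L^1_\loc(\Gw')$. Since $\Gw'$ is a bounded Lipschitz subdomain, the continuous inclusions $H^1(\Gw')\hookrightarrow L^2(\Gw')\hookrightarrow L^1(\Gw')\subset L^1_\loc(\Gw')$ provide this at once, so no use of the weak gradient of $\phi$ is required.

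First I would rewrite the integral on the left of \eqref{lebeg_diff} as a ball average: for a fixed $x\in \Gw'$ and every $k$ large enough that $B_{1/k}(x)\Subset \Gw'$,
$$\int_{\Gw'}\phi(y)\,f_{k,x}(y)\dy=\frac{1}{|B_{1/k}(x)|}\int_{B_{1/k}(x)}\phi(y)\dy.$$
The Lebesgue differentiation theorem, applied to $\phi$ regarded as an element of $L^1_\loc(\R^n)$ after extension by zero outside $\Gw'$, then states that almost every $x\in \Gw'$ is a Lebesgue point of $\phi$. In particular the ball averages on the right-hand side converge to $\phi(x)$ for a.e. $x\in \Gw'$, which is precisely \eqref{lebeg_diff}.

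For the second assertion, I would argue directly by continuity rather than appealing again to Lebesgue theory: given $x\in \Gw'$ and $\vge>0$, choose $\gd>0$ with $B_\gd(x)\subset \Gw'$ and $|\phi(y)-\phi(x)|<\vge$ for every $y\in B_\gd(x)$; then for $1/k<\gd$ one has
$$\Bigl|\int_{\Gw'}\phi(y)\,f_{k,x}(y)\dy-\phi(x)\Bigr|\le \frac{1}{|B_{1/k}(x)|}\int_{B_{1/k}(x)}|\phi(y)-\phi(x)|\dy<\vge,$$
yielding the pointwise limit at every $x\in \Gw'$. The proposition is a direct corollary of standard real analysis and presents no genuine obstacle; its role here appears to be purely a bookkeeping device for later identifying distributional Green functions with their pointwise values.
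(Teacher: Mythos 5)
Your argument is correct and is exactly the route the paper takes: the paper states the proposition without proof as a well-known consequence of the Lebesgue differentiation theorem, and your write-up simply fleshes out that citation (ball-average rewriting, a.e. Lebesgue points for $\phi\in L^1_\loc$, and the elementary $\vge$--$\gd$ argument in the continuous case). No gaps; nothing further is needed.
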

As a corollary of Proposition~\ref{prop:l}, Lemma \ref{lem_adj_positivity}, and  estimates \eqref{need1}, and \eqref{need2}, we obtain the following approximation result.
\begin{cor}\label{Green_app_dist}
	Let $q\!=\!p'$. For all $k\! \in \! \N$, consider the functions $f_k\!:=\!|B_{1/k}(x_0)|^{-1}\chi_{B_{1/k}(x_0)}$, where $x_0\in \Gw'\Subset_R \Gw$ and $\Gw'$ is a Lipschitz bounded subdomain.
	Let $u_k\in H^{1}_{\partial \Gw'_{\mathrm{Dir}}}(\Gw')$ be a sequence of positive (H\"older continuous)  solutions to the problem
	$(P^*,B^*)u_k=f_k$ in $\Gw'$.
	Then,  $u_k$ converges weakly (up to a subsequence) in $W^{1,q}(\Gw')$ and locally uniformly in $\overline{\Gw'}\setminus (\partial \Gw'_{\mathrm{Dir}}\cup \{x_0\})$ to a positive distributional solution of the problem
	$(P^*,B^*)u=\delta_{x_0}$ in $\Gw'$, namely, to $G^{\Gw'}_{P,B}(x_0,\cdot)$. 
\end{cor}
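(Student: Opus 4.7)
The plan is to apply the a priori bounds \eqref{need1}--\eqref{need2} with the unit-mass measures $f_k\dx$ to extract compactness, then identify the weak limit via the uniqueness result of Lemma~\ref{uniqdist}. First, note that by Lemma~\ref{lem_adj_positivity}, $(P^*,B^*)\geq 0$ in $\Gw$, so Lemma~\ref{2.12} gives $\lambda_c(P^*,B^*,\Gw')>0$ and the estimates \eqref{need1}--\eqref{need2} apply with $(P,B)$ replaced by $(P^*,B^*)$. Since $\mathrm{supp}(f_k)\subset B_{\vge}(x_0)$ for all $k$ large and $\int_{\Gw'}f_k\dx=1$, \eqref{need1} gives $\|u_k\|_{W^{1,q}(\Gw')}\leq C(\vge)$ uniformly. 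Reflexivity of the closed subspace $W^{1,q}_{\partial\Gw'_{\Dir}}(\Gw')$ yields a subsequence $u_k\rightharpoonup u$ weakly in $W^{1,q}_{\partial\Gw'_{\Dir}}(\Gw')$.

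Next, I would upgrade the weak convergence to local uniform convergence on $\overline{\Gw'}\setminus(\partial\Gw'_{\Dir}\cup\{x_0\})$. Fix any Lipschitz subdomain $\gw\Subset_R \Gw'\setminus\{x_0\}$ (which may touch $\partial\Gw'_{\Rob}$); for $k$ large enough, $\mathrm{supp}(f_k)\cap\gw=\emptyset$, so $u_k$ is a nonnegative weak solution of $(P^*,B^*)u_k=0$ in $\gw$. Estimate \eqref{need2} gives $\sup_{\gw}|u_k|\leq C(\gw)$ uniformly, and then the up-to-the-boundary Hölder estimate (Lemma~\ref{Holder up to}) together with the interior Hölder estimate produce a uniform $C^{\alpha}$ bound on compact subsets of $\gw$. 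Arzelà--Ascoli combined with a Cantor diagonal argument over an exhaustion of $\overline{\Gw'}\setminus(\partial\Gw'_{\Dir}\cup\{x_0\})$ extracts a further subsequence converging locally uniformly to the weak limit $u$.

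To verify that $u$ is a distributional solution of $(P^*,B^*)u=\gd_{x_0}$, let $\phi$ be admissible in the sense of Definition of distributional solution: $\phi\in H^1_{\partial\Gw'_{\Dir}}(\Gw')$ continuous, satisfying the forward problem $(P,B)\phi=g_0+\diver\mathbf{g}$ with $g_0,\mathbf{g}^i\in \mathcal{D}(\Gw',\partial\Gw'_{\Dir})$. Testing the weak equation for $u_k$ against $\phi$ and using the formal adjoint identity yields
\begin{equation*}
\int_{\Gw'}\!\bigl(u_k g_0 - \nabla u_k\cdot\mathbf{g}\bigr)\dx \;=\; \int_{\Gw'}\! \phi f_k\dx.
\end{equation*}
Since $g_0,\mathbf{g}\in L^{q'}(\Gw')$ and $u_k\rightharpoonup u$ in $W^{1,q}(\Gw')$, the left-hand side converges to $\int_{\Gw'}(ug_0-\nabla u\cdot\mathbf{g})\dx$. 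Since $\phi\in C(\overline{\Gw'})$ (in particular continuous at $x_0$, by Lemma~\ref{bound grad2} and Lemma~\ref{Holder up to}), Proposition~\ref{prop:l} gives $\int_{\Gw'}\phi f_k\dx\to\phi(x_0)$. Hence $u$ is a (nonnegative) distributional solution of $(P^*,B^*)u=\gd_{x_0}$ in $\Gw'$, and the uniqueness Lemma~\ref{uniqdist} (applied with $\lambda=0<\lambda_c(P^*,B^*,\Gw')$) identifies $u=G^{\Gw'}_{P,B}(x_0,\cdot)$. Strict positivity on $\Gw'\setminus\{x_0\}$ then follows from the locally uniform limit of the $u_k$ (each positive in $\Gw'\cup\partial\Gw'_{\Rob}$ by the strong maximum principle) combined with the weak Harnack inequality.

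The main obstacle is the passage to local uniform convergence up to $\partial\Gw'_{\Rob}$: weak $W^{1,q}$-convergence alone is insufficient, and one must combine the uniform $L^\infty$-bound from \eqref{need2} (valid only away from $\mathrm{supp}(f_k)$, hence only for $k$ large in each fixed $\gw$) with the boundary Hölder estimate to apply Arzelà--Ascoli. The rest of the argument is a standard approximation, once the delta-function right-hand side is handled via Proposition~\ref{prop:l}.
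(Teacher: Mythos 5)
Your proposal is correct and follows essentially the same route the paper intends: it uses Lemma~\ref{lem_adj_positivity} to transfer nonnegativity to $(P^*,B^*)$, the estimates \eqref{need1}--\eqref{need2} for $W^{1,q}$-compactness and uniform local bounds (upgraded to local uniform convergence via the boundary H\"older estimate), Proposition~\ref{prop:l} to pass to the Dirac mass in the distributional identity, and Lemma~\ref{uniqdist} to identify the limit with $G^{\Gw'}_{P,B}(x_0,\cdot)$ --- precisely the ingredients the paper cites for this corollary.
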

\begin{rem}\label{rem514}
	\em{
		By Corollary~\ref{Green_app_dist} 
		$G_{P-\lambda,B}^{\Gw'}(x_0,\cdot)$ is H\"older continuous in $\Gw' \setminus \{x_0\}$ and satisfies  weakly $(P^*,B^*)u=0$ in $\Gw'\setminus \{x_0\}$. Moreover, Corollary~\ref{Green_app_dist} and \cite[Theorem~8.29]{GT} imply that if $\mathcal{T}$ is a Lipschitz-portion of $\partial \Gw'_{\mathrm{Dir}}$, then $G^{\Gw'}_{P,B}(x_0,\cdot)$ vanishes continuously on $\mathcal{T}$. 
		 We remark that this conclusion can be also deduced  from the  approximation method in \cite[Theorem~9.2]{ST} in which one obtains that $G_{P-\lambda,B}^{\Gw'}(x_0,\cdot)$ is a (locally uniformly) limit of positive solutions $\{u_k\}_{k\in \N}$ of 
		$((P^k)^*,B^*)u_k=0$ in $\Gw'\setminus B_{1/k}(x_0)$.
		Here $(P^k)^*$ is the mollification of the  operator $P^*$ whose  coefficients are smooth in $\R^n$. 
	}
\end{rem}
%
\begin{rem}\label{G_D} {\em 
	Suppose that $(P,B)$ admits a positive Green function $G=G^{\Gw}_{P,B}(x_0,\cdot)$ in a bounded Lipschitz domain $\Gw$. 	Let $v$ be a positive solution of  the Dirichlet problem 
			$P^*v=0$ in $B_{\varepsilon}(x_0)  \Subset \Gw$ satisfying 
			$v=G$ on $\partial B_{\varepsilon}(x_0)$.
			Then $G-v$ is a distributional solution (in the Dirichlet sense \cite[Definition~9.1]{ST}) of the Dirichlet problem $P^*u=\delta_{x_0}$ in $B_{\varepsilon}(x_0)$, $u=0$ on $\partial B_{\varepsilon}(x_0)$. Therefore,  by the uniqueness of the Dirichlet Green function, $ G^{\Gw}_{P,B}(x_0,x)  =v(x)+G_{P}^{B_{\varepsilon}(x_0)}(x_0,x)$ in $B_{\varepsilon}(x_0)$, where 
			$G_{P}^{B_{\varepsilon}(x_0)}(x_0,x)$ is the  Dirichlet Green function in $B_{\varepsilon}(x_0)$ with a pole at $x_0$ (see also  \cite[Theorems 1 and 5]{Serrin}).
	}
\end{rem}
As a consequence of the previous subsection we obtain the following important result.  
\begin{lem}\label{G(x,y)}
	Let $\Gw'\Subset_R \Gw$ be a Lipschitz subdomain.
	Then $G_{P,B}^{\Gw'}(x,y)=G^{\Gw'}_{P^*,B^*}(y,x)$ for all $x,y\in \Gw'$, $x\neq y$.
\end{lem}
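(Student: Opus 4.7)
The plan is to approximate the two Green functions by smooth sources via Corollary~\ref{Green_app_dist} and to exploit the pointwise symmetry of the bilinear forms. Fix distinct points $x,y \in \Gw'$, and choose $k$ large enough that the balls $B_{1/k}(x)$ and $B_{1/k}(y)$ are disjoint and compactly contained in $\Gw'$. Set $f_{k,x}:=|B_{1/k}(x)|^{-1}\chi_{B_{1/k}(x)}$ and define $f_{k,y}$ analogously. Using Theorem~\ref{coercivity}, Theorem~\ref{eeqiuv_add} and Lemma~\ref{lem_adj_positivity} (the latter of which provides $(P^*,B^*)\geq 0$ in $\Gw$), let $u_k,v_k \in H^{1}_{\partial \Gw'_{\mathrm{Dir}}}(\Gw')$ be the unique positive weak solutions of $(P,B)u_k=f_{k,y}$ and $(P^*,B^*)v_k=f_{k,x}$ in $\Gw'$, respectively.

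A direct inspection of the expressions defining $\mathcal{B}_{P,B}$ and $\mathcal{B}_{P^*,B^*}$ yields the pointwise algebraic identity
$$
\mathcal{B}_{P,B}(\varphi,\psi)=\mathcal{B}_{P^*,B^*}(\psi,\varphi) \qquad \forall \varphi,\psi \in H^{1}_{\partial \Gw'_{\mathrm{Dir}}}(\Gw'),
$$
because the principal part $a^{ij}D_j\varphi D_i\psi$, the zero-order term $c\varphi\psi$, and the boundary contribution $(\gamma/\beta)\varphi\psi$ are manifestly symmetric in $(\varphi,\psi)$, while the first-order terms $\varphi\bt^i D_i\psi+\bb^i D_i\varphi\,\psi$ appearing in $\mathcal{B}_{P,B}(\varphi,\psi)$ match, term-by-term, the terms $\psi\bb^i D_i\varphi+\bt^i D_i\psi\,\varphi$ appearing in $\mathcal{B}_{P^*,B^*}(\psi,\varphi)$. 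In particular, no integration by parts is needed. Since $\mathcal{D}(\Gw',\partial\Gw'_{\mathrm{Dir}})$ is dense in $H^{1}_{\partial \Gw'_{\mathrm{Dir}}}(\Gw')$ and both bilinear forms are bounded there by Theorem~\ref{coercivity}, $v_k$ is an admissible test function for the equation satisfied by $u_k$, and conversely, giving
$$
\int_{\Gw'} f_{k,y}(z) v_k(z)\,\dz \;=\; \mathcal{B}_{P,B}(u_k,v_k) \;=\; \mathcal{B}_{P^*,B^*}(v_k,u_k) \;=\; \int_{\Gw'} f_{k,x}(z) u_k(z)\,\dz.
$$

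It remains to let $k\to\infty$. By Corollary~\ref{Green_app_dist} applied to $(P^*,B^*)$ for the sequence $v_k$, and applied with $P$ replaced by $P^*$ (legitimate by Lemma~\ref{lem_adj_positivity}) for the sequence $u_k$, along a subsequence $v_k\to G^{\Gw'}_{P,B}(x,\cdot)$ and $u_k\to G^{\Gw'}_{P^*,B^*}(y,\cdot)$ locally uniformly on $\overline{\Gw'}\setminus(\partial\Gw'_{\mathrm{Dir}}\cup\{x\})$ and $\overline{\Gw'}\setminus(\partial\Gw'_{\mathrm{Dir}}\cup\{y\})$, respectively. Since $x\ne y$, both limit functions are continuous near $y$ and $x$, respectively, and writing
$$
\int_{\Gw'} f_{k,y} v_k \,\dz \;=\; \int_{\Gw'} f_{k,y}\bigl(v_k-G^{\Gw'}_{P,B}(x,\cdot)\bigr)\dz \;+\; \int_{\Gw'} f_{k,y}\, G^{\Gw'}_{P,B}(x,\cdot)\,\dz,
$$
the first summand vanishes in the limit by uniform convergence on $\supp(f_{k,y})$, while the second tends to $G^{\Gw'}_{P,B}(x,y)$ by Proposition~\ref{prop:l}. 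An identical argument gives $\int f_{k,x} u_k\to G^{\Gw'}_{P^*,B^*}(y,x)$, and combining with the identity above yields $G^{\Gw'}_{P,B}(x,y)=G^{\Gw'}_{P^*,B^*}(y,x)$, as required.

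The only mildly delicate point is the justification of the bilinear-form identity at the $H^1$-level rather than for smooth test functions, but as noted this reduces to an algebraic comparison of integrands pointwise (the divergence-form structure makes the first-order contributions match without any boundary manipulation), so the density of $\mathcal{D}(\Gw',\partial\Gw'_{\mathrm{Dir}})$ in $H^{1}_{\partial \Gw'_{\mathrm{Dir}}}(\Gw')$ together with the boundedness of the forms suffices. Beyond that step, the argument is a standard mollification-plus-Lebesgue-differentiation limit and involves no substantial obstacle.
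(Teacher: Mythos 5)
Your proof is correct and takes essentially the same route as the paper: both arguments approximate the two Green functions by solutions with mollified point sources as in Corollary~\ref{Green_app_dist}, use the algebraic identity $\mathcal{B}_{P,B}(\varphi,\psi)=\mathcal{B}_{P^*,B^*}(\psi,\varphi)$ to equate $\int f_{k,y}v_k$ with $\int f_{k,x}u_k$, and pass to the limit using local uniform convergence away from the poles. Your write-up simply makes explicit the density/boundedness step and the Lebesgue-differentiation limit that the paper's short proof leaves implicit.
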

\begin{proof}
	Fix $x_0\neq y_0$ in $\Gw'$, and let $u_k$ (resp., $v_k$) be an approximating sequence of $G^{\Gw'}_{P,B}(x_0,\cdot)$ (resp., $G^{\Gw'}_{P^*,B^*}(y_0,\cdot)$) with $g_k$ (resp., $f_k$) as in Corollary~\ref{Green_app_dist}, where $g_k$ is an approximating sequence of $\gd_{y_0}$ (resp., $f_k$ is an approximating sequence of $\gd_{x_0}$). 
	For all $k$,
	\begin{align*}
	\mathcal B_{P^*,B^*}(u_k, v_k) =\int_{\Gw'} f_k v_k \dx\to G^{\Gw'}_{P,B}(y_0,x_0) \quad \mbox{as~} k\to \infty.
	\end{align*}
	On the other hand, 
	\begin{align*}
	\mathcal{B}_{P^*,B^*}(u_k, v_k) =\mathcal B_{P,B}(v_k, u_k)=
	\int_{\Gw'}g_ku_k\dx \to G^{\Gw'}_{P^*,B^*}(x_0,y_0) \quad \mbox{as~} k\to \infty. \qquad \qedhere
	\end{align*}	 
\end{proof}
\subsection{Green function for $(P,B)$ satisfying Assumptions~\ref{assump2}} 
We proceed with the construction of the positive minimal Green function of $(P,B)$ in a domain $\Gw$. 
\begin{theorem} \label{thm_Gr}
Let $(P,B)\geq 0$ in $\Gw$  and satisfies  Assumptions~\ref{assump2}.
Then either $(P,B)$ admits a positive minimal Green function in $\Gw$, or else,  $(P,B)$ admits a ground state. 
\end{theorem}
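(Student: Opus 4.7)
The plan is to construct the Green function of $(P,B)$ along an exhaustion $\{\Gw_k\}_{k\in\N}$ of $\wb$ and argue by dichotomy. Fix a reference point $x_0\in\Gw_1$. Since $(P,B)\ge 0$ in $\Gw$ and each $\Gw_k\Subset_R\Gw$ satisfies Assumptions~\ref{assump1}, Lemma~\ref{2.12} yields $\lambda_c(P,B,\Gw_k)>0$, and Lemmas~\ref{bound grad2}, \ref{Greendelta}, \ref{G(x,y)} produce a positive function $g_k(y):=G^{\Gw_k}_{P,B}(y,x_0)$ that distributionally solves $(P,B)u=\delta_{x_0}$ in $\Gw_k$, belongs to $\mathcal{H}^{0}_{P,B}(\Gw_k\setminus\{x_0\})$ by Lemma~\ref{reg_dist}, and vanishes continuously on $\partial\Gw_{k,\Dir}$ by Remark~\ref{rem514}. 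A first step is monotonicity $g_k\le g_{k+1}$: by Remark~\ref{G_D} the two $\delta$-singularities cancel modulo a bounded remainder, so $g_{k+1}-g_k$ extends to a weak solution of $(P,B)u=0$ in $\Gw_k$; since it is nonnegative on $\partial\Gw_{k,\Dir}$ and satisfies $B(g_{k+1}-g_k)=0$ on $\partial\Gw_{k,\Rob}$, the generalized maximum principle in $\Gw_k$ (available by Theorem~\ref{eeqiuv_add}) yields the claim.

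Now fix $x_1\in\Gw\setminus\{x_0\}$ and split into two cases. If $\sup_k g_k(x_1)<\infty$, the local Harnack inequality (Lemma~\ref{lem_whi}) together with Corollary~\ref{Harnack_up} make $\{g_k\}$ locally uniformly bounded on $\wb\setminus\{x_0\}$; the Harnack convergence principle (Lemma~\ref{HCP}) then delivers a limit $G\in\mathcal{H}^{0}_{P,B}(\Gw\setminus\{x_0\})$ with a fundamental-solution singularity at $x_0$ (by Remark~\ref{G_D}), so $G$ is a positive Green function. Minimality follows by the standard comparison: given any $v\in C(\Gw_l^*\cup\Pw_{l,\Dir})\cap \mathcal{SH}_{P,B}(\Gw_l^*)$ with $v\ge G\ge g_k$ on $\Pw_{l,\Dir}$, the generalized maximum principle on $\Gw_k\setminus\overline{\Gw_l}$ (using $g_k=0$ on $\partial\Gw_{k,\Dir}$) gives $g_k\le v$ there, and $k\to\infty$ yields $G\le v$.

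If instead $g_k(x_1)\to\infty$, set $M_k:=g_k(x_1)$ and $v_k:=g_k/M_k$. The crucial point is that $\{v_k\}$ stays locally uniformly bounded on all of $\wb$, including near the pole $x_0$. Fix $B_\vge(x_0)\Subset\Gw_1$ and apply the decomposition of Remark~\ref{G_D} (for $(P^*,B^*)$) to write $g_k=\tilde w_k+G^{B_\vge(x_0)}_{P}(x_0,\cdot)$ on $B_\vge(x_0)$, where the singular part is $k$-independent and $\tilde w_k$ is a positive $P$-solution whose trace on $\partial B_\vge(x_0)$ is bounded by $CM_k$ via Harnack. Then $\tilde w_k/M_k$ is uniformly bounded in $B_\vge(x_0)$ by the maximum principle for $P$, while $G^{B_\vge(x_0)}_{P}(x_0,\cdot)/M_k\to 0$ locally uniformly off $x_0$; hence $\{v_k\}$ is locally uniformly bounded on $\wb$. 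Lemma~\ref{HCP} then supplies a subsequence $v_k\to v$ locally uniformly with $v\in\mathcal{H}^{0}_{P,B}(\Gw)$ and $v(x_1)=1$. To confirm $v$ is a ground state, let $s$ satisfy the conditions in Definition~\ref{def:minimalgrowth} with $v\le s$ on $\Pw_{l,\Dir}$: for any $\vge>0$, local uniform convergence gives $v_k\le s+\vge$ on $\Pw_{l,\Dir}$ for all large $k$; since $v_k=0\le s+\vge$ on $\partial\Gw_{k,\Dir}$, the generalized maximum principle on $\Gw_k\setminus\overline{\Gw_l}$ gives $v_k\le s+\vge$ there; sending $k\to\infty$ and then $\vge\to 0$ concludes.

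The main obstacle I expect is Case~II, specifically the claim that the rescaled Green functions $v_k$ (each with a singularity of order $|y-x_0|^{2-n}/M_k$ at $x_0$) admit a bounded limit across the pole. This depends essentially on Remark~\ref{G_D}, which isolates the universal singular part common to every $g_k$ and leaves a regular part whose $L^\infty$-growth is controlled by $M_k$ via the boundary Harnack principle; once this is in place, the singularity becomes negligible in the limit and Lemma~\ref{HCP} applies on the full domain $\Gw$.
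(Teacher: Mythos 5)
Your overall skeleton (exhaustion, monotone family of Green functions, dichotomy bounded/unbounded, Harnack convergence, minimal growth via the generalized maximum principle on $\Gw_k\setminus\overline{\Gw_l}$) is the same as the paper's, and your Case~1 and your monotonicity argument (via the cancellation of the common singular part from Remark~\ref{G_D}, rather than the paper's test against arbitrary $0\lneqq\phi$) are fine. Where you genuinely diverge is the unbounded case: the paper never normalizes the Green functions themselves. It passes to $u_k(x)=\int_{\Gw_k}G_k(x,y)\phi(y)\dy$ for a fixed $0\lneqq\phi\in C_0^\infty$, so the normalized functions $u_k/u_k(x_1)$ are pole-free positive solutions of $(P,B)u=\phi/u_k(x_1)$ with right-hand side tending to zero, and the Harnack convergence principle applies verbatim; minimal growth of the limit is then checked on $\Gw_k\setminus K$ with $K\supset\supp\phi$ using the multiplicative comparison $(C+\vge)v$. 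Your route, normalizing $g_k/g_k(x_1)$ and isolating the $k$-independent singular part via Remark~\ref{G_D}, is more direct in that the ground state appears as the limit of the Green functions themselves, but it forces you to prove local uniform bounds across the pole and a removable-singularity statement in the limit, which the paper's trick avoids entirely.

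Three steps of your Case~2 need repair, all standard. First, ``the maximum principle for $P$'' does not bound $\tilde w_k$ by the constant $CM_k$: constants are not supersolutions of $P$ (the zero-order term has no sign), so you should instead compare $\tilde w_k$ with $C M_k u_0$ on $B_\vge(x_0)$, where $u_0\in\mathcal{H}^0_{P,B}(\Gw)$ is a fixed positive solution bounded away from $0$ and $\infty$ on $\overline{B_\vge(x_0)}$; this gives the desired $\sup_{B_\vge(x_0)}\tilde w_k\leq C'M_k$. Second, Lemma~\ref{HCP} does not literally apply to $v_k=g_k/M_k$, since $(P,B)v_k=\delta_{x_0}/M_k$ and a measure right-hand side is not covered by the ``$P+V_k$ with $V_k\to 0$ in $L^{p/2}_\loc$'' clause; you must run the compactness argument on $\wb\setminus\{x_0\}$ and then use your decomposition $v_k=\tilde w_k/M_k+G_P^{B_\vge(x_0)}(x_0,\cdot)/M_k$ to show that the limit coincides near $x_0$ with the limit of the uniformly bounded $P$-solutions $\tilde w_k/M_k$, hence extends as a positive solution across $x_0$. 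Third, in the minimal-growth step the additive perturbation $s+\vge$ is not a supersolution (again because of the zero-order and Robin terms), so the generalized maximum principle cannot be invoked against it; use the paper's multiplicative version, i.e.\ $v_k\leq(1+\vge)s$ on $\Pw_{l,\Dir}$ for $k$ large (legitimate since $\inf_{\Pw_{l,\Dir}}s>0$ by continuity and the strong maximum principle), compare with $(1+\vge)s$ on $\Gw_k\setminus\overline{\Gw_l}$, and then let $k\to\infty$, $\vge\to 0$. With these corrections your argument goes through and yields the same dichotomy as the paper.
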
 
\begin{proof}
Let $\{\Gw_k \}_{k\in \N}$ be an exhaustion of $\wb$. Recall that for any $\Gw' \Subset_R \wb$ the generalized maximum principle holds in $\Gw'$,  hence, $(P,B)$ admits a positive Green function $G_{P ,B}^{\Gw'}$ in $\Gw'$. 
Let $x_0,x_1\in \Gw$ with $x_0\neq x_1$.
For each $k\in \N$, consider the function $G_k(x,y)=G_{P ,B}^{\Gw_k}(x,y)$. 

We claim that the sequence $\{G_k(x,y)\}_{k\in \N}$ is monotone nondecreasing. 
Indeed, for any $0\lneqq \phi\in \mathcal{D}(\Gw_k,\partial \Gw_{k,\Dir})$, and $x\in \Gw_k$,
$$
u_{k+1}(x)-u_k(x) \! =\!\!\int_{\Gw_{k+1}}\!\!\!\! G_{k+1}(x,y)(x,y))\phi \dy - 
\!\!\int_{\Gw_k} \!\!\! G_{k}(x,y))\phi \dy
	\!=\!\! \int_{\Gw_k}\!\!\!(G_{k+1}(x,y)-G_{k}(x,y))\phi \dy,
$$ 
where $u_{k+1}$ and $u_k$ are positive solutions to the equation $(P,B)u=\phi$ in $\Gw_k$ and $\Gw_{k+1}$ respectively. Moreover, $u_k< u_{k+1}$ on $\partial\Gw_{k,\Dir}$ implying that $u_k < u_{k+1}$ in  $\Gw_k$. Since $\phi$ is arbitrary, the monotonicity of $G_{k}(x,y)$ follows.  

Next we show  the following dichotomy:

{\bf Case 1:} Assume first that for some $y_0\neq x_0 \in \Gw$, the sequence $\{ G_k(x_0,y_0)\}_{k\in \N}$ is  bounded, then by the monotonicity and Harnack convergence principle, 
 the sequence $\{G_k(x,y)\}_{k\in \N}$ converges locally uniformly in $\Gw\setminus \{y\}$ to a positive solution  of $(P,B)u=0$ in $\Gw \setminus \{ y\}$, denoted by $G^{\Gw}_{P,B}(x,y)$.
Consider again the function 
$u_k(x)\! = \!\int_{\Gw_k}G_k(x,y)\phi(y) \!\dy$
which satisfies
$(P,B)u_k=\phi$ in $\Gw_k$.
Then, for any $0\lneqq \phi\in C_0^\infty(\Gw)$, the monotone convergence theorem implies that 
$$
\lim_{k\to \infty} u_k(x)=\lim_{k\to \infty}\int_{\Gw_k}G_k(x,y)\phi(y)\dy=
\int_{\Gw}G^{\Gw}_{P,B}(x,y)\phi(y)\dy.
$$
Further, for  $x_1\notin \mathrm{supp}(\phi)$ the sequence $\{u_k(x_1)\}_{k\in \N}$ is bounded by 
$
\int_{\Gw}G_{P,B}^\Gw(x_1,y)\phi(y) \dy.
$
Therefore,
  $u_k\to u$ locally uniformly in $\Gw$ and $(P,B)u=\phi \gneqq 0$ in $\Gw$. In particular, $u$ is a regular positive supersolution of  $(P,B)$, and  therefore, $(P,B)$ is subcritical in $\Gw$. 
   Moreover, $(P,B)G_{P,B}^\Gw(\cdot,y)=\gd_y$ in $\Gw$ in the distributional sense, and $(P,B)G_{P,B}^\Gw(\cdot,y)=0$ in $\Gw\setminus \{y\}$ in the weak sense. 
  Clearly, the uniqueness and minimality of such a  Green function follows from a standard comparison argument. We call $G^{\Gw}_{P,B}$  the {\em positive minimal Green function of the operator $(P,B)$ in $\Gw$}. 

	{\bf Case 2: }
	Assume that the sequence $\{ G_k(x_0,y_0)\}_{k\in \N}$ is  unbounded, and again  by the monotonicity and Harnack convergence principle, 
	the sequence $\{G_k(x,y)\}_{k\in \N}$ converges locally uniformly in $\Gw\setminus \{y\}$ to
	$\infty$. 
	Let 
	$$
	u_k(x)=\int_{\Gw_k}G_k(x,y)\phi(y) \dy\in H^1_{\partial \Gw_{k,\Dir}}(\Gw_k).
	$$
	Fix $x_1\in \Gw_1$. By the monotone convergence theorem, $$\lim_{k\to \infty}u_k(x_1)=\lim_{k\to \infty}\int_{\Gw_k}G_k(x_1,y)\phi(y)\dy=\infty.$$
	Therefore, the sequence $\{ \vgf_k:=u_k/u_k(x_1)\}_{k\in \N}$ converges locally uniformly to a positive solution $\vgf$ of the equation $(P,B)v=0$ in $\Gw$. 
	
	We claim that the function $\vgf$ is a ground state.
	Indeed,
	let $K\Subset_R\Gw$ be a Lipschitz subdomain such that $\mathrm{supp}(\phi)\Subset K$.
	Let $v$ be a positive continuous supersolution
	of the equation $(P,B)w=0$ in $\Gw\setminus K$ such that $\vgf\leq Cv$ in 
	$\partial K_{\Dir}$. 
	Recall that $\vgf_k\to \vgf$ uniformly on $K$ and therefore for any $\varepsilon>0$ there exists 
	$\vgf_k$ satisfying $\vgf_k\leq (C+\varepsilon) v$ on $\partial K_{\Dir}$.
	By the generalized maximum principle in ${\Gw_k}\setminus  K$, we have $\vgf_k\leq (C+\varepsilon)v $ in $\Gw_k\setminus  K$. Letting $k\to \infty$ we deduce that 
	$\vgf\leq (C_1+\varepsilon)v$ in $\Gw \setminus K$. Since $\vge>0$ is arbitrarily small we have,
	$\vgf\leq Cv$ in $\Gw \setminus K$, implying that $\vgf$ has minimal growth. By the uniqueness of the ground state, it follows that $\vgf$ does not depend on $\phi$. 
	\end{proof}
As a corollary of Lemma~\ref{G(x,y)} and Theorem~\ref{thm_Gr}, we have: 
\begin{cor}
	Assume that $(P,B)\geq 0$ in $\Gw$. Then 
	$(P,B)$ is subcritical in $\Gw$ if and only if 
	$(P^*,B^*)$ is subcritical in $\Gw$.
\end{cor}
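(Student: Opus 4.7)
The plan is to deduce the corollary directly from the dichotomy established in Theorem~\ref{thm_Gr}, combined with the symmetry identity of Lemma~\ref{G(x,y)} and the preservation of nonnegativity for the adjoint proved in Lemma~\ref{lem_adj_positivity}. By the symmetry of the statement (with $(P^{*},B^{*})^{*} = (P,B)$), it suffices to prove one direction: if $(P,B)$ is subcritical in $\Gw$, then so is $(P^{*},B^{*})$.

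First, I would fix an exhaustion $\{\Gw_k\}_{k\in\N}$ of $\wb$ and two distinct points $x_0,y_0 \in \Gw_1$. Inspecting the proof of Theorem~\ref{thm_Gr}, the dichotomy is governed precisely by whether the monotone nondecreasing sequence $\{G^{\Gw_k}_{P,B}(x_0,y_0)\}_{k\in\N}$ is bounded (Case 1, yielding a positive minimal Green function and subcriticality) or unbounded (Case 2, yielding a ground state and criticality, by Lemma~\ref{gs_implies_crit}). Since $(P,B)$ is assumed subcritical in $\Gw$, Lemma~\ref{inview} rules out the existence of a ground state (which would be a second, linearly independent regular positive supersolution together with a Hardy-perturbed solution), so we must be in Case 1, and therefore $G^{\Gw_k}_{P,B}(x_0,y_0)$ stays bounded as $k\to\infty$.

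Next, I would invoke Lemma~\ref{G(x,y)}, which asserts the identity
\[
G^{\Gw_k}_{P,B}(x_0,y_0) = G^{\Gw_k}_{P^{*},B^{*}}(y_0,x_0) \qquad \forall k\in\N.
\]
Consequently the sequence $\{G^{\Gw_k}_{P^{*},B^{*}}(y_0,x_0)\}_{k\in\N}$ is also bounded. By Lemma~\ref{lem_adj_positivity}, the adjoint operator $(P^{*},B^{*})$ is nonnegative in $\Gw$, so Theorem~\ref{thm_Gr} applies to it on the same exhaustion $\{\Gw_k\}$. The boundedness of the Green function values at $(y_0,x_0)$ forces the adjoint dichotomy into Case 1 as well, producing a positive minimal Green function $G^{\Gw}_{P^{*},B^{*}}$. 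Arguing as in that case, the function $x\mapsto \int_{\Gw} G^{\Gw}_{P^{*},B^{*}}(x,y)\phi(y)\dy$ for any $0\lneqq\phi\in C_0^{\infty}(\Gw)$ is a regular positive supersolution of $(P^{*},B^{*})$ in $\Gw$ satisfying $(P^{*},B^{*})u = \phi \gneqq 0$, so Lemma~\ref{inview} (applied to $(P^{*},B^{*})$) yields subcriticality.

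There is essentially no analytic obstacle here; the proof is a mechanical combination of the previously established tools. The only minor bookkeeping point is to ensure that the exhaustion used when invoking Theorem~\ref{thm_Gr} for the adjoint is the same one used for the original operator, so that Lemma~\ref{G(x,y)} can be applied termwise; this is automatic since the exhaustion depends only on $\Gw$ and not on the operator. The reverse implication then follows either by repeating the argument with the roles of $(P,B)$ and $(P^{*},B^{*})$ exchanged, or by the observation that the formal $L^{2}$-adjoint operation is an involution.
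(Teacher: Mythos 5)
Your proposal is correct and follows essentially the route the paper intends, since the corollary is stated there precisely as a consequence of Lemma~\ref{G(x,y)} and the dichotomy of Theorem~\ref{thm_Gr}, with Lemma~\ref{lem_adj_positivity} supplying the nonnegativity of $(P^*,B^*)$ needed to run the dichotomy for the adjoint. The only cosmetic point is that excluding Case~2 under subcriticality is most directly cited from Lemma~\ref{gs_implies_crit} (ground state $\Leftrightarrow$ criticality) rather than Lemma~\ref{inview}, but this does not affect the argument.
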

The approximation argument in Corollary~\ref{Green_app_dist} and Harnack convergence principle readily imply the following result.
	\begin{lem}\label{Green_app_dist2}
		Assume that $(P,B)$ is subcritical in $\Gw$.
		Let $\{\Gw_k\}_{k\in \N}$ be an exhaustion of  $\wb$, and
		let $q=p'$. Consider the functions $f_k:=|B_{1/k}(x_0)|^{-1}\chi_{B_{1/k}(x_0)}$ where $x_0\in \Gw_1$. 
		Let $u_k\in H^{1}_{\partial \Gw_{k,\Dir}}(\Gw_k)$ be a sequence of positive (H\"older continuous)  solutions to the problem
		$(P,B)u_k=f_k$ in $\Gw_k$.
		Then,  $u_k$ converges  locally uniformly in $\wb$ to   $G^{\Gw}_{P,B}(\cdot,x_0)$. In particular, $G^{\Gw}_{P,B}(\cdot,x_0)\in \mathcal{H}^{0}_{P,B}(\Gw \setminus \{ x_0\})$ is a positive solution in  $\Gw \setminus \{ x_0\}$ of minimal growth in a neighborhood of infinity in $\Gw$.
	\end{lem}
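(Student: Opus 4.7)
The plan is to establish the claimed convergence in two steps---first a pointwise identification of the limit, then an upgrade to local uniform convergence---and then to verify the minimal growth property by a comparison argument on the annular subdomains $\Gw_k\setminus\overline{\Gw_l}$, using subcriticality in a crucial way.

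For the pointwise convergence, I would apply the Green representation formula of Lemma~\ref{bound grad2} in $\Gw_k$ to write
\[
u_k(x)\;=\;\int_{\Gw_k} G^{\Gw_k}_{P,B}(x,y)\,f_k(y)\dy\;=\;\frac{1}{|B_{1/k}(x_0)|}\int_{B_{1/k}(x_0)} G^{\Gw_k}_{P,B}(x,y)\dy.
\]
The proof of Theorem~\ref{thm_Gr} established the monotone convergence $G^{\Gw_k}_{P,B}(x,y)\nearrow G^\Gw_{P,B}(x,y)$; since, by Lemma~\ref{G(x,y)}, $G^{\Gw_k}_{P,B}(x,\cdot)=G^{\Gw_k}_{P^*,B^*}(\cdot,x)$ is a positive solution of $(P^*,B^*)u=0$ in $\Gw_k\setminus\{x\}$, the Harnack convergence principle (Lemma~\ref{HCP}) upgrades this to locally uniform convergence in $y$ on $\wb\setminus\{x\}$. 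For fixed $x\ne x_0$, combining the continuity of $G^\Gw_{P,B}(x,\cdot)$ at $x_0$ with Proposition~\ref{prop:l} then yields $u_k(x)\to G^\Gw_{P,B}(x,x_0)$. To upgrade to local uniform convergence, note that for $k$ large, $\mathrm{supp}(f_k)\Subset\Gw_l$, so $(P,B)u_k=0$ on any fixed compact subset of $\wb\setminus\{x_0\}$; normalising by $u_k(x_1)$ at any $x_1\ne x_0$ and applying Lemma~\ref{HCP} extracts a subsequence converging locally uniformly, and the pointwise convergence forces every subsequential limit to equal $G^\Gw_{P,B}(\cdot,x_0)$.

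For the minimal growth property, first observe that by Theorem~\ref{thm_Gr}, $G^\Gw_{P,B}(\cdot,x_0)\in\mathcal{H}^0_{P,B}(\Gw^*_j)$ for any $j$ with $x_0\in\Gw_j$. Fix such a $j$, let $l\ge j$, and take $v\in C(\Gw^*_l\cup\partial\Gw_{l,\Dir})\cap\mathcal{SH}_{P,B}(\Gw^*_l)$ with $G^\Gw_{P,B}(\cdot,x_0)\le v$ on $\partial\Gw_{l,\Dir}$. Exploiting the subcriticality hypothesis (Definition~\ref{def:4.6}), choose a Hardy-weight $W\gneqq 0$ and $\varphi\in\mathcal{H}^0_{P-W,B}(\Gw)$, so that $(P,B)\varphi=W\varphi\gneqq 0$, and $\inf_{\partial\Gw_{l,\Dir}}\varphi>0$ by continuity and strict positivity on the compact interior set $\partial\Gw_{l,\Dir}$. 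Setting
\[
\delta_k:=\frac{\sup_{\partial\Gw_{l,\Dir}}\bigl(u_k-G^\Gw_{P,B}(\cdot,x_0)\bigr)^+}{\inf_{\partial\Gw_{l,\Dir}}\varphi}\;\xrightarrow[k\to\infty]{}\;0,
\]
the function $w_k:=v+\delta_k\varphi-u_k$ satisfies $(P,B)w_k\ge \delta_k W\varphi\ge 0$ in $\Gw_k\setminus\overline{\Gw_l}$ with $w_k^-=0$ on the Dirichlet boundary $\partial\Gw_{k,\Dir}\cup\partial\Gw_{l,\Dir}$. Since $(P,B)\ge 0$ in the Lipschitz subdomain $\Gw_k\setminus\overline{\Gw_l}$, the generalized maximum principle (Theorem~\ref{eeqiuv_add}) gives $u_k\le v+\delta_k\varphi$ there; passing $k\to\infty$ yields $G^\Gw_{P,B}(\cdot,x_0)\le v$ on $\Gw^*_l$.

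The main obstacle is this last comparison step. The naive inequality $(P,B)(v-u_k)\ge 0$ on the annulus, coupled with $u_k\le v$ on the outer Dirichlet boundary, only holds up to an error $\epsilon_k\to 0$ on $\partial\Gw_{l,\Dir}$ coming from the convergence $u_k\to G^\Gw_{P,B}(\cdot,x_0)$, and this error cannot be absorbed by adding a constant, since $(P,B)1$ may be signed. The strict positive supersolution $\varphi$ provided by subcriticality is precisely the device needed to absorb the boundary error while preserving the supersolution inequality throughout the annulus, and thus subcriticality enters the proof in an essential way.
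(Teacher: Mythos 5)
Your argument is correct, and its first two steps (Green representation in $\Gw_k$, monotone convergence $G^{\Gw_k}_{P,B}\nearrow G^{\Gw}_{P,B}$ from Theorem~\ref{thm_Gr}, Lemma~\ref{G(x,y)} plus the Harnack convergence principle to get local uniform convergence in the second variable, and Proposition~\ref{prop:l} to identify the pointwise limit) are exactly the ingredients the paper has in mind when it says that Corollary~\ref{Green_app_dist} and the Harnack convergence principle ``readily imply'' the lemma; the paper simply does not write this out. Where you genuinely deviate is the minimal-growth step. The paper's intended ``standard comparison argument'' (as in Theorem~\ref{thm_Gr}, Case 2, and Lemma~\ref{lem:min_growth}) compares $v$ not with your $u_k$ but with the monotone approximations $G^{\Gw_k}_{P,B}(\cdot,x_0)$: since $G^{\Gw_k}_{P,B}(\cdot,x_0)\leq G^{\Gw}_{P,B}(\cdot,x_0)\leq v$ holds \emph{exactly} on $\partial\Gw_{l,\Dir}$ and $G^{\Gw_k}_{P,B}(\cdot,x_0)$ has zero trace on $\partial\Gw_{k,\Dir}$, the generalized maximum principle in $\Gw_k\setminus\overline{\Gw_l}$ applies with no boundary error, and letting $k\to\infty$ finishes the proof. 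Your device of absorbing the error $\sup_{\partial\Gw_{l,\Dir}}(u_k-G)^+$ by a positive supersolution $\varphi\in\mathcal H^0_{P-W,B}(\Gw)$ is a valid alternative (and is needed because your $u_k$, unlike the $G^{\Gw_k}$, are not monotone in $k$), but your closing claim that subcriticality is \emph{essential} to the comparison is overstated: its real role in this lemma is to place you in Case 1 of Theorem~\ref{thm_Gr}, i.e.\ to guarantee that the monotone limit $G^{\Gw}_{P,B}$ is finite; a positive solution of $(P,B)u=0$ in $\Gw$ (which exists by nonnegativity alone) would serve equally well as the corrector $\varphi$ in your inequality, since only $(P,B)\varphi\geq 0$ and $\inf_{\partial\Gw_{l,\Dir}}\varphi>0$ are used. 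So the two routes buy the same conclusion; the monotone comparison is shorter, while yours works directly with the $u_k$ of the statement.
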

\begin{rem}\label{rem:green}
	\em{
	Assume that $(P,B)$ is subcritical in $\Gw$, and let $u\in \mathcal{H}^{0}_{P,B}(\Gw)$. 
	 Consider the corresponding ground state transform $(P^u,B^u)$ in $\Gw$. 
	Then in $L^2_\loc(\Gw)$
	$$G_{P^u,B^u}^{\Gw}(x,y)=\frac{G_{P,B}^{\Gw}(x,y)u(y)}{u(x)}\,. $$
	}
\end{rem}
We conclude the present section with some basic properties of critical and subcritical operators $(P,B)$.	
\begin{rem}
		\em{
Using the same proofs as in  \cite{PS} and references therein, one deduce the following assertions for $(P,B)\geq 0$ in $\Gw$: 
 \begin{enumerate}
 	\item  Let $V\in L^{p/2}_{\loc}(\wb)$, $p>n$. Then $$S:=\{\lambda \in \R:\mathcal{H}^{0}_{P-\lambda V,B}\neq \emptyset \}$$ is a closed  interval, and if $\gl \in \mathrm{int}(S)$, then $(P-\gl,B)$ is subcritical in $\Gw$. Furthermore, $S$ is unbounded if and only if $V$ does not change its sign in $\Gw$. 
 	Moreover, if $V$  has a compact support in $\Gw$, and $(P,B)$ is subcritical in $\Gw$, then $\mathrm{int}(S)\neq \emptyset$ and $\gl\in \partial S$ if and only if  $(P-\gl,B)$ is critical in $\Gw$.  
		\item {Protter-Weinberger formula \cite[Theorem~7.12]{PS}}: Let $V\in L^{p/2}_{\loc}(\wb)$, $p>n$, be a positive potential, and consider the sets:
		$$K:=\{0<u\in H^1_{\loc}(\wb) \}, \qquad M:= \{\phi\gneqq 0 \mid \phi \in \mathcal{D}(\Gw,\pwd)\}.$$
		Then
		$$
		\lambda_0(P,B,V,\Gw)=\sup\limits_{u\in K}
		\inf\limits_{\phi\in M}\dfrac{\mathcal{B}_{P,B}(u,\phi)}{\int_{\Gw} Vu \phi \dx}\,.
		$$
		\item  Let $ V_1,V_2\in L^{p/2}_{\loc}(\wb)$, $V_1\leq V_2$. Then $\lambda_0(P,B,V_2,\Gw)\leq \lambda_0(P,B,V_1,\Gw)$. But if $\mathcal{SH}_{P,B}(\Gw)= \emptyset$ and $V_1\geq 0$, then 
		$\lambda_0(P,B,V_2,\Gw)\leq \lambda_0(P,B,V_1,\Gw)$ \cite[Lemma~7.10]{PS}.
		\item  If $ W,V_1,V_2\in L^{p/2}_{\loc}(\wb)$ such that $V_1\leq V_2$
		then $\lambda_0(P+V_2,B,W,\Gw)\geq \lambda_0(P+V_1,B,W,\Gw)$. Moreover,  $\lambda_0(P,B,V,\Gw)$ is a concave function of $V$, \cite[Lemma~7.9]{PS}.
		\item Assume that $\Gw_1\Subset_R \Gw_2$ and assume that $(P,B)$ is subcritical in $\Gw_2$. Let $V_1,V_2\in L^{p/2}(\Gw_2) $ satisfying $0\leq V_1\leq V_2$. Then
		$G_{P+V_2,B}^{\Gw_1}(\cdot,y)\leq G_{P+V_1,B}^{\Gw_2}(\cdot,y)$ in $\Gw_1$ \cite[Corollary~8.22]{PS}.
	\end{enumerate}
}	
\end{rem}

\section{Symmetric operators}\label{sec_Symm}
Throughout this section {\bf we assume that  $(P,B)$ satisfies Assumptions~\ref{assump2} in $\Gw$, and that $(P,B)$ is {\em symmetric}},  in other words, we assume that $\bb=\bt$. We note that if $P$ is symmetric and $\bt$ is smooth enough, then $P$ is in fact a Schr\"{o}dinger-type operator of the form
\begin{equation*} \label{div_P7}
Pu:=-\diver \big(A(x)\nabla u\big) 
+\big(c(x)-\diver \bt(x)\big) u \qquad x\in\Gw.
\end{equation*}
We prove the appropriate Allegretto-Piepenbrink-type theorem for \eqref{P,B} and characterize criticality via the existence of a null-sequence. 
\subsection{Allegretto-Piepenbrink  theorem } This theorem is well known if $\pwr\!\!=\!\emptyset$, namely, when $\tilde P$ is the Friedrichs extension of $P$ (see, \cite{Agmon,P3,S} and references therein). We prove:
\begin{thm}[Allegretto-Piepenbrink-type theorem]
	Let Assumptions~\ref{assump2} hold in $\Gw$.
	Then a symmetric operator $(P,B)$ is nonnegative in $\Gw$ if and only if $\mathcal{B}_{P,B}(\phi,\phi)\!\geq\! 0$ for all $\phi \!\in\! \mathcal{D}(\Gw,\pwd)$. In other words, in the symmetric case we have  $\gl_0(P,B,1,\Gw)\!=\!\Gl(P,B,\Gw)$.
\end{thm}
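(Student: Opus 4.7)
The plan is to prove the two implications separately, using the symmetry $\bb=\bt$ to reduce each direction to a clean computation: the forward direction via the ground state transform, and the converse via the self-adjointness of the Robin realization on each bounded Lipschitz piece of an exhaustion, combined with the local results of Section~\ref{sec_local}.

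\smallskip

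For the direction $(P,B)\geq 0 \Rightarrow \mathcal{B}_{P,B}(\phi,\phi)\geq 0$, I pick a positive solution $u\in \mathcal{H}^{0}_{P,B}(\Omega)$. By Lemma~\ref{1.20} together with the interior Harnack inequality, $u$ is continuous and locally bounded away from $0$ on $\wb$. Given $\phi\in\mathcal{D}(\Omega,\pwd)$, set $\eta:=\phi/u$; then $\eta\in H^{1}_{\mathrm{loc}}(\wb)$ with compact support in $\wb$. Since $\bb=\bt$, formula \eqref{eq-gs} reduces to $P^{u}(w)=-u^{-2}\diver(u^{2}A\nabla w)$ and $B^{u}(w)=\beta A\nabla w\cdot\vec{n}$. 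Remark~\ref{r2.11}(2), extended by continuity so that both slots range over $H^{1}_{\mathrm{loc}}(\wb)$ with compact support in $\wb$, then yields
\[
\mathcal{B}_{P,B}(\phi,\phi)=\mathcal{B}_{P^{u},B^{u}}(\eta,\eta)=\int_{\Omega} u^{2}\, A\nabla\eta\cdot\nabla\eta\,\dx\geq 0,
\]
by positive-definiteness of $A$.

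\smallskip

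For the converse, let $\{\Omega_{k}\}$ be a Lipschitz exhaustion of $\wb$ (Definition~\ref{def:exhaustion}). Extension by zero embeds $\mathcal{D}(\Omega_{k},\partial\Omega_{k,\mathrm{Dir}})$ into $\mathcal{D}(\Omega,\pwd)$, so the hypothesis immediately gives $\Lambda(P,B,\Omega_{k})\geq 0$ for every $k$. The central step is to show that in the symmetric bounded Lipschitz setting $\lambda_{c}(P,B,\Omega_{k})=\Lambda(P,B,\Omega_{k})$. To see this, fix $\mu>\mu_{0}$ as in Theorem~\ref{coercivity}. Then the symmetric coercive form $\mathcal{B}_{P+\mu,B}$ defines, via Lax-Milgram/Friedrichs, a self-adjoint realization of $\tilde P+\mu$ on $L^{2}(\Omega_{k})$ with compact resolvent. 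Its spectrum is therefore discrete, and the min-max principle identifies its least eigenvalue with
\[
\inf_{v\in H^{1}_{\partial\Omega_{k,\mathrm{Dir}}}(\Omega_{k})\setminus\{0\}}\frac{\mathcal{B}_{P+\mu,B}(v,v)}{\|v\|_{L^{2}(\Omega_{k})}^{2}}=\Lambda(P,B,\Omega_{k})+\mu,
\]
where the last equality uses the density of $\mathcal{D}(\Omega_{k},\partial\Omega_{k,\mathrm{Dir}})$ in $H^{1}_{\partial\Omega_{k,\mathrm{Dir}}}(\Omega_{k})$ and the continuity of $\mathcal{B}_{P,B}$ (Theorem~\ref{coercivity}). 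Theorem~\ref{agmonmethod} identifies the same infimum with $\Gamma(P,B,\Omega_{k})+\mu=\lambda_{c}(\Omega_{k})+\mu$. Hence $\lambda_{c}(\Omega_{k})=\Lambda(\Omega_{k})\geq 0$ for every $k$.

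\smallskip

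Passing to the full domain is then routine: Lemma~\ref{lem-lim} gives $\lambda_{0}(\Omega)=\lim_{k\to\infty}\lambda_{c}(\Omega_{k})\geq 0$, and Corollary~\ref{cor_gwmp} supplies $u\in\mathcal{H}^{0}_{P-\lambda_{0},B}(\Omega)$. Since $\lambda_{0}\geq 0$ and $Bu=0$, this $u$ satisfies $(P,B)u=\lambda_{0}u\geq 0$, and the Harnack bounds $u,u^{-1}\in L^{\infty}_{\mathrm{loc}}(\wb)$ show $u\in\mathcal{RSH}_{P,B}(\Omega)$. Theorem~\ref{eeqiuv_add} then delivers $\mathcal{H}^{0}_{P,B}(\Omega)\neq\emptyset$, i.e., $(P,B)\geq 0$. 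The one nontrivial obstacle is the identification $\lambda_{c}=\Lambda$ on each $\Omega_{k}$; this is where symmetry is used essentially, through self-adjointness and the density of smooth test functions in the natural form domain. All other ingredients are the structural results already established in Sections~\ref{sec_local}--\ref{sec_crit}.
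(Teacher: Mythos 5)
Your proof is correct, and the first direction (nonnegativity implies $\mathcal{B}_{P,B}\geq 0$ on $\mathcal{D}(\Gw,\pwd)$ via the ground state transform, testing with $\eta=\phi/u$) is essentially the paper's argument in a slightly different notation. The converse, however, follows a genuinely different route. The paper argues directly: from $\mathcal{B}_{P,B}\geq 0$ it gets coercivity of $\mathcal{B}_{P+1/k,B}$ on each $\Gw_k$, solves $(P+\tfrac1k,B)v_k=f_k$ with $f_k\gneqq 0$ supported in $\Gw_k\setminus\Gw_{k-1}$, and passes to the limit with the Harnack convergence principle (Lemma~\ref{HCP}) to produce an element of $\mathcal{H}^0_{P,B}(\Gw)$ — no spectral theory beyond solvability and the generalized maximum principle is needed. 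You instead identify, on each $\Gw_k$, the bottom of the spectrum of the self-adjoint Robin realization with $\Gl(\Gw_k)$ by min-max and density, invoke Theorem~\ref{agmonmethod} to get $\gl_c(\Gw_k)=\Gg(\Gw_k)=\Gl(\Gw_k)\geq 0$, and then pass to $\Gw$ through Lemma~\ref{lem-lim}, Corollary~\ref{cor_gwmp} and Theorem~\ref{eeqiuv_add}. This buys you the sharper local identity $\gl_c(\Gw_k)=\Gl(\Gw_k)=\Gg(\Gw_k)$ (which the paper only records afterwards as a remark), at the price of heavier machinery. One point to tighten: the proof of Lemma~\ref{lem-lim} as written leans on Lemma~\ref{2.12}, whose hypotheses presuppose a regular positive supersolution of $(P,B)$ in $\Gw$ — the very nonnegativity you are proving — so citing it wholesale is slightly circular. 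In your symmetric setting this is easily repaired without the lemma: $\gl_c(\Gw_k)=\Gl(\Gw_k)$ is non-increasing (nesting of test-function spaces) and bounded below by $\Gl(\Gw)\geq 0$, hence converges to some $\ell\geq 0$, and applying the Harnack convergence principle directly to the normalized principal eigenfunctions $u_{c,k}$ (with $V_k=\gl_c(\Gw_k)-\ell\to 0$) yields a positive solution of $(P-\ell,B)u=0$ in $\Gw$, which is already a regular positive supersolution of $(P,B)$; Theorem~\ref{eeqiuv_add} then finishes the argument, so the remaining steps through $\gl_0$ and Corollary~\ref{cor_gwmp} can be bypassed.
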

\begin{proof}
	Fix an exhaustion $\{\Gw_k\}_{k\in \N}$ of $\wb$. If $\mathcal{B}_{P,B}(\phi,\phi)\geq 0$ for all $\phi\in \mathcal{D}(\Gw,\pwd)$, then for any $k>0$ the form $\mathcal{B}_{P+1/k,B}$ is coercive in $H^1_{\partial \Gw_{k,\Dir}}(\Gw_k)$, and the corresponding resolvent is a positive operator. 
	
	 For $k\in \N$, let  $f_k\in C_0^\infty(\Gw_{k}\setminus \Gw_{k-1})$ be a nonzero nonnegative function. 
	By Lemma~\ref{lem1.8}, there exists a unique   positive solution $v_k$ to the problem	
	$$
	\begin{cases}
	\left(P+\frac{1}{k} \right)w=f_k & \Omega_k, \\
	Bw=0 & \Pw_{k,\mathrm{Rob}},\\
	\mathrm{Trace}(w)=0& \partial {\Gw_{k,\Dir}}.
	\end{cases}
	$$
	Fix $x_1\in \Gw\setminus \Gw_1$, and consider the sequence $\{u_k:= v_k/v_k(x_1) \}_{k\in \N}$. 
	By the Harnack convergence principle  (Lemma~\ref{HCP}), there exists a subsequence of $\{u_k \}$  converging to a positive solution $u\in H^{1}_{\loc}(\Gw)$ of $(P,B)u=0$ in $\Gw$. Hence,
	$\gl_0(P,B,1,\Gw)\geq \Gl(P,B,\Gw)$. 
	
	\medskip
	
	Assume now that $(P,B)\geq 0$ in $\Gw$, and let $u\in \mathcal{H}_{P,B}^{0}(\Gw)$. 
	By the ground-state transform (Definition~\ref{gs+transform})  in subdomains $\Gw'\Subset_R \Gw$, we 
	obtain 
	$$
	\mathcal B_{P,B}(u\phi,u\phi) = \mathcal B_{P^u,B^u}(\phi,\phi)= \int_{\Gw}(a^{ij} D_j \phi D_i \phi )u^2\!\dx\geq 0  \qquad \forall u\phi \in \mathcal{D}(\Omega,\Pw_{\mathrm{Dir}}).
	$$
	Hence, $\mathcal B_{P,B}$ is nonnegative on $ \mathcal{D}(\Gw,\pwd)$. Hence, $\gl_0(P,B,1,\Gw)\leq \Gl(P,B,\Gw)$.
\end{proof}
\begin{rem}
	\em{
		If $(P,B)$ is symmetric in a $\Gw$ and Assumptions~\ref{assump1} hold in $\Gw$, then the above proof and Theorem~\ref{agmonmethod} imply that 
		$$\gl_0(P,B,1,\Gw)=\Gl(P,B,\Gw)=\Gg(P,B,\Gw)=\gl_c.$$
	}	
\end{rem}
 \subsection{Null-sequence and criticality}
\begin{defi}\em{
		A sequence $\{u_k \}_{k\in \N}\subset \mathcal{D}(\Gw,\pwd)$ is called a {\em null-sequence} with respect to $(P,B)$ in $\Gw$ if the sequence satisfies the following three properties:
		\begin{enumerate}
			\item $u_k \geq 0$ for all $k\in \N$,
			\item there exists a fixed open set $O\Subset \Gw$ such that $\|u_k \|_{L^2(O)}=1$ for all $k\in \N$,
			\item $\lim_{k\to \infty} \mathcal{B}_{P,B}(u_k,u_k)=0$.
		\end{enumerate}
	}
\end{defi}
For the characterization of criticality by the existence of a null-sequence in the particular case $\pwr=\emptyset$, see, \cite[Theorem~2.7]{M1} and \cite[Theorem~1.4]{PT}. 
\begin{thm}
	Let Assumptions~\ref{assump2} hold in $\Gw$. Assume that $(P,B)$ is symmetric  and $(P,B)\geq 0$ in $\Gw$. Then $(P,B)$ admits a null-sequence in $\Gw$ if and only if $(P,B)$ is critical in $\Gw$. 
\end{thm}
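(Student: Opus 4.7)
The plan is to avoid any explicit construction via principal eigenfunctions and instead reduce the theorem to a Rayleigh-quotient identity combined with the Allegretto--Piepenbrink-type theorem just proved in this section. Set
\[
\lambda^{\star}(O):=\inf\Big\{\mathcal{B}_{P,B}(\phi,\phi):\phi\in\mathcal{D}(\Gw,\pwd),\ \|\phi\|_{L^2(O)}=1\Big\}.
\]
The theorem will follow once I show that $(P,B)$ is critical in $\Gw$ if and only if $\lambda^\star(O)=0$, together with the fact that the minimizing $\phi_k$ can be chosen nonnegative.

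For ``$\lambda^\star(O)>0\Rightarrow(P,B)$ is subcritical'': a scaling argument gives $\mathcal{B}_{P-\lambda^\star(O)\chi_O,B}(\phi,\phi)\geq 0$ on $\mathcal{D}(\Gw,\pwd)$; the symmetric Allegretto--Piepenbrink theorem upgrades this to $(P-\lambda^\star(O)\chi_O,B)\geq 0$ in $\Gw$, producing the Hardy-weight $\lambda^\star(O)\chi_O\in L^{p/2}_\loc(\wb)$ (it has compact support since $O\Subset\Gw$) and thus subcriticality. For the converse ``$(P,B)$ subcritical $\Rightarrow\lambda^\star(O)>0$'': take $V=\chi_O$ (a nonzero nonnegative element of $L^{p/2}_\loc(\wb)$ with compact support in $\Gw$) in item~(1) of the remark at the end of Section~\ref{sec_green}; this furnishes $\varepsilon>0$ with $(P-\varepsilon\chi_O,B)\geq 0$, which by the Allegretto--Piepenbrink theorem yields $\mathcal{B}_{P,B}(\phi,\phi)\geq\varepsilon\|\phi\|_{L^2(O)}^2$ for all $\phi\in\mathcal{D}(\Gw,\pwd)$, i.e., $\lambda^\star(O)\geq\varepsilon>0$. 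In particular a null-sequence $\{u_k\}$ forces $\lambda^\star(O)\leq\mathcal{B}_{P,B}(u_k,u_k)\to 0$, so $\lambda^\star(O)=0$ and $(P,B)$ must be critical; conversely, criticality gives $\lambda^\star(O)=0$ and any minimizing sequence satisfies the first two defining conditions of a null-sequence.

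It remains to promote the minimizing sequence to a nonnegative one. In the symmetric case $\bb=\bt$, a direct calculation using $D_i|\phi|=\mathrm{sign}(\phi)D_i\phi$ yields $\mathcal{B}_{P,B}(|\phi|,|\phi|)=\mathcal{B}_{P,B}(\phi,\phi)$ and $\||\phi|\|_{L^2(O)}=\|\phi\|_{L^2(O)}$. Since the $|\phi_k|$ lie in $H^1_\pwd(\Gw)$ and have compact support in $\wb\setminus\pwd$, standard positivity-preserving mollification (which, for a sufficiently small parameter, preserves this compact support) approximates them by nonnegative elements of $\mathcal{D}(\Gw,\pwd)$; combined with the $H^1$-continuity of $\mathcal{B}_{P,B}$ from Theorem~\ref{coercivity}\,(i) and a diagonal extraction, this produces the desired nonnegative null-sequence. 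The only mildly delicate point in the whole argument is this final mollification; the substantive content is contained in the two applications of the Allegretto--Piepenbrink theorem, which is why the symmetry hypothesis $\bb=\bt$ is indispensable for this approach.
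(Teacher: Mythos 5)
Your argument is sound, and your forward direction (criticality implies a null-sequence) is essentially the paper's: both reduce the claim to showing that the localized infimum of the Rayleigh-type quotient over normalized test functions vanishes, on pain of producing a compactly supported Hardy-weight via the Allegretto--Piepenbrink theorem; the only difference is that you restore nonnegativity a posteriori through $|\phi|$ plus smoothing (your mollification can in fact be replaced by the approximants $\sqrt{\phi^2+\varepsilon^2}-\varepsilon$, which are smooth, nonnegative, supported in $\supp\phi$, and converge to $|\phi|$ in $H^1$, so no boundary-adapted mollification is needed), whereas the paper builds nonnegativity into the infimum from the start. The converse is where you genuinely diverge. The paper argues intrinsically: given a null-sequence $\{\phi_k\}$ and any positive supersolution $v\in \mathcal{SH}_{P,B}(\Gw)$, the ground state transform together with the weak Harnack inequality gives $\int_K|\nabla(\phi_k/v)|^2v^2\dx\le C\,\mathcal{B}_{P,B}(\phi_k,\phi_k)\to 0$, so $\phi_k/v$ tends to a constant in $H^1_\loc$; hence any two positive supersolutions are proportional, and criticality follows from Lemma~\ref{inview}. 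You instead deduce $\lambda^\star(O)=0$ and exclude subcriticality by invoking item (1) of the closing remark of Section~\ref{sec_green} (a subcritical operator admits $(P-\varepsilon V,B)\geq 0$ for some $\varepsilon>0$ when $V\gneqq 0$ has compact support), combined once more with Allegretto--Piepenbrink. This is logically consistent and not circular, but that remark is only asserted in the paper (with reference to \cite{PS}), so your proof of this direction is not self-contained within the machinery actually developed here; the paper's route avoids this dependence and, as a byproduct, identifies the $L^2_\loc$-limit of the null-sequence as a multiple of the unique positive supersolution, i.e., the ground state. Apart from these points I see no gap.
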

\begin{proof}
Let  $u\in \mathcal{H}^0_{P,B}(\Omega)$, then by the ground-state transform we have
$$
\mathcal B_{P,B}(u\phi,u\phi) = \mathcal B_{P^u,B^u}(\phi,\phi)= \int_{\Gw}(a^{ij} D_j \phi D_i \phi )u^2\!\dx,
$$
where  $u\phi \in \mathcal{D}(\Omega,\Pw_{\mathrm{Dir}})$.	
	
	We claim that if $(P,B)$ is critical in $\Gw$, then for any nonempty open set $O \Subset \Gw$, we have  
\begin{equation}\label{eq_cb}
	C_O:=\inf\limits_{\underset{0\leq \phi \in \mathcal{D}(\Gw,\pwd)}{ \| \phi\|_{L^2(O)}=1}}\mathcal{B}_{P,B}(\phi,\phi)=0.
\end{equation}
	Indeed, pick $0\lneqq W \in C^{\infty}_0(O)$ such that 
	$0 \leq W \leq 1$. Then for all $0\leq \phi \in \mathcal{D}(\Gw,\pwd)$ with 
	$\| \phi\|_{L^2(O)}=1$ we have
	$$
	C_O\int_{\Gw}W\phi^2 \leq C_O\leq \mathcal{B}_{P,B}(\phi,\phi).
	$$
	Hence, the criticality of $(P,B)$ clearly implies that $C_O=0$,  and therefore, there exists a minimizing sequence for \eqref{eq_cb} which is obviously a null-sequence.

\medskip
	
	Now, let $\{\phi_k \}_{k\in \N}$ be a null-sequence of $(P,B)$ in $\Gw$,
	let $v\in \mathcal{SH}_{P,B}(\Gw)$, and
	let $w_k:={\phi_k}/{{v}}$. Recall that by the  weak Harnack inequality (Lemma~\ref{lem_whi}), $v>0$, and therefore,  the sequence $\{w_k\}_{k\in \N}$ is well defined. 
	 Since $\phi_k$ is a null-sequence, it follows that 
	for any $K\Subset_R \wb$ there exists $C_K>0$ such that
	\begin{equation*}\label{theinequality}
	C_K\!\!\int_{K}|\nabla w_k|^2v^2\!\dx\! \leq \!\!\int_{K}|\nabla w_k|_A^2v^2\!\dx \!\leq \!\!\int_{\Gw}|\nabla w_k|_A^2 v^2\!\dx\!\leq\! 
	\mathcal{B}_{P^v,B^v}(w_k,w_k)\! \leq \!\mathcal{B}_{P,B}(\phi_k,\phi_k)\! \to\! 0
	\end{equation*}
 as $k \to \infty$, where the above inequality is a consequence of \eqref{eq:gs_eq} and the  weak Harnack inequality (Lemma~\ref{lem_whi}).
 
	Consequently, $\nabla w_k \to 0$ in $L^2_{\mathrm{loc}}(\wb)$.
	Rellich-Kondrachov theorem (see for example \cite[Theorem 8.11]{LL}) implies that up to a subsequence $w_k\to C\geq 0$ in $H^{1}_{\loc}(\wb)$. Hence, up to a subsequence 
	$\phi_k\to C v$ pointwise in $\Gw$ and also in $L^2_{\loc}(\wb)$. Therefore, any two positive supersolutions in $\mathcal{SH}_{P,B}(\Gw)$ are equal up to a multiplicative constant. In view of Lemma~\ref{inview}, $(P,B)$ is critical in $\Gw$.
\end{proof}
\appendix
\section[ ]{}	\label{appendix1}
The appendix is devoted to a construction (under Assumptions~\ref{assump2}) of a Lipschitz exhaustion of $\wb$ (see, Definition~\ref{def:exhaustion}).
 It is well known (see for example, \cite[Proposition~8.2.1]{Daners}), that for any domain $\Gw\subset \R^n$  there is an exhaustion $\{\Gw_k\}_{k\in \N}$ of bounded smooth domains satisfying  
$\Gw_{k}\Subset \Gw_{{k+1}} \Subset \Gw$, and $\bigcup\limits_{k\in \N} \Gw_{k}=\Gw$.
 
In order to obtain an exhaustion of $\wb$, we construct an exhaustion of $\pwr$ and glue it through  appropriate `cylinders' to a smooth compact exhaustion of $\Gw$. The difficulty that arises in such a process comes from the fact that $\pwr$ might be unconnected and only $C^1$-smooth. Moreover, the `cylinders' should touch $\pwr$ in `good directions', i.e., directions with respect to which $\pwr$  can be locally represented as the graph of a continuous function.

Existence of such an exhaustion is known for the case, where $\partial \Gw= \pwr\in C^3$ \cite{DR}. In our paper we consider  the case $\pwr\subset \partial \Gw$, and $\pwr\in C^1_\loc$ which forces us to take a different approach. We remark that our construction is inspired by \cite{BZ}.

We begin with the following geometric preliminaries.   
\begin{defi}\label{def_gd}
	{\em	
Let $\Gw\subset \R^n$ be a domain of class $C^0$.  For a point $x_0 \in  \R^n$, we define a
{\em good direction at $x_0$, with respect to a ball $B(x_0, \gd)$, $\gd\! >\! 0$}, with $B(x_0, \gd) \! \cap \! \partial\Gw \neq \emptyset$,  to be a vector $\mathcal{N}\!=\!\mathcal{N}(x_0)\! \in \! S^{n-1}$ 
such that there is an orthonormal coordinate system $Y=(y',y_n)=(y_1,\ldots ,y_n)$ with origin at the point $x_0$,  so that $\mathcal{N}=e_n $ is the unit vector in the $y_n$ direction, 
together with a continuous function  $f:\R^{n-1}\to \R$ 
(depending on $x_0,\delta$ and $\mathcal{N}$), such that
$$
\Gw \cap B_{x_0}(\delta)= \{ y\in \R^n: f(y')<y_n, |y|<\delta\}.
$$
We say that $\mathcal{N}$ is a good direction at $x_0$ if it is a good direction with respect to some ball
$B(x_0, \gd)$ with $B(x_0, \gd) \cap \partial\Gw \neq \emptyset$.

If $x_0 \in \partial \Gw$, then a good direction $\mathcal{N}$ at $x_0$ is called a {\em pseudonormal} at $x_0$ (see \cite{BZ}).		
	}
\end{defi}
\begin{rem}\label{rem_good}
	\em{
		If $\Gw\in C^1$, then any good direction at $x_0\in \partial \Gw$ is never tangent to  $\partial \Gw$ at   $x_0$. Indeed, the normal direction at $x_0$ is given by the vector $(\nabla f,-1)$ which is not orthogonal to the vector $e_n=(0,\ldots,0,1)$.	
	}
\end{rem}
\begin{proposition}\label{goodcone}
	Let $\Gw\subset \R^n$ be a $C^1$-domain. Let $x_0\in \partial \Gw$ and let $v\in S^{n-1}\setminus T_{x_0}\partial \Gw$, where 
	$T_{x_0}\partial \Gw$ is the tangent space to $\partial \Gw$  at $x_0$.  Then $v$ is a good direction at $x_0$. 
\end{proposition}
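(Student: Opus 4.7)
The plan is to apply the implicit function theorem in suitably rotated coordinates. Since $\Gw$ is of class $C^1$ near $x_0$, there exist orthonormal coordinates $X=(x',x_n)$ centered at $x_0$ in which the inner unit normal at $x_0$ is $e_n$, together with a $C^1$ function $g:B'_r \to \R$ satisfying $g(0)=0$ and $\nabla g(0)=0$, such that
\[
\Gw \cap B_r(x_0) = \{x_n > g(x')\}, \qquad \partial\Gw \cap B_r(x_0) = \{x_n = g(x')\}.
\]
In these coordinates $T_{x_0}\partial\Gw = \mathrm{span}\{e_1,\ldots,e_{n-1}\}$, so the hypothesis $v\notin T_{x_0}\partial\Gw$ is equivalent to $v\cdot e_n \neq 0$. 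Since the conclusion is unaffected by replacing $v$ with $-v$ (at the cost of a sign in the coordinate system), I may assume $v\cdot e_n > 0$.

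Next I would introduce orthonormal coordinates $Y=(y',y_n)$ centered at $x_0$, with $y_n$-axis in direction $v$, related to the $X$-coordinates by a rotation $X=TY$ with $Te_n=v$. Define $\Phi:B_r(0)\to\R$ by
\[
\Phi(y) := (TY)_n - g\bigl((TY)'\bigr),
\]
so that the zero set of $\Phi$ is precisely $\partial\Gw$ in $Y$-coordinates. Then $\Phi\in C^1$, $\Phi(0)=0$, and, using $\nabla g(0)=0$ together with $\partial X/\partial y_n = T e_n = v$,
\[
\frac{\partial \Phi}{\partial y_n}(0) = v_n - \nabla g(0)\cdot v' = v_n = v\cdot e_n > 0,
\]
where $v=(v',v_n)$ is written in the $X$-coordinates. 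The implicit function theorem therefore yields some $\delta \in (0,r)$ and a $C^1$ function $f:B'_\delta \to \R$ with $f(0)=0$, such that within $B_\delta(0)$ the locus $\Phi(y)=0$ coincides with the graph $y_n = f(y')$.

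Finally, since $\partial_{y_n}\Phi(0)>0$, after possibly shrinking $\delta$ the function $\Phi$ is strictly increasing in $y_n$ on $B_\delta(0)$, so
\[
\{y\in B_\delta(0) : \Phi(y)>0\} = \{y\in B_\delta(0) : y_n > f(y')\}.
\]
Unwinding the definition of $\Phi$, the left-hand side is exactly $\Gw\cap B_\delta(0)$ expressed in $Y$-coordinates. This exhibits the local graph representation required by Definition~\ref{def_gd}, showing that $v$ is a good direction at $x_0$.

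The only substantive point is the verification that $\partial_{y_n}\Phi(0)\neq 0$, and this relies crucially on the choice of initial coordinates in which $\nabla g(0)=0$: in a generic $C^1$ graph representation a cross-term would survive, and the non-tangency of $v$ would translate into the non-vanishing of that inner product. The $C^1$-regularity of $\Gw$ is used precisely here, in line with Remark~\ref{rem_good}.
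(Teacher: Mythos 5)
Your argument is correct in substance and reaches the conclusion by a genuinely different route than the paper. The paper first treats directions $v$ lying in a coordinate two-plane spanned by $e_i$ and $e_n$ (an explicit rotation plus the implicit function theorem, with the nondegeneracy condition $\cos\theta+\tfrac{\partial\phi}{\partial x_i}(0)\sin\theta\neq 0$), and then passes to a general $v$ by writing it as a normalized convex combination of such good directions together with $e_n$ and invoking \cite[Lemma~2.2]{BZ}, i.e.\ the fact that good directions with respect to a fixed ball are stable under such combinations. You instead normalize the local chart so that $\nabla g(0)=0$ (graph over the tangent plane at $x_0$) and apply the implicit function theorem once, in coordinates rotated so that $e_n$ is sent to $v$; non-tangency of $v$ then appears exactly as $\partial_{y_n}\Phi(0)=v\cdot e_n\neq 0$. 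This is more direct and self-contained: it avoids both the case analysis over the planes $\mathcal{H}_i$ and the external convexity lemma, at the mild cost of the standard preliminary normalization $\nabla g(0)=0$, and it is where the $C^1$-regularity enters, consistently with Remark~\ref{rem_good}.

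One sentence needs a caveat: the reduction to $v\cdot e_n>0$ ``by replacing $v$ with $-v$'' is not justified by Definition~\ref{def_gd}, since the notion of good direction is not symmetric under $v\mapsto -v$: the definition requires $\Gw\cap B_\delta(x_0)=\{y_n>f(y'),\,|y|<\delta\}$, and for a half-space the outward normal is plainly not a good direction. Read literally, the proposition therefore only holds for $v$ in the open hemisphere $v\cdot e_n>0$ determined by the inner normal; this is an imprecision of the statement rather than a defect specific to your proof, and the paper's own argument carries the same implicit restriction (its convex combination has nonnegative coefficients and includes $e_n$). From that point on your computation --- $\partial_{y_n}\Phi(0)=v_n>0$, the implicit function theorem, and the monotonicity in $y_n$ identifying $\{\Phi>0\}$ with the supergraph --- is correct, modulo the routine shrinking of $\delta$ so that the ball lies inside the implicit-function-theorem box, which you indicate.
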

\begin{proof}
	 Let us assume without loss of generality that 
	 $x_0=0$.
	 There exists a local coordinate system, $\delta >0$, and a $C^1$ function $\phi:\R^{n-1}\to \R$  such that 
	 $\phi(0)=0$, $B_{x_0}(\delta)\cap \Gw=\{x\in \R^n:\phi(x')<x_n,|x|<\delta\}$.
	 
		Assume first that $v\in \mathcal{H}_i$ where $1\leq i \leq n$ is fixed and 
	$$\mathcal{H}_i:=\{z\in \R^n\mid z=(0,\ldots, 0,z_i,0,\ldots,0,z_n), z_i,z_n\neq 0\}.$$
	Denote by $0< \theta<\pi$ the angle between $v$ and $e_n$.
	Let $\mathcal{O}$ be the rotation which maps $e_n$ to $v$ and fixes
	$e_j$ for all $j\neq n$ and $j\neq i$. Namely,  
	{\small
	$$
	\mathcal{O}_{n\times n}=
	\begin{pmatrix}
	\begin{matrix}
	1& 0&\dots& \dots&\dots&\dots &\dots& 0\\
	\vdots& \vdots&\vdots&\vdots&\vdots&\vdots&\vdots&\vdots&\\
	0 & \dots &\cos \theta& 0& \dots& \dots& \dots& \sin \theta\\
	0& \dots&\dots& 1&0&\dots &\dots& 0\\
	\vdots& \vdots&\vdots&\vdots&\vdots&\vdots&\vdots&\vdots&\\
	0 & \dots &-\sin \theta& 0& \dots& \dots& \dots& \cos \theta\\
	\end{matrix}
	\end{pmatrix}.
	$$}
	By the implicit function theorem, $\mathcal{O}\begin{pmatrix}x'\\\phi(x')\end{pmatrix}$ is  a graph of a $C^1$ function and $e_n$ is a good direction as long as 
	$$
	\cos \theta +  \frac{\partial \phi}{\partial x_i}(0) \sin \theta\neq 0.
	$$
	The latter condition is satisfied once $v\notin T_{x_0}\partial \Gw$.
	
	For a general $v\in S^{n-1}\setminus T_{x_0}\partial \Gw$, we can write $v=\sum_{i=1}^{n+1}\alpha_i u_i$ where 
	$u_i\in \mathcal{H}_j$ for some $j$, and $u_i$ is a good direction at $x_0$,
	$0\leq \ga_i\leq 1$, $u_{n+1}=e_n$ and $\sum_{i=1}^{n+1} \alpha_i=1$. 
	By \cite[Lemma~2.2]{BZ}, $v/|v|$ is  a good direction.
\end{proof}
\begin{proposition}\cite[Proposition~2.1]{BZ}\label{field_good}
	Let $\Gw\subset \R^n$ be a bounded, open set with boundary of class $C^0$. Then there exists a neighborhood  $U$ of $\partial \Gw$ and a smooth function $\vec{N}:U\to S^{n-1}$ so that for each $x_0\in U$ the unit vector $\vec{N}(x_0)$ is a good direction at $x_0$.
\end{proposition}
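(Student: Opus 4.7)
The plan is to build $\vec{N}$ by partition-of-unity averaging of locally chosen pseudonormals. Two auxiliary observations drive the construction. First, \emph{spatial persistence}: if $\vec{M}\in S^{n-1}$ is a good direction at $x_0$ with respect to a ball $B(x_0,\delta)$ and continuous local graph $f$, then $\vec{M}$ remains a good direction at every $y\in B(x_0,\delta/2)$, as one may restrict the same graph representation to the smaller ball $B(y,\delta/2)$. Second, \emph{angular openness}: the set of good directions at a fixed point is open in $S^{n-1}$, since the orthogonal projection of $\partial\Omega\cap B(x_0,\delta)$ onto a hyperplane retains its graph property under sufficiently small perturbations of the normal.

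Using the $C^0$-regularity of $\partial\Omega$, at each $x_0\in\partial\Omega$ I pick a pseudonormal $\vec{M}(x_0)$ and a ball $B_{x_0}$ on which $\vec{M}(x_0)$ persists as a good direction. Compactness of $\partial\Omega$ yields a finite subcover $\{B_i\}_{i=1}^N$ with associated directions $\{\vec{N}_i\}_{i=1}^N$. Using angular openness, I arrange (by shrinking the $B_i$ if necessary) that whenever $B_i\cap B_j\neq\emptyset$ the angle between $\vec{N}_i$ and $\vec{N}_j$ is less than some fixed $\theta_0 < \pi/2$. Setting $U=\bigcup_{i=1}^N B_i$ and choosing a smooth partition of unity $\{\phi_i\}$ subordinate to $\{B_i\}$, I define
\[
\vec{N}(x) := \frac{\sum_{i=1}^N \phi_i(x)\vec{N}_i}{\Bigl|\sum_{i=1}^N \phi_i(x)\vec{N}_i\Bigr|}, \qquad x\in U.
\]
The angular clustering forces the denominator to stay bounded away from zero on $U$, so $\vec{N}$ is well-defined and smooth.

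It remains to verify that $\vec{N}(x)$ is a good direction at every $x\in U$. Every index $i$ with $\phi_i(x)\neq 0$ satisfies $x\in B_i$, so by spatial persistence $\vec{N}_i$ is a good direction at $x$. Invoking \cite[Lemma~2.2]{BZ} (already used in the proof of Proposition~\ref{goodcone}), a nonnegative convex combination of good directions at a fixed point, whenever nonzero, is itself a good direction at that point, whence $\vec{N}(x)$ is a good direction. The main obstacle is precisely this convex-combination statement: knowing that each $\vec{N}_i$ yields a local graph representation does not immediately supply one in the averaged direction, and extracting the representation requires projecting $\partial\Omega$ onto the hyperplane perpendicular to the averaged direction and verifying injectivity of that projection. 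This is exactly the content of Lemma~2.2 of \cite{BZ}, and the rest of the argument is bookkeeping designed to make that lemma applicable.
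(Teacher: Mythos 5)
Note first that the paper contains no proof of this proposition at all: it is quoted from \cite[Proposition~2.1]{BZ}, so the only meaningful comparison is with the argument there, and your construction is essentially that one — cover the compact boundary by finitely many balls each carrying a single good direction, glue by a smooth partition of unity, normalize, and invoke \cite[Lemma~2.2]{BZ} to conclude that the resulting direction is again good. Your ``spatial persistence'' step is correct (restrict the graph representation on $B(x_0,\delta)$ to $B(y,\delta/2)\subset B(x_0,\delta)$ and note $x_0\in\partial\Gw\cap B(y,\delta/2)$), and the final appeal to Lemma~2.2 is legitimate once you add the small but necessary remark that all the $\vec N_i$ active at a point $x$ are good at $x$ with respect to one \emph{common} ball, since a good direction with respect to a ball remains good with respect to any smaller concentric ball.

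The genuine gap is the ``angular openness'' claim and the use you make of it. For a merely $C^0$ boundary the set of good directions at a fixed point is \emph{not} open in $S^{n-1}$: let $\Gw$ be a bounded $C^0$ domain coinciding near the origin with $\{(x,y)\in\R^2:\, y>\sqrt{|x|}\}$. Then $e_2$ is good at $0$, but no tilted direction $(\sin\vge,\cos\vge)$ with $\vge>0$ is: given any $\delta'>0$, for $a>0$ small enough there is $b\in(a,6a)$ such that the chord joining $(-a,\sqrt a)$ to $(b,\sqrt b)$ has slope exactly $\cot\vge$; this chord lies in $\Gw\cap B(0,\delta')$, while points of the same line just beyond $(b,\sqrt b)$ lie in $B(0,\delta')\setminus\Gw$, so the portion of $\Gw$ on that line has an upper endpoint interior to the ball and $\Gw\cap B(0,\delta')$ cannot be a supergraph in the direction $(\sin\vge,\cos\vge)$. (Stability of the graph property under small tilts holds for Lipschitz graphs, not for continuous ones.) Moreover, even where openness does hold it would not let you ``arrange'' pairwise angles $<\theta_0<\pi/2$ between directions of overlapping balls: near a corner the natural pseudonormals on the two adjacent boundary pieces stay $90^{\circ}$ apart no matter how much you shrink the balls. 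What actually guarantees $\sum_i\phi_i(x)\vec N_i\neq 0$ is cancellation-freeness at a single point: two antipodal directions cannot both be good at $x$ with respect to the same ball (their two graph representations contradict each other on the slice through any point of $\partial\Gw$ inside that ball), and if a nontrivial nonnegative combination vanished, repeated application of \cite[Lemma~2.2]{BZ} to partial sums would produce exactly such an antipodal pair. Replacing your openness/angle-control step by this observation makes the partition-of-unity construction go through, and is in substance how \cite{BZ} argue.
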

We conclude our preliminaries with the following lemma. 
\begin{lemma}\label{lem_Lip}
	Let $T,S\subset \R^n$ be  $C^1$ domains and let $H:=T\cap S$. 
	For $x\in \partial H$, denote by $\vec{N}_T(x)$ and $\vec{N}_S(x)$ the corresponding normal vector fields to $\partial T$ and $\partial S$, respectively. Assume that for all $x\in \partial H$,
	$\vec{N}_T(x)$ and $\vec{N}_T(x)$ are linearly independent. Then each connected component of $H$ is a Lipschitz domain.
\end{lemma}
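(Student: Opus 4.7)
The plan is to verify the Lipschitz property locally: for every $x_0\in\partial H$ I will exhibit a neighborhood in which $\partial H$ is the graph of a Lipschitz function in suitable orthonormal coordinates. Since $H=T\cap S$ is open, $\partial H\subset \partial T\cup\partial S$, and I split into two cases. If $x_0\in \partial T\setminus \partial S$ (the symmetric case is identical), then $x_0$ lies in the open set $S$, so some ball $B_\rho(x_0)$ is contained in $S$; consequently $H\cap B_\rho(x_0)=T\cap B_\rho(x_0)$, and since $T\in C^1$ the boundary of $H$ is a $C^1$ (hence Lipschitz) graph near $x_0$.

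The main case is $x_0\in \partial T\cap \partial S$. Set $\vec{N}_T:=\vec{N}_T(x_0)$, $\vec{N}_S:=\vec{N}_S(x_0)$, taken as outward unit normals. Because $\vec{N}_T$ and $\vec{N}_S$ are linearly independent, the two open half-spaces $\{v:v\cdot\vec{N}_T<0\}$ and $\{v:v\cdot \vec{N}_S<0\}$ have distinct bounding hyperplanes, and hence their intersection
$$
\mathcal{C}:=\{v\in\R^n\setminus\{0\}:v\cdot \vec{N}_T<0 \text{ and } v\cdot \vec{N}_S<0\}
$$
is a nonempty open cone. Pick any $\mathcal{N}\in \mathcal{C}\cap S^{n-1}$ and choose orthonormal coordinates centered at $x_0$ such that $\mathcal{N}=e_n$. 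By Proposition~\ref{goodcone}, $e_n$ is simultaneously a good direction for $\partial T$ and for $\partial S$ at $x_0$, and it points into both $T$ and $S$.

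Applying the implicit function theorem to $C^1$ defining functions of $T$ and $S$ near $x_0$, there exist $\rho>0$ and $C^1$ functions $g_T,g_S\colon B'_\rho(0)\subset \R^{n-1}\to\R$ with $g_T(0)=g_S(0)=0$, $g_T,g_S\in C^1$, such that
\begin{align*}
T\cap B_\rho(0)&=\{x\in B_\rho(0):x_n>g_T(x')\},\\
S\cap B_\rho(0)&=\{x\in B_\rho(0):x_n>g_S(x')\}.
\end{align*}
Therefore $H\cap B_\rho(0)=\{x\in B_\rho(0):x_n>\max(g_T(x'),g_S(x'))\}$, so $\partial H\cap B_\rho(0)$ is the graph of $\max(g_T,g_S)$. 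Since the pointwise maximum of two $C^1$ functions is locally Lipschitz, this exhibits $\partial H$ as a Lipschitz graph near $x_0$. Carrying this out at every boundary point of a connected component of $H$ shows that the component is a Lipschitz domain.

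The one genuinely non-routine step is the selection of the common transversal direction $\mathcal{N}$ in Case~2; this is exactly where the linear-independence hypothesis on $\vec{N}_T$ and $\vec{N}_S$ enters, and without it the intersection could exhibit a cusp at $x_0$ and fail to be Lipschitz. Everything else reduces to the implicit function theorem and the trivial fact that the maximum of Lipschitz functions is Lipschitz.
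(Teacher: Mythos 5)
Your argument is correct, but it follows a different route from the paper. The paper's proof is a two-line reduction to the Ball--Zarnescu machinery: by \cite[Lemma~7.1]{BZ} it suffices to exhibit $n$ linearly independent good directions at each point of $\partial H$, and Proposition~\ref{goodcone} supplies a whole open cone of directions transversal to both $\partial T$ and $\partial S$ (nonempty precisely by the linear-independence hypothesis), from which $n$ independent good directions are extracted. You instead verify the Lipschitz graph property directly: after disposing of the easy case $x_0\in\partial T\setminus\partial S$ (where $H$ coincides locally with the $C^1$ domain $T$ -- a case the paper leaves implicit), you pick a \emph{single} direction $\mathcal N$ in the open cone $\{v\cdot\vec N_T<0,\ v\cdot\vec N_S<0\}$, use the implicit function theorem to write both $T$ and $S$ as supergraphs of $C^1$ functions $g_T,g_S$ in coordinates with $e_n=\mathcal N$, and conclude that $H$ is locally the supergraph of $\max(g_T,g_S)$, which is Lipschitz. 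Your approach buys self-containedness (no appeal to \cite[Lemma~7.1]{BZ}) and makes transparent exactly where linear independence is used; the price is that it leans on the $C^1$ regularity of $T$ and $S$ (the maximum of two $C^1$ graphs is Lipschitz), whereas the paper's reduction via $n$ independent good directions is the mechanism that would survive for rougher, merely $C^0$, pieces. Two cosmetic remarks: your citation of Proposition~\ref{goodcone} is not really needed since you rerun the implicit function theorem yourself with the stronger requirement $\mathcal N\cdot\vec N_T<0$, $\mathcal N\cdot\vec N_S<0$ (non-tangency alone would not guarantee the supergraph orientation); and for a bounded component one should note that the local graph description upgrades to the usual uniform Lipschitz-domain definition by compactness of the boundary -- a point the paper also passes over.
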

\begin{proof}
	By \cite[Lemma~7.1]{BZ}, it is enough to find $n$ linearly independent good directions at each $x_0\in \partial H$. By Proposition~\ref{goodcone} at each $x_0\in \partial H$ we can find an exterior cone of good directions with respect for both $S$ and $T$. This cone  contains  
	$n$ linearly independent good directions.
\end{proof}

Let Assumptions~\ref{assump2} hold in  a domain $\Gw \subset \R^n$ with non-empty boundary. 
The {\em signed distance $d_\Gw$ to}  $\partial \Gw$ is given by 
$$
d_{\Gw}(x):=
\begin{cases}
\;\;\;\inf\limits_{y\in \partial \Gw}|x-y| & ~\text{if} ~ x\in \Gw, \\[2mm]
-\inf\limits_{y\in \partial \Gw}|x-y| & ~\text{if} ~ x\notin \Gw.
\end{cases}
$$

Let $\rho(x):\R^n\to \R$ be a {\em regularized signed distance to $\partial \Gw$} \cite[Proposition~3.1]{BZ}, namely, $\rho(x)\in C^{\infty}(\R^n\setminus \partial \Gw)\cap C^{0,1}(\R^n)$ satisfies the following properties:
\begin{enumerate}
	\item For all $x\in \R^n \setminus \partial \Gw$, \quad  $\dfrac{1}{2}\leq \dfrac{\rho(x)}{d_{\Gw}(x)}\leq 2$.
	\item If $\Gw$ is bounded with continuous boundary, then  there exists a relative neighborhood $U$ of $\partial \Gw$ in $\Gw$ such that $|\nabla \rho|\neq 0$ in $U\setminus \partial \Gw$. 
\end{enumerate}

For $\varepsilon>0$ small enough, let
$$
D_{\varepsilon}:=\{x\in \Gw: \rho(x)>\varepsilon\} \neq \emptyset.
$$ 
By Sard's theorem \cite[Theorem~6.8]{Lee}, for  almost  every $\varepsilon>0$, the open sets $D_{\varepsilon}$ are smooth.
For each $\delta>0$ let 
$$
D_{\varepsilon,\delta}:=
D_{\varepsilon} \cap B_{0}(1/\delta).
$$
By \cite[Lemma~1]{DR} and Lemma~\ref{lem_Lip}, there exists $c_0$ such that for a.e. $0<\varepsilon,\delta<c_0$, we have that  $D_{\varepsilon,\delta}$ is a Lipschitz set  having a finite number of connected components.
\begin{lem}\label{ballem}
	For each $k>0$ there exist $\varepsilon, \delta>0$ such that
	such that $D_{\varepsilon}$ is smooth,  and if $x\in E$, where $E$ is a connected component of $\partial D_{\varepsilon,\delta}$, and $|x|<k$, then $|\nabla \rho(x)|\neq 0$ for all $x\in E \setminus \partial \Gw$.
\end{lem}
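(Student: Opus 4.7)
The plan is to combine Sard's theorem applied to $\rho$ with a quantitative choice of $\delta$ that forces the spherical portion of $\partial D_{\varepsilon,\delta}$ to lie outside $B_0(k)$; the non-vanishing of $\nabla\rho$ is then inherited from the non-vanishing on level sets of a regular value.

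Given $k>0$, I would first fix any $\delta\in(0,c_0)$ small enough that $1/\delta>k$; for concreteness one may take $\delta<\min(c_0,1/(2k))$. Since $\rho\in C^{\infty}(\R^n\setminus\partial\Gw)$, Sard's theorem guarantees that the set of critical values of $\rho|_{\Gw}$ in $(0,\infty)$ has Lebesgue measure zero. The statement recalled immediately before the lemma (obtained from \cite[Lemma~1]{DR} and Lemma~\ref{lem_Lip}) asserts that for a.e.\ $\varepsilon\in(0,c_0)$ the set $D_{\varepsilon,\delta}$ is Lipschitz with finitely many connected components. Taking the intersection of these two full-measure subsets of $(0,c_0)$, I can pick an $\varepsilon\in(0,c_0)$ that is simultaneously a regular value of $\rho$ and realizes the Lipschitz structure of $D_{\varepsilon,\delta}$; in particular $D_\varepsilon$ is smooth in the sense that its boundary $\{\rho=\varepsilon\}$ is a smooth hypersurface on which $|\nabla\rho|\neq 0$ everywhere.

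It remains to check the gradient condition on $\partial D_{\varepsilon,\delta}$. Writing $D_{\varepsilon,\delta}=D_\varepsilon\cap B_0(1/\delta)$, one has the elementary inclusion
$$
\partial D_{\varepsilon,\delta}\subset \partial D_\varepsilon \cup \partial B_0(1/\delta).
$$
Now let $E$ be a connected component of $\partial D_{\varepsilon,\delta}$ and let $x\in E$ with $|x|<k$. The choice $1/\delta>k$ forces $|x|<1/\delta$, hence $x\notin\partial B_0(1/\delta)$, so necessarily $x\in\partial D_\varepsilon=\{y:\rho(y)=\varepsilon\}$. Since $\varepsilon>0$, we have $\rho(x)>0$, which forces $x\in\Gw$, and in particular $x\in E\setminus\partial\Gw$. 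By the regularity of $\varepsilon$ as a value of $\rho$, $|\nabla\rho(x)|\neq 0$, as required.

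The argument is essentially bookkeeping: the substance is outsourced to Sard's theorem and to the Lipschitz-structure statement drawn from \cite{DR} and Lemma~\ref{lem_Lip}. The only mild point to verify is that one can select a single $\varepsilon$ simultaneously satisfying two almost-everywhere conditions, which is automatic since the union of two measure-zero sets still has measure zero; no genuine obstacle arises.
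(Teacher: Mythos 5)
Your proof is correct, and at the crucial step it takes a shorter route than the paper. Both arguments start from Sard's theorem applied to the regularized distance $\rho\in C^{\infty}(\R^n\setminus\partial\Gw)$ to make $D_{\varepsilon}$ smooth for a.e.\ $\varepsilon$; but where the paper then returns to the implicit equation $\rho(x)=G(x,\rho(x))$ of \cite[(3.4)]{BZ} and compares $d_{\Gw}$ with the distance function of the bounded set $D_{\varepsilon,\delta}$ so as to invoke the nondegeneracy of the regularized distance near the boundary of a \emph{bounded} domain, you observe that none of this is needed: once $\varepsilon$ is a regular value of $\rho$, the nonvanishing $|\nabla\rho|\neq 0$ is automatic at \emph{every} point of the level set $\{\rho=\varepsilon\}$, and the choice $1/\delta>k$ together with the elementary inclusion $\partial(D_{\varepsilon}\cap B_{0}(1/\delta))\subset\partial D_{\varepsilon}\cup\partial B_{0}(1/\delta)$ forces every point of $\partial D_{\varepsilon,\delta}$ with $|x|<k$ to lie on that level set (and in $\Gw$, so the clause $x\notin\partial\Gw$ is automatic). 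This is precisely what is needed later in \eqref{pathconn}, so your argument buys simplicity and bypasses the paper's delicate distance identity altogether. Two minor remarks: the Lipschitz structure of $D_{\varepsilon,\delta}$ is not part of the lemma's conclusion, so you do not actually need it here; and if you do want to retain it, you should select $\delta$ (not only $\varepsilon$) from the full-measure set furnished by the statement preceding the lemma, since that statement is an almost-everywhere assertion in the pair $(\varepsilon,\delta)$, not for every fixed $\delta$.
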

\begin{proof}
	As mentioned before, Sard's theorem implies that there exists a sequence $\{\varepsilon_l\}_{l\in \N}$ satisfying  $0<\varepsilon_l\to 0$ as $l\to \infty$, and
	such that $D_{\varepsilon_l}$ is smooth.
	By \cite[(3.4)]{BZ}, a regularized signed distance  function $\rho(x)$ is given by the implicit equation
	$$
	G(x,\tau)=\int_{|z|< 1}d_{\Gw}\left (x-\frac{\tau}{2}z\right)\varphi(z) \,\text{d}z,
	$$
	where $\vgf$ is a smooth nonnegative function on $\R^n$ supported on the unit ball such that $\int_{|z|<1}\varphi(z) \,\text{d}z=1$, and 
	\begin{equation}\label{rhoApp}
	\rho(x)=G(x,\rho(x)).
	\end{equation}

	If $|x|\leq k$ then 
	 for $\tau,\delta$ sufficiently small  we have 
	$$
	d_{\Gw}\left (x-\frac{\tau}{2}z\right)=
	d_{D_{\varepsilon,\delta}}\left (x-\frac{\tau}{2}z\right).
	$$
	Hence, by \eqref{rhoApp} and the properties of $\rho(x)$,
	$\nabla\rho(x)\neq 0$.
\end{proof}
Next, we modify  the exhaustion $\{\Gw_k\}_{k\in \N}$ of $\Gw$ to obtain an exhaustion of $\wb$.
Since $\Gw$ is connected we may construct $\Gw_k$    such that for each $k$, $\Gw_{k}$ is a connected component of some $D_{\varepsilon,\delta}$ (see the last paragraph in the proof of \cite[Proposition~8.2.1]{Daners}). We proceed by the following steps: 

{\bf Step 1:} 
Let $V$ be any noncompact (connected) component of $\pwr$, and consider $V$ as a $C^1$-differentiable manifold (without boundary). Then $V$ admits a $C^1$-compact exhaustion $\{U_m^V\}_{m\in \N}$ (see for example,  \cite[Proposition~2.28]{Lee} and  use \cite[Proposition~8.2.1]{Daners}).

{\bf Step 2:} 
Assume  that  $ V\in C^1$ is any  {\em noncompact} (connected) component of $\pwr$, and let $\{U_m^V\}_{m\in \N}$ be a $C^1$-compact  exhaustion of $V$.
Fix some  $U_m^V$ and consider the integral hypersurface $\partial Z_m^V$, where $Z_m^V$ is the union of all integral curves  defined by the flow
$$
\dot{\gamma}(t)=\vec{N}(t)\in S^{n-1}
$$
with initial condition $\gamma(0)\in  U_m^V$.
Here $\vec{N}$ is smooth vector field of good directions which is defined in some $n$-dimensional relative neighborhood of $U_m^V$,  see Proposition~\ref{field_good}. 
In other words, $\partial Z_m^V$ is the boundary of the union of all integral curves of the vector field $\vec{N}$  which start from $ U_m^V$. 
By Remark~\ref{rem_good}, the vector field $\vec{N}$ is never tangent to $U_m^{V}$.  Consequently, $\partial Z_m^V$ is a well defined integral hypersurface `starting' from $U_m^{V}$.

{\bf Step 3:} 
If the component $V$ is {\em compact}, then we take $V$ as a trivial exhaustion of itself, and we define $Z_1^V\subset \R^n$ to be a smooth bounded relative neighborhood of $V$ in $\Gw$.

{\bf Step 4:} 
For each $k,m$ we define the following set:
$$C_{k,m}^V:=
\left(Z_m^V\cap \big(\overline{\Gw}\setminus \Gw_k\big)\right) \cup \Gw_k.
$$
By Lemma~\ref{ballem}, for each $m\in \N$ there exist
$k_{V}(m)$
 such that for all $k\geq k_V(m)$,
  \begin{equation}\label{pathconn}
  \nabla\rho(x)\neq 0\quad \mbox{for all}\quad x\in Z_{m}^V\cap \partial \Gw_{k_V(m)}.
  \end{equation} 
\begin{rem}
	\em{
		One can visualize the set $C_{k,m}^V$ as follows: The integral surface $Z_m^V$ is a perturbed cylinder attaching $U_m^V \subset V\subset \pwr$ to its image under some projection on $\partial \Gw_k$. $C_{k,m}^V$ is then the union of the cylinder and $\Gw_k$ as depicted by the gray area in Figure~\ref{fig:1}. 
	}
\end{rem}
\begin{figure}
	\centering
	\begin{tikzpicture}

	\node at (-5.2,2) {$\partial \Gw_{\mathrm{Rob}}$};
		\node at (0,3) {$U_m^V$};
			\node at (0,1.9) {$Z_m^V$};
	\node at (0,0) {$ \Gw_{k}$};
	
		\draw [white,fill=gray, fill opacity=0.1] plot [smooth cycle] coordinates {(-2,3)
			(-1,2.8) (0,2.7) (2,2.5) (1.9,2) (2.1,1.5) (2,1)  (0,1) (-2,1)  (-1.9,1.5) (-2,2) (-2.2,2.5) (-2,3)}; 
	
		\draw [dashed,thick,black] plot [smooth] coordinates {(2,2.5) (1.9,2) (2.1,1.5) (2,1)  }; 
	
	\draw [dashed, thick,black] plot [smooth] coordinates {(-2,3) (-2.2,2.5) (-2,2) (-1.9,1.5) (-2,1)  }; 
		\draw [ thick, black,fill=gray, fill opacity=0.1] plot [smooth cycle] coordinates {(-5,0) (-2,1) (0,1) (2,1) (5,0) (0,-2) };
			\draw [thick, black] plot [smooth] coordinates {(-5,2) (-2,3) (-1,2.8) (0,2.7) (2,2.5) (5,3) }; 

	\end{tikzpicture}
	\caption{Illustration of the set $C_{k,m}^V$} \label{fig:1}
\end{figure}
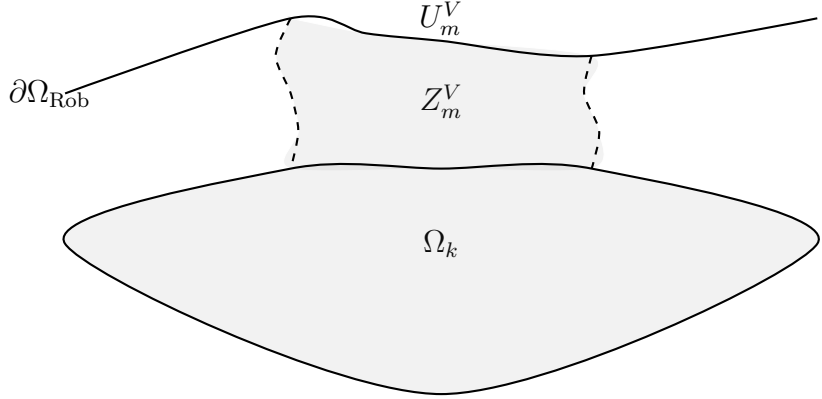
{\bf Step 5:} The last step of the construction is given by the following lemma.
\begin{lem}
Let $\{V^l\}_{l\in \N}$ be the connected components of $\pwr$ 
and let $\{U_m^{V^l}\}_{m\in \N}$ be a $C^1$ exhaustion of $V^l$.
For each $m$ let $$q_m^l:=\max\{k_{V^l}(m):1\leq l\leq m\}.$$
Then the following assertions hold.
\begin{enumerate}
		\item 	$\bigcup\limits_{m\in \N}\left(\bigcup\limits_{l=1}^{m} C_{q_m^l,m}^{V^l}\right)=\wb.$

		\item For each $m\in \N$, $\widetilde{\Gw}_m:=\bigcup\limits_{l=1}^{m} C_{q_m^l,m}^{V^l}$ is a Lipschitz connected set.
	\end{enumerate}
Hence, $\{\widetilde{\Gw}_m\}_{m\in \N}$ is a Lipschitz exhaustion of $\wb$.
\end{lem}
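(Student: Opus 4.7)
The plan is to verify the two assertions separately. For (1), I decompose $\wb=\Gw\cup\pwr$. If $x\in\Gw$, then since $q_m\to\infty$ as $m\to\infty$ (which we may arrange by replacing, if necessary, each $k_{V^l}(m)$ by $\max(k_{V^l}(m),m)$ without affecting \eqref{pathconn}) and $\{\Gw_k\}$ is an exhaustion of $\Gw$, we have $x\in\Gw_{q_m}\subset\widetilde{\Gw}_m$ for all $m$ large. If $x\in\pwr$, then $x$ lies in some connected component $V^{l_0}$, and since $\{U_m^{V^{l_0}}\}$ exhausts $V^{l_0}$, we have $x\in U_m^{V^{l_0}}\subset\overline{Z_m^{V^{l_0}}}$ for every $m\geq l_0$ sufficiently large, hence $x\in\overline{C_{q_m,m}^{V^{l_0}}}\subset\overline{\widetilde{\Gw}_m}$. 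Nestedness $\widetilde{\Gw}_m\subset\widetilde{\Gw}_{m+1}$ follows from $U_m^{V^l}\Subset U_{m+1}^{V^l}$ (so that $Z_m^{V^l}\subset Z_{m+1}^{V^l}$) together with $\Gw_{q_m}\subset\Gw_{q_{m+1}}$.

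For (2), I first treat connectedness. Each cylinder $Z_m^{V^l}$ is the union of integral curves of the smooth good-direction field $\vec N$ starting from the connected set $U_m^{V^l}$; since $\vec N$ is never tangent to $\pwr$ (Remark~\ref{rem_good}), these curves immediately enter $\Gw$, and by the choice \eqref{pathconn} of $k_{V^l}(m)$ each curve crosses $\partial\Gw_{q_m}$. Hence $C_{q_m,m}^{V^l}=\Gw_{q_m}\cup(Z_m^{V^l}\setminus\Gw_{q_m})$ is connected, and $\widetilde{\Gw}_m$ is a finite union of such sets, each containing the connected set $\Gw_{q_m}$, and is therefore connected and bounded.

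The Lipschitz regularity of $\widetilde{\Gw}_m$ is obtained by decomposing $\partial\widetilde{\Gw}_m$ into (i) the caps $U_m^{V^l}\subset\pwr$, which are $C^1$ by Assumptions~\ref{assump2}; (ii) the portions of the level surface $\partial\Gw_{q_m}=\{\rho=\varepsilon\}$ not swept out by any cylinder, which are $C^\infty$ by Lemma~\ref{ballem}; and (iii) the lateral surfaces $\partial Z_m^{V^l}\setminus\Gw_{q_m}$, which are $C^1$ because they are ruled by integral curves of the smooth field $\vec N$. The only Lipschitz concern is at the seams where (ii) meets (iii); this is precisely the scenario addressed by Lemma~\ref{lem_Lip} with $T=Z_m^{V^l}$ and $S=\Gw_{q_m}$. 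Since $\vec N$ is tangent to $\partial Z_m^{V^l}$, the outward unit normal $\vec n_Z$ to $\partial Z_m^{V^l}$ is perpendicular to $\vec N$, while the outward normal to $\partial\Gw_{q_m}$ is $-\nabla\rho/|\nabla\rho|$, well-defined thanks to \eqref{pathconn}. The linear independence required by Lemma~\ref{lem_Lip} therefore reduces to the transversality condition $\vec N\cdot\nabla\rho\neq 0$ at each seam point.

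This transversality is the main obstacle. I would enforce it by selecting the good-direction field $\vec N$ afforded by Proposition~\ref{field_good} so that $\vec N$ coincides with $\nabla\rho/|\nabla\rho|$ in some relative neighborhood $U$ of $\pwr$; this choice is legitimate since along the $C^1$-portions of $\pwr$, $\nabla\rho/|\nabla\rho|$ is itself a smooth field of good directions. Then $\vec N\cdot\nabla\rho=|\nabla\rho|>0$ throughout $U$. Enlarging $k_{V^l}(m)$ further if necessary so that every seam lies inside $U$, Lemma~\ref{lem_Lip} yields that each $C_{q_m,m}^{V^l}$, and hence the finite union $\widetilde{\Gw}_m$, is a bounded connected Lipschitz set. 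Finally, $\widetilde{\Gw}_m\Subset_R\widetilde{\Gw}_{m+1}\Subset_R\Gw$ follows because $\partial\widetilde{\Gw}_m\cap\pwd=\emptyset$ (each boundary piece lies either in $\pwr$ or inside $\Gw$) and $\partial\widetilde{\Gw}_m\cap\pwr=\bigcup_{l\leq m}U_m^{V^l}\Subset\bigcup_{l\leq m+1}U_{m+1}^{V^l}\Subset\pwr$ by the nestedness of the $C^1$-exhaustions $\{U_m^{V^l}\}$.
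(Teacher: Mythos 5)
Your overall route is the same as the paper's: part (1) by direct inspection of the construction, and part (2) by showing each cylinder $Z_m^{V^l}$ meets the smooth level-set domain $\Gw_{q_m^l}$ transversally and invoking Lemma~\ref{lem_Lip}, with connectedness coming from path-connectedness of the cylinders. Your reduction of the hypothesis of Lemma~\ref{lem_Lip} to the condition $\vec{N}\cdot\nabla\rho\neq 0$ at the seam points is also correct, and you rightly identify this transversality as the crux.

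The gap is in how you discharge that crux. The paper obtains it from the cited result \cite[Remark~3.1]{BZ}: at a seam point one has $\rho(x)=\varepsilon_{q_m^l}$ and, by \eqref{pathconn} (Lemma~\ref{ballem}), $\nabla\rho(x)\neq 0$, and the Ball--Zarnescu result then gives that the good-direction field $\vec{N}$ (along which the lateral boundary of $Z_m^{V^l}$ is ruled) is transversal to the level set $\{\rho=\varepsilon_{q_m^l}\}$. You instead propose to \emph{redefine} $\vec{N}$ to be $\nabla\rho/|\nabla\rho|$ in a relative neighborhood of $\pwr$, asserting without proof that this is a smooth field of good directions there. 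That assertion is precisely the missing ingredient, not a shortcut around it: $\rho$ is only $C^{0,1}$ across $\partial\Gw$ (smooth only in $\R^n\setminus\partial\Gw$), so $\nabla\rho$ is not defined, let alone continuous, at the points of $U_m^{V^l}\subset\pwr$ where the integral curves of Step~2 must start; and for a merely $C^1$-portion there is no immediate reason why the direction of the mollified distance gradient is a good direction up to the boundary---establishing the link between $\nabla\rho$ and good directions is exactly the content of the Ball--Zarnescu analysis that the paper quotes. (A secondary point: replacing $\vec{N}$ alters $Z_m^{V^l}$ and hence the sets $C_{q_m^l,m}^{V^l}$, so even if your claim were justified you would be proving the lemma for a modified construction; this is tolerable since any admissible good-direction field yields an exhaustion, but the transversality claim itself must be proved, either for the field of Proposition~\ref{field_good} via \cite[Remark~3.1]{BZ} as the paper does, or for your candidate field, which would require showing $\nabla\rho/|\nabla\rho|$ extends to $\pwr$ as a continuous nontangential field.)
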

\begin{proof}
	(1) Follows directly  from our construction.

	(2)  For each $l,m\in \N$, $Z_{m}^{V^l}$ is $C^1$ and $\Gw_{q_m^l}$ is Lipschitz. Moreover,  $x\in \partial Z_m^{V^l}\cap \partial\Gw_{q_m^l} $ implies that $\rho(x)=\varepsilon_{q_m^l}.$
	By \cite[Remark~3.1]{BZ}, the latter equality and \eqref{pathconn} imply that  the intersection $Z_m^{V^l}\cap \overline{\Gw_{q_m^l}}$ is transversal and therefore Lipschitz.
	Moreover, the intersection $ Z_m^{V^l}\cap \big(\overline{\Gw}\setminus \Gw_{q_m^l}\big)$ is transversal by the definition of $Z_m^{V^l}$, and therefore Lipschitz. 
	
	Finally, since $Z_m^{V^l}$ is path connected we deduce that  $\widetilde{\Gw}_{l}$ is connected as well. 
\end{proof}
\begin{center}
	{\bf Acknowledgments}
\end{center}
The authors wish to thank D.~ Jerison for  a valuable discussion.
  The paper is based on part of the Ph.~D. thesis of the second author under the supervision of the first author. I.~V.  is grateful to the Technion for supporting his study.  
The  authors  acknowledge  the  support  of  the  Israel Science Foundation (grant  637/19) founded by the Israel Academy of Sciences and Humanities. 


\begin{thebibliography}{9}
	\bibitem{Adams} R.A.~Adams, ``Sobolev Spaces", Pure and Applied Mathematics, Vol. 65., Academic Press, New York-London, 1975.
	%
	\bibitem{Agmon}  S.~Agmon, On positivity and decay of solutions of second order
	elliptic equations on Riemannian manifolds, {\em in} ``Methods of
	Functional Analysis and Theory of Elliptic Equations" (Naples,
	1982), pp. 19--52, Liguori, Naples, 1983.
	\bibitem{ATG}
	W.~Arendt, A.F.M~ter Elst, and J.~Gl\"uck,
	Strict positivity for the principal eigenfunction of elliptic operators with various boundary conditions, {\em Adv. Nonlinear Stud.} {\bf 20} (2020),  633--650.
	%
	\bibitem{BZ} J.~ Ball, and A.~ Zarnescu, Partial regularity and smooth topology-preserving approximations of rough domains, { \em Calc. Var. Partial Differential Equations}  {\bf 56} (2017), Paper No. 13, 32 pp.
	\bibitem{BNV}
	H.~Berestycki, L.~Nirenberg, S.R.S.~Varadhan, 
	The principal eigenvalue and maximum principle for second-order elliptic operators in general domains,
	{\em Comm. Pure Appl. Math.} {\bf47} (1994), 47--92.
	
		\bibitem{BGT} D.~Bucur, A.~Giacomini, and P.~Trebeschi, Best constant in Poincar\'e inequalities with traces: A free discontinuity approach, {\em Ann. Inst. H. Poincar\'e Anal. Non Lin\'eaire} {\bf 36} (2019),  1959--1986.
	\bibitem{BR} H.~Berestycki, and L.~ Rossi, Maximum principle and generalized principal eigenvalue for degenerate elliptic operators, {\em J. Math. Pures Appl.} (9) {\bf 103} (2015), 1276--1293.

	\bibitem{BI}
	 I.~Birindelli, Second-order elliptic equations in general domains: Hopf's lemma and anti-maximum principle, Thesis (Ph.~D.), New York University, 1992.
	 \bibitem{Daners}
	 D.~Daners, Domain perturbation for linear and semi-linear boundary value problems, {\em in} ``Handbook of Differential Equations: Stationary Partial Differential Equations", Vol. 6, Handb. Differ. Equ., Elsevier/North-Holland, Amsterdam, 2008, 1--81.
	 \bibitem{Daners2}
	 D.~Daners,
	 Inverse positivity for general Robin problems on Lipschitz domains, {\em Arch. Math. (Basel)} {\bf 92} (2009), 57--69.
	\bibitem{DFP} B.~Devyver, M.~Fraas, and Y.~Pinchover, Optimal Hardy weight for second-order elliptic operator: an answer to a problem of Agmon, {\em J. Funct. Anal.} {\bf 266} (2014), 4422--4489.
	\bibitem{DR}
	 R.~ Ducasse, and L.~Rossi, Blocking and invasion for reaction-diffusion equations in periodic media, { \em Calc. Var. Partial Differential Equations} {\bf 57} (2018), Paper No. 142, 39 pp.
	 \bibitem{EV}
	  L.C.~Evans, ``Partial Differential Equations", Second edition, Graduate Studies in Mathematics, 19. American Mathematical Society, Providence, 2010.
	\bibitem{EG} L.C.~Evans, and R.F.~Gariepy, ``Measure Theory and Fine Properties of Functions", Studies in Advanced Mathematics, CRC Press, Boca Raton, FL, 1992.
	%
	
	\bibitem{GT} D.~Gilbarg, and N.~S.~Trudinger, ``Elliptic Partial Differential Equations of Second Order", Reprint of the 1998 edition, Classics in Mathematics, Springer-Verlag, Berlin, 2001.
	%
	
	%
	\bibitem{KATO}
	 T.~Kato, Schr\"odinger operators with singular potentials, {\em Israel J. Math.} {\bf 13} (1972), 135--148.
	 \bibitem{K}
	 S.~Kim, and G.~Sakellaris,
	 Green's function for second order elliptic equations with singular lower order coefficients, {\em
	 Comm. Partial Differential Equations} {\bf 44} (2019), 228--270.
	 
	\bibitem{Lablee}
	O.~Labl\'ee, ``Spectral Theory in Riemannian Geometry"
	EMS Textbooks in Mathematics, European Mathematical Society (EMS), Z\"urich, 2015.
	\bibitem{Lee}
	 J.M.~Lee,  ``Introduction to Smooth Manifolds", Second edition, Graduate Texts in Mathematics, 218, Springer, New York, 2013.
	 \bibitem{LL}  E.~Lieb, and M.~Loss,  ``Analysis", Graduate Studies in Mathematics 14. American Mathematical Society, Providence, RI, 2001.
	 
	\bibitem{L} G.M.~Lieberman, ``Oblique Derivative Problems for Elliptic Equations", World Scientific Publishing Co, Pte. Ltd., Hackensack, NJ, 2013.
	
	\bibitem{M1}
	M.~Murata, Structure of positive solutions to $-(\Delta +V)u$  in $\mathbb{R}^n$, Duke Math. J. \textbf{53} (1986), 869--943.
	
	\bibitem{MZ} J.~Maly, and W.~P.~Ziemer, ``Fine Regularity of Solutions of Elliptic Partial Differential Equations", Mathematical Surveys and Monographs 51, American Mathematical Society, Providence, RI, 1997.
	\bibitem{NP} R.D.~Nussbaum, and Y.~Pinchover, 
	On variational principles for the generalized principal eigenvalue of second order elliptic operators and some applications,
	Festschrift on the occasion of the 70th birthday of Shmuel Agmon, {\em J. Anal. Math.} {\bf 59} (1992), 161--177.
	\bibitem{PDuke}
	Y.~Pinchover, On positive solutions of second-order elliptic equations, stability results, and classification. {\em Duke Math. J.} {\bf 57} (1988), 955--980.
	
	\bibitem{P3}
	Y.~Pinchover, 	Topics in the theory of positive solutions of second-order elliptic and parabolic partial differential equations.  in: ``Spectral Theory and Mathematical Physics: a Festschrift in Honor of Barry Simon's 60th Birthday",  eds. F. Gesztesy, et al., Proceedings of Symposia in Pure Mathematics 76 Part 1, American Mathematical Society, Providence, RI, 2007, 329--356.	
	
	
	\bibitem{PS} Y.~Pinchover, and T.~Saadon, On positivity of solutions of degenerate boundary value problems for second-order elliptic equations, {\em Israel J.~Math.} {\bf 132} (2002), 125--168. 
	\bibitem{PT} Y.~Pinchover, and K.~Tintarev, A ground state alternative for singular Schr\"odinger operators, {\em J. Functional Analysis} {\bf 230} (2006), 65--77.
	
	
	\bibitem{Serrin}
	J.~Serrin, Isolated singularities of solutions of quasi-linear equations.,{\em  Acta Math.} {\bf 113} (1965), 219--240. 
	
	\bibitem{S}
	B.~ Simon, Schr\"odinger semigroups, \textit{Bull. Amer. Math. Soc. (N.S.)} \textbf{7} (1982), 447--526.
	
	\bibitem{ST}
	G.~Stampacchia, Le probl\`eme de Dirichlet pour les \'{e}quations elliptiques du
	second ordre \`a coefficients discontinus, {\em  Ann. Inst. Fourier (Grenoble)} {\bf 15} (1965), 189--258.
\end{thebibliography}
\end{document}